%% file: sample_paper.tex
\documentclass[twoside]{article}

\usepackage[preprint]{aistats2027}

\input{math_commands.tex}

\usepackage{hyperref}
\usepackage{cleveref}
\usepackage{url}
\usepackage{booktabs}
\usepackage{longtable}

\usepackage{amsthm}
\usepackage{amsmath}
\usepackage{mathtools}
\usepackage{algorithm, algorithmic}
\usepackage{amssymb}
\usepackage{thmtools}
\usepackage{thm-restate}

\newtheorem{lemma}{Lemma}
\newtheorem{theorem}{Theorem}
\newtheorem{corollary}{Corollary}
\newtheorem{assumption}{Assumption}
\newtheorem{definition}{Definition}
\newtheorem{remark}{Remark}
\newtheorem{example}{Example}
\newtheorem{proposition}{Proposition}

\crefname{assumption}{Assumption}{Assumptions}
\crefname{theorem}{Theorem}{Theorems}
\crefname{corollary}{Corollary}{Corollaries}

\newenvironment{proofsketch}
{\begin{proof}[Proof sketch]}
{\end{proof}}
\usepackage{xcolor}
\definecolor{myred}{RGB}{180,0,0}

\newcommand{\conf}{\operatorname{conf}}
\newcommand{\Ind}[1]{\mathbf{1}\{#1\}}
\newcommand{\bmu}{\boldsymbol{\mu}}

\newcommand{\Prob}[1]{\mathbb{P}\left(#1\right)}
\usepackage{nicefrac}
\newcommand{\Exp}[1]{{\mathbb{E}}\left[#1\right]}
\newcommand{\ExpSub}[2]{{\mathbb{E}}_{#1}\left[#2\right]}

\DeclareMathOperator{\supp}{supp}
\newcommand{\regretdyn}{\mathcal{R}}
\renewcommand{\eqref}[1]{(\ref{#1})}

\usepackage{xcolor}

\newcommand{\tceil}[1]{\left\lceil #1 \right\rceil}
\newcommand{\tfloor}[1]{\left\lfloor #1 \right\rfloor}
\newcommand{\cI}{\mathcal{I}}

\newcommand{\ba}{\bm{a}}
\newcommand{\bs}{\bm{s}}
\newcommand{\bw}{\bm{w}}
\newcommand{\bu}{\bm{u}}

\newcommand{\cW}{\mathcal{W}}

\newcommand{\algname}[1]{\ifmmode\text{\scshape #1}\else{\scshape #1}\fi}

\newcommand{\eqdef}{\mathrel{\mathop:}=}

\usepackage{natbib}

\hypersetup{
  colorlinks   = true, %
  urlcolor     = blue, %
  linkcolor    = {red!75!black}, %
  citecolor   = {blue!50!black} %
}

\hypersetup{
  pdftitle={Improved Analysis of Adaptive Task Allocation and Non-Stationary Processing Times},
  pdfauthor={Grigory Begunov, Ilgam Latypov, Yuriy Dorn, Alexander Tyurin}
}

\begin{document}
\runningtitle{Improved Analysis of Adaptive Task Allocation}
\runningauthor{Begunov, Latypov, Dorn, Tyurin}

\twocolumn[

\aistatstitle{Improved Analysis of Adaptive Task Allocation \\ and Non-Stationary Processing Times}

\aistatsauthor{
  \begin{tabular}{@{}cccc@{}}
    Grigory Begunov & Ilgam Latypov & Yuriy Dorn & Alexander Tyurin \\
    {\normalfont AXXX\textsuperscript{1}, MSU\textsuperscript{2}} &
    {\normalfont IAI MSU\textsuperscript{3}} &
    {\normalfont IAI MSU\textsuperscript{3}} &
    {\normalfont AXXX\textsuperscript{1}, Applied AI Institute\textsuperscript{4}} \\
    \scalebox{0.98}{\normalfont\small\texttt{grigorii.begunov@math.msu.ru}} &
    \scalebox{0.98}{\normalfont\small\texttt{i.latypov@iai.msu.ru}} &
    \scalebox{0.98}{\normalfont\small\texttt{dornyv@my.msu.ru}} &
    \scalebox{0.98}{\normalfont\small\texttt{alexandertiurin@gmail.com}}
  \end{tabular}
}

\aistatsaddress{} ]

\begingroup
\renewcommand{\thefootnote}{}
\makeatletter
\renewcommand{\@makefntext}[1]{\noindent#1}
\makeatother
\footnotetext[0]{
  \textsuperscript{1}AXXX, Moscow, Russia. \\
  \textsuperscript{2}Lomonosov Moscow State University, Moscow, Russia. \\
  \textsuperscript{3}Institute for Artificial Intelligence, Lomonosov Moscow State University,
  Moscow, Russia. \\
  \textsuperscript{4}Applied AI Institute, Moscow, Russia.
}
\endgroup

\begin{abstract}
  We study the task allocation problem, where our goal is to develop an online algorithm that efficiently distributes tasks among workers based on their computational performance. This task is challenging since computation times are \emph{random} and \emph{non-stationary} in practice. Recent works have introduced different online algorithms to address this problem. One of them is Adaptive Task Allocation (ATA), a bandit-style algorithm. However, previous theoretical guarantees leave substantial room for improvement, and existing methods are designed only for stationary computation times. Using new proof techniques, our work establishes an improved analysis of ATA with better gap-dependent and gap-independent regret guarantees, improving upon all previous approaches. We also introduce a new method, Sliding Window Adaptive Task Allocation (SW-ATA), designed for more practical non-stationary environments. Finally, we prove new lower bounds in both the stationary and non-stationary settings.
\end{abstract}
\section{Introduction}
In this work, we consider the task management problem. 
Assume that there are $n$ workers or nodes with random computational delays and unknown time distributions, which can perform tasks in parallel. 
At each iteration, they need to perform $B$ tasks. One of the recent applications 
of this framework is neural network training, where at each iteration $B$ gradients need to be computed by $n$ heterogeneous devices. For instance, \citet{tyurin2023optimal} designed an optimal SGD-like method that computes $B$ stochastic gradients asynchronously across the devices.
However, modern asynchronous methods \citep{dutta2018slow,tyurin2023optimal,islamov2024asgrad,maranjyan2025ringmaster} may be wasteful in terms of total worker utilization time, and they require \emph{all devices all the time} to work in every iteration. Moreover, all these methods greedily aggregate the calculated tasks, which is impractical because, in modern frameworks, task allocation must be known a priori before each iteration to support highly optimized communication, model-parallel, and memory-efficient implementations \citep{sergeev2018horovod,shoeybi2019megatron,rajbhandari2020zero}.

This motivates the main question: instead of greedily waiting for and aggregating tasks, can we distribute them efficiently through adaptive task allocation?

The challenge is that an efficient allocation depends on worker speeds, which must be learned
from observed task durations.
Additionally, in real systems, workers' computation times can change during execution: workers can temporarily slow down because of resource contention, background jobs, network congestion, or garbage collection \citep{dean2013tail,peng2018optimus}.
Such side effects are common in shared clusters and can turn a normally fast worker into a straggler for
several iterations. As a result, an allocation that was good at the beginning may become inefficient later.
This motivates the non-stationary processing setting, where the algorithm must adapt to changes in worker
processing times.

Our goal is to provide strong theoretical guarantees for an efficient task allocation method that adapts to heterogeneous and changing worker speeds.
We aim to reduce the total worker utilization time caused by stragglers by learning task allocations online and analyzing an approach in which the allocation is known before every iteration.

\subsection{Problem Setup}
\label{sec:setup}
We consider the task-allocation problem with non-stationary and random processing times. 
In each round $k \in [K]$, we distribute $B$ tasks among
$n$ workers. Worker $i$'s delay at iteration $k$ has an unknown distribution with mean $\mu_{i, k}>0$.
We first use an allocation algorithm to choose an allocation $\ba_k \in \mathcal A$ at iteration $k$. The allocation set is defined as
$\mathcal A = \left\{
      \ba \in \mathbb Z_{\geq 0}^n:\,\sum_{i = 1}^n a_{i} = B\right\}$. 
Once the tasks are completed, we observe $a_{i, k}$ observations $\{X_{i, k}^{(u)}\}_{u = 1}^{a_{i, k}}$ 
from each worker and observe the completion time
$C_k(\ba_k) := \max_{i \in \supp(\ba_k)} \sum_{u = 1}^{a_{i, k}} X_{i, k}^{(u)}$,
where $\supp(\ba):=\{i\in[n]:a_i>0\}$ denotes the set of workers assigned at least one task.
Our main goal is to minimize the total expected completion time 
\begin{align}
\label{eq:eZJWZzDcwN}
\textstyle C_K \eqdef \sum\limits_{k = 1}^K \Exp{C_k(\ba_k)},
\end{align}
where we choose $\ba_k$ in an online fashion.
Let $\ba_k^* \in \argmin_{\ba \in \mathcal A} \Exp{C_k(\ba)}$ denote an allocation that minimizes the expected completion time in round $k$.
The minimum total expected completion time is $C_K^* = \sum_{k = 1}^K \Exp{C_k(\ba_k^*)}$. Note that finding $\ba_k^*$ directly is infeasible because
$\Exp{C_k(\ba)}$ depends on the unknown delay distribution of $X_{i, k}.$ 

\textbf{Reduction to regret minimization.} To solve the problem, similarly to \citep{maranjyan2025ata}, 
we introduce the proxy loss $\ell_k(\ba) \eqdef \max_{i\in[n]}a_i\mu_{i, k}$ and instead of minimizing (\ref{eq:eZJWZzDcwN}), we minimize
$\ell_k(\ba)$. We then define the expected regret of the proxy loss 
$$\textstyle \regretdyn := \sum\limits_{k=1}^K \Exp{\ell_k(\ba_k) - L_k^*},$$
where we define an arbitrary minimizer of the proxy loss $\bar \ba_k \in \argmin_{\ba \in \mathcal A} \ell_k(\ba),$ the minimum of the proxy loss $L_k^* := \ell_k(\bar \ba_k),$ and $\bar \regretdyn := \sum_{k=1}^K \left(\ell_k(\ba_k) - L_k^*\right).$ 
\emph{Crucially, in Theorems~\ref{thm:stationary_prox_connection} and \ref{thm:oRtZJgVnZogTrbAAt}, we show that we can bound $C_K$ using the expected regret $\regretdyn$.}

\subsection{Assumptions on delays}\label{sec:delays}
We assume that delays $X_{i, k}^{(u)}$ are drawn from a sub-exponential distribution. 
\begin{definition}
\label{def:subexp}
A random variable $X$ is sub-exponential if $\|X\|_{\psi_1} < + \infty$, where
$\|X\|_{\psi_1} := \inf\left\{t>0 : \mathbb{E}\left[\exp\left(\nicefrac{|X|}{t}\right)\right]\le 2 \right\}$.
\end{definition}

\begin{assumption}\label{ass:drift-exp}
The random variables $X_{i,k}^{(u)}$ are sub-exponential, positive almost surely (a.s.), and mutually independent
for all $i \in [n]$, $k \in [K]$, and $u \in [B]$.
For each fixed $i \in [n]$ and $k \in [K]$, the variables $X_{i,k}^{(u)}$, $u \in [B]$, are identically distributed and satisfy
$\mathbb{E}[X_{i,k}^{(u)}] > 0$. Let $\mu_{i,k} \eqdef \mathbb{E}[X_{i,k}^{(u)}]$ and
$\alpha_{i,k} \eqdef \|X_{i,k}^{(u)} - \mu_{i,k}\|_{\psi_1}$.
\end{assumption}
We then define $\alpha_{\max} = \max_{i \in [n], k \in [K]} \alpha_{i, k}$
and $\tilde \alpha = \max_{i, k} \|X_{i, k}^{(u)}\|_{\psi_1}$.
Note that for a positive random variable $X$ satisfying Definition~\ref{def:subexp}, we have $\|X - \mu\|_{\psi_1} \leq 2 \|X\|_{\psi_1}$. Therefore, $\alpha_{\max} \leq 2 \tilde \alpha$.

Sub-exponential random variables form a broader class than the sub-Gaussian class and include
distributions with heavier tails, such as exponential and gamma
distributions \citep{vershynin2019high}. Such distributions are natural models for positive random
delays \citep{mitliagkas2016asynchrony, dutta2018slow}.

\subsection{Related work}
\begin{table*}[t]
\caption{\emph{Top table}: Bounds on the total expected completion time $C_K = \sum_{k=1}^K \mathbb{E}[C_k(\ba_k)]$ in the stationary case.
\emph{Bottom table}: Bounds on the expected proxy-loss regret $\regretdyn := \sum_{k=1}^K \Exp{\ell(\ba_k) - L^*}$.
For comparison, we assume $0 < X_{i,k}^{(u)} \leq 1$ a.s., $K \geq \max\left\{B, \nicefrac{n}{B}\right\}$, $n \geq 2$, and $B \geq 2$.
Abbr.: $n$ is the \# of workers; $B$ is the batch size; and $K$ is the \# of rounds;
$C_K^*$ is the optimal total expected completion time; $\mu_i$ is worker $i$'s mean processing time; $b_i$ and $\Delta_i$ are the over-allocation threshold
and suboptimality gap defined in Definition~\ref{def:suboptimality_gaps}.
The upper bounds on regret $\regretdyn^{\text{ATA}}$, $\regretdyn^{\text{Tak21}}$, $\regretdyn^{\text{Gen}}$, and
$\regretdyn^{\text{Ours}}$ are defined in the bottom table.}
\label{table:stationary_comparison}
\centering
\setlength{\tabcolsep}{4pt}
\setlength{\arrayrulewidth}{0.3pt}
\fontsize{8}{10}\selectfont
\renewcommand{\arraystretch}{1.8}
\begin{minipage}[t]{0.75\linewidth}
\centering
\vspace{0.1em}
\begin{tabular*}{\linewidth}{c@{\extracolsep{\fill}}c}
\toprule
\textbf{Results} & \textbf{Bound on} $C_K$ \\
\midrule[\heavyrulewidth]
{\renewcommand{\arraystretch}{1}
\begin{tabular}[c]{@{}c@{}}Previous \\ \citep{maranjyan2025ata} \\
\citep{takemura2021near} \\ \citep{nakamura2025multi}\end{tabular}}
& $\left(1 + 4\textcolor{myred}{\eta}\ln B\right)
   \left(C_K^* + \textcolor{myred}{\min\{\regretdyn^{\text{ATA}}, \regretdyn^{\text{Tak21}}, \regretdyn^{\text{Gen}}\}}\right)$ \\

\midrule
Ours (Theorem~\ref{thm:stationary_prox_connection})
& $\left(1 + 4\varphi\ln \min\left\{n, B\right\}\right)
   \left(C_K^* + \regretdyn^{\text{Ours}}\right)$ \\
\bottomrule
\end{tabular*}
\end{minipage}
\par\vspace{0.2em}
{The coefficients $\eta$ and $\varphi$ are defined in
Section~\ref{sec:ata_large_batch}; $\varphi \leq \eta$ by definition.\\
For fixed $n$ and worker distributions,
Proposition~\ref{prop:large_batch_phi} shows that
$\varphi = \mathcal O(B^{-1/2})$ as $B \to \infty$, while $\eta$ remains constant.
\par}
\par\vspace{0.8em}
\begin{minipage}[t]{\linewidth}
\centering
\begin{tabular*}{\linewidth}{l@{\extracolsep{\fill}}ccc}
\toprule
\multicolumn{2}{c}{} & \multicolumn{2}{c}{\textbf{Bound on proxy-loss regret} ($\boldsymbol{\regretdyn}$)} \\
\cmidrule{3-4}
\textbf{Method} & \textbf{Notation} & \textbf{Gap-independent} & \textbf{Gap-dependent} \\
\midrule[\heavyrulewidth]
ATA~\citep{maranjyan2025ata}
& $\regretdyn^{\text{ATA}}$
& --
& $\mathcal O\left(\sum_{i=1}^n \textcolor{myred}{\frac{B\mu_i-L^*}{\Delta_i b_i}}
   \frac{\log K}{\Delta_i}\right)$ \\

\midrule
Tak21~\citep{takemura2021near}
& $\regretdyn^{\text{Tak21}}$
& $\widetilde{\mathcal O}\left(\textcolor{myred}{n}\sqrt{BK}\right)$
& -- \\

\midrule
GenCTS~\citep{nakamura2025multi}
& $\regretdyn^{\text{Gen}}$
& --
& $\Omega \left(\textcolor{myred}{B\log(\min\{n,B\})}
   \sum_{i=1}^n\frac{\log K}{\Delta_i}\right)$\textsuperscript{(b)} \\

\midrule
ATA (\textbf{ours})\textsuperscript{(a)} (Theorem~\ref{thm:ata_regret})
& $\regretdyn^{\text{Ours}}$
& $\widetilde{\mathcal O}\left(\min\left\{\sqrt{n B K}, \sqrt{K(B+n)}
   + \frac{n}{\min_{i \in [n]}\mu_i}\right\}\right)$
& $\mathcal O\left(\sum_{i=1}^n\frac{\log K}{\Delta_i}\right)$ \\

\midrule
\midrule
Lower bound (\textbf{ours}) (Theorem~\ref{thm:lower_bound})
& --
& $\Omega\left(\sqrt{K \left(\frac{B}{n} + \frac{n}{B}\right)}\right)$
& -- \\
\bottomrule
\end{tabular*}

\vspace{0.2em}
{\fontsize{8}{10}\selectfont\raggedright
\textsuperscript{(a)} $\regretdyn^{\text{Ours}} :=
\min\{\text{gap-independent bound},\,\text{gap-dependent bound}\}$
from the ATA (ours) row. 
\textsuperscript{(b)} For a clearer comparison, we display a lower bound on the leading term in $\regretdyn^{\text{Gen}}$.
In the table we present only the terms that depend on $K$, omitting constants.
\par}
\end{minipage}
\end{table*}

The adaptive allocation problem considered here generalizes the stationary version introduced in \citep{maranjyan2025ata} and extends it to a more practical setting in which the distribution may vary from iteration to iteration. 

When $B = 1$ and the means are stationary, this problem can be reduced to the classical multi-armed bandit problem.
The literature on multi-armed bandits is extensive; we refer to
\citet{lattimore2019bandit} and \citet{slivkins2019introduction} for
introductions to the subject. A classical finite-time analysis of stochastic
bandits and the UCB1 algorithm was given by \citet{auer2002finite}.

The combinatorial semi-bandit framework is closely related to our problem \citep{chen2013combinatorial,kveton2015tight,chen2016combinatorial}.
In the standard formulation, an action selects a subset of base arms and is represented by a binary vector. In our setting, an action is an integer allocation
$\ba$, allowing multiple tasks to be assigned to the same worker. Each task provides an independent observation of that worker's processing time: assigning
$a_i$ tasks to worker $i$ yields $a_i$ observations.

When $B > 1$, assigning multiple tasks to a worker corresponds to repeated pulls of the same arm, connecting our problem to bandit settings that allow repeated pulls.
Related work includes adaptive task allocation \citep{maranjyan2025ata} and the more general frameworks of \citet{takemura2021near,nakamura2025multi}.

Non-stationary bandits study learning in changing environments; our problem reduces to this setting when $B = 1.$ One approach uses sliding-window confidence bounds, as studied by
\citet{garivier2011upper}. For multi-armed bandits with
mean variation $V_K=\sum_{k=1}^{K-1}\|\bmu_{k+1}-\bmu_k\|_\infty$,
\citet{besbes2014stochastic} establish
a regret rate of $\widetilde{\mathcal O}((nV_K)^{1/3}K^{2/3})$.
\citet{cheung2019learning} obtain matching upper and lower bounds for non-stationary linear bandits. 
\citet{chen2021combinatorial} extend this line of work to combinatorial
semi-bandits with nonlinear rewards and provide both gap-dependent and
gap-independent guarantees. Although their framework accommodates Lipschitz-continuous objectives,
it requires the random variables to take values in $[0, 1]$.

\subsection{Contributions}\label{sec:motivation}
(i) In this work, we consider the adaptive task allocation problem from Section~\ref{sec:setup} in both stationary and non-stationary computational settings. In Section~\ref{sec:new}, we develop a new method called SW-ATA (Algorithm~\ref{alg:sw_ata}), designed to minimize~(\ref{eq:eZJWZzDcwN}), which uses a sliding window mechanism to adapt to non-stationary environments.

(ii) We first present our analysis in the stationary setting, where we improve on the proxy-loss regret bounds of all previous approaches \citep{takemura2021near,nakamura2025multi,maranjyan2025ata} (see Table~\ref{table:stationary_comparison} and Section~\ref{sec:stat}). In particular, using a new counting argument, we reduce the dependence on the gaps in the bound of~\citet{maranjyan2025ata} from $\nicefrac{1}{\Delta_i^2}$ to $\nicefrac{1}{\Delta_i}$ and improve on the bound for GenCTS \citep{nakamura2025multi}, which is at least $B\log(\min\{n,B\})$ times larger than ours. Additionally, we obtain a gap-independent guarantee that describes the algorithm's worst-case performance and is better than that of~\citet{takemura2021near} by a factor of $\sqrt{n}$. Compared with previous approaches, we not only improve the regret bound but also provide a tighter bound on the total expected completion time (see Table~\ref{table:stationary_comparison}) by replacing $\eta$ with a sharper quantity $\varphi$ that decreases at the rate
$\mathcal O\left(B^{-\nicefrac{1}{2}}\right)$ as the batch size $B$ grows. 
Thus, we provide the first guarantees showing that $C_K/C_K^*$ can be arbitrarily close to $1$
for sufficiently large $B$ and $K$, while also improving the regret bound.

(iii) Next, in Section~\ref{sec:sw_ata}, to the best of our knowledge, we provide the first theoretical guarantees for the more challenging non-stationary computational setting. We analyze the regret of SW-ATA using our refined counting argument.

(iv) To the best of our knowledge, we provide the first regret lower bounds for the task allocation problem, helping to clarify the complexity gap. While \algname{SW-ATA} does not match these bounds, it comes closest among all current methods. See Table~\ref{table:stationary_comparison} and Section~\ref{sec:lower_bounds}.

\vspace{-3pt}
\section{SW-ATA: A New Algorithm in a Non-Stationary Environment}
\label{sec:new}
\vspace{-2pt}
In this section, we present a new algorithm called Sliding Window Adaptive Task Allocation (SW-ATA) (Algorithm~\ref{alg:sw_ata}).
The idea is to extend the method in \citep{maranjyan2025ata}: we estimate worker delay means and, following the principle which is called optimism in the face of uncertainty, we assign tasks according to the optimal allocation with respect to lower confidence bounds.

The algorithm requires a parameter $\alpha \geq \alpha_{\max}$.
In non-stationary environments, we implement a sliding-window mechanism in order to "forget" outdated statistics. 
We introduce an additional parameter, the window size $W\in\{1,\dots,K\}$. 
We then, for each round $k$, define the window of previous observations by
$\cW_k := \{q:\max(1,k-W)\leq q\leq k-1\}$ and define the number of accumulated observations for
each worker $i$ as $N_{i,k}^{(W)} := \sum_{q\in\cW_k}a_{i,q}$.
For $N_{i,k}^{(W)}\geq1$, we define the sliding-window empirical mean
and the confidence radius:
\vspace{-2pt}
\begin{align*}
\textstyle \hat\mu_{i,k}^{(W)}
\eqdef \frac{1}{N_{i,k}^{(W)}} \sum\limits_{q\in\cW_k}\sum\limits_{u=1}^{a_{i,q}}X_{i,q}^{(u)}
\end{align*} 
\vspace{-2pt}
and
\vspace{-2pt}
\begin{align}\label{eq:lksjnfkjhfd}
\textstyle \conf_W(i,k) \eqdef 2\alpha\max\left\{ \sqrt{\frac{\log(2Bk^3)}{N_{i,k}^{(W)}}}, 
   \frac{\log(2Bk^3)}{N_{i,k}^{(W)}}\right\}.
\end{align}
Then, the lower confidence bound (LCB) is \begin{align}
   \label{eq:kAGPwqp}
   s_{i,k} \eqdef \left(\hat\mu_{i,k}^{(W)}-\conf_W(i,k)\right)_+.
\end{align}
For $N_{i,k}^{(W)}=0$, the empirical mean and the confidence radius are undefined, and we set $s_{i,k}=0$.

It is important to choose an appropriate window size $W:$ a small $W$ may yield inaccurate sliding-window empirical means. A large $W$ reduces the variance of the estimates but may include outdated observations, introducing estimation bias. For example, a worker may perform well initially but later become a straggler.

It is convenient to define the following quantities. 
\begin{definition}[Suboptimality Gaps]\label{def:suboptimality_gaps}
The minimal number of tasks that implies over-allocation 
of the $i$th worker at the $k$th iteration is $b_{i, k} := \min\left\{b \in [B + 1]: b \mu_{i, k}>L_k^* \right\}$.
We then define the minimal suboptimality gap 
$\Delta_i^{\min} := \min_{k \in [K]: b_{i, k} \leq B}\left\{\mu_{i, k}-\nicefrac{L_k^*}{b_{i, k}}\right\} > 0$ with the standard convention that $\min$ of an empty set is $+\infty.$
\end{definition}
We call a regret bound gap-dependent if it depends on the suboptimality gaps, and gap-independent otherwise.

\begin{algorithm}[h]
\caption{SW-ATA}\label{alg:sw_ata}
\begin{algorithmic}[1]
\STATE \textbf{Input:} number of workers $n$, number of tasks $B$, $\alpha>0$, window length $W$
\FOR{$k=1,\dots,K$}
    \STATE Compute LCBs $s_{i,k}$ for all $i\in[n]$ using (\ref{eq:kAGPwqp})
    \STATE Set $\mathcal Z_k=\{i\in[n]:s_{i,k}=0\}$
    \IF{$\mathcal Z_k=\varnothing$}
        \STATE Set $\ba_k= \text{RAS}(\bs_k; B)$ (Algorithm~\ref{alg:ras}) 
    \ELSE
        \STATE Set $a_{i,k}=0$ for $i\notin\mathcal Z_k$ and split $B$ tasks uniformly\footnotemark among $i\in\mathcal Z_k$
    \ENDIF
    \STATE Allocate $a_{i,k}$ tasks to each worker $i \in [n]$
    \STATE Observe $X_{i,k}^{(u)}$ for $u=1,\dots,a_{i,k}$ after the completion of the tasks
\ENDFOR
\end{algorithmic}
\end{algorithm}
\footnotetext{Set $a_{i,k} = q + 1$ for the first $r$ workers of $\mathcal Z_k$, and $a_{i,k} = q$ for the remaining $|\mathcal Z_k| - r$ workers, where $q = \tfloor{B / |\mathcal Z_k|}$ and $r = B - |\mathcal Z_k| q$.}

\section{Stationary Processing Times: Improved Analysis}
\label{sec:stat}
In this section, we temporarily consider the stationary setting.
We show that our new theoretical results provide better guarantees than those in \citep{takemura2021near,nakamura2025multi,maranjyan2025ata}. We use this assumption only in this section.
\begin{assumption}\label{ass:stationary-exp}
In addition to Assumption~\ref{ass:drift-exp}, for each $i \in [n]$,
the random variables $X_{i,k}^{(u)}$, $k \in [K]$, $u \in [B]$, are identically distributed.
\end{assumption}

Note that ATA is a special case of SW-ATA when $W=K$. In this setting, we simplify the notation by omitting the window index $W$:
the number of accumulated samples becomes $N_{i,k}:=\sum_{q=1}^{k-1}a_{i,q}$,
and we write $\conf(i,k) \equiv \conf_W(i,k)$ for the confidence radius.
We also simplify quantities that are constant due to stationarity,
writing $\mu_i$, $\alpha_i$, $\ell(\ba)$, $L^*$, $b_i$, and $\Delta_i$
instead of $\mu_{i,k}$, $\alpha_{i,k}$, $\ell_k(\ba)$, $L_k^*$, $b_{i,k}$, and $\Delta_i^{\min}$, respectively.
Additionally, due to Assumption~\ref{ass:stationary-exp}, the optimal allocations $\bar \ba_k$ and $\ba_k^*$ are independent of
$k$, so we omit their index. Finally, we define $C^* \eqdef \Exp{C_1(\ba^*)}$ and note that $C_K^* = K C^*$.

\subsection{Better regret bound}\label{sec:refined_ata_bound}
In the following theorem, we provide both a gap-independent and a gap-dependent bound. 

\begin{theorem}[Regret of ATA]\label{thm:ata_regret}
Let Assumption~\ref{ass:stationary-exp} be satisfied, and let
$\beta_K := 4\log(2BK^3)$.
Then, ATA (Alg.~\ref{alg:sw_ata} with $W=K$) with parameter
$\alpha \geq \alpha_{\max}$ satisfies the gap-dependent bound
\begin{align*}
\textstyle \regretdyn &\textstyle\leq \mathcal O\Big(n\max_{i \in [n]}\left\{B\mu_i-L^*\right\}
+ \alpha^2 \beta_K \sum_{i = 1}^n \frac{1}{\Delta_i} \\
&\textstyle\quad + \alpha \beta_K \sum_{i = 1}^n
   \left(\log \frac{B\mu_i-L^*}{\alpha B}\right)_+
\Big).
\end{align*}
Moreover, its gap-independent regret is
\begin{align*}
\textstyle \regretdyn
&\textstyle\leq \widetilde{\mathcal O}\Big(
   n\max_{i \in [n]}\left\{B\mu_i-L^*\right\}
   + \min\Big\{\alpha\sqrt{nBK}, \\
&\textstyle \qquad \alpha\sqrt{K(B+n)}
   + \frac{\alpha^2 n}{\min_{i \in [n]}\mu_i}
   \Big\}\Big).
\end{align*}
\end{theorem}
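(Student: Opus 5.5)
The plan is to work on a good event, charge regret only to over-allocated workers, and then count how many rounds each worker can be over-allocated before its confidence radius becomes too small.

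\textbf{Good event.} By Bernstein's inequality for sub-exponential variables, with $\alpha\ge\alpha_{\max}$ and radius \eqref{eq:lksjnfkjhfd}, we have $|\hat\mu_{i,k}-\mu_i|\le \conf(i,k)$ for each $i$, $k$ and each possible value of $N_{i,k}\le Bk$ with probability at least $1-1/(Bk^3)$. A union bound over $N_{i,k}\in[Bk]$, $i$ and $k$ gives a failure probability of order $n/k^2$. On the complement we bound the round regret crudely by $\max_i\{B\mu_i-L^*\}$. Summing over $k$ gives the first term $n\max_i\{B\mu_i-L^*\}$, which also absorbs the initial rounds in which some $s_{i,k}=0$.

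\textbf{Regret decomposition.} On the good event $s_{i,k}\le\mu_i$ for all $i$. Hence the optimal RAS allocation for $\bs_k$ satisfies $\ell_{\bs_k}(\ba_k)\le \ell_{\bs_k}(\bar\ba)\le L^*$, and the same holds for the uniform split over $\mathcal Z_k$, since there $s_{i,k}=0$. If $\ell(\ba_k)>L^*$, take the worker $i$ attaining the max. It has $a_{i,k}\ge b_i$ and $a_{i,k}s_{i,k}\le L^*$. This gives
\[
a_{i,k}\mu_i-L^*\le a_{i,k}(\mu_i-s_{i,k})\le 2a_{i,k}\conf(i,k).
\]
So the round regret is at most $\min\{B\mu_i-L^*,\,2a_{i,k}\conf(i,k)\}$, charged to worker $i$, and it is nonzero only when $a_{i,k}\ge b_i$.

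\textbf{Counting argument (main step).} For worker $i$, write $a_{i,k}=a$ and $N=N_{i,k}$; each charged round increases $N$ by at least $b_i$. We need $a(\mu_i-L^*/a)\ge a\Delta_i$, which uses $\mu_i-L^*/a\ge\mu_i-L^*/b_i=\Delta_i$ for $a\ge b_i$. Combined with the decomposition, this gives $\Delta_i\le 2\conf(i,k)$. In the sub-Gaussian regime this forces $N\lesssim\alpha^2\beta_K/\Delta_i^2$.

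The gap-dependent bound then comes from bounding the per-round regret by $2a\alpha\sqrt{\beta_K/N}$. Summing $a\sqrt{1/N}$ over rounds where $N$ grows by $a$ is a Riemann sum, $\sum a/\sqrt{N}\lesssim\sqrt{N_{\text{final}}}$. The only caveat is the first steps, where $a$ is large compared with $N$, and the crude $\min$ with $B\mu_i-L^*$ handles those. This yields $\alpha\sqrt{\beta_K}\cdot\alpha\sqrt{\beta_K}/\Delta_i=\alpha^2\beta_K/\Delta_i$.

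In the sub-exponential regime $N\le\beta_K$, the regret per round is about $a\alpha\beta_K/N$. Summing $a/N$ gives a logarithm of the ratio of final to initial $N$. This ratio is capped by the requirement that $\alpha\beta_K/N$ not exceed $(B\mu_i-L^*)/B$, which produces the term $\alpha\beta_K(\log\frac{B\mu_i-L^*}{\alpha B})_+$. This doubling-style summation, where $a$ can be as large as $B$ relative to $N$, is the main obstacle. I would handle it with a peeling argument over dyadic ranges of $N$.

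\textbf{Gap-independent bound.} Split at a threshold $\varepsilon$ on $\mu_i-L^*/b_i$. Workers with small gap contribute at most $\varepsilon\sum_k a_{i,k}\le\varepsilon BK$ in total, while the rest contribute $\sum_i\alpha^2\beta_K/\varepsilon$. Alternatively, use Cauchy--Schwarz directly on $\sum_k a_{i,k}/\sqrt{N_{i,k}}\lesssim\sqrt{N_{i,K}}$ together with $\sum_i N_{i,K}\le BK$, which gives $\alpha\sqrt{nBK\beta_K}$.

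For the second branch, note that only the maximizing worker is charged in each round. Its regret is at most $2\alpha\sqrt{\beta_K}\,a/\sqrt{N}$, and $a/\sqrt N\le \sqrt{a}\cdot\sqrt{a/N}$. Using $\sum_k\sqrt{a_{i,k}}$ over charged rounds and Cauchy--Schwarz over $k$ (one worker per round) should give $\sqrt{K}\sqrt{\sum_k a_k/N_k\cdot a_k}$. Bounding $a_{i,k}\lesssim L^*/\mu_i+1$ on the good event gives $\sum a\lesssim B+n$. The leftover burn-in where $N_{i,k}$ is small relative to $\alpha/\mu_i$ costs $\alpha^2 n/\min_i\mu_i$.

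Taking the minimum of the two branches gives the stated bound.
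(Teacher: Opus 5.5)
Your skeleton is the same as the paper's: the good event, the inequality $a_{i,k}\mu_i-L^*\le a_{i,k}(\mu_i-s_{i,k})\le 2a_{i,k}\conf(i,k)$ for the bottleneck worker, and $\Delta_{i,k}:=\mu_i-L^*/a_{i,k}\ge\Delta_i$. After that the routes differ. You sum confidence widths along the growth of $N_{i,k}$ using Riemann sums and Cauchy--Schwarz. The paper instead applies Abel summation over the distinct values of $\Delta_{i,k}$ (Lemma~\ref{lemma:weighted_gap_summation}). For each level $\delta_j$ it counts samples only up to the \emph{last} round $k'$ at that level, giving $\sum_{k:\Delta_{i,k}\ge\delta_j}a_{i,k}\le N_{i,k'}+B$ (Lemma~\ref{lemma:stable_samples}).

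Your route is viable, but the step you flag as the main obstacle and leave open is a real gap. It concerns rounds where the bottleneck worker receives $a_{i,k}>N_{i,k}$, including its first charged round, where $N_{i,k}=0$. Since $N_{i,k}$ more than doubles at each such round, there can be about $\log_2 B$ of them per worker. Charging each by the crude $\min$ with $B\mu_i-L^*$ gives $(B\mu_i-L^*)\log B$ per worker. Dyadic peeling over $N$ does not fix this by itself, because every dyadic range can end with an overshoot of up to $B$ tasks. The stated $\mathcal O(n\max_i\{B\mu_i-L^*\})$ term cannot absorb this extra $\log B$. Neither can the other terms, since both vanish as $\alpha\to0$.

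The missing observation is that these allocations are super-increasing. Let $k_1<\dots<k_J$ be such rounds. Then $a_{i,k_{l+1}}>N_{i,k_{l+1}}\ge\sum_{l'\le l}a_{i,k_{l'}}$, so $\sum_l a_{i,k_l}<2B$. Each such round has $a_{i,k_l}\mu_i>L^*$, so their total regret is below $2B\mu_i-JL^*\le 2(B\mu_i-L^*)$.

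Once these rounds are removed, every remaining charged round has $a_{i,k}\le N_{i,k}$. Your sums are then valid: $\sum a/\sqrt N\lesssim\sqrt{N_{\mathrm{last}}}$ and $\sum a/N\lesssim 1+\log(N_{\mathrm{end}}/N_{\mathrm{start}})$. The capped sub-exponential phase leaves an additive $\mathcal O(\alpha\beta_K)$. When $(B\mu_i-L^*)/B\lesssim\alpha$, it is absorbed by $\alpha^2\beta_K/\Delta_i$, because $\Delta_i\le(B\mu_i-L^*)/B$. Otherwise the logarithmic term absorbs it. The paper never meets this issue: the slack $+B$ in Lemma~\ref{lemma:stable_samples} is paid once per level, and these payments sum to $B\delta_J\le B\mu_i-L^*$.

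\textbf{First gap-independent branch.} Your first split is wrong as stated. Thresholding the worker-level gap $\Delta_i=\mu_i-L^*/b_i$ does not bound a small-gap worker's regret by $\varepsilon\sum_k a_{i,k}$. Such a worker can be over-allocated far beyond $b_i$, and then $\Delta_{i,k}\gg\Delta_i$. The threshold must be per round: on $\Delta_{i,k}$, or, as in the paper, on the round regret $\theta$, which gives $\delta_1\ge\theta/B$. Your Cauchy--Schwarz alternative giving $\alpha\sqrt{nBK\beta_K}$ is fine once the overshoot rounds are excluded.

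\textbf{Second gap-independent branch.} The bound $a_{i,k}\lesssim L^*/\mu_i$ holds only after the burn-in, once $s_{i,k}\ge\mu_i/2$; you do treat the burn-in separately. You also need $\sum_i L^*/\mu_i\le B+n-1$, which is the paper's Lemma~\ref{lemma:number_of_tasks}. Your route then gives $\alpha\sqrt{K(B+n)\beta_K\log(BK)}$ plus the burn-in cost. The extra $\sqrt{\log}$ is hidden by $\widetilde{\mathcal O}$. The paper gets this branch from $1/\delta_1\le(1+L^*/\theta)/\mu_i$ (Lemma~\ref{lemma:delta_sum_bound}) and then optimizes $\theta$.

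\textbf{Rounds with some $s_{i,k}=0$.} These are not few, so they cannot be absorbed into $n\max_i\{B\mu_i-L^*\}$. They need no special treatment, though. When $s_{i,k}=0$ and $N_{i,k}\ge1$, the good event still gives $\mu_i\le 2\conf(i,k)$. The one round with $N_{i,k}=0$ is among the overshoot rounds handled above.
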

Our analysis improves the gap-dependent proxy-loss regret bounds of
\citet{maranjyan2025ata} and \citet{nakamura2025multi} and the gap-independent bound of
\citet{takemura2021near}.
Table~\ref{table:stationary_comparison} compares these bounds under the common assumption
$0 < X_{i,k}^{(u)} \leq 1$ a.s., highlighting the improved dependence on the batch size,
the number of workers, and the suboptimality gaps.

Let us compare the old and new gap-dependent bounds for ATA in more detail. 
Both bounds include a constant term $\mathcal O\left(n\max_{i \in [n]}\left\{B\mu_i-L^*\right\}\right)$, 
so below we compare terms that depend on $K$.
The original ATA bound by \citet{maranjyan2025ata} is
$\Theta\left(\tilde \alpha^2 \sum_{i = 1}^n \left(\nicefrac{B\mu_i-L^*}{\Delta_i b_i}\right) \cdot \nicefrac{\log K}{\Delta_i}\right)$. 
Let us show that our bound is tighter.
When we set $\alpha = 2 \tilde \alpha$ as an input to the algorithm (which is not a tight choice but makes the comparison easier),
Theorem~\ref{thm:ata_regret} yields regret of the order 
$\mathcal O \left(\tilde \alpha^2 \beta_K \sum_{i = 1}^{n} \frac{1}{\Delta_i}
\right)$. Indeed, by Lemma~\ref{lemma:expectation_via_orlicz}, we have 
$\ln \nicefrac{B\mu_i - L^*}{\tilde \alpha B} \leq \ln \nicefrac{\mu_i}{\tilde \alpha} \leq \ln \ln 2 \leq 0$.
We then show that our bound is never worse. Indeed, for $i$ such that $\Delta_i = +\infty$,
the corresponding terms in both bounds $\left(\nicefrac{B\mu_i-L^*}{\Delta_i b_i}\right) \cdot \nicefrac{\log K}{\Delta_i}$
and $\nicefrac{\log K}{\Delta_i}$ are equal to $0$.
If $\Delta_i \neq +\infty$, then $b_i \leq B$ and 
$\nicefrac{B\mu_i-L^*}{\Delta_i b_i} = \nicefrac{B\mu_i-L^*}{b_i \mu_i-L^*} 
   = 1 + \nicefrac{\left(B - b_i\right) \mu_i}{b_i \mu_i-L^*} \geq 1$. 
Thus, the new bound is never worse and becomes strictly better if there exists 
a worker $i$ such that $b_i < B$.
\begin{example}
Consider $n=3$ workers with
$\mu_1=\mu_2=\nicefrac{2}{B}$ and $\mu_3=\nicefrac{1-\varepsilon}{2}$,
where $B \geq 4$ is even and $0<\varepsilon<\nicefrac{1}{2B}$.
In this case, the optimal allocation assigns $\nicefrac{B}{2}-1$ tasks to the first two workers
and $2$ tasks to the last one.
Thus, $\bar a_1\mu_1=1-\nicefrac{2}{B}$, $\bar a_3\mu_3=1-\varepsilon$, and
$L^*=1-\varepsilon$.
Moreover, $\Delta_1=\nicefrac{2\varepsilon}{B}$ and
$\mu_1-\nicefrac{L^*}{B}=\nicefrac{1+\varepsilon}{B}\geq\nicefrac{1}{B}$,
while $\Delta_3=\nicefrac{1-\varepsilon}{6}\geq\nicefrac{1}{12}$.
Therefore, the previous gap-dependent term is
$\Omega\left(\tilde \alpha^2 B\log K/\varepsilon^2\right)$,
while the corresponding term in our bound is
$\mathcal O\left(\tilde \alpha^2 B\log K/\varepsilon\right)$.
Thus, the new guarantee can be arbitrarily better as $\varepsilon\to0$.
\end{example}

The results by \citet{takemura2021near, nakamura2025multi} can also be applied and used in our problem since $\ell(\ba,\bmu)=\max_{i\in[n]} a_i \mu_i$ is $B$-Lipschitz in $\bmu$.
However, the framework of~\citet{takemura2021near} yields a gap-independent bound only under $\sigma$-sub-Gaussian noise.
The recent work of~\citet{nakamura2025multi} directly considers integer actions and multiple observations from the same arm.
For distributions supported on $[0,1]$, their GenCTS analysis gives the gap-dependent bound in Table~\ref{table:stationary_comparison}. Our new result provides better guarantees for our problem.
\begin{remark}\label{remark:gap_comparison}
In our work, $\Delta_i$ measures how accurately the mean processing time of worker $i$ must be estimated to stop over-allocation of this worker. At the same time, \citet{nakamura2025multi} define and use
$\Delta_i^{\mathrm{MP}}
:= \min_{\ba \in \mathcal A \setminus \{\bar \ba\}, a_i \geq 1}
\left(\ell(\ba)-L^*\right),$
which depends on the entire allocation, as is common in combinatorial bandit
analyses~\citep{chen2016combinatorial}.
In Table~\ref{table:stationary_comparison}, we use the bound $\Delta_i^{\mathrm{MP}} \leq b_i \Delta_i$ to simplify the comparison; thus, the original guarantees of~\citet{nakamura2025multi}, which depend on $\Delta_i^{\mathrm{MP}}$, may be even worse.
\end{remark}

\subsection{Proof sketch}\label{sec:proof_sketch}
Setting $\alpha = 2 \tilde \alpha$ as an input to the algorithm for a simpler presentation, we introduce the proof idea; see
Appendix~\ref{sec:main_regret_analysys} for the formal proof for arbitrary $\alpha \geq \alpha_{\max}$.
First, we briefly outline the proof idea from \citep{maranjyan2025ata}.
They fix a worker $i$ and consider the set of rounds
$\mathcal S_i := \left\{k \in [K]: a_{i, k} \mu_{i} > L^*, \,
a_{i, k} \mu_{i} = \ell(\ba_k)\right\}$.
They then bound
\begin{align}\label{eq:oisjfiojiorwje}
\textstyle\bar \regretdyn \leq \sum_{i = 1}^n \sum_{k \in \mathcal S_i}
\left(a_{i, k} \mu_{i} - L^*\right),
\end{align}
since if the algorithm makes a mistake in the $k$th round, then there exists a worker $i$
such that $a_{i, k} \mu_{i} = \ell(\ba_k)$.
\citet{maranjyan2025ata} show that $|\mathcal S_i| \leq O\left(\nicefrac{\tilde \alpha^2 \ln K}{b_i \Delta_i^2}\right)$,
where $|\cdot|$ denotes cardinality.
Combining this with $a_{i, k} \mu_{i} - L^* \leq B \mu_{i} - L^*$,
they obtain the regret bound
$\mathcal O\left(\tilde \alpha^2 \sum_{i = 1}^n
\left(\nicefrac{B\mu_i-L^*}{\Delta_i b_i}\right)
\cdot \nicefrac{\log K}{\Delta_i}\right)$.
This analysis can be loose for large $B$, since it replaces the actual allocation
$a_{i,k}$ of the over-allocated worker by the worst-case value $B$.

We instead analyze the sum in~(\ref{eq:oisjfiojiorwje}) more accurately using our technical lemma:
\begin{restatable}[Decomposition by confidence levels]{lemma}{GAPSUMMATION}
\label{lemma:weighted_gap_summation}
Let $a_1,\ldots,a_m$ and $\Delta_1,\ldots,\Delta_m$ be non-negative sequences.
Let $0 < \delta_1 < \cdots < \delta_J$ be the distinct positive values among
$\Delta_1,\ldots,\Delta_m$, and set $\delta_0 = 0$. Then,
$\sum_{k=1}^m a_k \Delta_k
= \sum_{j=1}^J \left(\delta_j-\delta_{j-1}\right)
\sum_{k:\,\Delta_k\geq\delta_j} a_k$.
\end{restatable}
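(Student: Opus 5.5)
The identity is a layer-cake (Abel summation) decomposition. I would prove it by writing each $\Delta_k$ as a telescoping sum of increments between consecutive distinct levels, and then exchanging the order of summation.

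First, fix $k$. If $\Delta_k = 0$, then no $j\ge 1$ satisfies $\Delta_k \ge \delta_j$, because every $\delta_j>0$. Hence the indicator sum $\sum_{j=1}^J (\delta_j-\delta_{j-1})\Ind{\Delta_k\ge\delta_j}$ equals $0=\Delta_k$. If $\Delta_k>0$, then $\Delta_k=\delta_{j_k}$ for a unique index $j_k\in[J]$, by definition of the $\delta_j$ as the distinct positive values. Since the $\delta_j$ are strictly increasing, $\Delta_k\ge\delta_j$ holds exactly for $j\le j_k$. So the indicator sum telescopes:
\begin{align*}
\sum_{j=1}^{J}(\delta_j-\delta_{j-1})\Ind{\Delta_k\geq\delta_j}=\sum_{j=1}^{j_k}(\delta_j-\delta_{j-1})=\delta_{j_k}-\delta_0=\Delta_k.
\end{align*}
In both cases we therefore obtain the pointwise identity $\Delta_k=\sum_{j=1}^J(\delta_j-\delta_{j-1})\Ind{\Delta_k\ge\delta_j}$.

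Second, multiply this identity by $a_k$ and sum over $k\in[m]$. Both sums are finite, so swapping them is justified:
\begin{align*}
\sum_{k=1}^m a_k\Delta_k=\sum_{j=1}^J(\delta_j-\delta_{j-1})\sum_{k=1}^m a_k\Ind{\Delta_k\ge\delta_j}.
\end{align*}
The inner sum is exactly $\sum_{k:\Delta_k\ge\delta_j}a_k$, which is the claim.

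There is no real obstacle here. The only points needing care are the degenerate cases. If all $\Delta_k=0$, then $J=0$, the right-hand side is an empty sum, and both sides are $0$. Zero gaps are correctly excluded from every layer because $\delta_0=0$ is not among the indexed levels. Non-negativity of the $a_k$ is not actually needed for the identity itself; it matters only later, when the inner sums are bounded by counting arguments.
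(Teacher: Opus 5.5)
Your proof is correct and is essentially the paper's argument. The paper regroups $\sum_k a_k\Delta_k$ by the level sets $\{k:\Delta_k=\delta_j\}$ and applies Abel's transformation to the grouped totals, whereas you telescope each $\Delta_k$ over the levels and swap the two finite sums — the same summation-by-parts identity written pointwise, which also makes the zero gaps and the case $J=0$ explicit where the paper leaves them implicit.
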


The key difference is that we sum the actual loss
$a_{i,k}\mu_i-L^*$ incurred in each round instead of upper bounding it by
$B\mu_i-L^*$.

\textbf{Step 1.} We write~(\ref{eq:oisjfiojiorwje}) as
$\bar \regretdyn \leq \sum_{i = 1}^n \sum_{k \in \mathcal S_i}
a_{i, k} \Delta_{i, k}$,
where $\Delta_{i, k} \eqdef \mu_{i} - \nicefrac{L^*}{a_{i, k}}$.
For every $i$ such that $\Delta_i$ from Definition~\ref{def:suboptimality_gaps} is finite,
we have $\Delta_{i, k} \geq \Delta_i > 0$ for all $k \in \mathcal S_i$.
For all other workers $i$, the set $\mathcal S_i$ is empty.

\textbf{Step 2.} We then fix a worker $i$ and define
$\delta_1, \dots, \delta_J$ as the increasing sequence of distinct values
in $\left\{\Delta_{i, k}\right\}_{k \in \mathcal S_i}$.
Additionally, we set $\delta_0 = 0$.
By Lemma~\ref{lemma:weighted_gap_summation}, we have
$\sum_{k \in \mathcal S_i} a_{i, k} \Delta_{i, k}
= \sum_{j=1}^J \left(\delta_j-\delta_{j-1}\right)
\sum_{k\in \mathcal S_i:\,\Delta_{i, k}\geq\delta_j} a_{i, k}$.

\textbf{Step 3.}
By Lemmas~\ref{lemma:confidence_radius_bound} and~\ref{lemma:stable_samples},
we bound $\sum_{k\in \mathcal S_i:\,\Delta_{i, k}\geq\delta_j} a_{i, k}$ by
the number of samples collected until the confidence radius becomes
smaller than $\delta_j$. This number of samples is
$\mathcal O\left(\nicefrac{\tilde \alpha^2 \beta_K}{\delta_j^2}\right)$; therefore,
\begin{align*}
&\textstyle \sum_{k \in \mathcal S_i} a_{i, k} \Delta_{i, k}
   \leq \mathcal O\left(
   \tilde \alpha^2 \beta_K
   \sum_{j=1}^J \frac{\delta_j-\delta_{j-1}}{\delta_j^2}\right) \\
&\textstyle \leq \mathcal O\left(\frac{\tilde \alpha^2 \beta_K}{\delta_1}\right)
   \leq \mathcal O\left(\frac{\tilde \alpha^2 \beta_K}{\Delta_i}\right).
\end{align*}
Summing over the workers establishes the bound
$\mathcal O\left(\sum_{i = 1}^n \nicefrac{\tilde \alpha^2 \beta_K}{\Delta_i}\right)$.
\begin{remark}
In Appendix~\ref{sec:knapsack}, we present this proof idea in a slightly more general form,
which may be applied to other objectives.
We illustrate this by analyzing a UCB-type algorithm for the knapsack problem.
\end{remark}

\subsection{Better bound on the total expected completion time}\label{sec:ata_large_batch}
In this section, we present a new theorem that refines the connection between the expected regret $\regretdyn$ and the total expected completion time. 
\begin{theorem}\label{thm:stationary_prox_connection}
Suppose that Assumption~\ref{ass:stationary-exp} is satisfied and $n \geq 2$. Then,
\begin{align*}
C_K &\leq \left(1 + 4 \varphi \ln \min\left\{n, B\right\}\right)
   \left(C_K^* + \regretdyn\right),
\end{align*}
where
$\varphi = \max_{i \in [n]} \left\{\min\left\{\frac{\alpha_{i}}{\mu_{i}},
   \frac{\alpha_{i}}{\sqrt{\mu_{i}}}\sqrt{\frac{K}{\left(K C^* + \regretdyn\right)}}\right\}\right\}.$
\end{theorem}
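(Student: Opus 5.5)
The plan is to bound each round's expected completion time in terms of the proxy loss, sum over rounds, and then optimize between two concentration regimes; the second regime is where the $\sqrt{K/(KC^*+\regretdyn)}$ factor in $\varphi$ comes from.

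\textbf{Step 1 (per-round comparison).} Fix $k$ and condition on the history $\mathcal F_{k-1}$, which determines $\ba_k$. For each $i\in\supp(\ba_k)$ write $S_i=\sum_{u=1}^{a_{i,k}}X_{i,k}^{(u)}=a_{i,k}\mu_i+Y_i$, where $Y_i$ is a centered sum of $a_{i,k}$ independent variables with $\|X-\mu_i\|_{\psi_1}\le\alpha_i$. Then
\[
C_k(\ba_k)\le \ell(\ba_k)+\max_{i\in\supp(\ba_k)} Y_i .
\]
It therefore suffices to show
\[
\Exp{\max_i Y_i\mid\mathcal F_{k-1}}\le 4\ln(\min\{n,B\})\,\varphi_k\,\ell(\ba_k)
\]
for a suitable per-round quantity $\varphi_k$. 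The $\min\{n,B\}$ appears because $|\supp(\ba_k)|\le\min\{n,B\}$.

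To bound the expected maximum, I would use the standard MGF/Jensen argument for maxima of sub-exponential variables: $\Exp{\max_i Y_i}\le \lambda^{-1}\ln\sum_i\Exp{e^{\lambda Y_i}}$. Bernstein-type MGF control, presumably an earlier lemma alongside Lemma~\ref{lemma:expectation_via_orlicz}, gives $\ln\Exp{e^{\lambda Y_i}}\lesssim a_{i,k}\lambda^2\alpha_i^2$ for $\lambda\lesssim1/\alpha_i$.

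\textbf{Step 2 (two regimes).} There are two ways to finish the per-round bound.
\begin{itemize}
\item \emph{Crude regime.} Take $\lambda\asymp 1/\max_i\alpha_i$. Since $\alpha_i = (\alpha_i/\mu_i)\mu_i$ and $\max_i a_{i,k}\mu_i=\ell(\ba_k)$, this yields $\Exp{\max_i Y_i}\lesssim \ln m\cdot\max_i(\alpha_i/\mu_i)\,\ell(\ba_k)$. This gives the first branch of $\varphi$, uniformly in $k$.
\item \emph{Square-root regime.} Optimize $\lambda$ to get $\Exp{\max_i Y_i}\lesssim \ln m\cdot\max_i \alpha_i\sqrt{a_{i,k}}$. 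Then use $\alpha_i\sqrt{a_{i,k}}=(\alpha_i/\sqrt{\mu_i})\sqrt{a_{i,k}\mu_i}\le(\alpha_i/\sqrt{\mu_i})\sqrt{\ell(\ba_k)}$.
\end{itemize}

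\textbf{Step 3 (summation).} Sum over $k$ and take total expectations. In the first regime this gives directly $C_K\le(1+4\varphi\ln m)\sum_k\Exp{\ell(\ba_k)}$. In the second, Cauchy--Schwarz (or Jensen for the concave square root) gives
\[
\sum_k\Exp{\sqrt{\ell(\ba_k)}}\le\sqrt{K\sum_k\Exp{\ell(\ba_k)}} = \sum_k\Exp{\ell(\ba_k)}\cdot\sqrt{K/(KL^*+\regretdyn)},
\]
using $\sum_k\Exp{\ell(\ba_k)}=KL^*+\regretdyn$. Taking the better of the two regimes worker by worker produces the $\min$ inside $\varphi$.

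There is one subtlety: the maximum over $i$ has to be taken after splitting. I would handle this by bounding $\max_i\alpha_i\sqrt{a_{i,k}}\le\max_i\min\{\ldots\}$-type expressions before summing, accepting that the per-worker $\min$ is where the bookkeeping is delicate.

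\textbf{Step 4 (comparison of $L^*$ and $C^*$).} Finally, use $L^*\le C^*$. This holds by Jensen: $\Exp{\max_i S_i}\ge\max_i a_i\mu_i$ applied at $\ba^*$ gives $\ell(\ba^*)\le C^*$, and $L^*\le\ell(\ba^*)$. This allows replacing $KL^*+\regretdyn$ by $KC^*+\regretdyn$ in two places: in the square-root factor, since a larger denominator only shrinks $\varphi$ and still yields an upper bound, and in the prefactor, where $KL^*+\regretdyn\le C_K^*+\regretdyn$.

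\textbf{Main obstacle.} The hardest part is getting the clean constant $4\ln\min\{n,B\}$ from the sub-exponential maximal inequality in both regimes simultaneously. That inequality naturally has a mixed form, roughly $\alpha\sqrt{a\ln m}+\alpha\ln m$. My first attempt would be to bound the subexponential tail term $\alpha_i\ln m$ using $a_{i,k}\ge1$, so that $\alpha_i\le\alpha_i\sqrt{a_{i,k}}$, and to absorb the $\sqrt{\ln m}$ into $\ln m$ using $m\ge2$.
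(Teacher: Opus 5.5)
Your ingredients are the paper's: the maximal inequality at the single scale $\tilde\alpha_k=\max_i\sqrt{a_{i,k}}\,\alpha_i$, the two pointwise bounds $\alpha_i\sqrt{a_{i,k}}\le(\alpha_i/\mu_i)\ell(\ba_k)$ and $\alpha_i\sqrt{a_{i,k}}\le(\alpha_i/\sqrt{\mu_i})\sqrt{\ell(\ba_k)}$, Cauchy--Schwarz/Jensen, and $L^*\le C^*$. The scale $\tilde\alpha_k$ is valid because $a_{i,k}\ge1$ on the support, which is your ``main obstacle'' fix. Your separate ``crude'' $\lambda$ is unnecessary, since the first pointwise bound already follows from $\sqrt{a}\le a$.

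\textbf{The gap} is exactly the step you defer as bookkeeping. As written, Steps~2--3 give two global bounds: one with $\max_i\alpha_i/\mu_i$, one with $\max_i(\alpha_i/\sqrt{\mu_i})\sqrt{K/(KC^*+\mathcal{R})}$. Choosing the better one yields $\min\{\max_i\cdot,\max_i\cdot\}$, which can be far larger than the $\max_i\min\{\cdot,\cdot\}$ in the statement; the paper stresses this point. Your Cauchy--Schwarz in Step~3 runs over all rounds, so it cannot be applied only to the workers for which the square-root branch is smaller.

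\textbf{The missing idea} is to fix in advance the deterministic level $x=\sqrt{(C_K^*+\mathcal{R})/K}$. With this $x$, $\varphi=\max_i\min\{\alpha_i/\mu_i,\alpha_i/(x\sqrt{\mu_i})\}$. So each worker has $\alpha_i/\mu_i\le\varphi$ or $\alpha_i/\sqrt{\mu_i}\le\varphi x$, and in every round
\[
\max_i\alpha_i\sqrt{a_{i,k}}\le\varphi\max\bigl\{\ell(\ba_k),\,x\sqrt{\ell(\ba_k)}\bigr\}\le\varphi\Bigl(\ell(\ba_k)+\frac{x^2}{4}\Bigr).
\]
Summing gives $\varphi\bigl(\sum_k\mathbb{E}[\ell(\ba_k)]+Kx^2/4\bigr)\le\frac54\varphi(C_K^*+\mathcal{R})$. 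The paper implements the same idea differently. It splits rounds according to whether the worker attaining $\max_i\alpha_i\sqrt{a_{i,k}}$ is fast ($\sqrt{\mu_i}<x$) or slow. It uses Cauchy--Schwarz and Jensen on the fast rounds, then Young's inequality $x\sqrt{KA}\le x^2K/4+A$, so the fast contribution becomes linear in $A$ and can be added to the slow one.

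\textbf{This linearization is also what makes the constant come out.} If you bound the fast and slow parts separately by $C_K^*+\mathcal{R}$, you get a factor $2$ instead of $\frac54$. Since $2(1+\ln 2)>4\ln 2$, the claimed $4\ln\min\{n,B\}$ would fail at $\min\{n,B\}=2$. The paper uses $\mathbb{E}\max_iY_i\le(1+\ln m)\tilde\alpha_k$, obtained via $2\sqrt{t}\le 1+t$, and then $\frac54(1+\ln m)\le 4\ln m$ for $m\ge2$. Note that your ``absorb $\sqrt{\ln m}$ into $\ln m$'' is false at $m=2$.

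Two smaller points:
\begin{itemize}
\item The case $B=1$ must be handled separately. There the conditional mean of $C_k(\ba_k)$ is exactly $\ell(\ba_k)$, consistent with $\ln 1=0$.
\item Your Step~4 justification is backwards: shrinking $\varphi$ does not by itself preserve an upper bound. Replacing $KL^*+\mathcal{R}$ by $KC^*+\mathcal{R}$ is legitimate because $x^2\varphi(x)$ is nondecreasing in $x$. More simply, you can choose $x$ with $C_K^*$ from the start, as above.
\end{itemize}
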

Together, Theorems~\ref{thm:stationary_prox_connection} and~\ref{thm:ata_regret} provide the full guarantee, including a bound on $C_K$.

Compare this with the corresponding bound from~\citep{maranjyan2025ata}:
\begin{align*}
C_K
&\leq \left(1 + 4 \eta \ln B\right) \cdot \left(C_K^* + \regretdyn\right), \text{where }
\eta = \max_{i \in [n]} \frac{\alpha_{i}}{\mu_{i}}.
\end{align*}
In the previous section, we compared the bounds on $\regretdyn$; here we highlight
the difference in the coefficients $\eta$ and $\varphi$.
First, from the definition of $\varphi$ it is clear that $\varphi \leq \eta$. 
To simplify further comparison, notice that
\begin{align}\label{eq:lksflkjdsnflkjsn}
\textstyle \frac{1}{\sqrt{C^* + \nicefrac{\regretdyn}{K}}}
\leq \frac{1}{\sqrt{\ell(\ba^*)}}
\leq \frac{1}{\sqrt{\ell(\bar \ba)}},
\end{align}
which implies
\begin{align}
\textstyle \varphi \leq \max_{i \in [n]} 
   \left\{\min\left\{\frac{\alpha_i}{\mu_{i}},
   \frac{\alpha_i}{\sqrt{\mu_{i} \ell(\bar \ba)}}\right\}\right\}.
\end{align}
Note that it is important that in the definition of $\varphi$ we take the minimum first.
Indeed, the minimum of two bounds can be small for each worker
while both quantities $\max_{i \in [n]} \nicefrac{\alpha_{i}}{\mu_{i}}$
and $\max_{i \in [n]} \nicefrac{\alpha_{i}}{\sqrt{\mu_{i}\ell(\bar \ba)}}$ can be huge. 
We then show why the difference between $\eta$ and $\varphi$ may be crucial in settings where 
the batch size $B$ is large.
\begin{proposition}\label{prop:large_batch_phi}
Suppose that Assumption~\ref{ass:stationary-exp} holds and
$B \geq \left(\max_{i\in[n]}\mu_i\right)\sum_{i=1}^n\mu_i^{-1}$. Then,
\begin{align*}
\textstyle \varphi \leq \frac{1}{\sqrt{B}} \left(\sum\limits_{i=1}^n\frac{1}{\mu_i}\right)^{1/2}
   \max\limits_{i\in[n]}\frac{\alpha_i}{\sqrt{\mu_i}}
\leq \eta.
\end{align*}
In particular, for fixed $n$ and worker distributions, $\varphi=\mathcal O(B^{-1/2})$ as $B\to\infty$,
while $\eta$ remains constant.
\end{proposition}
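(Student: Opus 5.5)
We start from the bound already recorded after Theorem~\ref{thm:stationary_prox_connection}:
\[
\varphi \leq \max_{i \in [n]} \min\left\{\frac{\alpha_i}{\mu_i}, \frac{\alpha_i}{\sqrt{\mu_i \ell(\bar \ba)}}\right\}.
\]
It follows from \eqref{eq:lksflkjdsnflkjsn}, i.e.\ from $C^* + \regretdyn/K \geq \ell(\ba^*) \geq \ell(\bar\ba)$. The first inequality there is Jensen's inequality: $\Exp{\max_i \sum_u X_i^{(u)}} \geq \max_i a_i^*\mu_i$. The second is optimality of $\bar\ba$ together with $\regretdyn \geq 0$.

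Keeping only the second argument of the minimum gives $\varphi \leq \max_i \alpha_i/\sqrt{\mu_i}\cdot \ell(\bar\ba)^{-1/2}$. It therefore suffices to lower bound $L^* = \ell(\bar\ba)$ by $B/\sum_i \mu_i^{-1}$. For any $\ba \in \mathcal A$ we have $a_i \leq \ell(\ba)/\mu_i$ for every $i$. Summing over $i$ gives $B = \sum_i a_i \leq \ell(\ba)\sum_i \mu_i^{-1}$, so $L^* \geq B/\sum_i\mu_i^{-1}$. Substituting yields the first inequality
\[
\varphi \leq \frac{1}{\sqrt B}\left(\sum_i \mu_i^{-1}\right)^{1/2}\max_i \frac{\alpha_i}{\sqrt{\mu_i}}.
\]
This step needs no assumption on $B$.

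For the second inequality ($\leq \eta$) we use the hypothesis $B \geq (\max_j \mu_j)\sum_i \mu_i^{-1}$, which gives $\left(\sum_i\mu_i^{-1}\right)/B \leq 1/\max_j\mu_j$. Hence the middle expression is at most $\max_i \alpha_i/\sqrt{\mu_i \max_j \mu_j}$. Since $\mu_i \leq \max_j \mu_j$, each term satisfies $\alpha_i/\sqrt{\mu_i\max_j\mu_j} \leq \alpha_i/\sqrt{\mu_i\cdot\mu_i} = \alpha_i/\mu_i$, which is at most $\eta$. The asymptotic claim is then immediate: with $n$ and the distributions fixed, $\sum_i \mu_i^{-1}$ and $\max_i \alpha_i/\sqrt{\mu_i}$ are constants, so $\varphi = \mathcal O(B^{-1/2})$. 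Meanwhile $\eta = \max_i \alpha_i/\mu_i$ does not depend on $B$.

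The only delicate point is the justification of \eqref{eq:lksflkjdsnflkjsn}, specifically that $C^* \geq \ell(\ba^*)$ and $\regretdyn \geq 0$. Both are routine: $\regretdyn \geq 0$ because $L^*$ is the minimum of $\ell$ over $\mathcal A$, and $C^* \geq \ell(\ba^*)$ follows from Jensen's inequality applied to the convex map $\max$. Everything else is the elementary counting bound on $L^*$.
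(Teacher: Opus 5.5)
Your proposal is correct and is essentially the paper's proof. Both start from (\ref{eq:lksflkjdsnflkjsn}) and lower bound $L^*=\ell(\bar\ba)$ by $B\bigl(\sum_i\mu_i^{-1}\bigr)^{-1}$; your observation $\bar a_i\le L^*/\mu_i$ is the paper's count $\sum_i\lfloor L^*/\mu_i\rfloor\ge B$ without the floors. Both also use the hypothesis on $B$ only to get $B\bigl(\sum_i\mu_i^{-1}\bigr)^{-1}\ge\max_i\mu_i$, which is what gives the comparison with $\eta$. One small misattribution, which does not affect the argument: $\regretdyn\ge0$ is needed for the first inequality $C^*+\regretdyn/K\ge\ell(\ba^*)$, while the second, $\ell(\ba^*)\ge\ell(\bar\ba)$, follows from optimality of $\bar\ba$ alone.
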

\begin{proof}
Since $\ell(\bar \ba)$ is the first moment
by which workers with fixed times have computed at least $B$ tasks, we get
$\textstyle \sum\limits_{i = 1}^n \frac{\ell(\bar \ba)}{\mu_i}
\geq \sum\limits_{i = 1}^n \tfloor{\frac{\ell(\bar \ba)}{\mu_i}} \geq B$ and 
$\textstyle \ell(\bar \ba) \geq B \left(\sum_{i = 1}^n \frac{1}{\mu_i}\right)^{\!-1}
\geq \max\limits_{i\in[n]}\mu_i$.
Combining this with~(\ref{eq:lksflkjdsnflkjsn}) proves the claim. 
\end{proof}

\section{Non-Stationary Environments}\label{sec:sw_ata}
In this section, we provide an analysis of SW-ATA in a non-stationary environment. 
To control non-stationarity, we introduce the following quantities.
\begin{definition}[Variation budget]\label{ass:variation_budget}
Let $V_K \in \R$ be such that $\sum_{k=1}^{K-1}\|\bmu_{k+1}-\bmu_k\|_\infty\leq V_K$.
Additionally, define $M_K \eqdef \max_{k \in [K], i \in [n]} \mu_{i,k}$ and $m_K \eqdef \min_{k \in [K], i \in [n]} \mu_{i,k}$.
\end{definition}
We focus on the nontrivial regime $V_K = \Theta\left(K^{\gamma}\right)$, where $-\nicefrac{1}{2} < \gamma < 1$.
Our bounds show that when $V_K \leq \mathcal O\left(\nicefrac{1}{\sqrt{K}}\right)$ (small variation), we can use a stationary algorithm. When $V_K \geq \tilde \Omega\left(K\right)$, the regret is $\mathcal O(K)$. This behavior is expected because the rate $\widetilde{\mathcal O}(V_K^{1/3}K^{2/3})$ is common in the bandit literature, and algorithms cannot learn from observations that quickly become irrelevant.

\subsection{SW-ATA guarantees}
In this section, all window lengths are rounded up to the nearest integer.

\begin{theorem}[Regret of SW-ATA]
Let Assumption~\ref{ass:drift-exp} be satisfied, let $\beta_K := 4\log(2BK^3)$, and suppose that
$\frac{\alpha}{B} \sqrt{\frac{\beta_K}{K}\max\left\{nB,B+n\right\}} \leq V_K \leq \frac{\alpha K}{B}$.
With $W=\max\left\{1,\sqrt{\nicefrac{K\alpha}{BV_K}}\right\}$,
SW-ATA with parameter $\alpha\geq\alpha_{\max}$ satisfies the gap-dependent bound
\begin{align*}
\textstyle \regretdyn
&\textstyle\leq \widetilde{\mathcal O}\Big(
   \sqrt{\alpha BV_KK}
   \Big(
   \sum_{i = 1}^n\frac{\alpha}{\Delta_i^{\min}}
   + n\left(\log\frac{M_K}{\alpha}\right)_+ \\
&\textstyle\quad + \frac{nBM_K}{\alpha}
   \Big)\Big).
\end{align*}
With $W=\max\left\{1,
\left(\nicefrac{K^2n\alpha^2\beta_K}{BV_K^2}\right)^{\frac{1}{3}}\right\}$,
the gap-independent regret satisfies
\begin{align*}
\textstyle \regretdyn
&\textstyle \leq \widetilde{\mathcal O}\Big(
   \left(\alpha^2nB^2\right)^{\frac{1}{3}}
   K^{\frac{2}{3}}V_K^{\frac{1}{3}} \\
&\textstyle\quad+ \left(\alpha n^2B\right)^{\frac{1}{3}}
   K^{\frac{1}{3}}V_K^{\frac{2}{3}} \cdot \frac{BM_K}{\alpha}\Big).
\end{align*}
Alternatively, choosing $W=\max\left\{1,
\left(\nicefrac{K^2(B+n)\alpha^2\beta_K}{B^2V_K^2}\right)^{\frac{1}{3}}\right\}$
yields
\begin{align*}
\textstyle\regretdyn
&\textstyle\leq \widetilde{\mathcal O}\Big(
   \left(\alpha^2B(B+n)\right)^{\frac{1}{3}}
   K^{\frac{2}{3}}V_K^{\frac{1}{3}} \\
&\textstyle\quad + \left(\frac{\alpha n^3B^2}{B+n}\right)^{\frac{1}{3}}
   K^{\frac{1}{3}}V_K^{\frac{2}{3}}
   \left(\frac{\alpha}{m_K}+\frac{BM_K}{\alpha}\right)\Big).
\end{align*}
\end{theorem}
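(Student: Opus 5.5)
We follow the standard sliding-window template: split the horizon into "good" rounds, where the window statistics are close to the current means, and residual rounds charged to drift. Then rerun the counting argument of Section~\ref{sec:proof_sketch} inside each window.

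\textbf{Step 1 (drift-aware concentration).} For round $k$ and worker $i$, let $\bar\mu_{i,k}^{(W)} := \frac{1}{N_{i,k}^{(W)}}\sum_{q\in\cW_k} a_{i,q}\mu_{i,q}$ be the sample-weighted average of the means in the window. The bias satisfies $|\bar\mu_{i,k}^{(W)}-\mu_{i,k}|\le D_k:=\sum_{q=k-W}^{k-1}\|\bmu_{q+1}-\bmu_q\|_\infty$. Bernstein's inequality for sub-exponential variables, with $\alpha\ge\alpha_{\max}$ and a union bound over $i$, $k$ and the at most $Bk$ possible values of $N_{i,k}^{(W)}$, gives a good event $\mathcal G$. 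On $\mathcal G$ we have $|\hat\mu_{i,k}^{(W)}-\bar\mu_{i,k}^{(W)}|\le\conf_W(i,k)/2$ for all $i,k$, and $\Prob{\mathcal G^c}$ is small enough that its contribution to $\regretdyn$ is $O(nBM_K)$. Hence on $\mathcal G$ the LCB satisfies $s_{i,k}\le\mu_{i,k}+D_k$, i.e. it is optimistic up to the drift. Moreover $\sum_k D_k\le WV_K$.

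\textbf{Step 2 (per-round regret decomposition).} On $\mathcal G$, as in~(\ref{eq:oisjfiojiorwje}), any round with positive regret has a bottleneck worker $i$ with $a_{i,k}\mu_{i,k}>L_k^*$. Optimism of RAS with respect to $\bs_k$, the analogue of Lemmas~\ref{lemma:confidence_radius_bound} and~\ref{lemma:stable_samples} applied to $\bmu_k$ shifted by $D_k$, forces
\[
a_{i,k}\bigl(\mu_{i,k}-L_k^*/a_{i,k}\bigr)\lesssim a_{i,k}\conf_W(i,k)+B D_k .
\]
The $BD_k$ terms sum to at most $BWV_K$. The remaining "statistical" regret is handled in each block of $W$ consecutive rounds, on which the window counts restart at most once. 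Within a block, the counting argument of Step 3 in Section~\ref{sec:proof_sketch} via Lemma~\ref{lemma:weighted_gap_summation} gives gap-dependent cost $O\bigl(\sum_i\alpha^2\beta_K/\Delta_i^{\min}+n\alpha\beta_K(\log\tfrac{M_K}{\alpha})_+ + nBM_K\bigr)$, where the last term covers the initial rounds with $s_{i,k}=0$. The gap-independent cost per block is $O(\alpha\sqrt{nBW\beta_K})$ or $O(\alpha\sqrt{(B+n)W\beta_K}+\alpha^2 n/m_K)$, as in Theorem~\ref{thm:ata_regret} with $K$ replaced by $W$. There are $\lceil K/W\rceil$ blocks.

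\textbf{Step 3 (balancing $W$).} The total is
\[
\regretdyn\lesssim \tfrac{K}{W}\cdot(\text{block cost}) + BWV_K .
\]
In the gap-dependent case the block cost is independent of $W$. Choosing $W=\sqrt{K\alpha/(BV_K)}$ gives $\tfrac{K}{W}=\sqrt{KBV_K/\alpha}$, and multiplying the block cost by this factor produces the stated $\sqrt{\alpha BV_KK}(\cdots)$ form, since $BWV_K=\sqrt{\alpha BKV_K}$. In the gap-independent case, $\tfrac{K}{W}\alpha\sqrt{nBW\beta_K}+BWV_K$ is balanced by $W=(K^2n\alpha^2\beta_K/(BV_K^2))^{1/3}$, giving $(\alpha^2nB^2)^{1/3}K^{2/3}V_K^{1/3}$. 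The additive $nBM_K$ per block, times $K/W$, gives the second term. The $(B+n)$ variant is analogous, with the extra $\alpha^2n/m_K$ per block. The hypotheses on $V_K$ ensure $1\le W\le K$, so the rounding and the $\max\{1,\cdot\}$ are harmless.

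\textbf{Main obstacle.} The hardest step is Step 2: making the counting argument robust to drift. In the stationary proof, the samples collected while $\Delta_{i,k}\ge\delta_j$ are bounded through a monotone confidence radius. Here the gaps $\Delta_{i,k}$ vary with $k$, and samples leave the window. I would first try to keep the sample-counting lemma within each block (counts are monotone between restarts) and absorb gap variation into the $BD_k$ slack. This needs $\Delta_i^{\min}$ to lower-bound all per-round gaps, which holds by its definition as a minimum over $k$.
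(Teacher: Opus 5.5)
Your overall architecture matches the paper's: drift-shifted optimism of the LCBs, the per-round inequality of Lemma~\ref{lemma:confidence_radius_bound}, blocks of length $W$ multiplied by $\tceil{K/W}$, and balancing $W$. However, Step~2 has a genuine gap, exactly where you flag the main obstacle, and your suggested fix does not close it.

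The counting argument (Lemma~\ref{lemma:stable_samples} fed into Lemma~\ref{lemma:weighted_gap_summation}) needs two things at once for the rounds it counts:
\begin{itemize}
\item an \emph{upper} bound $\Delta_{i,k'}\lesssim\conf_W(i,k')$ at the last counted round $k'$ of each level set, which converts $\Delta_{i,k'}\geq\delta_j$ into $N_{i,k'}^{(W)}\lesssim\alpha^2\beta_K/\delta_j^2$;
\item a \emph{lower} bound $\delta_1\geq\Delta_i^{\min}$ (or $\geq\theta/B$) on the smallest level, which makes $\sum_j(\delta_j-\delta_{j-1})/\delta_j^2\leq 2/\delta_1$ useful.
\end{itemize}
The additive inequality $a_{i,k}\Delta_{i,k}\lesssim a_{i,k}\conf_W(i,k)+BD_k$ delivers only one of these. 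If you keep $\Delta_{i,k}$ as the level variable, a round with large $D_k$ can have a large gap while $N_{i,k}^{(W)}$ is huge, so the sample count fails. If you instead subtract the drift, counting $\Delta'_{i,k}:=(\Delta_{i,k}-2BD_k/a_{i,k})_+$ (which is how your ``remaining statistical regret'' reads), then $\Delta'_{i,k}\lesssim\conf_W(i,k)$ holds. But $\Delta'_{i,k}$ can be arbitrarily close to $0$, so $1/\delta_1$ is no longer controlled by $1/\Delta_i^{\min}$. Your remark that $\Delta_i^{\min}$ lower-bounds every per-round gap is true, but it supplies only the second ingredient. Note that varying gaps are not the issue; they already vary in the stationary proof.

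The missing idea is the paper's multiplicative split of good rounds. Fix $\varepsilon\in(0,1)$ and call a good round unstable if $2B\rho_k\geq\varepsilon(\ell_k(\ba_k)-L_k^*)$. Its \emph{entire} regret is then at most $2B\rho_k/\varepsilon$, so these rounds cost at most $2BWV_K/\varepsilon$ in total. On the remaining stable rounds, Lemma~\ref{lemma:confidence_radius_bound} gives the pure inequality $(1-\varepsilon)\Delta_{i,k}\leq 2\conf_W(i,k)$ for the bottleneck worker, with the unmodified gap $\Delta_{i,k}\geq\Delta_i^{\min}$. Both ingredients then hold, and the stationary counting runs verbatim inside each block. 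What is used there is that the block has length $W$, so $\cW_{k'}$ for its last counted round contains all earlier rounds of the block. The sliding window never ``restarts'', and $N_{i,k}^{(W)}$ is not monotone.

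The same split is needed again for the $(B+n)$ variant, which is not simply ``Theorem~\ref{thm:ata_regret} with $K$ replaced by $W$''. Inside a block, the minimal gaps of different workers are attained at different rounds $p_i$, with different $\bmu_k$ and $L_k^*$, so Lemma~\ref{lemma:number_of_tasks} cannot be applied directly. The paper uses Lemma~\ref{lemma:delta_sum_bound} to transport each $1/\Delta_{i,p_i}$ to the block's first round, absorbing the $\rho_{p_i}$ terms through stability.

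Two smaller points:
\begin{itemize}
\item With the algorithm's radius, built on $\log(2Bk^3)$, a single global event $\mathcal G$ is not available. The union bound only gives $\sum_k n/k^2$, which is vacuous. Moreover, $|\hat\mu_{i,k}^{(W)}-\bar\mu_{i,k}^{(W)}|\leq\conf_W(i,k)/2$ has tail only of order $(2Bk^3)^{-1/4}$. Instead, charge each bad round separately by $G_{\max}$, using $\sum_k\Prob{\mathcal E_k}\leq 2n$.
\item To reach the stated form, you must absorb the standalone $BWV_K$ and $nG_{\max}$ terms. This needs the bracket to be $\gtrsim 1$, which follows from Proposition~\ref{proposition:inf_deltas}, $G_i\geq B\Delta_i^{\min}$, and AM--GM.
\end{itemize}
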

For the setting where $\alpha = 1$, $B = 1$, and $M_K = \mathcal O\left(1\right)$, omitting constant factors,
the asymptotically leading term is $\widetilde{\mathcal O}((nV_K)^{1/3}K^{2/3})$ as in \citet{besbes2014stochastic}.
\begin{proofsketch}
The approach is similar to that in the stationary setup, except for several details.
Our proof in the stationary setup relies on the inequality
$a_{i,k}\mu_{i} - L^* \leq 2 a_{i,k} \conf_W(i,k)$, which connects the regret with the confidence radius. 
However, in a non-stationary environment, the analogue of this bound is the inequality
$a_{i,k}\mu_{i,k} - L_k^* \leq 2 a_{i,k} \conf_W(i,k) + 2 B\rho_k$, where 
$\rho_k := \sum_{q\in\cW_k} \|\bmu_{q+1}-\bmu_q\|_\infty$ is the variation inside the current window (local variation). 
This additional term is expected, since the algorithm may make a mistake for two reasons:
it estimates the expectation poorly (a small number of observations), or there is high local variation,
which implies that the estimated quantity is very loosely related to the parameters of the current round. 
So, we split rounds into two groups: rounds with small and high variation. In the first group, $\rho$ is small and 
we proceed as in the stationary setup. To estimate the regret on the other rounds, we use the fact that the total variation 
is bounded by $V_K$. See the complete proof in Section~\ref{sec:main_regret_analysys}.
\end{proofsketch}

\subsection{Proxy-loss connection}
Below, we provide a generalization of Theorem~\ref{thm:stationary_prox_connection}
that connects the expected completion time achieved by the algorithm with $C_K^*$
via the expected regret $\regretdyn$. 
\begin{restatable}{theorem}{PROXCONNECTION}
\label{thm:oRtZJgVnZogTrbAAt}
Suppose that Assumption~\ref{ass:drift-exp} is satisfied and $n \geq 2$. Then,
\begin{align*}
C_K
\leq \left(1 + 4 \varphi \ln \min\left\{n, B\right\}\right)
   \left(C_K^* + \regretdyn\right),
\end{align*}
where $\varphi = \max\limits_{i \in [n], k \in [K]} 
   \left\{\min\left\{\frac{\alpha_{i, k}}{\mu_{i, k}},
   \frac{\alpha_{i, k}}{\sqrt{\mu_{i, k}}} \cdot \sqrt{\frac{K}{C_K^* + \regretdyn}}\right\}\right\}.$
\end{restatable}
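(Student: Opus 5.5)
Fix a round $k$ and condition on the allocation $\ba_k$, which is measurable with respect to the past. Write $S_i := \sum_{u=1}^{a_{i,k}} X_{i,k}^{(u)}$ for $i \in \supp(\ba_k)$. We have $\Exp{S_i} = a_{i,k}\mu_{i,k} \leq \ell_k(\ba_k)$, and $S_i - a_{i,k}\mu_{i,k}$ is a sum of $a_{i,k}$ independent centered sub-exponential variables with $\psi_1$-norm $\alpha_{i,k}$.

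\textbf{Step 1: a per-round bound.} The goal is
\[
\Exp{C_k(\ba_k)\mid \ba_k} \leq \ell_k(\ba_k) + 4\ln\min\{n,B\}\max_{i\in\supp(\ba_k)} c_{i,k},
\]
where $c_{i,k}$ is an effective scale of the fluctuation $S_i - \Exp{S_i}$. I would use the standard maximal inequality for sub-exponential sums. For each $i$, Bernstein's inequality gives the tail bound
\[
\Prob{S_i - a_{i,k}\mu_{i,k} > t} \leq \exp\left(-c\min\left\{\frac{t^2}{a_{i,k}\alpha_{i,k}^2}, \frac{t}{\alpha_{i,k}}\right\}\right).
\]
A union bound over the at most $\min\{n,B\}$ workers in the support (note $|\supp(\ba_k)| \leq \min\{n,B\}$), followed by integration of the tail, gives
\[
\Exp{\max_i (S_i - \Exp{S_i})} \lesssim \ln\min\{n,B\}\,\max_i \min\left\{a_{i,k}\alpha_{i,k},\ \alpha_{i,k}\sqrt{a_{i,k}}\right\}\cdot(\text{const}),
\]
up to a lower-order $\sqrt{\ln}$ term that I absorb using $\ln\geq$ const for $n\geq2$. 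The scale $a_{i,k}\alpha_{i,k}$ comes from the crude Orlicz triangle inequality, as in the proof in \citep{maranjyan2025ata}. The scale $\sqrt{a_{i,k}}\alpha_{i,k}$, together with a $\ln$ term, comes from the Bernstein regime. The constant $4$ has to be tracked to match the statement; this is routine but fiddly.

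\textbf{Step 2: convert to the $\varphi$ form.} Use $a_{i,k}\mu_{i,k} \leq \ell_k(\ba_k)$. This gives $a_{i,k}\alpha_{i,k} \leq \frac{\alpha_{i,k}}{\mu_{i,k}}\ell_k(\ba_k)$ and
\[
\sqrt{a_{i,k}}\,\alpha_{i,k} \leq \frac{\alpha_{i,k}}{\sqrt{\mu_{i,k}}}\sqrt{\ell_k(\ba_k)}.
\]
The first form is already multiplicative in $\ell_k$. The second is not. Summing over $k$ and applying Cauchy--Schwarz gives $\sum_k \sqrt{\ell_k(\ba_k)} \leq \sqrt{K\sum_k \ell_k(\ba_k)}$. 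Then write
\[
\sqrt{K X} = X\sqrt{K/X}, \qquad X := \sum_k \Exp{\ell_k(\ba_k)} = \sum_k L_k^* + \regretdyn,
\]
with Jensen moving the expectation inside the square root.

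\textbf{Step 3: the minimum inside the max.} The main obstacle is that $\varphi$ takes the minimum per $(i,k)$ before the maximum. Therefore I cannot simply pick one regime globally. Per round I would bound the excess by $\max_i \min\{A_{i,k}\ell_k,\ G_{i,k}\sqrt{\ell_k}\}$, with $A_{i,k} = \alpha_{i,k}/\mu_{i,k}$ and $G_{i,k} = \alpha_{i,k}/\sqrt{\mu_{i,k}}$.

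To handle this, I would write $\min\{A\ell, G\sqrt{\ell}\} = \ell \min\{A, G/\sqrt{\ell}\}$. Next I would compare $\ell_k$ with the average $X/K$. In rounds where $\ell_k \geq X/K$, the quantity $G/\sqrt{\ell_k}$ is at most $G\sqrt{K/X}$, so the per-round excess is at most $\varphi\,\ell_k$. In rounds with $\ell_k < X/K$, I would use the weighted-average trick: $\ell_k\min\{A, G/\sqrt{\ell_k}\} \leq \min\{A\ell_k, G\sqrt{\ell_k}\}$, which is concave in $\ell_k$. Jensen over $k$, treated as a uniform average, then gives
\[
\sum_k \min\{A\ell_k, G\sqrt{\ell_k}\} \leq K\min\{A\bar\ell, G\sqrt{\bar\ell}\} = X\min\{A, G\sqrt{K/X}\}.
\]
This requires a common $(A,G)$ across rounds, so I would first bound $\max_{i,k}$ of each of $A$ and $G$ separately and then take care that the minimum is preserved. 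If that fails, I would fall back to splitting by which term is smaller.

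\textbf{Step 4: conclusion.} Finally, use $\sum_k L_k^* \leq C_K^*$, since $\ell_k(\ba) \leq \Exp{C_k(\ba)}$ by Jensen because the max is convex. Then $X \leq C_K^* + \regretdyn$, which yields the statement.
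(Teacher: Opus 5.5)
Your skeleton matches the paper's: a per-round maximal inequality at the scale $\max_i \alpha_{i,k}\sqrt{a_{i,k}}$, the two bounds $\sqrt{a_{i,k}}\alpha_{i,k}\leq \frac{\alpha_{i,k}}{\mu_{i,k}}\ell_k(\ba_k)$ and $\sqrt{a_{i,k}}\alpha_{i,k}\leq\frac{\alpha_{i,k}}{\sqrt{\mu_{i,k}}}\sqrt{\ell_k(\ba_k)}$, Cauchy--Schwarz/Jensen, and $L_k^*\leq \Exp{C_k(\ba_k^*)}$. (On the support $a_{i,k}\geq 1$, so your $\min\{a_{i,k}\alpha_{i,k},\sqrt{a_{i,k}}\alpha_{i,k}\}$ always equals $\sqrt{a_{i,k}}\alpha_{i,k}$. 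The two branches of $\varphi$ come only from Step~2.)

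The gap is Step~3, which is where the content of the statement lies. Jensen over rounds needs one common concave function. But $(A_{i_k,k},G_{i_k,k})$ changes with the round and with the bottleneck worker. If you replace it by $\max_{i,k}A_{i,k}$ and $\max_{i,k}G_{i,k}$ separately, you get $\min\{\max_{i,k}A_{i,k},\ \max_{i,k}G_{i,k}\sqrt{K/X}\}$. This can be far larger than $\varphi=\max_{i,k}\min\{\cdot,\cdot\}$; the paper stresses exactly this point. There is no way to ``preserve the minimum'' inside that Jensen step, and your fallback is left unspecified.

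The missing idea is to split \emph{worker--round pairs}, not rounds by the size of $\ell_k$, at the fixed level $x=\sqrt{(C_K^*+\regretdyn)/K}$. Call $(i,k)$ fast if $\mu_{i,k}<x^2$ (the minimum in $\varphi$ is then attained by the square-root branch) and slow otherwise. Then $\alpha_{i,k}/\sqrt{\mu_{i,k}}\leq x\varphi$ on fast pairs and $\alpha_{i,k}/\mu_{i,k}\leq\varphi$ on slow pairs. Applying to the bottleneck $i_k$ the branch dictated by its class gives, pathwise and with the single constant $\varphi$,
\[
\max_{i}\alpha_{i,k}\sqrt{a_{i,k}}\leq \varphi\max\left\{\ell_k(\ba_k),\,x\sqrt{\ell_k(\ba_k)}\right\}\leq \varphi\left(\ell_k(\ba_k)+\tfrac{x^2}{4}\right).
\]
Summing over rounds and taking expectations gives $\tfrac54\varphi\,(C_K^*+\regretdyn)$; the paper reaches the same bound via Cauchy--Schwarz and Young on the fast rounds.

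Taking $x$ from $C_K^*+\regretdyn$ also fixes your Step~4. Your $\varphi$ is normalized by $X\leq C_K^*+\regretdyn$, so it is larger than the stated one. Swapping $X$ for $C_K^*+\regretdyn$ at the end therefore needs the fact that $X\mapsto\max_{i,k}\min\{A_{i,k}X,\,G_{i,k}\sqrt{KX}\}$ is nondecreasing, which you do not state.

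The constant $4$ is also not routine bookkeeping. The paper's per-round factor is $1+\ln\min\{n,B\}$. It comes from a direct MGF/log-sum-exp bound with the common parameter $\tilde\alpha=\max_i\sqrt{a_{i,k}}\alpha_{i,k}$, which is admissible for $|\lambda|\leq 1/\tilde\alpha$ because $a_{i,k}\geq1$ gives $\tilde\alpha\geq\alpha_{i,k}$. The closing step $\tfrac54(1+\ln 2)\approx 2.12\leq 4\ln 2\approx 2.77$ leaves little slack. A Bernstein bound with an unspecified constant, or an aggregation factor $2$ instead of $\tfrac54$, does not reach $4$ when $\min\{n,B\}=2$.

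Finally, your absorption step (``$\ln\geq$ const for $n\geq2$'') fails for $B=1$, where $\ln\min\{n,B\}=0$. Treat that case separately; it is trivial, since then $C_K=C_K^*+\regretdyn$ exactly.
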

We obtain the same conclusion as in Section~\ref{sec:ata_large_batch}: the quantity $\varphi$ tends to zero as $B$ grows.
\section{Experiments}
\begin{figure}[t]
\centering
\includegraphics[width=0.75\linewidth]{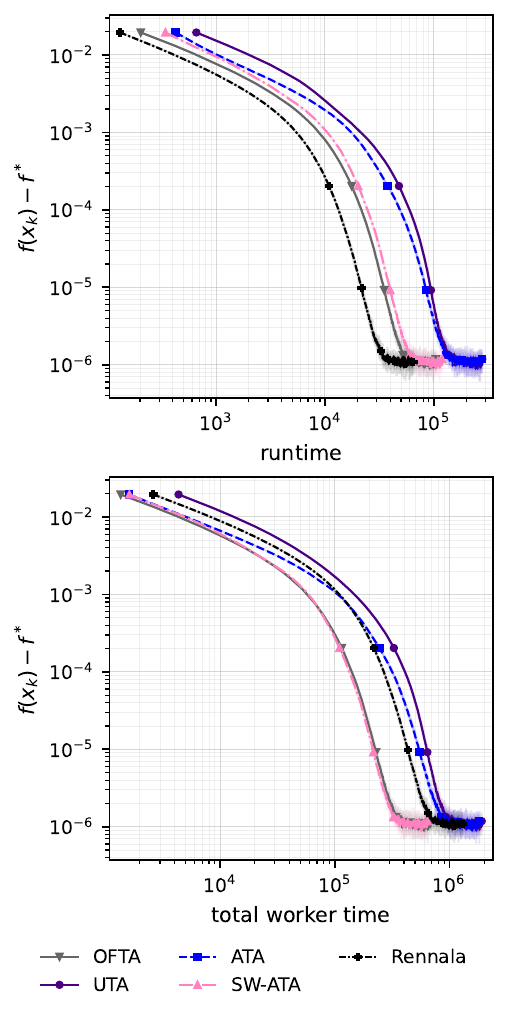}
\caption{The drifting instance at $n = 20$: the suboptimality $f(x_k) - f^*$
against the runtime $C_k = \sum_{q=1}^k C_q(\ba_q)$ \textbf{(top)} and against the
\textit{total worker time} $\sum_{i=1}^n T_{i,k}$ of \eqref{eq:paper_worker_time} \textbf{(bottom)}. Median over $15$ trajectories with a $95\%$
confidence interval. \algname{SW-ATA} closely follows the OFTA,
\algname{ATA} and the uniform baseline lag behind, and \algname{Rennala} is the fastest in runtime at visibly higher worker time.}
\label{fig:paper_sgd_drift_n20_mini}
\end{figure}
\label{sec:experiments}

We complement the theoretical guarantees with a numerical study on synthetic
instances. In this section we validate the proposed method on a distributed
SGD problem by simulating a scenario with $n$ workers with drifting delays.
The parameter tuning, the
remaining pool sizes, and further experimental details are deferred to
\Cref{app:paper_extra}.

\paragraph{Optimization task.}
We consider distributed stochastic optimization: at every step, the
workers compute a batch of stochastic gradients of the objective in
parallel, and the iterate is updated once with their average:
$x_{k+1} = x_k - \gamma\,\bar g_k$, where $\bar g_k$ is the average of the
$B$ gradients of round $k$ and $\gamma>0$ is the step size. The
objective is the convex quadratic
$f(x) = \tfrac12 x^\top A x - b^\top x$ with
$A = \tfrac14\,\mathrm{tridiag}(-1,2,-1)$, $b = \tfrac14(-1,0,\dots,0)$
and dimension $d = 50$, and the
gradient noise is isotropic Gaussian with
$\Exp{\|g - \nabla f\|^2} = \sigma^2$, $\sigma = 0.01$. In experiments we
use the constant step size $\gamma = 1$, which lies well inside the
stability region, since $\lambda_{\max}(A) < 1$ gives the threshold
$2/\lambda_{\max}(A) > 2$.

\paragraph{Environment.}
The mean delay of worker $i$ at iteration $k$ follows the slowing sinusoid
$\mu_{i,k} = 5\sin\bigl(2\pi\sqrt{k/P_0} + \phi_i\bigr) + 6$
with $P_0 = 1600$ and the phases $\phi_i = 2\pi(i-1)/n$ spread over the
pool, so the oscillation slows down over the horizon, and the set of fast
workers rotates. The resulting variation
budget grows as $V_K = \Theta(\sqrt K)$: its measured value at $K = 10^4$
is $V_K \approx 77$ at every pool size used below, with growth exponent
$\approx 0.5$. A task of worker
$i$ at iteration $k$ takes $\mu_{i,k}/2 + \mathrm{Exp}(\mu_{i,k}/2)$ time
units; that is, the delays have mean $\mu_{i,k}$ and standard deviation
$\mu_{i,k}/2$.

\paragraph{Parameters.}
At each iteration, the allocation algorithm distributes a batch of $B = 32$
stochastic gradients at the current point over the $n$ workers.
The horizon is $K = 10^{4}$ iterations, and the pools are $n = 10, 20, 40$.
Every reported curve is the median
over $15$ trajectories, with a $95\%$ confidence interval.

We run \algname{SW-ATA} with the window prescribed by
\cref{thm:general_ns_gap_independent}, evaluated at the reference budget
$V_K = \sqrt K$ as described in \Cref{app:paper_param_selection}. According to this selection, we use the window $W = 59$ at $n = 20$,
and $W = 47$ and $W = 74$ at $n = 10$ and $n = 40$.
The competitors are \algname{OFTA} (Optimal Fixed Task
Allocation), which plays the exact optimal
allocation of the current means, the \algname{UTA} (Uniform Task
Allocation),
\algname{ATA} of \citet{maranjyan2025ata}, and the queue
mechanics of \citet{tyurin2023optimal} (\algname{Rennala}).
The uniform baseline \algname{UTA} spreads the batch uniformly over the whole pool of workers.
In \algname{Rennala} each worker starts processing its own queue of tasks, and the round ends as soon as the first $B$ tasks are
completed.

\Cref{fig:paper_sgd_drift_n20_mini} summarizes the experimental results
for $n=20$: it plots the suboptimality $f(x_k)-f^*$, where $f^*$ is the
optimal value of the objective, against the runtime
$C_k := \sum_{q=1}^k C_q(\ba_q)$, whose expectation at $k = K$ is the total
expected completion time $C_K$ of \Cref{sec:setup}, and against the total
worker time $\sum_{i=1}^n T_{i,k}$, where $T_{i,k}$ is the total time
worked by worker $i$ up to iteration $k$. For the simplex methods,
$T_{i,k}$ is the sum of the realized delays of the assigned tasks, and
for \algname{Rennala} it equals $C_k$, since every worker stays busy for
the whole round.
The proposed \algname{SW-ATA} significantly outperforms \algname{ATA} and \algname{UTA}, staying close to the oracle \algname{OFTA} in runtime. \algname{Rennala} converges fastest, but uses more worker time than \algname{OFTA} and \algname{SW-ATA}; \algname{SW-ATA} is the most economical learning method. The average regret and the average iteration time, together with the results at the remaining pool sizes, are deferred to \Cref{app:paper_extra:drift}.

\section{Conclusion and Future Work}
In this paper, we consider the stationary and non-stationary task allocation problems.
We improve the previous analysis under Assumption~\ref{ass:stationary-exp} in two important ways:
first, by refining the proxy-loss regret bound (Section~\ref{sec:refined_ata_bound}), and second, by strengthening
the connection between the proxy loss and the true completion time (Section~\ref{sec:ata_large_batch}).
Introducing SW-ATA, we extend ATA to non-stationary environments (Assumption~\ref{ass:drift-exp}), which better reflect practical systems. Our theory shows that SW-ATA achieves sublinear proxy-loss regret, while, similarly to the stationary case,
the connection between the proxy loss and the true completion time becomes tighter as the batch size grows
(Section~\ref{sec:sw_ata}).
Our experiments further show that SW-ATA performs comparably to the asynchronous baseline while using
noticeably less computation (Figure~\ref{fig:paper_sgd_drift_n20_mini}).
Several directions remain for future work. In particular, our upper bounds and our new lower bounds~(Section~\ref{sec:lower_bounds}) do not match
in their dependence on $n$ and $B$. Designing a new method that matches the lower bound, or developing a tighter lower-bound construction,
remains an open question.

\bibliographystyle{plainnat}
\bibliography{sample_paper}

\clearpage
\appendix

\thispagestyle{empty}

\onecolumn
\newpage
\tableofcontents
\newpage

\section{Notation}
\label{sec:notation}
Table~\ref{tab:notation} summarizes the general mathematical notation used throughout the paper.

\begingroup
\small
\renewcommand{\arraystretch}{1.2}
\begin{longtable}{@{}p{0.25\textwidth}p{0.71\textwidth}@{}}
\caption{General mathematical notation.}
\label{tab:notation}\\
\toprule
\textbf{Symbol} & \textbf{Meaning} \\
\midrule
\endfirsthead
\multicolumn{2}{l}{\tablename~\thetable{} (continued)}\\
\toprule
\textbf{Symbol} & \textbf{Meaning} \\
\midrule
\endhead
\midrule
\multicolumn{2}{r}{Continued on next page}\\
\endfoot
\bottomrule
\endlastfoot
$[m]$ & The set $\{1,\ldots,m\}$ for a positive integer $m$; in particular, $[K]=\{1,\ldots,K\}$. \\
$\mathbb R$, $\mathbb Z$ & The sets of real numbers and integers, respectively. \\
$\mathbb R^n$ & Real vectors with $n$ coordinates. \\
$\mathbb Z_{\geq0}^n$ & Vectors of $n$ nonnegative integers, including zero coordinates. \\
$\boldsymbol{x}$, $x_i$ & A vector and its $i$th coordinate. \\
$\supp(\boldsymbol{x})$ & Support of a vector: $\{i:x_i\neq0\}$; for nonnegative vectors, equivalently $\{i:x_i>0\}$. \\
$\|\boldsymbol{x}\|_\infty$ & Maximum norm: $\max_i|x_i|$. \\
$\lfloor x\rfloor$, $\lceil x\rceil$ & Largest integer not exceeding $x$ and smallest integer not less than $x$, respectively. \\
$(x)_+$ & Positive part: $\max\{0,x\}$. We use $(\log x)_+=\log\max\{1,x\}$ for $x\geq0$. \\ 
$\mathbf{1}\{A\}$ & Indicator: $1$ when event or condition $A$ holds, and $0$ otherwise. \\
$\mathbb P(A)$ & Probability of event $A$. \\
$\mathbb E[X]$ & Expectation of a random variable $X$. \\
$f\lesssim g$ & Inequality $f\leq c g$ for a positive universal constant $c$. \\
$\mathcal O(g)$, $\Omega(g)$ & Asymptotic upper and lower bounds up to positive constant factors. \\
$\Theta(g)$ & Matching asymptotic upper and lower bounds up to positive constant factors. \\
$\widetilde{\mathcal O}(g)$ & An asymptotic upper bound that additionally suppresses logarithmic factors. \\
\end{longtable}
\endgroup

\section{Technical Lemmas}
Our proof relies on the following lemma:
\GAPSUMMATION*

\begin{proof}
We define a decreasing sequence $\tilde \delta_j := \delta_{J + 1 - j}$. Then, regrouping terms and applying Abel's transformation, we get
\begin{align*}
\sum_{k=1}^m a_k \Delta_k
= \sum_{j=1}^J \tilde \delta_j \sum_{k:\, \Delta_k = \tilde \delta_j} a_k
&= \tilde \delta_J \sum_{k:\, \Delta_k \geq \tilde \delta_J} a_k
   + \sum_{j=1}^{J - 1} \left(\tilde \delta_j - \tilde\delta_{j+1}\right) \sum_{k:\, \Delta_k \geq \tilde \delta_j} a_k \\
&= \delta_1 \sum_{k:\, \Delta_k \geq \delta_1} a_k
   + \sum_{j=2}^{J} \left(\delta_j - \delta_{j-1}\right) \sum_{k:\, \Delta_k \geq \delta_j} a_k.
\end{align*}
\end{proof}

\begin{lemma}\label{lemma:sub_exp_technical}
Let $0 < \delta_1, \dots, \delta_J$ ($J \geq 1$) be an increasing sequence of distinct values, and additionally
set $\delta_0 = 0$. Then,
\begin{align*}
\sum_{j=1}^J \frac{\delta_j-\delta_{j-1}}{\delta_j} \leq 1 + \log \frac{\delta_{J}}{\delta_1}.
\end{align*}
\end{lemma}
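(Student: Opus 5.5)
The plan is to bound each summand by an integral and then telescope. The $j=1$ term is handled separately: since $\delta_0 = 0$, it equals $(\delta_1 - 0)/\delta_1 = 1$, which accounts for the additive constant $1$ in the claimed bound. It therefore remains to show
\begin{align*}
\sum_{j=2}^J \frac{\delta_j-\delta_{j-1}}{\delta_j} \leq \log \frac{\delta_J}{\delta_1}.
\end{align*}
When $J = 1$ this sum is empty and the right-hand side is $\log 1 = 0$, so this case is already settled.

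For $j \geq 2$ we have $0 < \delta_{j-1} < \delta_j$. The function $t \mapsto 1/t$ is decreasing on $(0,\infty)$, so $1/\delta_j \leq 1/t$ for every $t \in [\delta_{j-1}, \delta_j]$. Integrating this over the interval gives
\begin{align*}
\frac{\delta_j-\delta_{j-1}}{\delta_j} = \int_{\delta_{j-1}}^{\delta_j} \frac{1}{\delta_j}\,dt \leq \int_{\delta_{j-1}}^{\delta_j} \frac{dt}{t} = \log\frac{\delta_j}{\delta_{j-1}}.
\end{align*}
Equivalently, this is the elementary inequality $1 - x \leq -\log x$ applied with $x = \delta_{j-1}/\delta_j \in (0,1)$.

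Summing over $j = 2, \dots, J$, the logarithms telescope:
\begin{align*}
\sum_{j=2}^J \log\frac{\delta_j}{\delta_{j-1}} = \log\frac{\delta_J}{\delta_1}.
\end{align*}
Adding back the $j=1$ term, which equals $1$, yields the claim.

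There is no real obstacle in this argument. The only point needing care is that the $j=1$ term cannot be treated by the integral comparison, because $\delta_0 = 0$ would make $\int_0^{\delta_1} dt/t$ diverge. Splitting off that term is exactly what produces the additive $1$. This is the form used in Step~3 of the proof sketch, where the weights carry $\delta_j^{-2}$ instead of $\delta_j^{-1}$.
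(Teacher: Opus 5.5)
Your proof is correct and follows the paper's argument: the $j=1$ term equals $1$ because $\delta_0=0$, and each term with $j\geq 2$ is bounded by $\int_{\delta_{j-1}}^{\delta_j} dt/t$, which telescopes to $\log(\delta_J/\delta_1)$. One small correction to your closing aside: in the paper this lemma handles the linear-weight (sub-exponential) part of the proof of Theorem~\ref{thm:master_bound}, while the $\delta_j^{-2}$-weighted sum of Step~3 is bounded by a different telescoping, $(\delta_j-\delta_{j-1})/\delta_j^2 \leq 1/\delta_{j-1}-1/\delta_j$.
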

\begin{proof}
For $J = 1$, the inequality is trivial. Assume $J \geq 2$.
Since the sequence $\left\{\delta_j\right\}$ is increasing, we have
\begin{align*}
\int_{\delta_{j-1}}^{\delta_{j}} \frac{dx}{x}
\geq \left(\min_{x \in \left[\delta_{j-1}, \delta_{j}\right]} \frac{1}{x}\right)
   \cdot \left(\delta_j-\delta_{j-1}\right)
= \frac{\delta_j-\delta_{j-1}}{\delta_j},
\quad \text{for all} \quad j \in \{2, \dots, J\}.
\end{align*}
This implies 
\begin{align*}
\sum_{j=1}^J \frac{\delta_j-\delta_{j-1}}{\delta_j}
= 1 + \sum_{j=2}^J \frac{\delta_j-\delta_{j-1}}{\delta_j}
\leq 1 + \int_{\delta_{1}}^{\delta_{J}} \frac{dx}{x}
= 1 + \log \frac{\delta_{J}}{\delta_1}.
\end{align*}
\end{proof}

\begin{lemma}\label{lemma:number_of_tasks}
Let $\left\{\mu_i\right\}_{i = 1}^n$ be a sequence of positive values. 
Assume $0 < B \in \mathbb N$ and define
 $t^* = \min_{t > 0} \left\{t: \sum_{i = 1}^n \tfloor{\frac{t}{\mu_i}} \geq B\right\}$. 
Then, 
\begin{align*}
\sum_{i = 1}^n \frac{t^*}{\mu_i} \leq B + n - 1.
\end{align*}
\end{lemma}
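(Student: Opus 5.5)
The plan is to exploit the minimality of $t^*$ together with the elementary bound $x < \tfloor{x} + 1$. First, note that $t^*$ is well defined and positive. The map $t \mapsto \sum_{i=1}^n \tfloor{t/\mu_i}$ is a nondecreasing, right-continuous step function. It equals $0$ for small $t>0$ and tends to infinity, so the set $\{t>0 : \sum_i \tfloor{t/\mu_i} \geq B\}$ is a closed-from-the-left half-line $[t^*,\infty)$ with $t^*>0$. In particular, the minimum is attained, and $t^*$ must be a jump point, i.e., $t^* = c\,\mu_{j}$ for some index $j$ and some positive integer $c$. This follows because between jump points the function is constant, so if $t^*$ were not a jump point, a slightly smaller $t$ would still satisfy the inequality.

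The key step is to use minimality from the left. For every $\varepsilon>0$ small enough, we have $\sum_i \tfloor{(t^*-\varepsilon)/\mu_i} \leq B-1$. For each $i$, as $\varepsilon \downarrow 0$, the quantity $\tfloor{(t^*-\varepsilon)/\mu_i}$ stabilizes to $\lceil t^*/\mu_i\rceil - 1$. Hence
\begin{align*}
\sum_{i=1}^n \left(\left\lceil \frac{t^*}{\mu_i}\right\rceil - 1\right) \leq B - 1.
\end{align*}
Combining this with $t^*/\mu_i \leq \lceil t^*/\mu_i \rceil$ gives $\sum_i t^*/\mu_i \leq B - 1 + n$, which is exactly the claim.

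The only delicate point is justifying that the left limit of $\tfloor{(t^*-\varepsilon)/\mu_i}$ equals $\lceil t^*/\mu_i\rceil - 1$, and that a single small $\varepsilon$ works simultaneously for all $n$ indices; the latter holds because there are finitely many. This is a routine case check: if $t^*/\mu_i$ is an integer $m$, the left limit is $m-1$; otherwise it is $\tfloor{t^*/\mu_i} = \lceil t^*/\mu_i\rceil - 1$. There is also a sharper route that avoids limits. One can argue directly that $\tfloor{(t^*-\varepsilon)/\mu_i} \geq t^*/\mu_i - 1 - \varepsilon/\mu_i$, sum over $i$, and let $\varepsilon \to 0$, which gives $\sum_i t^*/\mu_i \leq B-1+n$ without computing the exact limit. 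I would present this second version since it is shorter.
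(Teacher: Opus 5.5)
Your proposal is correct and is essentially the paper's proof: both use that $\sum_i \lfloor t/\mu_i \rfloor \le B-1$ for every $t\in(0,t^*)$ (minimality of $t^*$), together with the bound $t^*/\mu_i \le \lfloor t/\mu_i \rfloor + 1$ for $t$ just below $t^*$. The paper fixes a maximizer $t'$ of the step function on $(0,t^*)$ and proves the per-coordinate bound $t^*/\mu_i \le \lfloor t'/\mu_i \rfloor + 1$ by contradiction. Your $\varepsilon \to 0$ version, based on $\lfloor x \rfloor > x-1$, reaches the same inequality more directly and does not need that maximizer.
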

\begin{proof}
Define a non-decreasing function $f(t) = \sum_{i = 1}^n \tfloor{\frac{t}{\mu_i}}$ and a point
$t' \in \argmax_{t \in (0, t^*)} f(t)$.
Such a $t'$ exists since $f(t)$ is piecewise constant and it has a finite number of discontinuity points
on $(0, t^*)$.
Note that $\frac{t^*}{\mu_i} \leq \tfloor{\frac{t'}{\mu_i}} + 1$
for all $i \in [n]$.
Indeed, if $\frac{t^*}{\mu_i} >\tfloor{\frac{t'}{\mu_i}} + 1$, then 
there exists $t'' = \mu_i\left(\tfloor{\frac{t'}{\mu_i}} + 1\right)$ 
such that $f(t') < f(t'') < f(t^*)$. Since $f(t)$ is non-decreasing, we have $t'' < t^*$. 
Combining these inequalities, we obtain a contradiction with the definition of $t'$. Thus,
\begin{align*}
\sum_{i = 1}^n \frac{t^*}{\mu_i} \leq \sum_{i = 1}^n \tfloor{\frac{t'}{\mu_i}} + n 
\leq B + n - 1.
\end{align*}
The last inequality follows since 
$f(t') = \sum_{i = 1}^n \tfloor{\frac{t'}{\mu_i}} < B$.
\end{proof}

The following proposition states that there generally exists at least one $i$ such that $\Delta_i^{\min} \neq +\infty$.
\begin{restatable}{proposition}{INFDELTAS}\label{proposition:inf_deltas}
   If $n \geq 2$ and $\Delta_i^{\min} = +\infty$ for all $i \in [n]$, then $B = 1$, and $\mu_{1, k} = \dots = \mu_{n, k}$ for all $k$ in $[K]$.
\end{restatable}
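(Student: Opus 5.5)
We argue by contraposition, one round at a time. Fix $k \in [K]$ and suppose $\Delta_i^{\min} = +\infty$ for every $i$. By Definition~\ref{def:suboptimality_gaps}, this means the set $\{k' : b_{i,k'} \leq B\}$ is empty for each $i$. In particular $b_{i,k} = B+1$, i.e.
\begin{align*}
B\mu_{i,k} \leq L_k^* \quad \text{for all } i \in [n].
\end{align*}
The plan is to compare this with an explicit upper bound on $L_k^*$ coming from feasible allocations. We then deduce first $B=1$ and then equality of all means.

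\textbf{Step 1: $B = 1$.} Let $j$ be a worker with $\mu_{j,k} = \min_i \mu_{i,k}$. The allocation putting all $B$ tasks on $j$ gives $L_k^* \leq B\mu_{j,k}$. If $B \geq 2$, use $n \geq 2$ to pick $i \neq j$ and consider the allocation with $B-1$ tasks on $j$ and one task on $i$. Its loss is $\max\{(B-1)\mu_{j,k}, \mu_{i,k}\}$.

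Since $B\mu_{i,k} \leq L_k^* \leq B\mu_{j,k}$, we have $\mu_{i,k} \leq \mu_{j,k}$. Combined with the minimality of $\mu_{j,k}$, this gives $\mu_{i,k} = \mu_{j,k}$. The loss of the new allocation is then $(B-1)\mu_{j,k}$, using $B-1 \geq 1$, so
\begin{align*}
L_k^* \leq (B-1)\mu_{j,k} < B\mu_{j,k}.
\end{align*}
Here the strict inequality uses $\mu_{j,k} > 0$. This contradicts $B\mu_{j,k} \leq L_k^*$. Hence $B = 1$.

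\textbf{Step 2: equal means.} With $B=1$, the condition reads $\mu_{i,k} \leq L_k^*$ for all $i$. Since $L_k^* = \min_i \mu_{i,k}$ when $B=1$, every $\mu_{i,k}$ is at most the minimum, so all means coincide in round $k$. Because $k$ was arbitrary, this holds for all $k \in [K]$.

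The only subtle point is in Step 1: the comparison worker $i$ may not be strictly slower than $j$. This is why we first derive $\mu_{i,k} \leq \mu_{j,k}$ from the hypothesis before bounding the perturbed allocation's loss. Positivity of the means, guaranteed by Assumption~\ref{ass:drift-exp}, supplies the strict inequality.
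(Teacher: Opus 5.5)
Your proof is correct and follows essentially the same route as the paper. You get $B\mu_{i,k}\le L_k^*$ from $b_{i,k}=B+1$, compare it with the feasible allocation that puts all tasks on one worker, and rule out $B\ge 2$ using the allocation with $B-1$ tasks on one worker and one task on another. The only difference is the order: the paper first shows $L_k^*=B\mu_{i,k}$ for every $i$ (so all means coincide for any $B$) and then excludes $B\ge 2$, whereas you extract equality of just the two relevant means to obtain $B=1$ first and then deduce equality of all means.
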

\begin{proof}
If $\Delta_i^{\min} = +\infty$, then $b_{i,k} = B + 1$ for every $k \in [K]$.
Hence, $B\mu_{i,k} \leq L_k^*$. On the other hand, assigning all $B$ tasks to
worker $i$ is feasible, and therefore $L_k^* \leq B\mu_{i,k}$. Thus, $L_k^* = B\mu_{i,k}$
for all $i \in [n]$ and $k \in [K]$, which implies
$\mu_{1,k} = \dots = \mu_{n,k}$ for every $k \in [K]$.

Suppose now that $B \geq 2$. Since $n \geq 2$, assigning $B-1$ tasks to one
worker and one task to another yields proxy loss $(B-1)\mu_{i,k} < B\mu_{i,k}
= L_k^*$, which contradicts the optimality of $L_k^*$. Hence, $B=1$.
\end{proof}
\section{Regret Analysis}\label{sec:main_regret_analysys}
In this section we prove a technical bound from which gap-dependent and gap-independent bounds for both stationary 
and non-stationary settings follow. First, we introduce useful quantities.
\subsection{Auxiliary Definitions}
For $N_{i,k}^{(W)}\geq1$, define the window-average mean
$\bar\mu_{i,k}^{(W)} = \frac{1}{N_{i,k}^{(W)}} \sum_{q\in\cW_k}a_{i,q}\mu_{i,q}$.
SW-ATA estimates $\bar\mu_{i,k}^{(W)}$ by computing $\hat\mu_{i,k}^{(W)}$.
We then define the bad events
\begin{align}\label{eq:bad_events}
\mathcal E_{i,k} =
\left\{
N_{i,k}^{(W)}\geq1,\quad
\left|\hat\mu_{i,k}^{(W)}-\bar\mu_{i,k}^{(W)}\right|
> \conf_W(i,k)
\right\},
\qquad \mathcal E_k=\bigcup_{i=1}^n\mathcal E_{i,k}.
\end{align}
The bad event $\mathcal E_k$ implies that on the $k$th iteration we were "unlucky" and the constructed confidence intervals 
do not contain $\bar\mu_{i,k}^{(W)}$ for at least one $i \in [n]$. In Section~\ref{sec:confidence_interval}
we show that our choice of $\conf_W(i, k)$ ensures that the expected number of such rounds is less than $2n$.

Additionally, we define the local variation $\rho_k=\sum_{q\in\cW_k} \|\bmu_{q+1}-\bmu_q\|_\infty$.
Intuitively, if on round $k$ the local variation is high, then the allocation received from the algorithm 
may be bad even if we estimate the window-average mean accurately, since $\bar\mu_{i,k}^{(W)}$ may not be close to 
the actual mean of the $i$th worker in the $k$th round.
\begin{lemma}[Window variation]\label{lemma:window-var}
Let the variation budget in Definition~\ref{ass:variation_budget} be satisfied. Then, $\sum_{k=1}^K \rho_k\leq W V_K$.
\end{lemma}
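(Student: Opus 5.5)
We need to prove $\sum_{k=1}^K \rho_k \le W V_K$, where $\rho_k=\sum_{q\in\cW_k}\|\bmu_{q+1}-\bmu_q\|_\infty$ and $\cW_k=\{q:\max(1,k-W)\le q\le k-1\}$. The plan is a double-counting (exchange of summation) argument. Write $d_q \eqdef \|\bmu_{q+1}-\bmu_q\|_\infty\ge 0$ for $q\in[K-1]$. Every $q\in\cW_k$ satisfies $q\le k-1\le K-1$, so all terms appearing in $\rho_k$ are well defined and $\rho_k=\sum_{q=1}^{K-1} d_q\,\Ind{q\in\cW_k}$.

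Swapping the order of summation gives
\begin{align*}
\sum_{k=1}^K \rho_k = \sum_{q=1}^{K-1} d_q \cdot \left|\left\{k\in[K]: q\in\cW_k\right\}\right|.
\end{align*}
Next I bound the multiplicity. The condition $q\in\cW_k$ requires $q\le k-1$ and $k-W\le q$ (since $\max(1,k-W)\le q$ implies $k-W\le q$). Hence $k\in\{q+1,\dots,q+W\}$, a set of at most $W$ integers. So each $d_q$ is counted at most $W$ times.

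Using $d_q\ge0$, we obtain $\sum_{k}\rho_k\le W\sum_{q=1}^{K-1}d_q\le WV_K$ by Definition~\ref{ass:variation_budget}. There is no real obstacle here. The only care needed is the boundary handling: the $\max(1,\cdot)$ truncation only shrinks windows, and $q\le K-1$ ensures that every increment used lies in the range covered by the variation budget.
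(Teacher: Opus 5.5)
Your proof is correct and follows essentially the same route as the paper. The paper also argues that each increment $\|\bmu_{t+1}-\bmu_t\|_\infty$ appears in at most $W$ windows and then sums. Your explicit multiplicity count $k\in\{q+1,\dots,q+W\}$ spells out that step.
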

\begin{proof}
For any $t$, the term $\|\bmu_{t+1}-\bmu_t\|_\infty$ appears in at most $W$ windows. Hence,
\begin{align*}
\sum_{k=1}^K \rho_k \leq W\sum_{t=1}^{K-1}\|\bmu_{t+1}-\bmu_t\|_\infty \leq W V_K.
\end{align*}
\end{proof}

Assume that in the $k$th iteration, the good event holds, 
but the algorithm makes a mistake. The following lemma connects
the regret caused by a worker to the confidence radius and local variation. 

\begin{lemma}\label{lemma:confidence_radius_bound}
Let Assumption~\ref{ass:drift-exp} and the variation budget in Definition~\ref{ass:variation_budget} be satisfied.
Consider the allocations returned by SW-ATA. 
If the round is good ($\neg \mathcal E_{k}$ holds) and $N_{i,k}^{(W)} > 0$, then
\begin{align*}
a_{i,k}\mu_{i,k} - L_k^* \leq 2 a_{i,k} \conf_W(i,k) + 2 B\rho_k.
\end{align*}
\end{lemma}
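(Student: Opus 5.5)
The plan is to compare worker $i$'s true mean $\mu_{i,k}$ with its lower confidence bound $s_{i,k}$. On a good round, $s_{i,k}$ is sandwiched between $\mu_{i,k}-\rho_k-2\conf_W(i,k)$ and $\mu_{i,k}+\rho_k$. We then use the optimality of the allocation chosen by SW-ATA with respect to the LCBs. If $a_{i,k}=0$, the left-hand side is $-L_k^*\le 0$ and there is nothing to prove, so we assume $a_{i,k}\ge 1$.

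\textbf{Step 1 (drift of the window average).} For every $j$ with $N_{j,k}^{(W)}\ge 1$ and every $q\in\cW_k$, telescoping gives
\[
|\mu_{j,q}-\mu_{j,k}|\le \sum_{t=q}^{k-1}\|\bmu_{t+1}-\bmu_t\|_\infty\le \rho_k,
\]
because all indices $t\in\{q,\dots,k-1\}$ lie in $\cW_k$. The window average $\bar\mu_{j,k}^{(W)}$ is a convex combination of the $\mu_{j,q}$, $q\in\cW_k$, so $|\bar\mu_{j,k}^{(W)}-\mu_{j,k}|\le\rho_k$.

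\textbf{Step 2 (sandwich for the LCB).} On $\neg\mathcal E_k$ we have $|\hat\mu_{j,k}^{(W)}-\bar\mu_{j,k}^{(W)}|\le\conf_W(j,k)$ whenever $N_{j,k}^{(W)}\ge1$. This gives the upper bound $s_{j,k}\le(\bar\mu_{j,k}^{(W)})_+\le\mu_{j,k}+\rho_k$. If $N_{j,k}^{(W)}=0$, then $s_{j,k}=0\le\mu_{j,k}+\rho_k$ trivially, so the upper bound holds for all $j\in[n]$. For worker $i$, where $N_{i,k}^{(W)}>0$, we also get the lower bound
\[
\hat\mu_{i,k}^{(W)}-\conf_W(i,k)\ \ge\ \mu_{i,k}-\rho_k-2\conf_W(i,k).
\]

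\textbf{Step 3 (case split on $\mathcal Z_k$).}

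If $\mathcal Z_k\neq\varnothing$, then $a_{i,k}\ge1$ forces $i\in\mathcal Z_k$, i.e.\ $s_{i,k}=0$. Hence $\hat\mu_{i,k}^{(W)}-\conf_W(i,k)\le 0$, and Step 2 yields $\mu_{i,k}\le 2\conf_W(i,k)+\rho_k$. Multiplying by $a_{i,k}\le B$ and using $L_k^*\ge 0$ gives the claim, even with $B\rho_k$ in place of $2B\rho_k$.

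If $\mathcal Z_k=\varnothing$, all $s_{j,k}>0$ and $\ba_k=\text{RAS}(\bs_k;B)$. We use the property of RAS (as in ATA) that it returns a minimizer over $\mathcal A$ of $\ba\mapsto\max_j a_j s_{j,k}$. Comparing with the proxy-optimal allocation $\bar\ba_k$, and using $\sum_j\bar a_{j,k}=B$, we get
\[
a_{i,k}s_{i,k}\le\max_j a_{j,k}s_{j,k}\le\max_j \bar a_{j,k}s_{j,k}\le\max_j\bar a_{j,k}(\mu_{j,k}+\rho_k)\le L_k^*+B\rho_k .
\]
Since $s_{i,k}>0$, it equals $\hat\mu_{i,k}^{(W)}-\conf_W(i,k)$, and the lower bound of Step 2 gives
\[
a_{i,k}\mu_{i,k}\le a_{i,k}s_{i,k}+a_{i,k}\rho_k+2a_{i,k}\conf_W(i,k).
\]
Combining the two displays and using $a_{i,k}\le B$ yields $a_{i,k}\mu_{i,k}-L_k^*\le 2a_{i,k}\conf_W(i,k)+2B\rho_k$.

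The main obstacle is the RAS optimality property used in the second case. It is the only ingredient not restated in the text above. I would invoke it from the description of Algorithm~\ref{alg:ras}, and it is all that is needed: any exact minimizer of $\max_j a_j s_{j,k}$ over $\mathcal A$ suffices, regardless of tie-breaking. The remaining steps are routine bookkeeping of the telescoping bound and of the positive part in the definition of $s_{i,k}$.
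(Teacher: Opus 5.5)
Your proof is correct and follows the paper's argument. You sandwich the LCB between $\mu_{i,k}-\rho_k-2\conf_W(i,k)$ and $\mu_{j,k}+\rho_k$, then compare with $\bar\ba_k$ using the LCB-optimality of $\ba_k$ and $\bar a_{j,k}\le B$. The differences are cosmetic: you treat $\mathcal Z_k\neq\varnothing$ directly, whereas the paper notes that the uniform split over $\mathcal Z_k$ is also LCB-optimal, with value $0$. You also spell out the telescoping bound $|\bar\mu_{j,k}^{(W)}-\mu_{j,k}|\le\rho_k$, which the paper uses without comment.
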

\begin{proof}
On the event $\neg\mathcal E_{i,k}$, we have
\begin{align}\label{eq:oiwureoiwnnds}
a_{i,k}\bigl(\bar\mu_{i,k}^{(W)}-2\conf_W(i,k)\bigr)
\leq a_{i,k}s_{i,k}.
\end{align}
We also show that $s_{j,k} \leq \mu_{j,k} + \rho_k$ for all $j$.
Consider two cases. If $N_{j,k}^{(W)} > 0$, then, by Corollary~\ref{cor:confidence_interval},
we have $s_{j,k} \leq \bar\mu_{j,k}^{(W)} \leq \mu_{j,k} + \rho_k$.
Otherwise, we have $N_{j,k}^{(W)} = 0$, which implies $s_{j,k} = 0 \leq \mu_{j,k} + \rho_k$.
Then,
\begin{align*}
a_{i,k}s_{i,k}
\leq \max_{j\in[n]} a_{j,k} s_{j,k}
\leq \max_{j\in[n]} \bar a_{j,k} s_{j,k}
\leq \max_{j\in[n]} \bar a_{j,k} \left(\mu_{j,k} + \rho_k\right).
\end{align*}
Here, the second inequality holds because $\ba_{k}$ is an optimal allocation returned by RAS
(or the uniform allocation among workers with $s_{j,k} = 0$, which is also optimal).
Therefore, we obtain
\begin{align}\label{eq:iwfnmdskmfd}
a_{i,k}s_{i,k}
\leq
\max_{j\in[n]} \bar a_{j,k}\mu_{j,k}
+
\max_{j\in[n]} \bar a_{j,k}\rho_k
\leq
L_k^*+B\rho_k.
\end{align}
Combining inequalities~(\ref{eq:oiwureoiwnnds}) and~(\ref{eq:iwfnmdskmfd}) with
$\bar\mu_{i,k}^{(W)} \geq \mu_{i,k}-\rho_k$, we get
\begin{align*}
a_{i,k}\left(\mu_{i,k}-\rho_k-2\conf_W(i,k)\right)
\leq a_{i,k}\left(\bar\mu_{i,k}^{(W)}-2\conf_W(i,k)\right) &\leq L_k^*+B\rho_k.
\end{align*}
Using $a_{i,k} \leq B$, we obtain
\begin{align*}
a_{i,k}\mu_{i,k} - L_k^* - 2 B\rho_k \leq 2 a_{i,k} \conf_W(i,k).
\end{align*}
\end{proof}
Let us fix a trajectory.
To consider rounds in which a worker may be overallocated because of high local variation and 
insufficient information (a large confidence interval), we split rounds as follows:
\begin{align}\label{eq:stable_definition}
\text{stable rounds} \quad
   &\mathcal S = \left\{k:\, \neg \mathcal E_k \text{ holds and }
   \ell_k(\ba_k) > L_k^* \text{ and }
   2 B\rho_k < \varepsilon \left(\ell_k(\ba_k) - L_k^*\right)\right\} \\
\text{and unstable rounds} \quad
   &\mathcal U = \left\{k:\,\neg \mathcal E_k \text{ holds and }
   \ell_k(\ba_k) > L_k^* \text{ and }
   2 B\rho_k \geq \varepsilon \left(\ell_k(\ba_k) - L_k^*\right)\right\}. \nonumber
\end{align}
\subsection{Proofs}
We now explain the proof idea.
Note that each mistaken round is either bad, stable, or unstable.
We bound the regret on unstable rounds using Lemma~\ref{lemma:window-var}.
To bound the regret on stable rounds, we split the horizon into consecutive blocks of length $W$, where
the last block may contain fewer elements.
We fix a worker $i$ and a block $q$, denote the last stable round in the block by $k'$, and
use the fact that $N_{i,k'}^{(W)}$ accumulates all corresponding observations from the preceding rounds in this block.
Using Lemmas~\ref{lemma:weighted_gap_summation},~\ref{lemma:confidence_radius_bound},
and~\ref{lemma:stable_samples},
we control the regret in each block. Multiplying by $\tceil{\frac{K}{W}}$ bounds the total regret caused by stable rounds.
Finally, we bound the expected number of bad rounds using 
Lemma~\ref{lemma:sufficient-beta} and charge them with the maximum possible regret.

\begin{theorem}\label{thm:master_bound}
Let Assumption~\ref{ass:drift-exp} and the variation budget in Definition~\ref{ass:variation_budget} be satisfied.
Then, the following upper bound on the expected regret of SW-ATA with window size $W \in [K]$ holds
for every $\varepsilon \in \left(0, 1\right)$ and $\theta > 0$:
\begin{align}\label{eq:ksndhfiuwehfjdk}
\regretdyn
&\leq 2 n \max_{i, k} \left\{B\mu_{i,k} - L_k^*\right\} + \frac{2 B V_K W}{\varepsilon} + K \theta \\
& \quad + \frac{16 \alpha^2 \beta_K}{(1 - \varepsilon)^2}
   \min\left\{\tceil{\frac{K}{W}} \sum \limits_{i = 1}^{n} \frac{1}{\Delta_i^{\min}}, 
      \tceil{\frac{K}{W}} \frac{n B}{\theta},
      \sum \limits_{q = 1}^{\tceil{\frac{K}{W}}} \left(1 + \varepsilon + \frac{L_{\tau_q}^*}{\theta}\right) 
         \sum \limits_{i = 1}^{n} \frac{1}{\mu_{i, \tau_q}}\right\} \nonumber \\
&\quad + \tceil{\frac{K}{W}} \frac{4\alpha \beta_K}{1 - \varepsilon} \sum_{i = 1}^n
   \left(\log \frac{\max_{k \in [K]} \left\{B\mu_{i,k} - L_k^*\right\}}{\alpha B}\right)_+ \nonumber \\
&\quad + 2 \tceil{\frac{K}{W}} \sum_{i = 1}^n \max_{k \in [K]} \left\{B \mu_{i,k} - L_k^*\right\}, \nonumber 
\end{align}
where $\beta_K = 4 \log(2BK^3)$, $\tau_q = (q - 1)W + 1$, and $(\log x)_+ := \log \max\left\{1, x\right\}$.
\end{theorem}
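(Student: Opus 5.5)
We split every round $k$ into four disjoint classes: rounds with $\ell_k(\ba_k)=L_k^*$, which contribute nothing; bad rounds, on which $\mathcal E_k$ holds; unstable rounds $\mathcal U$; and stable rounds $\mathcal S$ as in~(\ref{eq:stable_definition}). We then bound the regret on each class separately and add the four contributions.

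\textbf{Bad rounds.} On any round the loss is at most $\max_{i,k}\{B\mu_{i,k}-L_k^*\}$, because $\ell_k(\ba_k)\le \max_i B\mu_{i,k}$. By the confidence-interval result of Section~\ref{sec:confidence_interval} (Lemma~\ref{lemma:sufficient-beta}), the expected number of bad rounds is at most $2n$. This yields the first term.

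\textbf{Unstable rounds.} By definition, $\ell_k(\ba_k)-L_k^*\le 2B\rho_k/\varepsilon$ on these rounds. Summing and applying Lemma~\ref{lemma:window-var} gives at most $2BVW/\varepsilon$.

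\textbf{Stable rounds: reduction to one worker.} On a stable round, pick a worker $i$ attaining $\ell_k(\ba_k)=a_{i,k}\mu_{i,k}$. Then $a_{i,k}\mu_{i,k}>L_k^*$, and if $N_{i,k}^{(W)}>0$, Lemma~\ref{lemma:confidence_radius_bound} combined with stability gives
\[
(1-\varepsilon)\left(a_{i,k}\mu_{i,k}-L_k^*\right)\le 2a_{i,k}\conf_W(i,k).
\]
We partition $[K]$ into $\tceil{K/W}$ blocks of length $W$. Fix a block $q$ and a worker $i$, and let $\mathcal S_{i,q}$ be the stable rounds in block $q$ charged to $i$. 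The first charged round with $N_{i,k}^{(W)}=0$ costs at most $\max_k\{B\mu_{i,k}-L_k^*\}$, and we absorb it into the last term. For the rounds with $N_{i,k}^{(W)}>0$, we write the loss as $a_{i,k}\Delta_{i,k}$ with $\Delta_{i,k}=\mu_{i,k}-L_k^*/a_{i,k}$, and apply Lemma~\ref{lemma:weighted_gap_summation} with the distinct values $\delta_j$.

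\textbf{Stable rounds: the counting step.} Fix a level $\delta_j$ and let $k'$ be the last round of $\mathcal S_{i,q}$ with $\Delta_{i,k}\ge\delta_j$. Since all rounds in the block lie within distance $W$ of $k'$, the window at $k'$ contains every earlier such round. Hence
\[
N_{i,k'}^{(W)}\ge \sum_{k\in\mathcal S_{i,q}\setminus\{k'\},\,\Delta_{i,k}\ge\delta_j}a_{i,k}.
\]
On the other hand, $(1-\varepsilon)\delta_j\le 2\conf_W(i,k')$. Inverting the two-regime radius~(\ref{eq:lksjnfkjhfd}) bounds $N_{i,k'}^{(W)}$ by
\[
\frac{16\alpha^2\beta_K}{(1-\varepsilon)^2\delta_j^2}+\frac{4\alpha\beta_K}{(1-\varepsilon)\delta_j}.
\]
This is the content of Lemma~\ref{lemma:stable_samples}, which I would invoke here. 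The remaining round $k'$ contributes $a_{i,k'}\le B$, and with the weight $\delta_j-\delta_{j-1}$ this is handled by bounding its total loss separately, giving the $2\tceil{K/W}\sum_i\max_k\{\cdot\}$ term.

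\textbf{Stable rounds: summing over levels.} For the quadratic part, $\sum_j(\delta_j-\delta_{j-1})/\delta_j^2\le 2/\delta_1\le 2/\Delta_i^{\min}$. For the linear part, Lemma~\ref{lemma:sub_exp_technical} gives $1+\log(\delta_J/\delta_1)$. To turn this into the stated form, I bound $\delta_J\le(B\mu_{i,k}-L_k^*)/b$. For the lower end, levels with $\delta\lesssim\alpha/B$ are first charged through the quadratic term, so the logarithm becomes $(\log\frac{\max_k\{B\mu_{i,k}-L_k^*\}}{\alpha B})_+$.

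\textbf{The two gap-free alternatives.} These come from truncating at a threshold $\theta$: rounds with loss below $\theta$ cost at most $K\theta$ in total. For the remaining rounds, summing only over $\delta_j\gtrsim\theta/B$ gives $nB/\theta$ per block. The third form uses $a_{i,k}\le (L^*_{\tau_q}+\text{loss})/\mu_{i,k}$ together with a per-block comparison of $\mu_{i,k}$ with $\mu_{i,\tau_q}$. Taking the minimum of the three options and multiplying by $\tceil{K/W}$ completes the bound.

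\textbf{Main obstacle.} I expect the hardest part to be this last step: converting the gap-free counting into the $\sum_i(1+\varepsilon+L^*_{\tau_q}/\theta)/\mu_{i,\tau_q}$ form inside a non-stationary block, where the means drift. I would try to reuse stability to absorb the drift into the factor $(1+\varepsilon)$.
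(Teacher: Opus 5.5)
Your outline follows the paper's proof step for step. It uses the same split into bad, unstable and stable rounds, the same $\theta$-truncation producing $K\theta$, and the same length-$W$ blocks combined with Lemma~\ref{lemma:weighted_gap_summation} and the counting Lemma~\ref{lemma:stable_samples}. It also uses the same three bounds on $1/\delta_1$: from $\delta_1\ge\Delta_i^{\min}$ via $b_{i,k}\le a_{i,k}$, from $\delta_1\ge\theta/B$, and from Lemma~\ref{lemma:delta_sum_bound}. That lemma does exactly what you anticipate: $\mu_{i,\tau_q}\le\mu_{i,p}+\rho_p$ and $L_p^*\le L_{\tau_q}^*+B\rho_p$, after which stability turns $2B\rho_p$ into $\varepsilon$ times the loss. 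You charge the single possible round per block with $N_{i,k}^{(W)}=0$ separately and add $B\delta_J$ from the extra $B$ per level. This is equivalent bookkeeping to the paper's $2B\delta_J$ and gives the same last term, since $B\delta_J\le G_i:=\max_k\{B\mu_{i,k}-L_k^*\}$.

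Two details are wrong as written.

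First, the split between the quadratic and linear regimes must be at the crossover of the two branches of the max in Lemma~\ref{lemma:stable_samples}, namely $\delta=4\alpha/(1-\varepsilon)$, not at $\delta\approx\alpha/B$. Below the crossover, the linear branch is dominated by the quadratic one and is absorbed into the $1/\delta_1$ term. Above it, with $\delta_{j'+1}$ the first level past the crossover, Lemma~\ref{lemma:sub_exp_technical} gives $1+\log(\delta_J/\delta_{j'+1})\le 1+\log\frac{\delta_J}{4\alpha}\le 2(\log\frac{\delta_J}{\alpha})_+\le 2(\log\frac{G_i}{\alpha B})_+$. The last step uses $\delta_J\le\max_k\{\mu_{i,k}-L_k^*/B\}=G_i/B$ (with $B$, not $b$). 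Splitting at $\alpha/B$ instead leaves the levels in $(\alpha/B,\delta_J]$ in the linear regime. That only yields $(\log\frac{G_i}{\alpha})_+$, which exceeds the stated term by up to $\log B$ per worker and block.

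Second, when inverting the radius you must use $\log(2Bk'^3)\le\beta_K/4$. The correct bounds are $\frac{4\alpha^2\beta_K}{(1-\varepsilon)^2\delta^2}$ and $\frac{\alpha\beta_K}{(1-\varepsilon)\delta}$, as in the lemma you cite. Your constants are four times larger and would not produce the factors $16$ and $4$ in the statement.
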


\begin{proof}
We fix a trajectory. Then, we split the rounds into the following types: bad rounds,
stable rounds, and unstable rounds; see~(\ref{eq:stable_definition}). Since in a non-mistaken round,
$\ell_k(\ba_k) - L_k^* = 0$, we have
\begin{align}
&\sum \limits_{k = 1}^K \left(\ell_k(\ba_k) - L_k^*\right)
   = \sum \limits_{k = 1}^K \Ind{\ell_k(\ba_k) > L_k^*} \left(\ell_k(\ba_k) - L_k^*\right) \nonumber \\
&\leq \sum \limits_{k = 1}^K
   \left(\Ind{\mathcal E_k} + \Ind{\neg \mathcal E_k, \text{$k$th round is stable}}
      + \Ind{\neg \mathcal E_k, \text{$k$th round is unstable}}\right)
   \left(\ell_k(\ba_k) - L_k^*\right) \nonumber \\
&= \sum \limits_{k = 1}^K \Ind{\mathcal E_k} \left(\ell_k(\ba_k) - L_k^*\right)
   + \sum \limits_{k \in \mathcal U} \left(\ell_k(\ba_k) - L_k^*\right)
   + \sum \limits_{k \in \mathcal S} \left(\ell_k(\ba_k) - L_k^*\right) \nonumber \\
&\leq \max_{i, k} \left\{B\mu_{i,k} - L_k^*\right\} \cdot \sum \limits_{k = 1}^K \Ind{\mathcal E_k}
   + \sum \limits_{k \in \mathcal U} \left(\ell_k(\ba_k) - L_k^*\right)
   + \sum \limits_{k \in \mathcal S} \left(\ell_k(\ba_k) - L_k^*\right). \label{eq:decomposition}
\end{align}

Consider the last two terms separately. By the definition of unstable rounds, we have
$\ell_k(\ba_k) - L_k^* \leq \frac{2 B\rho_k}{\varepsilon}$.
Using this inequality and Lemma~\ref{lemma:window-var}, we obtain
\begin{align}\label{eq:drift_term_bound}
\sum \limits_{k \in \mathcal U} \left(\ell_k(\ba_k) - L_k^*\right)
\leq \sum \limits_{k \in \mathcal U}\frac{2 B\rho_k}{\varepsilon}
\leq \frac{2 B V_K W}{\varepsilon}.
\end{align}

Next, we bound the regret on the stable rounds. Define a sequence of indices $i_k$, $k \in [K]$, specifying
which worker is the bottleneck. If there are several such workers, we choose one arbitrarily
(e.g., the worker with the smallest index).
\begin{align*}
\sum \limits_{k \in \mathcal S} \left(\ell_k(\ba_k) - L_k^*\right)
= \sum \limits_{k \in \mathcal S} \left(a_{i_k,k} \mu_{i_k,k} - L_k^*\right).
\end{align*}
Then, we split the rounds into two types: rounds with small regret, which means
$\ell_k(\ba_k) - L_k^* < \theta$, and rounds such that $\ell_k(\ba_k) - L_k^* \geq \theta$.
The total regret caused by rounds of the first type is less than $K \theta$.
Therefore,
\begin{align}\label{eq:oieriiojjfowoj}
\sum \limits_{k \in \mathcal S} \left(\ell_k(\ba_k) - L_k^*\right)
\leq K \theta
   + \sum \limits_{k \in \mathcal S: \ell_k(\ba_k) - L_k^* \geq \theta} a_{i_k,k}
      \left(\mu_{i_k,k} - \frac{L_k^*}{a_{i_k,k}}\right).
\end{align}
It remains to bound the last term. Denote $\Delta_{i,k}:= \mu_{i,k} - \frac{L_k^*}{a_{i,k}}$
for $a_{i,k} > 0$.
Then, we split the horizon into consecutive blocks of length $W$, where
the last block may contain fewer elements.
Define the subset of stable rounds in the $q$th block with large loss:
\begin{align*}
\mathcal{S}_i^\theta(q, W):= \left\{k \in \mathcal{S}:\, (q - 1) W + 1 \leq k \leq q W
   \text{ and } i_k = i \text{ and } \ell_k(\ba_k) - L_k^* \geq \theta\right\},
\end{align*}
where $q \in \left[\tceil{\frac{K}{W}}\right]$. Thus,
\begin{align}\label{eq:ioewirojfwo}
\sum \limits_{k \in \mathcal S:\,\ell_k(\ba_k) - L_k^* \geq \theta} a_{i_k,k} \Delta_{i_k,k}
= \sum \limits_{i = 1}^{n} \sum \limits_{q = 1}^{\tceil{\frac{K}{W}}}
   \sum \limits_{k \in \mathcal{S}_i^\theta(q, W)} a_{i,k} \Delta_{i,k}.
\end{align}

Fix a worker $i$ and a block $q$.
If $\mathcal{S}_i^\theta(q, W)$ is empty, its contribution is zero, so we only consider nonempty sets.
Let $0 < \delta_1, \dots, \delta_J$ be an increasing sequence of distinct
values of the sequence $\Delta_{i,k}$, $k \in \mathcal{S}_i^\theta(q, W)$, and additionally
set $\delta_0 = 0$.
First, assume that $\mathcal{S}_i^\theta(q, W)$ contains at least two elements.
Using Lemma~\ref{lemma:weighted_gap_summation}, we obtain
\begin{align*}
\sum \limits_{k \in \mathcal{S}_i^\theta(q, W)} a_{i,k} \Delta_{i,k}
   = \sum_{j=1}^J \left(\delta_j-\delta_{j-1}\right)
   \sum_{k \in \mathcal{S}_i^\theta(q, W):\,\Delta_{i,k}\geq\delta_j} a_{i,k}.
\end{align*}
Lemma~\ref{lemma:stable_samples} yields
\begin{align}\label{eq:iksdfkldnskfd}
\sum \limits_{k \in \mathcal{S}_i^\theta(q, W)} a_{i,k} \Delta_{i,k}
&\leq \sum_{j=1}^J \left(\delta_j-\delta_{j-1}\right)
   \left(\max\left\{\frac{4\alpha^2 \beta_K}{(1 - \varepsilon)^2\delta_j^2},
   \frac{\alpha \beta_K}{(1 - \varepsilon)\delta_j}\right\} + B\right).
\end{align}

Define the function $g(x) = 
   \max\left\{\frac{4\alpha^2 \beta_K}{(1 - \varepsilon)^2 x^2},
   \frac{\alpha \beta_K}{(1 - \varepsilon)x}, B\right\}$.
Denote the last index $j \in \left\{0, 1, \dots, J\right\}$ 
such that $\delta_j \leq \frac{4 \alpha}{1 - \varepsilon}$ as $j'$.
Since $\delta_0 = 0$, $j'$ is always defined.
Additionally, we define $\mathcal J_1 = \left\{j:\,1 \leq j \leq j'\right\}$ and
$\mathcal J_2 = \left\{j:\,j' + 1 \leq j \leq J\right\}$.
For $j \in \mathcal J_1$, the quadratic term is at least the linear term, and for $j \in \mathcal J_2$,
the linear term is larger. Bounding the contribution of $B$ by $2B\delta_J$, we obtain
\begin{align*}
\sum \limits_{k \in \mathcal{S}_i^\theta(q, W)} a_{i,k} \Delta_{i,k}
&\leq 2 \sum_{j \in \mathcal J_1} \left(\delta_j-\delta_{j-1}\right)
   \frac{4\alpha^2 \beta_K}{(1 - \varepsilon)^2\delta_j^2} \\
&\quad + 2 \sum_{j \in \mathcal J_2} \left(\delta_j-\delta_{j-1}\right)
   \frac{\alpha \beta_K}{(1 - \varepsilon)\delta_j} + 2B\delta_J.
\end{align*}
Here, we use the convention that the sum over an empty set equals zero.
Using Lemma~\ref{lemma:sub_exp_technical}, we bound the sub-exponential term when $\mathcal J_2$ is nonempty.
\begin{align}\label{eq:ksdnfsdfksdj}
\sum_{j \in \mathcal J_2} \left(\delta_j-\delta_{j-1}\right)
   \frac{\alpha \beta_K}{(1 - \varepsilon)\delta_j}
&\leq \left(1 + \log \frac{\delta_J}{\delta_{j' + 1}}\right)
   \frac{\alpha \beta_K}{1 - \varepsilon} \nonumber \\
&\leq \left(1 + \log \frac{\delta_J}{4\alpha}\right)
   \frac{\alpha \beta_K}{1 - \varepsilon} \nonumber \\
&\leq 2\left(\log \frac{\delta_J}{\alpha}\right)_+
   \frac{\alpha \beta_K}{1 - \varepsilon}.
\end{align}
The second inequality follows from $\delta_{j' + 1} > \frac{4\alpha}{1 - \varepsilon} > 4\alpha$.
For the last inequality, note that $\mathcal J_2 \neq \varnothing$ implies $\delta_J/\alpha > 4$.
We then use $1 + \log x \leq 2\log x$ for $x \geq \mathrm e$ and monotonicity of the logarithm.
If $\mathcal J_2$ is empty, its contribution is zero, and the last bound remains valid.
Then, we bound the sub-Gaussian term. First, assume that $\left|\mathcal J_1\right| \geq 2$.
\begin{align*}
\sum_{j=1}^{j'} \left(\delta_j-\delta_{j-1}\right)
   \left(\frac{4\alpha^2 \beta_K}{(1 - \varepsilon)^2\delta_j^2}\right)
&\leq \frac{4\alpha^2 \beta_K}{(1 - \varepsilon)^2} \sum_{j=2}^{j'}
   \left(\frac{1}{\delta_{j-1}} - \frac{1}{\delta_j}\right)
   + \frac{4\alpha^2 \beta_K}{(1 - \varepsilon)^2 \delta_1} \\
&\leq \frac{4\alpha^2 \beta_K}{(1 - \varepsilon)^2}
   \left(\frac{1}{\delta_1} - \frac{1}{\delta_{j'}}\right)
   + \frac{4\alpha^2 \beta_K}{(1 - \varepsilon)^2 \delta_1} \\
&\leq \frac{8\alpha^2 \beta_K}{(1 - \varepsilon)^2 \delta_1}.
\end{align*}
If $\mathcal J_1$ contains fewer than $2$ elements, then the bound is straightforward.
Combining these inequalities, we obtain
\begin{align}\label{eq:kksmklslkf}
\sum \limits_{k \in \mathcal{S}_i^\theta(q, W)} a_{i,k} \Delta_{i,k} 
&\leq 2 \sum_{j=1}^J \left(\delta_j-\delta_{j-1}\right) g(\delta_j) \nonumber \\
&\leq \frac{16 \alpha^2 \beta_K}{(1 - \varepsilon)^2\delta_1}
+ \frac{4\alpha \beta_K}{1 - \varepsilon}
   \left(\log \frac{\delta_J}{\alpha}\right)_+ + 2B\delta_J,
\end{align}
where $(x)_+ = \max\left\{0, x\right\}$.

Above, we considered the case when $\mathcal{S}_i^\theta(q, W)$ contains at least two elements.
If $\mathcal{S}_i^\theta(q, W)$ contains exactly one element, the bound follows directly:
\begin{align*}
\sum \limits_{k \in \mathcal{S}_i^\theta(q, W)} a_{i,k} \Delta_{i,k}
\leq B \delta_J.
\end{align*}
Then, we bound $\delta_1$ and $\delta_J$ globally:
\begin{align*}
\delta_J &= \max_{k \in \mathcal{S}_i^\theta(q, W)}
\left\{\mu_{i,k} - \frac{L_k^*}{a_{i,k}} \right\}
\leq \max_{k \in \mathcal{S}_i^\theta(q, W)}
\left\{\mu_{i,k} - \frac{L_k^*}{B} \right\} \\
\text{and} \quad \delta_1 &= \min_{k \in \mathcal{S}_i^\theta(q, W)} \left\{\mu_{i,k} - \frac{L_k^*}{a_{i,k}}\right\}
\geq \Delta_i^{\min} = \min_{k \in [K]:\,b_{i,k} \leq B}
\left\{\mu_{i,k} - \frac{L_k^*}{b_{i,k}}\right\}.
\end{align*}
Here, worker $i$ is the bottleneck in every round $k \in \mathcal{S}_i^\theta(q, W)$, and the regret is positive.
Thus, $a_{i,k}\mu_{i,k} > L_k^*$ implies $b_{i,k} \leq a_{i,k} \leq B$.
By the definition of $\delta_1$, we also have $\delta_1 \geq \nicefrac{\theta}{B}$.
Moreover, the bound on $\delta_J$ and monotonicity imply
\begin{align*}
\left(\log \frac{\delta_J}{\alpha}\right)_+
\leq \left(\log \frac{\max_{k \in [K]} \left\{B\mu_{i,k} - L_k^*\right\}}{\alpha B}\right)_+.
\end{align*}
Additionally, we bound 
$\left(\min_{k \in \mathcal{S}_i^\theta(q, W)} \left\{\mu_{i,k} - \frac{L_k^*}{a_{i,k}}\right\}\right)^{-1}$
using $\ell_k(\ba_k) - L_k^* \geq \theta$ and Lemma~\ref{lemma:delta_sum_bound}
to obtain a tighter instance-independent bound.
Denoting the first round in the block by $\tau_q$ and combining these bounds, we get
\begin{align}\label{eq:iwifnkskmdknf}
\frac{1}{\delta_1} &\leq \min\left\{\frac{1}{\Delta_i^{\min}}, \frac{B}{\theta},
      \left(1 + \varepsilon + \frac{L_{\tau_q}^*}{\theta}\right) \frac{1}{\mu_{i, \tau_q}}\right\}.
\end{align}
Combining~(\ref{eq:ioewirojfwo}),~(\ref{eq:kksmklslkf}), and~(\ref{eq:iwifnkskmdknf}), we get
\begin{align}\label{eq:oijewnfnksdk}
\sum \limits_{k \in \mathcal S:\,\ell_k(\ba_k) - L_k^* \geq \theta} a_{i_k,k} \Delta_{i_k,k}
&\leq \frac{16 \alpha^2 \beta_K}{(1 - \varepsilon)^2}
   \min\left\{\tceil{\frac{K}{W}} \sum \limits_{i = 1}^{n} \frac{1}{\Delta_i^{\min}}, 
      \tceil{\frac{K}{W}} \frac{n B}{\theta},
      \sum \limits_{q = 1}^{\tceil{\frac{K}{W}}} \left(1 + \varepsilon + \frac{L_{\tau_q}^*}{\theta}\right) 
         \sum \limits_{i = 1}^{n} \frac{1}{\mu_{i, \tau_q}}\right\} \nonumber \\
&\quad + \tceil{\frac{K}{W}} \frac{4\alpha \beta_K}{1 - \varepsilon} \sum_{i = 1}^n
   \left(\log \frac{\max_{k \in [K]} \left\{B\mu_{i,k} - L_k^*\right\}}{\alpha B}\right)_+ \nonumber \\
&\quad + 2 \tceil{\frac{K}{W}} \sum_{i = 1}^n \max_{k \in [K]} \left\{B \mu_{i,k} - L_k^*\right\}.
\end{align}

Combining~(\ref{eq:oieriiojjfowoj}),~(\ref{eq:drift_term_bound}),
~(\ref{eq:oijewnfnksdk}), and~(\ref{eq:decomposition}), we deduce
\begin{align*}
\sum \limits_{k = 1}^K \left(\ell_k(\ba_k) - L_k^*\right)
&\leq \max_{i, k} \left\{B\mu_{i,k} - L_k^*\right\}
\cdot \sum \limits_{k = 1}^K \Ind{\mathcal E_k}
+ \frac{2 B V_K W}{\varepsilon} + K \theta \\
& \quad + \frac{16 \alpha^2 \beta_K}{(1 - \varepsilon)^2}
   \min\left\{\tceil{\frac{K}{W}} \sum \limits_{i = 1}^{n} \frac{1}{\Delta_i^{\min}}, 
      \tceil{\frac{K}{W}} \frac{n B}{\theta},
      \sum \limits_{q = 1}^{\tceil{\frac{K}{W}}} \left(1 + \varepsilon + \frac{L_{\tau_q}^*}{\theta}\right) 
         \sum \limits_{i = 1}^{n} \frac{1}{\mu_{i, \tau_q}}\right\} \nonumber \\
&\quad + \tceil{\frac{K}{W}} \frac{4\alpha \beta_K}{1 - \varepsilon} \sum_{i = 1}^n
   \left(\log \frac{\max_{k \in [K]} \left\{B\mu_{i,k} - L_k^*\right\}}{\alpha B}\right)_+ \nonumber \\
&\quad + 2 \tceil{\frac{K}{W}} \sum_{i = 1}^n \max_{k \in [K]} \left\{B \mu_{i,k} - L_k^*\right\}.
\end{align*}

Note that all terms except $\sum \limits_{k = 1}^K \Ind{\mathcal E_k}$ are deterministic. 
By Lemma~\ref{lemma:sufficient-beta}, we have
$\Exp{\sum \limits_{k = 1}^K \Ind{\mathcal E_k}} = \sum_{k=1}^K \Prob{\mathcal E_k} \leq 2n$, which completes the proof.
\end{proof}

\begin{lemma}\label{lemma:stable_samples}
Consider $\varepsilon \in \left(0, 1\right)$ and the corresponding set of
stable rounds $\mathcal{S}_i^\theta(q, W)$ from the proof of
Theorem~\ref{thm:master_bound}, with an arbitrary $\theta > 0$.
Define the subset $\mathcal D_{\delta} = \left\{k \in \mathcal{S}_i^\theta(q, W):\,\Delta_{i,k}\geq\delta\right\}$.
Then, for every $\delta > 0$,
\begin{align*}
\sum_{k \in \mathcal D_{\delta}} a_{i,k}
\leq \max\left\{
\frac{4\alpha^2 \beta_K}{(1 - \varepsilon)^2\delta^2},
\frac{\alpha \beta_K}{(1 - \varepsilon)\delta}\right\} + B,
\end{align*}
where $\beta_K = 4 \log(2B K^3)$.
\end{lemma}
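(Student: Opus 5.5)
The plan is to look at the last round of $\mathcal D_\delta$ and show that its own confidence radius forces the earlier rounds of $\mathcal D_\delta$ to have contributed few samples. If $\mathcal D_\delta$ is empty there is nothing to prove. Otherwise, let $k'$ be the largest index in $\mathcal D_\delta$. Every $k\in\mathcal D_\delta$ lies in block $q$, so $k'-W\le (q-1)W< k$ for every $k\in\mathcal D_\delta$ with $k<k'$. Hence every earlier round of $\mathcal D_\delta$ belongs to the window $\cW_{k'}$, and
\begin{align*}
\sum_{k\in\mathcal D_\delta} a_{i,k} \le N_{i,k'}^{(W)} + a_{i,k'} \le N_{i,k'}^{(W)} + B.
\end{align*}
The whole task therefore reduces to bounding $N_{i,k'}^{(W)}$.

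First I would check that $N_{i,k'}^{(W)}>0$, so that Lemma~\ref{lemma:confidence_radius_bound} applies at $k'$. Round $k'$ is stable with $i_{k'}=i$, so $\ell_{k'}(\ba_{k'})=a_{i,k'}\mu_{i,k'}>L_{k'}^*$. If $N_{i,k'}^{(W)}=0$, then $s_{i,k'}=0$, so $i\in\mathcal Z_{k'}$ and the allocation is uniform over $\mathcal Z_{k'}$. I would try to rule this out, or sidestep it: if it cannot be excluded, the pending-count term $N=0$ trivially satisfies the bound anyway. So either way we may assume $N_{i,k'}^{(W)}\ge1$.

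Next I apply Lemma~\ref{lemma:confidence_radius_bound} at $k'$ (the round is good by stability):
\begin{align*}
a_{i,k'}\mu_{i,k'}-L_{k'}^* \le 2a_{i,k'}\conf_W(i,k') + 2B\rho_{k'}.
\end{align*}
Stability gives $2B\rho_{k'}<\varepsilon(\ell_{k'}(\ba_{k'})-L_{k'}^*)=\varepsilon\, a_{i,k'}\Delta_{i,k'}$. Rearranging,
\begin{align*}
(1-\varepsilon)\, a_{i,k'}\Delta_{i,k'} < 2a_{i,k'}\conf_W(i,k'),
\end{align*}
and since $k'\in\mathcal D_\delta$ we have $\Delta_{i,k'}\ge\delta$, so $\conf_W(i,k')>(1-\varepsilon)\delta/2$.

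Finally I invert the definition~(\ref{eq:lksjnfkjhfd}). Write $N=N_{i,k'}^{(W)}$ and use $\log(2Bk'^3)\le\beta_K/4$. Then
\begin{align*}
\tfrac{(1-\varepsilon)\delta}{2} < 2\alpha\max\Big\{\sqrt{\beta_K/(4N)},\ \beta_K/(4N)\Big\}.
\end{align*}
If the square-root term attains the maximum, squaring gives $N<4\alpha^2\beta_K/((1-\varepsilon)^2\delta^2)$. If the linear term does, we get $N<\alpha\beta_K/((1-\varepsilon)\delta)$. In both cases $N$ is below the stated maximum, and adding $B$ finishes the proof.

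The main obstacle is the block/window containment step, in particular making sure that no earlier round of $\mathcal D_\delta$ falls outside $\cW_{k'}$. Because blocks have length $W$ and the window covers rounds $k'-W,\dots,k'-1$, the containment does hold. A secondary point is the constant bookkeeping in relating $\log(2Bk'^3)$ to $\beta_K$.
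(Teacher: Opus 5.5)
Your proposal is correct and follows the paper's proof essentially step by step. You show that the earlier rounds of $\mathcal D_\delta$ lie in $\cW_{k'}$, which gives $\sum_{k\in\mathcal D_\delta}a_{i,k}\le N_{i,k'}^{(W)}+B$; you then apply Lemma~\ref{lemma:confidence_radius_bound} together with the stability condition at the last round $k'$, and invert $\conf_W$ using $\log(2Bk'^3)\le\beta_K/4$, which yields exactly the paper's constants.

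The only difference is how you handle $N_{i,k'}^{(W)}=0$. You note that the bound then reduces trivially to $B$. The paper instead treats $|\mathcal D_\delta|\le 1$ separately, and otherwise uses an earlier bottleneck round $k''\in\cW_{k'}$ with $a_{i,k''}>0$ to force $N_{i,k'}^{(W)}>0$. Your sidestep is equally valid.
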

\begin{proof}
Consider two cases separately. First, if there are fewer than two rounds in
$\mathcal D_{\delta}$, then
\begin{align*}
\sum_{k \in \mathcal D_{\delta}} a_{i,k} \leq B.
\end{align*}
Next, assume that $\mathcal D_{\delta}$ contains at least two rounds.
Let $k'$ be the index of the last round in $\mathcal D_{\delta}$.
Since $\mathcal D_{\delta} \subseteq \mathcal{S}_i^\theta(q, W)$ is a subset of the $q$th block, these observations are accumulated in
$N_{i,k'}^{(W)}$, which implies
\begin{align}\label{eq:oifndkslnfk}
\sum_{k \in \mathcal D_\delta} a_{i,k}
&= \sum_{k \in \mathcal D_\delta:\,k<k'} a_{i,k}+a_{i,k'}
\leq N_{i,k'}^{(W)}+B.
\end{align}
Since $\mathcal D_{\delta}$ contains at least two rounds,
there exists $k'' < k'$ such that $k'' \in \cW_{k'}$, the regret in this round is positive, and
$a_{i,k''} > 0$, which implies $N_{i,k'}^{(W)} > 0$.
Then, we combine Lemma~\ref{lemma:confidence_radius_bound} with
the definition of stable rounds and obtain
\begin{align*}
(1 - \varepsilon) \left(a_{i,k'}\mu_{i,k'} - L_{k'}^*\right)
\leq 2 a_{i,k'} \conf_W(i,k').
\end{align*}
Since $a_{i,k'} > 0$, we have
\begin{align*}
(1 - \varepsilon)
\left(\mu_{i,k'} - \frac{L_{k'}^*}{a_{i,k'}}\right)
\leq 4\alpha\max\left\{\sqrt{\frac{\log(2Bk'^3)}{N_{i,k'}^{(W)}}}, \frac{\log(2Bk'^3)}{N_{i,k'}^{(W)}}
\right\}.
\end{align*}
Therefore, we have
\begin{align*}
N_{i,k'}^{(W)}
&\leq \max \left\{
\frac{16\alpha^2\log(2Bk'^3)}{(1 - \varepsilon)^2 \left(\mu_{i,k'} - \frac{L_{k'}^*}{a_{i,k'}}\right)^2},
\frac{4\alpha\log(2Bk'^3)}{(1 - \varepsilon) \left(\mu_{i,k'} - \frac{L_{k'}^*}{a_{i,k'}}\right)},
\right\}.
\end{align*}
Given that $k' \leq K, W \leq K$ and $\mu_{i,k'} - \frac{L_{k'}^*}{a_{i,k'}} \geq \delta$, we obtain
\begin{align*}
N_{i,k'}^{(W)}
\leq \max\left\{
\frac{4\alpha^2 \beta_K}{(1 - \varepsilon)^2\delta^2},
\frac{\alpha \beta_K}{(1 - \varepsilon)\delta}\right\}.
\end{align*}
Substituting this into~(\ref{eq:oifndkslnfk}) proves the claim.
\end{proof}

\begin{lemma}\label{lemma:delta_sum_bound}
Consider $\varepsilon \in \left(0, 1\right)$,
a block $q$, and the corresponding set of
stable rounds $\mathcal{S}_i^\theta(q, W)$ from the proof of
Theorem~\ref{thm:master_bound}, for an arbitrary $\theta > 0$.
For any $i \in [n]$ with $\mathcal{S}_i^\theta(q, W) \neq \varnothing$, choose any
$p_i \in \argmin_{k \in \mathcal{S}_i^\theta(q, W)} \left\{\mu_{i,k} - \frac{L_k^*}{a_{i,k}}\right\}$.
Then,
\begin{align*}
\frac{1}{\Delta_{i,p_i}}
\leq \left(1 + \varepsilon + \frac{L_{\tau}^*}{\theta}\right) \frac{1}{\mu_{i, \tau}},
\end{align*}
where $\Delta_{i,k}= \mu_{i,k} - \frac{L_k^*}{a_{i,k}}$ and $\tau$ is the index of the first round in block $q$.
\begin{proof}
By the definition of the local variation $\rho_{p_i}$, we get 
\begin{align*}
\mu_{i, \tau} \leq \mu_{i, p_i} + \rho_{p_i}
\quad \Rightarrow \quad a_{i, p_i} \mu_{i, \tau} \leq a_{i, p_i} \mu_{i, p_i} + a_{i, p_i} \rho_{p_i}
\quad \Rightarrow \quad a_{i, p_i} \mu_{i, \tau} \leq \ell_{p_i}(\ba_{p_i}) + B \rho_{p_i}.
\end{align*}
In the last inequality, we use $a_{i, p_i} \leq B$ and $a_{i, p_i} \mu_{i, p_i} = \ell_{p_i}(\ba_{p_i})$.
Note that 
\begin{align*}
\max_{j \in [n]} \bar{a}_{j, p_i} \mu_{j, p_i} &\leq \max_{j \in [n]} \bar{a}_{j, \tau} \mu_{j, p_i}
&&\text{by the optimality of } \bar{\ba}_{p_i}, \\
&\leq \max_{j \in [n]} \bar{a}_{j, \tau} (\mu_{j, \tau} + \rho_{p_i})
&&\text{by the definition of } \rho_{p_i}.
\end{align*}
This implies that $L_{p_i}^* \leq L_{\tau}^* + B \rho_{p_i}$.
Combining these inequalities, we deduce
\begin{align}\label{eq:iowkldsmlkfdslk}
a_{i, p_i} \mu_{i, \tau} \leq L_{\tau}^* + (\ell_{p_i}(\ba_{p_i}) - L_{p_i}^*) + 2 B \rho_{p_i}
\leq L_{\tau}^* + (1 + \varepsilon)(\ell_{p_i}(\ba_{p_i}) - L_{p_i}^*).
\end{align}
In the last inequality, we use the fact that the round $p_i$ is stable. 
Using inequality~(\ref{eq:iowkldsmlkfdslk}) and $\ell_{p_i}(\ba_{p_i}) - L_{p_i}^* \geq \theta$, we 
complete the proof:
\begin{align*}
\frac{1}{\Delta_{i,p_i}} = \frac{a_{i,p_i}}{a_{i,p_i} \mu_{i,p_i} - L_{p_i}^*}
\leq \frac{L_{\tau}^* + (1 + \varepsilon)(\ell_{p_i}(\ba_{p_i}) - L_{p_i}^*)}
   {\mu_{i, \tau} (\ell_{p_i}(\ba_{p_i}) - L_{p_i}^*)}
\leq \left(1 + \varepsilon + \frac{L_{\tau}^*}{\theta}\right) \frac{1}{\mu_{i, \tau}}.
\end{align*}
\end{proof}
\end{lemma}

\subsection{Gap-dependent and gap-independent bounds}\label{sec:regret_bounds}
In this section we deduce from Theorem~\ref{thm:master_bound} a gap-dependent bound (Theorem~\ref{thm:general_ns_gap_dependent}) and 
gap-independent bounds (Theorems~\ref{thm:general_ns_gap_independent} and~\ref{thm:general_ns_positive_mean}).
The difference between the last two theorems is that the bound in Theorem~\ref{thm:general_ns_positive_mean} has better dependence on the parameters $B, n$;
however, it depends on the quantity $(\min_{i \in [n], k \in [K]} \mu_{i, k})^{-1}$ and may be worse 
when $\min_{i \in [n], k \in [K]} \mu_{i, k}$ is small.
Note that each theorem compares regimes of small and high variation $V_K$ separately and the stationary bounds follow 
directly from the regimes with small variation. 
Throughout this section all window lengths in the statements of the theorems are rounded up to the nearest integer.

For convenience, define $G_i := \max_{k \in [K]} \left\{B\mu_{i,k} - L_k^*\right\}$ 
and $G_{\max} := \max_{i,k} \left\{B\mu_{i,k} - L_k^*\right\}$.

\begin{theorem}[Gap-dependent bound]\label{thm:general_ns_gap_dependent}
Let the assumptions of Theorem~\ref{thm:master_bound} be satisfied.
Suppose first that $V_K \geq \frac{\alpha}{B K}$.
Then, choosing $W = \max\left\{1, \sqrt{\frac{K \alpha}{B V_K}}\right\}$ yields
\begin{align}
\regretdyn
&\lesssim n G_{\max} + \sqrt{\alpha B  V_K K }
   \left(\alpha \beta_K \sum_{i = 1}^n \frac{1}{\Delta_i^{\min}}
   + \beta_K \sum_{i = 1}^n \left(\log \frac{G_i}{\alpha B}\right)_+
   + \frac{1}{\alpha}\sum_{i = 1}^n G_i\right)
   + B V_K. \label{eq:gap_ns_bound}
\end{align}
Otherwise, choosing $W = K$, we obtain
\begin{align}
\regretdyn
&\lesssim n G_{\max} + \alpha
   \left(\alpha \beta_K \sum_{i = 1}^n \frac{1}{\Delta_i^{\min}}
   + \beta_K \sum_{i = 1}^n \left(\log \frac{G_i}{\alpha B}\right)_+\right). \label{eq:gap_stationary_bound}
\end{align}
\end{theorem}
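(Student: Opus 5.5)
The plan is to specialize Theorem~\ref{thm:master_bound}. We fix $\varepsilon = 1/2$, so $(1-\varepsilon)^{-1}$ and $(1-\varepsilon)^{-2}$ become absolute constants. In the $\min\{\cdot\}$ term we keep only the first entry, $\tceil{K/W}\sum_i 1/\Delta_i^{\min}$. The $K\theta$ term is removed by letting $\theta \to 0^+$. This is legitimate because the other terms either do not depend on $\theta$ or involve it only through the $\min$, and we have already discarded the $\theta$-dependent entries of the $\min$. Since the right-hand side of the master bound holds for every $\theta>0$, we get
\begin{align*}
\regretdyn \lesssim n G_{\max} + B V_K W + \tceil{\frac{K}{W}}\left(\alpha^2\beta_K \sum_{i=1}^n \frac{1}{\Delta_i^{\min}} + \alpha\beta_K\sum_{i=1}^n \left(\log\frac{G_i}{\alpha B}\right)_+ + \sum_{i=1}^n G_i\right).
\end{align*}
Both claimed bounds follow from this display by choosing $W$.

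\textbf{Case $V_K \geq \alpha/(BK)$.} Here $\sqrt{K\alpha/(BV_K)} \leq K$, so the rounded window $W=\max\{1,\sqrt{K\alpha/(BV_K)}\}$ lies in $[K]$. We treat two subcases.

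If $\sqrt{K\alpha/(BV_K)} \geq 1$, rounding up at most doubles $W$. Then $BV_KW \lesssim \sqrt{\alpha BV_KK} + BV_K$, where the additive $BV_K$ accounts for the rounding. Also $\tceil{K/W} \leq 2K/W \leq 2\sqrt{BV_KK/\alpha}$. Multiplying this factor into the bracket gives $\sqrt{\alpha BV_KK}$ times $\alpha\beta_K\sum_i 1/\Delta_i^{\min}$, plus $\beta_K\sum_i(\log\frac{G_i}{\alpha B})_+$, plus $\frac{1}{\alpha}\sum_i G_i$. This is exactly \eqref{eq:gap_ns_bound}.

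If instead $W=1$, then $BV_KW = BV_K$ is covered by the last term. Moreover $\tceil{K/W} = K \leq \sqrt{BV_KK/\alpha}$, because $K\alpha/(BV_K) \leq 1$. The same multiplication therefore applies.

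\textbf{Case $V_K < \alpha/(BK)$.} Take $W=K$, so $\tceil{K/W}=1$. Then $BV_KW = BV_KK < \alpha$, and the bracket contributes $\alpha^2\beta_K\sum_i 1/\Delta_i^{\min} + \alpha\beta_K\sum_i(\log\cdots)_+ + \sum_i G_i$. The term $\sum_i G_i \leq nG_{\max}$ and the stray $\alpha$ must then be absorbed into the right-hand side of \eqref{eq:gap_stationary_bound}. The $\sum_i G_i$ term is absorbed by $nG_{\max}$.

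\textbf{Main obstacle.} The step I expect to be delicate is absorbing the additive $\alpha$. I would first try to absorb it into $\alpha^2\beta_K\sum_i 1/\Delta_i^{\min}$. By Proposition~\ref{proposition:inf_deltas}, some $\Delta_i^{\min}$ is finite unless $B=1$ with all means equal, in which case the regret is trivially zero. For a finite gap, $\Delta_i^{\min} \leq \mu_{i,k}$, and Lemma~\ref{lemma:expectation_via_orlicz} bounds $\mu$ by the Orlicz norm, relating $\mu_{i,k}$ to $\alpha$ up to constants. Since $\beta_K \geq 4\log 2$, this would give $\alpha \lesssim \alpha^2\beta_K/\Delta_i^{\min}$.

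If that relation fails, because $\alpha$ bounds only the centered norm, the fallback is to avoid the $BV_KW$ term in the stationary-like regime altogether. I would re-examine the master-bound derivation and choose $\varepsilon$ close to $1$. Alternatively, when $\rho_k$ is tiny I would note that unstable rounds contribute at most $2BV_KK/\varepsilon$, which is $\lesssim \alpha$. That keeps the extra term of order $\alpha$, dominated as argued above.
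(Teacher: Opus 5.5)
Your reduction matches the paper's: $\varepsilon=1/2$ in Theorem~\ref{thm:master_bound}, the first entry of the minimum, $\theta\to0$, and balancing $BV_KW$ against $\lceil K/W\rceil$. The gap is in absorbing a leftover term, and it occurs in both cases, not only the second.

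In Case~1, with $W=\lceil\sqrt{K\alpha/(BV_K)}\rceil$, you correctly get $BV_KW\lesssim\sqrt{\alpha BV_KK}+BV_K$. The standalone $\sqrt{\alpha BV_KK}$, however, does not appear in \eqref{eq:gap_ns_bound}. It can only be absorbed into $\sqrt{\alpha BV_KK}\cdot(\text{bracket})$ if the bracket is $\gtrsim1$. So ``this is exactly \eqref{eq:gap_ns_bound}'' is not yet justified. The stray $\alpha$ in Case~2 is the same issue multiplied by $\alpha$.

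Your primary absorption argument fails. Lemma~\ref{lemma:expectation_via_orlicz} bounds $\mu_{i,k}$ by the \emph{uncentered} norm $\|X_{i,k}^{(u)}\|_{\psi_1}$. The theorem only assumes $\alpha\ge\alpha_{\max}$, which controls the centered norm. For nearly deterministic delays, $\alpha$ can be arbitrarily small while the gaps stay of order one. For example, take $n=B=2$ and $\mu_1=\mu_2=1$, so $L^*=1$, $b_i=2$ and $\Delta_i^{\min}=1/2$. Then $\alpha^2\beta_K/\Delta_i^{\min}=2\alpha^2\beta_K\ll\alpha$ as $\alpha\to0$. The relation $\mu\lesssim\alpha$ holds only under the illustrative choice $\alpha=2\tilde\alpha$, which the theorem does not assume. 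Your fallback ends with ``dominated as argued above'', so it is circular. Moving $\varepsilon$ toward $1$ also does not help, since the unstable-round term $2BV_KW/\varepsilon$ remains of order $\alpha$.

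The missing idea is to use the $G_i$ terms too. Take any $i$ with $\Delta_i^{\min}<\infty$ and any $k$ with $b_{i,k}\le B$. Then $B\mu_{i,k}-L_k^*\ge B(\mu_{i,k}-L_k^*/b_{i,k})$, hence $G_i\ge B\Delta_i^{\min}$. By AM--GM,
\[
\frac{\alpha\beta_K}{\Delta_i^{\min}}+\frac{G_i}{\alpha}\ \ge\ 2\sqrt{B\beta_K}\ \ge\ 1 .
\]
If all $\Delta_i^{\min}=\infty$, Proposition~\ref{proposition:inf_deltas} gives $B=1$ and zero regret. So the bracket is at least $1$, which absorbs the standalone $\sqrt{\alpha BV_KK}$ in Case~1. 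Multiplying by $\alpha$ gives $\alpha\le\alpha^2\beta_K\sum_i 1/\Delta_i^{\min}+\sum_i G_i\le\alpha^2\beta_K\sum_i1/\Delta_i^{\min}+nG_{\max}$, which closes Case~2. This is exactly how the paper finishes both cases.
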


\begin{proof}
Taking the first term in the minimum in Theorem~\ref{thm:master_bound} and setting
$\varepsilon = \frac{1}{2}$, we obtain a bound that holds for every $\theta > 0$.
Letting $\theta \to 0$, we obtain
\begin{align*}
\regretdyn
&\lesssim n G_{\max} + B V_K W
   + \frac{\alpha K}{W}
   \left(\alpha \beta_K \sum_{i = 1}^n \frac{1}{\Delta_i^{\min}}
   + \beta_K \sum_{i = 1}^n \left(\log \frac{G_i}{\alpha B}\right)_+
   + \frac{1}{\alpha}\sum_{i = 1}^n G_i\right).
\end{align*}
Here and below, we use $\tceil{\frac{K}{W}} \leq \frac{2K}{W}$ for $W \leq K$.
Then, we balance the terms $B V_K W$ and $\frac{\alpha K}{W}$ by choosing $W = \sqrt{\frac{K\alpha}{B V_K}}$.
Suppose that $V_K \geq \frac{\alpha}{B K}$.
Then, $\sqrt{\frac{K\alpha}{B V_K}} \leq K$.
For $W = \max\left\{1,\sqrt{\frac{K\alpha}{B V_K}}\right\}$, we have
$B V_K W \leq B V_K + \sqrt{\alpha B  V_K K }$ and
$\frac{\alpha K}{W} \leq \sqrt{\alpha B  V_K K }$.
Substituting these inequalities, we get
\begin{align*}
\regretdyn
&\lesssim n G_{\max} + \sqrt{\alpha B  V_K K }
   \left(1 + \alpha \beta_K \sum_{i = 1}^n \frac{1}{\Delta_i^{\min}}
   + \beta_K \sum_{i = 1}^n \left(\log \frac{G_i}{\alpha B}\right)_+
   + \frac{1}{\alpha}\sum_{i = 1}^n G_i\right)
   + B V_K.
\end{align*}
Next, we show $1 \leq \alpha \beta_K \sum_{i = 1}^n \frac{1}{\Delta_i^{\min}} + \frac{1}{\alpha}\sum_{i = 1}^n G_i$.
Indeed, if $\Delta_i^{\min} = +\infty$ for all $i$, then by Proposition~\ref{proposition:inf_deltas}, $B = 1$ and any allocation is optimal;
therefore $\regretdyn = 0$. 
Assume that there exists $i$ such that $\Delta_i^{\min} \neq +\infty$. For this $i$, we have 
\begin{align*}
G_i = \max_{k \in [K]} \left\{B\mu_{i, k} - L_{k}^*\right\} \geq  B \max_{k \in [K]:\, b_{i, k} \leq B} \left\{\mu_{i, k} - \frac{L_{k}^*}{B}\right\}  
\geq B \max_{k \in [K]:\, b_{i, k} \leq B} \left\{\mu_{i, k} - \frac{L_{k}^*}{b_{i, k}}\right\} \geq B \Delta_i^{\min}.
\end{align*}
For this $i$ and $K > 1$, we have 
\begin{align*}
\frac{\alpha \beta_K}{\Delta_i^{\min}} + \frac{G_i}{\alpha} \geq 
   2 \sqrt{\frac{\alpha \beta_K}{\Delta_i^{\min}} \cdot \frac{G_i}{\alpha}}
\geq 2 \sqrt{B \beta_K} \geq 1.
\end{align*}
Therefore, we prove~(\ref{eq:gap_ns_bound}).
Now, suppose that $V_K < \frac{\alpha}{B K}$.
Choosing $W = K$, we have $B V_K K < \alpha$. Using 
$1 \leq \alpha \beta_K \sum_{i = 1}^n \frac{1}{\Delta_i^{\min}} + \frac{1}{\alpha}\sum_{i = 1}^n G_i$
and $\sum_{i = 1}^n G_i \leq n G_{\max}$, we prove the claim.
\end{proof}

\begin{corollary}\label{cor:general_ns_gap_dependent}
Let the assumptions of Theorem~\ref{thm:general_ns_gap_dependent} be satisfied.
Suppose that $\frac{\alpha}{B K} \leq V_K \leq \frac{K\alpha}{B}$ and $K > 1$.
Then,
\begin{align*}
\regretdyn
&\lesssim \sqrt{\alpha B  V_K K }
   \left(\alpha \beta_K \sum_{i = 1}^n \frac{1}{\Delta_i^{\min}}
   + \beta_K \sum_{i = 1}^n \left(\log \frac{G_i}{\alpha B}\right)_+
   + \frac{n G_{\max}}{\alpha}\right).
\end{align*}
\end{corollary}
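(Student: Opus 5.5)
The corollary follows from the first case of Theorem~\ref{thm:general_ns_gap_dependent}. The lower bound $V_K \geq \frac{\alpha}{BK}$ holds by assumption, so~(\ref{eq:gap_ns_bound}) applies with $W = \max\{1,\sqrt{K\alpha/(BV_K)}\}$. Starting from~(\ref{eq:gap_ns_bound}), three things remain. First, absorb the additive terms $nG_{\max}$ and $BV_K$ into $\sqrt{\alpha B V_K K}$ times the bracket. Second, bound $\frac{1}{\alpha}\sum_i G_i \leq \frac{nG_{\max}}{\alpha}$, which is immediate. Third, handle the trivial case separately. If $\Delta_i^{\min}=+\infty$ for all $i$, Proposition~\ref{proposition:inf_deltas} gives $B=1$ with all means equal, so $\regretdyn=0$ and there is nothing to prove. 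Hence we may assume some $\Delta_i^{\min}$ is finite.

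\textbf{The term $BV_K$.} The upper assumption $V_K \leq \frac{K\alpha}{B}$ gives $BV_K \leq \alpha K$, so
\begin{align*}
BV_K = \sqrt{BV_K}\sqrt{BV_K} \leq \sqrt{BV_K}\sqrt{\alpha K} = \sqrt{\alpha B V_K K}.
\end{align*}
It therefore suffices that the bracket is at least a constant. The proof of Theorem~\ref{thm:general_ns_gap_dependent} already shows, for $K>1$ and some finite $\Delta_i^{\min}$, that
\begin{align*}
\alpha\beta_K\sum_{i=1}^n \frac{1}{\Delta_i^{\min}} + \frac{1}{\alpha}\sum_{i=1}^n G_i \geq 2\sqrt{B\beta_K} \geq 1 .
\end{align*}
Since $\sum_i G_i \leq nG_{\max}$, the bracket in the corollary is also at least $1$, and $BV_K$ is dominated.

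\textbf{The term $nG_{\max}$ (main obstacle).} We need $nG_{\max} \lesssim \sqrt{\alpha B V_K K}\cdot\frac{nG_{\max}}{\alpha}$, which is equivalent to $\sqrt{BV_K K/\alpha} \gtrsim 1$, i.e. $V_K \gtrsim \frac{\alpha}{BK}$. This is exactly the lower assumption on $V_K$. Concretely, $V_K \geq \frac{\alpha}{BK}$ gives $\sqrt{\alpha BV_KK}\geq \alpha$, hence $\sqrt{\alpha BV_KK}\cdot \frac{nG_{\max}}{\alpha} \geq nG_{\max}$.

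Collecting the three terms gives the stated bound up to a universal constant. The role of $K>1$ is only to guarantee $\beta_K \geq 1$, which the lower bound on the bracket uses.
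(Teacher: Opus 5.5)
Your proposal is correct and follows the paper's proof. Both arguments apply the first case of Theorem~\ref{thm:general_ns_gap_dependent}, use $V_K \leq K\alpha/B$ to get $BV_K \leq \sqrt{\alpha B V_K K}$, use $\sum_i G_i \leq nG_{\max}$, and use $V_K \geq \alpha/(BK)$ to get $\sqrt{\alpha B V_K K}\cdot nG_{\max}/\alpha \geq nG_{\max}$. You also state explicitly that absorbing $BV_K$ requires the bracket to be at least $1$, via the inequality from the theorem's proof, and you treat the degenerate all-$\Delta_i^{\min}=+\infty$ case; the paper's corollary proof only uses both of these implicitly.
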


\begin{proof}
The upper bound on $V_K$ implies $B V_K \leq \sqrt{\alpha B  V_K K }$.
Thus, the last term in~(\ref{eq:gap_ns_bound}) is absorbed by the leading term.
By the definition of $G_{\max}$, we have $\sum_{i = 1}^n G_i \leq n G_{\max}$.
Finally, since $V_K \geq \frac{\alpha}{B K}$, we obtain $\sqrt{\alpha B  V_K K } \cdot \frac{n G_{\max}}{\alpha} \geq n G_{\max}$ 
and prove the claim. 
\end{proof}

\begin{theorem}[Gap-independent bound]\label{thm:general_ns_gap_independent}
Let the assumptions of Theorem~\ref{thm:master_bound} be satisfied.
Suppose first that $V_K \geq \alpha \sqrt{\frac{n \beta_K}{B K}}$.
Then, choosing
$W = \max\left\{1, \left(\frac{K^2 n \alpha^2 \beta_K}{B V_K^2}\right)^{\frac{1}{3}}\right\}$
yields
\begin{align}
\regretdyn
&\lesssim n G_{\max}
   + (\alpha^2 n B^2)^{\frac{1}{3}} K^{\frac{2}{3}} V_K^{\frac{1}{3}} \beta_K^{\frac{1}{3}} + B V_K 
   + \left(\frac{K B V_K^2}{n \alpha^2 \beta_K}\right)^{\frac{1}{3}}
   \left(\alpha \beta_K \sum_{i = 1}^n \left(\log \frac{G_i}{\alpha B}\right)_+
      + \sum_{i = 1}^n G_i\right). \label{eq:ind_ns_bound}
\end{align}
Otherwise, choosing $W = K$, we obtain
\begin{align}
\regretdyn
&\lesssim n G_{\max} + \alpha \sqrt{n B K \beta_K}
   + \alpha \beta_K \sum_{i = 1}^n \left(\log \frac{G_i}{\alpha B}\right)_+ \label{eq:ind_stationary_bound}
\end{align}
\end{theorem}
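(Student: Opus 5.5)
We derive this from Theorem~\ref{thm:master_bound} with $\varepsilon=\frac12$, taking the second term $\tceil{\frac{K}{W}}\frac{nB}{\theta}$ in the minimum. Using $\tceil{\frac{K}{W}}\le \frac{2K}{W}$ for $W\le K$, this gives, up to absolute constants and for all $\theta>0$ and $W\in[K]$,
\begin{align*}
\regretdyn \lesssim n G_{\max} + B V_K W + K\theta + \frac{K}{W}\cdot\frac{\alpha^2\beta_K nB}{\theta}
+ \frac{K}{W}\Big(\alpha\beta_K\sum_{i=1}^n\Big(\log\frac{G_i}{\alpha B}\Big)_+ + \sum_{i=1}^n G_i\Big).
\end{align*}
We first optimize over $\theta$, balancing $K\theta$ against $\frac{K}{W}\frac{\alpha^2\beta_K nB}{\theta}$. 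The choice is $\theta=\alpha\sqrt{nB\beta_K/W}$, and it yields a contribution of order $K\alpha\sqrt{nB\beta_K/W}$.

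\textbf{High-variation regime.} Suppose $V_K\ge\alpha\sqrt{\frac{n\beta_K}{BK}}$. We balance $BV_KW$ against $K\alpha\sqrt{nB\beta_K}\,W^{-1/2}$, which gives $W^{3/2}=\frac{K\alpha\sqrt{n\beta_K}}{\sqrt{B}V_K}$, i.e.\ $W_0=\big(\frac{K^2n\alpha^2\beta_K}{BV_K^2}\big)^{1/3}$. The lower bound on $V_K$ is exactly what ensures $W_0\le K$: indeed $W_0^3\le K^3$ is equivalent to $n\alpha^2\beta_K\le BKV_K^2$.

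With $W=\max\{1,W_0\}$ rounded up, we use $W\le W_0+2$ (or $W\le 2W_0$ when $W_0\ge1$). Then $BV_KW\lesssim BV_K+BV_KW_0$, and
\begin{align*}
BV_KW_0=(\alpha^2nB^2)^{1/3}K^{2/3}V_K^{1/3}\beta_K^{1/3}.
\end{align*}
Since $W\ge W_0$, the term $K\alpha\sqrt{nB\beta_K/W}$ is at most the same quantity.

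For the remaining $\frac{K}{W}$ factor, $W\ge W_0$ gives
\begin{align*}
\frac{K}{W}\le\frac{K}{W_0}=\Big(\frac{KBV_K^2}{n\alpha^2\beta_K}\Big)^{1/3},
\end{align*}
which multiplies the log and $G_i$ sums exactly as in~(\ref{eq:ind_ns_bound}). Note that $\frac{K}{W}\le K/W_0$ also covers the case where the max is $1$, since then $W=1\ge W_0$. Integer rounding only affects constants.

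\textbf{Low-variation regime.} Otherwise, we take $W=K$, so $\tceil{K/W}=1$. Then $BV_KW=BV_KK<\alpha\sqrt{nBK\beta_K}$ by the assumed upper bound on $V_K$. Optimizing over $\theta$ gives $\theta=\alpha\sqrt{nB\beta_K/K}$ and a term $\alpha\sqrt{nBK\beta_K}$. The remaining terms reduce to the log sum plus $\sum_i G_i\le nG_{\max}$, which gives~(\ref{eq:ind_stationary_bound}).

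\textbf{Main obstacle.} The delicate point is bookkeeping rather than any new idea. One must check that the constraint $W\in[K]$ holds in each regime, and that the rounding and the $\max\{1,\cdot\}$ only cost an additive $BV_K$ and constant factors. One must also verify that $\theta$ is a legitimate free parameter: it is, since Theorem~\ref{thm:master_bound} holds for all $\theta>0$ and $\theta$ enters only the deterministic terms, so it may depend on $K$, $W$, $n$ and $B$.
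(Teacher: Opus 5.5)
Your proposal is correct and follows the paper's proof essentially step for step. Like the paper, you set $\varepsilon=\tfrac12$ in Theorem~\ref{thm:master_bound}, take the second term of the minimum, and choose $\theta=\alpha\sqrt{nB\beta_K/W}$. You then balance $BV_KW$ against $K\alpha\sqrt{nB\beta_K/W}$ in the high-variation regime and take $W=K$ in the low-variation regime. Your explicit checks that $W_0\le K$ and that rounding and the $\max\{1,\cdot\}$ cost only an additive $BV_K$ and constant factors are slightly more careful than the paper's write-up, but the argument is the same.
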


\begin{proof}
Setting $\varepsilon = \frac{1}{2}$ in Theorem~\ref{thm:master_bound} and taking the second term
in the minimum, we obtain
\begin{align*}
\regretdyn
\lesssim n G_{\max} + B V_K W + \underbrace{K \theta
   + \frac{K}{W}\frac{n B \alpha^2 \beta_K}{\theta}}_{A_1}
   + \frac{K}{W}\left(
   \alpha \beta_K \sum_{i = 1}^n \left(\log \frac{G_i}{\alpha B}\right)_+
   + \sum_{i = 1}^n G_i\right).
\end{align*}
We choose $\theta^* = \alpha \sqrt{\frac{n B \beta_K}{W}}$ to minimize $A_1$ and get
$A_1\lesssim K \alpha \sqrt{\frac{n B \beta_K}{W}}$.
Therefore,
\begin{align}\label{eq:ind_intermediate_bound}
\regretdyn
&\lesssim n G_{\max} + \underbrace{B V_K W + K \alpha \sqrt{\frac{n B \beta_K}{W}}}_{A_2}
   + \frac{K}{W}\left(\alpha \beta_K \sum_{i = 1}^n \left(\log \frac{G_i}{\alpha B}\right)_+
   + \sum_{i = 1}^n G_i\right).
\end{align}
Suppose that $V_K \geq \alpha \sqrt{\frac{n \beta_K}{B K}}$.
Then, $\left(\frac{K^2 n \alpha^2 \beta_K}{B V_K^2}\right)^{\frac{1}{3}} \leq K$.
The choice of $W$ in the theorem balances the terms in $A_2$. Substituting 
$W = \max\left\{1, \left(\frac{K^2 n \alpha^2 \beta_K}{B V_K^2}\right)^{\frac{1}{3}}\right\}$, we obtain
\begin{align*}
A_2 &\lesssim B V_K
   + (\alpha^2 n B^2)^{\frac{1}{3}} K^{\frac{2}{3}} V_K^{\frac{1}{3}} \beta_K^{\frac{1}{3}}
\quad\text{and}\quad
\frac{K}{W} \leq \left(\frac{K B V_K^2}{n \alpha^2 \beta_K}\right)^{\frac{1}{3}}.
\end{align*}
Substituting these inequalities into~(\ref{eq:ind_intermediate_bound})
proves~(\ref{eq:ind_ns_bound}).
Now, suppose that $V_K < \alpha \sqrt{\frac{n \beta_K}{B K}}$.
Choosing $W = K$, we have $B V_K K < \alpha \sqrt{n B K \beta_K}$ and 
\begin{align*}
\regretdyn
&\lesssim n G_{\max} + \alpha \sqrt{n B K \beta_K}
   + \alpha \beta_K \sum_{i = 1}^n \left(\log \frac{G_i}{\alpha B}\right)_+ + \sum_{i = 1}^n G_i.
\end{align*}
It remains to notice that $\sum_{i = 1}^n G_i \leq n G_{\max}$.
\end{proof}
\begin{corollary}\label{cor:general_ns_gap_independent}
Let the assumptions of Theorem~\ref{thm:general_ns_gap_independent} be satisfied.
Suppose that $\alpha \sqrt{\frac{n \beta_K}{B K}}
\leq V_K \leq \alpha K \sqrt{\frac{n \beta_K}{B}}$.
Then, choosing the window as in Theorem~\ref{thm:general_ns_gap_independent} yields
\begin{align*}
\regretdyn
&\leq \widetilde{\mathcal O}\Bigg(
   \left(\alpha^2 n B^2\right)^{\frac{1}{3}} K^{\frac{2}{3}} V_K^{\frac{1}{3}}
+ \left(\alpha n^2 B\right)^{\frac{1}{3}} K^{\frac{1}{3}} V_K^{\frac{2}{3}} \cdot \frac{G_{\max}}{\alpha}\Bigg).
\end{align*}
If instead $V_K < \alpha \sqrt{\frac{n \beta_K}{B K}}$, choosing $W = K$ yields
\begin{align*}
\regretdyn
&\lesssim \beta_K n G_{\max} + \alpha \sqrt{n B K \beta_K}.
\end{align*}
\end{corollary}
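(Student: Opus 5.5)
The plan is to start from the bound~(\ref{eq:ind_ns_bound}) of Theorem~\ref{thm:general_ns_gap_independent} in the regime $V_K \geq \alpha\sqrt{\nicefrac{n\beta_K}{BK}}$, and from~(\ref{eq:ind_stationary_bound}) otherwise. The corollary then follows by showing that every remaining term is dominated, up to logarithmic factors, by one of the two displayed terms. Throughout, $\beta_K$ is hidden in $\widetilde{\mathcal O}$.

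\textbf{First regime.} There are four terms to absorb.
\begin{itemize}
\item[(a)] The leading term $(\alpha^2 nB^2)^{1/3}K^{2/3}V_K^{1/3}\beta_K^{1/3}$ is already the first displayed term.
\item[(b)] For $BV_K$, the upper assumption $V_K \leq \alpha K\sqrt{\nicefrac{n\beta_K}{B}}$ gives $V_K^{2/3} \leq (\alpha K)^{2/3}(n\beta_K/B)^{1/3}$. Multiplying by $BV_K^{1/3}$ yields $BV_K \leq (\alpha^2 nB^2\beta_K)^{1/3}K^{2/3}V_K^{1/3}$, which is the leading term.
\item[(c)] The term with $\sum_i G_i$ is bounded using $\sum_i G_i\leq nG_{\max}$:
\[
\Big(\tfrac{KBV_K^2}{n\alpha^2\beta_K}\Big)^{1/3} nG_{\max} \leq (\alpha n^2B)^{1/3}K^{1/3}V_K^{2/3}\cdot\tfrac{G_{\max}}{\alpha},
\]
which is exactly the second displayed term.
\item[(d)] The logarithmic term $\alpha\beta_K\sum_i(\log\nicefrac{G_i}{\alpha B})_+$ is the one that needs care.
\end{itemize}

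\textbf{Handling the logarithmic term (d).} Bound $(\log x)_+\leq x$, giving $\alpha\beta_K(\log\nicefrac{G_i}{\alpha B})_+ \leq \beta_K G_i/B \leq \beta_K G_i$. This reduces (d) to the $\sum_i G_i$ term of (c) up to the factor $\beta_K$, which $\widetilde{\mathcal O}$ absorbs. If a sharper absorption is preferred, one can keep the factor $(\log)_+$ as a logarithm of problem parameters and treat it as suppressed. I will use the crude bound, since it matches the form of the stated second term.

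\textbf{Remaining term $nG_{\max}$.} Note that $W\leq K$ gives $K/W\geq 1$, so the prefactor $(\nicefrac{KBV_K^2}{n\alpha^2\beta_K})^{1/3}$ is at least of order $1$. Equivalently, this follows from the lower assumption on $V_K$. Hence $nG_{\max}$ is dominated by the second term up to $\beta_K^{1/3}$.

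\textbf{Second regime.} Suppose $V_K<\alpha\sqrt{\nicefrac{n\beta_K}{BK}}$. Apply~(\ref{eq:ind_stationary_bound}) with the same bound $(\log x)_+\leq x$:
\[
\alpha\beta_K\sum_i(\log\nicefrac{G_i}{\alpha B})_+\leq \beta_K\sum_i G_i/B\leq \beta_K nG_{\max}.
\]
Together with $nG_{\max}\leq\beta_K nG_{\max}$ (since $\beta_K\geq1$), this yields $\beta_K nG_{\max}+\alpha\sqrt{nBK\beta_K}$.

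\textbf{Main obstacle.} The only delicate point is checking that the polynomial prefactors of the subdominant terms line up exactly with the two displayed terms. I expect this to be routine exponent bookkeeping, assuming the $\widetilde{\mathcal O}$ convention hides all powers of $\beta_K$.
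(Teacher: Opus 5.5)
Your proposal is correct and follows essentially the same argument as the paper. You bound $(\log x)_+\le x$ to fold the logarithmic term into a $\beta_K\sum_i G_i$ contribution, use the upper bound on $V_K$ to absorb $BV_K$ into the leading term, use the lower bound to get $\bigl(\nicefrac{KBV_K^2}{n\alpha^2\beta_K}\bigr)^{1/3}\ge 1$ so that $nG_{\max}$ is absorbed, hide the resulting powers of $\beta_K$, and treat the small-variation regime via (\ref{eq:ind_stationary_bound}) with $\beta_K\ge 1$, exactly as the paper does.
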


\begin{proof}
Since $(\log x)_+ \leq x$ for $x \geq 0$, $B \geq 1$, and $\beta_K \geq 1$, we have
\begin{align*}
\alpha \beta_K \sum_{i = 1}^n \left(\log \frac{G_i}{\alpha B}\right)_+ + \sum_{i = 1}^n G_i
&\leq \left(1 + \frac{\beta_K}{B}\right)\sum_{i = 1}^n G_i
\leq 2\beta_K n G_{\max}.
\end{align*}
The upper bound on $V_K$ gives $B V_K \leq (\alpha^2 n B^2)^{\frac{1}{3}}
K^{\frac{2}{3}} V_K^{\frac{1}{3}} \beta_K^{\frac{1}{3}}$.
The lower bound gives $\left(\nicefrac{K B V_K^2}{n \alpha^2 \beta_K}\right)^{1/3} \geq 1$.
Substituting these inequalities into~(\ref{eq:ind_ns_bound}) and absorbing $n G_{\max}$, we obtain
\begin{align*}
\regretdyn
&\lesssim \left(\alpha^2 n B^2\right)^{\frac{1}{3}}
   K^{\frac{2}{3}} V_K^{\frac{1}{3}} \beta_K^{\frac{1}{3}}
+ \left(\alpha n^2 B\right)^{\frac{1}{3}}
   K^{\frac{1}{3}} V_K^{\frac{2}{3}} \beta_K^{\frac{2}{3}} \frac{G_{\max}}{\alpha}.
\end{align*}
Hiding logarithmic factors proves the first bound.
For the small-variation regime, apply the same logarithmic bound to~(\ref{eq:ind_stationary_bound})
and use $\beta_K \geq 1$ to prove the second bound.
\end{proof}

\begin{theorem}[Gap-independent bound for means bounded away from zero]
\label{thm:general_ns_positive_mean}
Let the assumptions of Theorem~\ref{thm:master_bound} be satisfied.
Additionally, suppose that $\mu_{i,k} \geq m > 0$ for all $i \in [n]$ and $k \in [K]$.
Suppose first that $V_K \geq \frac{\alpha}{B}\sqrt{\frac{(B + n)\beta_K}{K}}$.
Then, choosing
$W = \max\left\{1, \left(\frac{K^2(B + n)\alpha^2 \beta_K}{B^2 V_K^2}\right)^{\frac{1}{3}}\right\}$
yields
\begin{align}
\regretdyn
&\lesssim n G_{\max}
   + K^{\frac{2}{3}}\left(B V_K\right)^{\frac{1}{3}} \left((B + n)\alpha^2 \beta_K\right)^{\frac{1}{3}}
   + B V_K \nonumber\\
&\quad + \left(
   \frac{K B^2 V_K^2}{(B + n)\alpha^2 \beta_K}\right)^{\frac{1}{3}}
   \left(\frac{\alpha^2 n \beta_K}{m}
   + \alpha \beta_K \sum_{i = 1}^n \left(\log \frac{G_i}{\alpha B}\right)_+
   + \sum_{i = 1}^n G_i\right). \label{eq:positive_ns_bound}
\end{align}
Otherwise, choosing $W = K$, we obtain
\begin{align}
\regretdyn
&\lesssim n G_{\max} + \alpha \sqrt{K(B + n)\beta_K}
   + \frac{\alpha^2 n \beta_K}{m}
   + \alpha \beta_K \sum_{i = 1}^n \left(\log \frac{G_i}{\alpha B}\right)_+. \label{eq:positive_stationary_bound}
\end{align}
\end{theorem}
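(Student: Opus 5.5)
The proof follows the pattern of Theorems~\ref{thm:general_ns_gap_dependent} and~\ref{thm:general_ns_gap_independent}, but now we use the \emph{third} term in the minimum of Theorem~\ref{thm:master_bound}. Fix $\varepsilon = \frac{1}{2}$. Since $\mu_{i,\tau_q} \geq m$, the third term is at most
\begin{align*}
\sum_{q=1}^{\tceil{K/W}} \left(\tfrac{3}{2} + \tfrac{L_{\tau_q}^*}{\theta}\right)\frac{n}{m}.
\end{align*}
To control $L_{\tau_q}^*$, we use $\sum_i L_{\tau_q}^*/\mu_{i,\tau_q} \lesssim B + n$. This follows from Lemma~\ref{lemma:number_of_tasks}, because $L^*$ equals the threshold $t^*$ of that lemma; this is the same observation as in the proof of Proposition~\ref{prop:large_batch_phi}. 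The bound $(L^*/\theta)\, n/m$ is too crude. Instead, we return to the per-worker bound \eqref{eq:iwifnkskmdknf} before it is summed: the contribution of block $q$ is at most
\begin{align*}
\frac{\alpha^2\beta_K}{1}\sum_i\left(\frac{3/2}{\mu_{i,\tau_q}} + \frac{L_{\tau_q}^*}{\theta\,\mu_{i,\tau_q}}\right)
\lesssim \alpha^2\beta_K\left(\frac{n}{m} + \frac{B+n}{\theta}\right).
\end{align*}
This is exactly what the third term in the minimum of Theorem~\ref{thm:master_bound} gives once we apply $\sum_i L^*_{\tau_q}/\mu_{i,\tau_q}\le B+n-1$ for each $q$. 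Multiplying by $\tceil{K/W}\le 2K/W$ bounds the third term by $\frac{K}{W}\alpha^2\beta_K\bigl(\frac{n}{m}+\frac{B+n}{\theta}\bigr)$.

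Substituting into \eqref{eq:ksndhfiuwehfjdk} gives
\begin{align*}
\regretdyn \lesssim nG_{\max} + BV_KW + K\theta + \frac{K}{W}\frac{(B+n)\alpha^2\beta_K}{\theta} + \frac{K}{W}\left(\frac{\alpha^2 n\beta_K}{m} + \alpha\beta_K\sum_i\left(\log\frac{G_i}{\alpha B}\right)_+ + \sum_i G_i\right).
\end{align*}
We take $\theta = \alpha\sqrt{(B+n)\beta_K/W}$, which balances $K\theta$ against the $1/\theta$ term and yields $K\alpha\sqrt{(B+n)\beta_K/W}$. We then balance $BV_KW$ against $K\alpha\sqrt{(B+n)\beta_K}\,W^{-1/2}$, which gives $W^{3/2} = K\alpha\sqrt{(B+n)\beta_K}/(BV_K)$, i.e.\ the stated $W = \bigl(K^2(B+n)\alpha^2\beta_K/(B^2V_K^2)\bigr)^{1/3}$.

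Next we check the regime. The condition $V_K \geq \frac{\alpha}{B}\sqrt{(B+n)\beta_K/K}$ is equivalent to this $W$ being at most $K$. Taking the maximum with $1$ adds at most $BV_K$. At the balanced $W$ both balanced terms equal $K^{2/3}(BV_K)^{1/3}((B+n)\alpha^2\beta_K)^{1/3}$, and $K/W \le \bigl(KB^2V_K^2/((B+n)\alpha^2\beta_K)\bigr)^{1/3}$. Together these give \eqref{eq:positive_ns_bound}.

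In the other regime we set $W=K$, so $\tceil{K/W}=1$ and $BV_KK<\alpha\sqrt{K(B+n)\beta_K}$. With $\theta=\alpha\sqrt{(B+n)\beta_K/K}$ the two balanced terms give $\alpha\sqrt{K(B+n)\beta_K}$. The remaining terms are $\alpha^2 n\beta_K/m$, the logarithmic sum, and $\sum_i G_i\le nG_{\max}$, which proves \eqref{eq:positive_stationary_bound}.

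The main obstacle is the step controlling $L_{\tau_q}^*$. We need $\sum_i L^*/\mu_i \le B+n-1$, which requires identifying $L^*_{\tau_q}$ with $t^*$ from Lemma~\ref{lemma:number_of_tasks}. The argument is that $L^* = \min_{\ba}\max_i a_i\mu_i$ is the smallest $t$ for which $\sum_i\lfloor t/\mu_i\rfloor\ge B$: any allocation with $\max_i a_i\mu_i\le t$ has $a_i\le\lfloor t/\mu_i\rfloor$, and conversely such $a_i$ can be chosen whenever the floor sum reaches $B$. Everything after this step is routine balancing.
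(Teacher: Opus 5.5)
Your proposal is correct and follows the paper's proof essentially step by step. It sets $\varepsilon=\tfrac12$ with the third term of the minimum in Theorem~\ref{thm:master_bound}, bounds $\sum_i 1/\mu_{i,\tau_q}\le n/m$ and $\sum_i L^*_{\tau_q}/\mu_{i,\tau_q}\le B+n-1$ via Lemma~\ref{lemma:number_of_tasks}, chooses $\theta=\alpha\sqrt{(B+n)\beta_K/W}$, balances $BV_KW$ against $K\alpha\sqrt{(B+n)\beta_K/W}$, and uses $W=K$ in the small-variation regime. Your explicit check that $L^*_{\tau_q}$ equals the threshold $t^*$ of that lemma fills in a step the paper uses silently; the detour back to (\ref{eq:iwifnkskmdknf}) is unnecessary, since the third term of the minimum already splits as you need.
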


\begin{proof}
Setting $\varepsilon = \frac{1}{2}$ in Theorem~\ref{thm:master_bound} and taking the third term
in the minimum, we obtain
\begin{align*}
\regretdyn
&\lesssim n G_{\max} + B V_K W + K \theta 
   + \alpha^2 \beta_K
   \sum_{q = 1}^{\tceil{\frac{K}{W}}}
   \left(1 + \frac{L_{\tau_q}^*}{\theta}\right)
   \sum_{i = 1}^n \frac{1}{\mu_{i,\tau_q}}
+ \frac{K}{W}\left(
   \alpha \beta_K \sum_{i = 1}^n \left(\log \frac{G_i}{\alpha B}\right)_+
   + \sum_{i = 1}^n G_i\right).
\end{align*}
Since $\mu_{i,k} \geq m$, we have $\sum_{i = 1}^n \frac{1}{\mu_{i,\tau_q}} \leq \frac{n}{m}$.
Moreover, Lemma~\ref{lemma:number_of_tasks} yields
$\sum_{i = 1}^n \frac{L_{\tau_q}^*}{\mu_{i,\tau_q}} \leq B + n - 1 \leq B + n$.
Therefore,
\begin{align*}
\regretdyn
&\lesssim n G_{\max} + B V_K W + \underbrace{K \theta
   + \frac{K}{W}\frac{(B + n)\alpha^2 \beta_K}{\theta}}_{A_1}
+ \frac{K}{W}\left(\frac{\alpha^2 n \beta_K}{m}
   + \alpha \beta_K \sum_{i = 1}^n \left(\log \frac{G_i}{\alpha B}\right)_+
   + \sum_{i = 1}^n G_i\right).
\end{align*}
As in the proof of Theorem~\ref{thm:general_ns_gap_independent}, 
we choose $\theta^* = \alpha \sqrt{\frac{(B + n)\beta_K}{W}}$ to minimize $A_1$.
Thus, we get
\begin{align*}
K \theta^* + \frac{K}{W}\frac{(B + n)\alpha^2 \beta_K}{\theta^*}
\lesssim K \alpha \sqrt{\frac{(B + n)\beta_K}{W}},
\end{align*}
which implies
\begin{align}\label{eq:positive_intermediate_bound}
\regretdyn
&\lesssim n G_{\max} + \underbrace{B V_K W + K \alpha \sqrt{\frac{(B + n)\beta_K}{W}}}_{A_2}
+ \frac{K}{W}\left(\frac{\alpha^2 n \beta_K}{m}
   + \alpha \beta_K \sum_{i = 1}^n \left(\log \frac{G_i}{\alpha B}\right)_+
   + \sum_{i = 1}^n G_i\right).
\end{align}

Suppose that $V_K \geq \frac{\alpha}{B}\sqrt{\frac{(B + n)\beta_K}{K}}$.
Then, choosing 
$W = \max\left\{1, \left(\frac{K^2(B + n)\alpha^2 \beta_K}{B^2 V_K^2}\right)^{\frac{1}{3}}\right\} \leq K$,
we balance the terms in $A_2$.
$\left(\frac{K^2(B + n)\alpha^2 \beta_K}{B^2 V_K^2}\right)^{\frac{1}{3}} \leq K$.
Therefore, 
$A_2 \lesssim B V_K + K^{\frac{2}{3}}\left(B V_K\right)^{\frac{1}{3}} \left((B + n)\alpha^2 \beta_K\right)^{\frac{1}{3}}$ and 
\begin{align*}
\regretdyn
&\lesssim n G_{\max} + B V_K + K^{\frac{2}{3}}\left(B V_K\right)^{\frac{1}{3}} \left((B + n)\alpha^2 \beta_K\right)^{\frac{1}{3}}
+ \left(\frac{B^2 K V_K^2}{(B + n)\alpha^2 \beta_K}\right)^{\frac{1}{3}} \left(\frac{\alpha^2 n \beta_K}{m}
   + \alpha \beta_K \sum_{i = 1}^n \left(\log \frac{G_i}{\alpha B}\right)_+
   + \sum_{i = 1}^n G_i\right).
\end{align*}

Now, suppose that $V_K < \frac{\alpha}{B}\sqrt{\frac{(B + n)\beta_K}{K}}$.
Choosing $W = K$, we have $B V_K K < \alpha \sqrt{K(B + n)\beta_K}$.
Substituting this inequality proves~(\ref{eq:positive_stationary_bound}).
\end{proof}

\begin{corollary}\label{cor:general_ns_positive_mean}
Let the assumptions of Theorem~\ref{thm:general_ns_positive_mean} be satisfied.
Suppose that $\frac{\alpha}{B}\sqrt{\frac{(B + n)\beta_K}{K}}
\leq V_K \leq \frac{\alpha}{B}\sqrt{(B + n)\beta_K} K$.
Then, choosing the window as in Theorem~\ref{thm:general_ns_positive_mean} yields
\begin{align*}
\regretdyn
&\leq \widetilde{\mathcal O}\Bigg(
   \left(\alpha^2 B(B + n)\right)^{\frac{1}{3}}
   K^{\frac{2}{3}} V_K^{\frac{1}{3}}
+ \left(\frac{\alpha n^3 B^2}{B + n}\right)^{\frac{1}{3}}
   K^{\frac{1}{3}} V_K^{\frac{2}{3}}
   \left(\frac{\alpha}{m\beta_K^{\frac{1}{3}}}
   + \frac{G_{\max}}{\alpha\beta_K^{\frac{4}{3}}}\right)\Bigg).
\end{align*}
If instead $V_K<\frac{\alpha}{B}\sqrt{\frac{(B+n)\beta_K}{K}}$, choosing $W=K$ yields
\begin{align*}
\regretdyn
&\lesssim \left(1+\frac{\beta_K}{B}\right)nG_{\max}+\alpha\sqrt{K(B+n)\beta_K}
   +\frac{\alpha^2 n\beta_K}{m}.
\end{align*}
\end{corollary}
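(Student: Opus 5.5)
The corollary follows from Theorem~\ref{thm:general_ns_positive_mean} by simplifying the two regimes separately, much as Corollary~\ref{cor:general_ns_gap_independent} was obtained from Theorem~\ref{thm:general_ns_gap_independent}. For the small-variation regime $V_K<\frac{\alpha}{B}\sqrt{\frac{(B+n)\beta_K}{K}}$, start from~(\ref{eq:positive_stationary_bound}). The only term to handle is $\alpha\beta_K\sum_i(\log\frac{G_i}{\alpha B})_+$. Using $(\log x)_+\le x$ for $x\ge0$ gives $\alpha\beta_K(\log\frac{G_i}{\alpha B})_+\le \frac{\beta_K}{B}G_i$. Summing over $i$ with $\sum_i G_i\le nG_{\max}$ produces $\frac{\beta_K}{B}nG_{\max}$. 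Combined with the $nG_{\max}$ term, this gives the factor $(1+\frac{\beta_K}{B})nG_{\max}$, and the other two terms are copied unchanged.

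For the large-variation regime, start from~(\ref{eq:positive_ns_bound}) and make three reductions.
\begin{enumerate}
\item The upper bound $V_K\le\frac{\alpha}{B}\sqrt{(B+n)\beta_K}\,K$ implies $B V_K\le K^{2/3}(BV_K)^{1/3}((B+n)\alpha^2\beta_K)^{1/3}$. To see this, cube the claimed inequality to get $B^2V_K^2\le K^2(B+n)\alpha^2\beta_K$, which is exactly the squared hypothesis. So the $BV_K$ term is absorbed into the leading term.
\item The leading term equals $(\alpha^2B(B+n))^{1/3}K^{2/3}V_K^{1/3}\beta_K^{1/3}$.
\item The lower bound on $V_K$ makes the prefactor $P:=\big(\frac{KB^2V_K^2}{(B+n)\alpha^2\beta_K}\big)^{1/3}$ at least $1$, since this is the condition $W\le K$ rearranged. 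Hence $nG_{\max}\le P\,nG_{\max}$ and the standalone $nG_{\max}$ term is absorbed into the last term.
\end{enumerate}

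It remains to write $P$ times the bracket in the claimed form. Bound the bracket by $\frac{\alpha^2n\beta_K}{m}+2\beta_K nG_{\max}$, using the same logarithmic estimate as in the small-variation regime together with $B\ge1$. Then
$P\cdot\frac{\alpha^2 n\beta_K}{m}=\big(\frac{\alpha^4 n^3B^2\beta_K^2}{B+n}\big)^{1/3}K^{1/3}V_K^{2/3}\frac1m=\big(\frac{\alpha n^3B^2}{B+n}\big)^{1/3}K^{1/3}V_K^{2/3}\cdot\frac{\alpha\beta_K^{2/3}}{m}$,
and similarly
$P\cdot\beta_K nG_{\max}=\big(\frac{\alpha n^3B^2}{B+n}\big)^{1/3}K^{1/3}V_K^{2/3}\cdot\frac{G_{\max}\beta_K^{2/3}}{\alpha}$.

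These powers of $\beta_K$ do not match the exponents written in the corollary ($\beta_K^{-1/3}$ and $\beta_K^{-4/3}$). The difference is only a logarithmic factor, and $\widetilde{\mathcal O}$ hides it. The step most likely to go wrong is this exponent bookkeeping of $\alpha$, $n$, $B$ and $\beta_K$ in the prefactor, so I would recompute it carefully to confirm the polynomial dependence is exactly $(\alpha n^3B^2/(B+n))^{1/3}K^{1/3}V_K^{2/3}$, and treat the $\beta_K$ powers as absorbed into $\widetilde{\mathcal O}$.
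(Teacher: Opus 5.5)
Your proposal is correct and follows essentially the same route as the paper's proof. In both, $(\log x)_+\le x$ turns the logarithmic term into $\frac{\beta_K}{B}\sum_i G_i$; the upper bound on $V_K$ absorbs $BV_K$ into the leading term; and the lower bound makes the prefactor at least $1$, so the standalone $nG_{\max}$ is absorbed and the leftover powers of $\beta_K$ are hidden by $\widetilde{\mathcal O}$. Your bookkeeping actually gives slightly smaller $\beta_K$ powers than the paper's intermediate bound, which carries $\beta_K(1+\beta_K)$ before hiding logarithms. The one fact you use without stating it, $\beta_K\ge1$ (needed for $1+\beta_K/B\le 2\beta_K$), holds trivially because $\beta_K=4\log(2BK^3)\ge 4\log 2$.
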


\begin{proof}
Since $(\log x)_+\leq x$ for $x\geq0$, we have
\begin{align*}
\alpha\beta_K\sum_{i=1}^n \left(\log\frac{G_i}{\alpha B}\right)_+ &\leq \frac{\beta_K}{B}\sum_{i=1}^nG_i.
\end{align*}
Therefore, for the chosen window,~(\ref{eq:positive_ns_bound}) yields
\begin{align*}
\regretdyn
&\lesssim nG_{\max}+BV_K
   +K^{\frac{2}{3}}(BV_K)^{\frac{1}{3}}
   \left((B+n)\alpha^2\beta_K\right)^{\frac{1}{3}} \\
&\quad +\left(\frac{KB^2V_K^2}{(B+n)\alpha^2\beta_K}\right)^{\frac{1}{3}}
   \left(\frac{\alpha^2n\beta_K}{m}
   +\left(1+\frac{\beta_K}{B}\right)\sum_{i=1}^nG_i\right).
\end{align*}
The upper bound on $V_K$ gives
$BV_K\leq K^{\frac{2}{3}}(BV_K)^{\frac{1}{3}}
((B+n)\alpha^2\beta_K)^{\frac{1}{3}}$.
The lower bound gives
$\left(\nicefrac{KB^2V_K^2}{(B+n)\alpha^2\beta_K}\right)^{1/3}\geq1$.
Using these inequalities, $\sum_{i=1}^nG_i\leq nG_{\max}$, and
$1+\nicefrac{\beta_K}{B}\leq1+\beta_K$, we obtain
\begin{align*}
\regretdyn
&\lesssim \left(\alpha^2B(B+n)\right)^{\frac{1}{3}}
   K^{\frac{2}{3}}V_K^{\frac{1}{3}}\beta_K^{\frac{1}{3}}
+\left(\frac{\alpha n^3B^2}{B+n}\right)^{\frac{1}{3}}
   K^{\frac{1}{3}}V_K^{\frac{2}{3}}\beta_K(1+\beta_K)
   \left(\frac{\alpha}{m\beta_K^{\frac{1}{3}}}
   +\frac{G_{\max}}{\alpha\beta_K^{\frac{4}{3}}}\right).
\end{align*}
Hiding logarithmic factors proves the first bound.
For the small-variation regime, apply the same logarithmic bound to~(\ref{eq:positive_stationary_bound})
and use $\sum_{i=1}^nG_i\leq nG_{\max}$ to prove the second bound.
\end{proof}

\subsection{Comparison with Baselines}
In this section, we compare our bounds for proxy loss with those in \citep{takemura2021near,nakamura2025multi}
on the common subclass $0 < X_{i,k}^{(u)} \leq 1$.
\begin{theorem}
Let Assumption~\ref{ass:stationary-exp} be satisfied, and suppose that $0<X_{i,k}^{(u)}\leq1$ a.s. for all $i,k,u$
and let $m = \min_{i \in [n]} \mu_i$. 
Then, ATA with $\alpha = 1$ yields gap-dependent regret $n G_{\max} + \alpha^2 \beta_K \sum_{i = 1}^n \nicefrac{1}{\Delta_i}$
and gap-independent regret 
\begin{align*}
\regretdyn
&\lesssim n G_{\max} + \min\left\{\alpha \sqrt{n B K \beta_K}, \alpha \sqrt{K(B + n)\beta_K}
   + \frac{\alpha^2 n \beta_K}{m}\right\}.
\end{align*}
\end{theorem}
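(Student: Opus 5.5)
This statement is a specialization of the stationary ($W=K$) branches of Theorems~\ref{thm:general_ns_gap_dependent}, \ref{thm:general_ns_gap_independent}, and~\ref{thm:general_ns_positive_mean}. The plan is to verify that $\alpha=1$ is admissible, apply each of these theorems with $W=K$, and show that the logarithmic terms vanish.

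\textbf{Step 1 (admissibility of $\alpha=1$).} We need $\alpha_{\max}\leq 1$. Under $0<X\leq 1$ a.s., we have $\mu\in(0,1]$ and $|X-\mu|\leq 1$ a.s. The Orlicz definition (Definition~\ref{def:subexp}) requires $\Exp{\exp(|X-\mu|/t)}\leq 2$, and $|X-\mu|\leq 1$ gives $\Exp{\exp(|X-\mu|/t)}\leq e^{1/t}$. This is at most $2$ only for $t\geq 1/\ln 2$, so the naive bound is $\alpha_{\max}\leq 1/\ln 2$. I will try to sharpen it using the fact that $X-\mu$ has mean zero and range inside an interval of length $1$: for $X\in(0,1]$ with mean $\mu$, the deviation $|X-\mu|$ is at most $\max\{\mu,1-\mu\}$, so $\exp(|X-\mu|)\leq 1+(e-1)|X-\mu|$ by convexity, and one can hope for $\Exp{e^{|X-\mu|}}\leq 1+(e-1)\Exp{|X-\mu|}\leq 1+(e-1)\cdot\tfrac12\leq 2$, using $\Exp{|X-\mu|}\leq 2\mu(1-\mu)\leq \tfrac12$. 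If this fails, I will note that $\alpha_{\max}$ is an absolute constant and the claimed bounds hold up to constants under $\lesssim$. This step, which decides whether the constant works exactly, is the main obstacle. Everything after it is bookkeeping.

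\textbf{Step 2 (killing the logarithmic terms).} In all three stationary bounds there is the extra term $\alpha\beta_K\sum_i(\log\frac{G_i}{\alpha B})_+$. Here $G_i=B\mu_i-L^*\leq B\mu_i\leq B$, so $\frac{G_i}{B}\leq 1$ and, with $\alpha=1$, each $(\log\cdot)_+$ equals $0$. This agrees with the paper's earlier remark that these terms disappear via Lemma~\ref{lemma:expectation_via_orlicz}.

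\textbf{Step 3 (assembling).} The gap-dependent claim is~(\ref{eq:gap_stationary_bound}) with $V_K=0<\alpha/(BK)$ (stationarity), so $W=K$ is the prescribed choice. For the gap-independent claim, use~(\ref{eq:ind_stationary_bound}) and~(\ref{eq:positive_stationary_bound}), both in the regime $V_K=0$ with $W=K$, taking $m=\min_i\mu_i>0$. This gives two upper bounds, $nG_{\max}+\alpha\sqrt{nBK\beta_K}$ and $nG_{\max}+\alpha\sqrt{K(B+n)\beta_K}+\alpha^2 n\beta_K/m$. Since ATA is the same algorithm with $W=K$ in both cases, the regret is bounded by their minimum, which is the displayed inequality.
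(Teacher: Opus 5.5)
Your proposal is correct and follows the paper's own proof: take the $W=K$ branches (valid here since $V_K=0$) of Theorems~\ref{thm:general_ns_gap_dependent}, \ref{thm:general_ns_gap_independent} and~\ref{thm:general_ns_positive_mean} with $\alpha=1$, use $G_i\le B\mu_i\le B$ to make every $(\log\frac{G_i}{\alpha B})_+$ vanish, and take the minimum of the two gap-independent bounds for the same algorithm. The paper simply asserts $\|X-\mu\|_{\psi_1}\le 1$, whereas your Step~1 proves it, and your argument does close: $|X-\mu|\in[0,1]$ gives the chord bound $e^{x}\le 1+(e-1)x$, and with $\mathbb{E}|X-\mu|\le 2\mu(1-\mu)\le \tfrac12$ this yields $\mathbb{E}\, e^{|X-\mu|}\le 1+\tfrac{e-1}{2}<2$, so the hedging is unnecessary; the fallback would not have worked as stated anyway, because the analysis (Lemma~\ref{lemma:sufficient-beta}) needs $\alpha\ge\alpha_{\max}$ for the confidence intervals themselves, so $\alpha_{\max}>1$ with $\alpha=1$ is not merely a constant factor in the final bound.
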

\begin{proof}
First, we notice that if $0 < X_{i,k}^{(u)} \leq 1$, then $\|X_{i,k}^{(u)} - \mu_{i, k}\|_{\psi_1} \leq 1$.
Therefore, we can apply Theorem~\ref{thm:general_ns_gap_dependent} with $\alpha = 1$ 
to obtain a gap-dependent bound.
Since $\log \frac{G_i}{\alpha B} \leq \log \frac{\mu_i}{\alpha} \leq \log \mu_i \leq 0$, we get
for $K \geq 1$
\begin{align*}
\regretdyn
&\lesssim n G_{\max} + \alpha^2 \beta_K \sum_{i = 1}^n \frac{1}{\Delta_i}.
\end{align*}
Consider now the gap-independent bound. Similarly, we bound the sub-exponential term by $0$. 
Then, combining Theorems~\ref{thm:general_ns_positive_mean} and~\ref{thm:general_ns_gap_independent}, we obtain
\begin{align*}
\regretdyn
&\lesssim n G_{\max} + \min\left\{\alpha \sqrt{n B K \beta_K}, \alpha \sqrt{K(B + n)\beta_K}
   + \frac{\alpha^2 n \beta_K}{m}\right\}.
\end{align*}
\end{proof}

\paragraph{The bound of \citet{takemura2021near}.}
We use Theorem~4 of \citet{takemura2021near} with an exact optimization oracle.
Their framework considers base-arm means of the form $\mathbf x_j^\top\boldsymbol\theta$, where the feature
vectors $\mathbf x_j\in\mathbb R^d$ are known, $\boldsymbol\theta\in\mathbb R^d$ is unknown, and $q$ distinct
arms are selected in each round. Suppose that the noise is conditionally $\sigma$-sub-Gaussian and the
absolute base-arm means are bounded by $M$. The features and parameter also have bounded Euclidean norms.
The objective $f(I,\mathbf v)$ must be nondecreasing in $\mathbf v$ and satisfy
$|f(I,\mathbf v)-f(I,\mathbf v')| \leq C\sum_{j\in I}|v_j-v_j'|$.
With the parameter choices specified in their theorem, Algorithm~2 satisfies
\begin{align*}
\bar \regretdyn &\leq \widetilde{\mathcal O}\left(C\left(\sigma d\sqrt{qK}+Mdq\right)\right)
\end{align*}
with probability at least $1-\delta$, for any $\delta\in(0,1)$.
Here, $\bar \regretdyn$ denotes the random pseudo-regret as in Section~\ref{sec:setup}.

\begin{theorem}\label{thm:takemura_proxy_bound}
Let Assumption~\ref{ass:stationary-exp} be satisfied, and suppose that $0<X_{i,k}^{(u)}\leq1$ a.s. for all $i,k,u$.
Then, Algorithm~2 of \citet{takemura2021near}, applied through the reduction below, satisfies
\begin{align*}
\regretdyn &\leq \widetilde{\mathcal O}\left(n\sqrt{BK}+nB\right).
\end{align*}
\end{theorem}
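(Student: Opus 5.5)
We obtain the bound by instantiating Theorem~4 of \citet{takemura2021near} with a suitable encoding of our allocation problem as a combinatorial semi-bandit with linear base-arm means, and then converting their high-probability pseudo-regret bound into a bound on the expected regret $\regretdyn$.

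\textbf{Reduction.} We introduce $nB$ base arms indexed by pairs $(i,u)$, $i\in[n]$, $u\in[B]$. The $(i,u)$th arm represents ``the $u$th task given to worker $i$'', and its mean is $\mu_i=\mathbf x_{(i,u)}^\top\boldsymbol\theta$ with $\mathbf x_{(i,u)}=e_i\in\mathbb R^n$ and $\boldsymbol\theta=\bmu$. Thus $d=n$, $\|\mathbf x_j\|_2=1$, and $\|\boldsymbol\theta\|_2\leq\sqrt n$.

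An allocation $\ba$ corresponds to the super-arm $I(\ba)=\{(i,u):u\leq a_i\}$, which has exactly $q=B$ distinct arms. Pulling $I(\ba)$ yields the observations $X_{i,k}^{(u)}$ for $u\leq a_i$. These are independent, bounded in $(0,1]$, and hence conditionally $\tfrac12$-sub-Gaussian, so $\sigma\leq 1/2$ and $M\leq1$.

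The objective is $f(I,\mathbf v)=\max_{i}\sum_{u:(i,u)\in I} v_{(i,u)}$. Evaluated at the true means, this equals $\max_i a_i\mu_i=\ell(\ba)$. This $f$ is nondecreasing in $\mathbf v$ and satisfies
\[
|f(I,\mathbf v)-f(I,\mathbf v')|\leq\sum_{j\in I}|v_j-v_j'|,
\]
so $C=1$. Since we minimize rather than maximize, we apply their framework to $-f$ (or, equivalently, use lower confidence bounds instead of upper ones). I expect this sign and monotonicity bookkeeping to be the main obstacle: their theorem is stated for monotone reward maximization, and I would need to check that their analysis applies verbatim to minimization of a nondecreasing loss with optimistic, i.e.\ lower, estimates. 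If it does not, I would instead use the map $v\mapsto c-v$ with $c=1$. This keeps the arm values in $[0,1]$, keeps them linear in the transformed parameter after adding a constant feature (so $d=n+1$), and makes the resulting reward monotone.

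A second subtlety is that the exact oracle must restrict to the structured super-arms $I(\ba)$, i.e.\ to prefixes in $u$, rather than arbitrary $B$-subsets. I would argue that this is harmless, since Theorem~4 allows an arbitrary family of feasible super-arms with an exact oracle over that family.

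\textbf{Conclusion.} Plugging $C=1$, $\sigma\leq1/2$, $d=n$ (or $n+1$), $q=B$, and $M\leq1$ into their bound gives
\[
\bar\regretdyn\leq\widetilde{\mathcal O}\bigl(n\sqrt{BK}+nB\bigr)
\]
with probability at least $1-\delta$. To pass to the expectation, we choose $\delta=1/K$. On the failure event we use the crude bound $\bar\regretdyn\leq K\,G_{\max}\leq K B$, which holds because $\mu_i\leq1$. This adds at most $B$ to $\regretdyn=\Exp{\bar\regretdyn}$, and that term is absorbed into $nB$. The $\log(1/\delta)=\log K$ factor is hidden in $\widetilde{\mathcal O}$, which yields the claim.
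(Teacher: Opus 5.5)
Your proposal is correct and follows the paper's proof: the same $nB$-arm encoding with features $\mathbf e_i$, constants $C=1$, $d=n$, $q=B$, $\sigma=1/2$, $M\le 1$, and the same passage to expectation via $\delta$ of order $1/K$ with the crude $BK$ bound on the failure event. The paper settles the sign issue you flag by making your ``equivalently'' option concrete: it observes rewards $-X_{i,k}^{(u)}$, sets $\boldsymbol\theta=-\bmu$, and maximizes the nondecreasing $f(I,\mathbf v)=\min_i\sum_{u:(i,u)\in I}v_{i,u}$, so no constant feature is needed (your $1-v$ fallback also works with $d=n$, since $1-\mu_i=\mathbf e_i^\top(\mathbf 1-\bmu)$).
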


\begin{proof}
Introduce $B$ arms indexed by $(i,1),\dots,(i,B)$ for each worker $i$, so the total number of arms is $nB$.
For each arm $(i,u)$, we define the feature vector as $\mathbf e_i\in\mathbb R^n$ and the unknown parameter
as $\boldsymbol\theta=-\bmu$. Selecting arm $(i,u)$ in round $k$, we observe the reward
$-X_{i,k}^{(u)}$.
Choosing $B$ distinct arms forms a set $I$ and corresponds to the allocation $a_i=|\{u:(i,u)\in I\}|$.
Conversely, every feasible allocation can be represented by such a set. Define the objective
\begin{align*}
f(I,\mathbf v) &:= \min_{i\in[n]}\sum_{u:\,(i,u)\in I}v_{i,u},
\end{align*}
where an empty sum equals zero. By definition, the true mean reward of arm $(i,u)$ is
$\langle\mathbf e_i,\boldsymbol\theta\rangle=-\mu_i$. Therefore, at the true base-arm means $v_{i,u}=-\mu_i$,
we have $f(I,\mathbf v)=-\ell(\ba,\bmu)$. Thus, maximizing $f$ is equivalent to minimizing the proxy loss.
Moreover, $f$ is nondecreasing in every coordinate and
\begin{align*}
|f(I,\mathbf v)-f(I,\mathbf v')|
&\leq \max_{i\in[n]}\left|\sum_{u:\,(i,u)\in I}(v_{i,u}-v_{i,u}')\right|
\leq \sum_{(i,u)\in I}|v_{i,u}-v_{i,u}'|.
\end{align*}
Therefore, we can take $C=1$, $d=n$, and $q=B$.
The required norm bounds hold since $\|\mathbf e_i\|_2=1$ and $\|\boldsymbol\theta\|_2\leq\sqrt n$.
Since $-X_{i,k}^{(u)}\in[-1,0]$ and the observations are independent, the noise is conditionally
$\nicefrac{1}{2}$-sub-Gaussian, and the absolute means are at most $1$.
Thus, we have formulated the proxy-loss minimization problem as a contextual combinatorial semi-bandit problem.
Applying their theorem, we obtain $\bar \regretdyn \leq \widetilde{\mathcal O}\left(n\sqrt{BK}+nB\right)$
with probability at least $1-\delta$.
Since the regret in each round is at most $B$, the total pseudo-regret is at most $BK$. Therefore,
\begin{align*}
\regretdyn = \Exp{\bar \regretdyn}
&\leq \widetilde{\mathcal O}\left(n\sqrt{BK}+nB\right) + \delta BK.
\end{align*}
Choosing $\delta=\nicefrac{1}{2K}$ finishes the proof.
\end{proof}

\paragraph{The bound of \citet{nakamura2025multi}.}
Theorem~3.1 of \citet{nakamura2025multi} considers integer actions
$\ba\in\mathcal A\subseteq\mathbb Z_{\geq0}^d$.
Selecting $a_i$ units of arm $i$ provides $a_i$ independent observations
from its fixed distribution on $[0,1]$.
Write $I_{\ba}:=\{i:a_i>0\}$ and $s:=\max_{\ba\in\mathcal A}|I_{\ba}|$.
Their mean-based loss $r$ satisfies
$|r(\ba,\bmu)-r(\ba,\bmu')|
\leq\kappa_r\sum_{i\in I_{\ba}}|\mu_i-\mu_i'|$,
and an exact oracle minimizing $r(\ba,\bmu)$ is available.
For a unique optimal action $\bar\ba$ and $s\geq2$, the leading term of
their GenCTS bound is
\begin{align*}
\regretdyn
&\leq C_0+\mathcal O\left(
   \kappa_r^2\log s\sum_{i=1}^d\frac{\log K}{\Delta_i^{\mathrm{MP}}}\right),
\end{align*}
where
$\Delta_i^{\mathrm{MP}}
:=\min_{\ba\in\mathcal A\setminus\{\bar\ba\}:\,a_i>0}
\{r(\ba,\bmu)-r(\bar\ba,\bmu)\}$.
Here, $C_0$ collects terms independent of $K$.
\begin{theorem}\label{thm:gencts_proxy_bound}
Let Assumption~\ref{ass:stationary-exp} be satisfied, and suppose that $0<X_{i,k}^{(u)}\leq1$ a.s. for all $i,k,u$.
Assume that $n,B\geq2$ and the optimal proxy allocation $\bar\ba$ is unique.
Then, the GenCTS analysis applied to the proxy-loss objective yields
\begin{align*}
\regretdyn
&\leq C_0+\mathcal O\left(
   B^2\log(\min\{n,B\})
   \sum_{i=1}^n\frac{\log K}{\Delta_i^{\mathrm{MP}}}\right),
\end{align*}
where
$\Delta_i^{\mathrm{MP}}
:=\min_{\ba\in\mathcal A\setminus\{\bar\ba\}:\,a_i>0}
\{\ell(\ba,\bmu)-L^*\}$,
and $C_0$ may depend on the instance but not on $K$.
\end{theorem}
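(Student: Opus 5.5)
We invoke Theorem~3.1 of \citet{nakamura2025multi} as recalled above. The plan is to check its hypotheses for the proxy loss and then identify the constants $\kappa_r$ and $s$ in our setting.

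First, the reduction. Take $d=n$ arms, one per worker, with action set $\mathcal A=\{\ba\in\mathbb Z_{\geq0}^n:\sum_i a_i=B\}$ and mean-based loss $r(\ba,\bmu)=\ell(\ba,\bmu)=\max_{i\in[n]}a_i\mu_i$. Allocating $a_i$ tasks to worker $i$ yields $a_i$ independent observations $X_{i,k}^{(u)}$. By Assumption~\ref{ass:stationary-exp} these come from a fixed distribution, and by hypothesis it is supported in $(0,1]\subseteq[0,1]$, which is exactly their observation model. An exact oracle minimizing $\ell(\cdot,\bmu)$ over $\mathcal A$ is available (RAS, Algorithm~\ref{alg:ras}, or brute force, since the oracle's cost is irrelevant to the regret). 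Uniqueness of $\bar\ba$ is assumed in the statement. With these identifications, their $\Delta_i^{\mathrm{MP}}$ coincides with the quantity defined in the theorem, because $r(\bar\ba,\bmu)=L^*$.

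Second, the Lipschitz constant. For $\ba\in\mathcal A$,
\begin{align*}
|\ell(\ba,\bmu)-\ell(\ba,\bmu')|\leq\max_{i\in I_{\ba}}a_i|\mu_i-\mu_i'|\leq B\sum_{i\in I_{\ba}}|\mu_i-\mu_i'|,
\end{align*}
using $|\max_i x_i-\max_i y_i|\leq\max_i|x_i-y_i|$ and $a_i\leq B$. Hence $\kappa_r=B$ works, and $\kappa_r^2=B^2$.

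Third, the support size. Any $\ba\in\mathcal A$ has at most $n$ positive coordinates, and at most $B$ of them because the coordinates are nonnegative integers summing to $B$. Thus $s\leq\min\{n,B\}$. Conversely, for $n,B\geq2$ the action with one task on each of two workers (and the rest on one of them) has support size $2$, so $s\geq2$ and the theorem applies. Since $\log$ is monotone, $\log s\leq\log\min\{n,B\}$.

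Substituting $\kappa_r=B$ and $\log s\leq\log\min\{n,B\}$ into the leading term gives the stated bound, with the $K$-independent terms collected in $C_0$. The main point requiring care is the hypothesis check rather than any computation. I expect the delicate step to be confirming that their framework allows an action set constrained by $\sum_i a_i=B$ with unbounded per-arm multiplicities up to $B$, and that their $\Delta_i^{\mathrm{MP}}$ is taken over the same action set. If $\kappa_r$ were required to hold over all of $\mathbb Z_{\geq0}^n$ and not just on $\mathcal A$, I would restrict $r$ to $\mathcal A$ and extend it arbitrarily, since the algorithm only ever evaluates $r$ on $\mathcal A$.
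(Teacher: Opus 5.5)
Your proposal is correct and follows the paper's proof: set $r=\ell$ in Theorem~3.1 of \citet{nakamura2025multi}, get $\kappa_r=B$ from the bound $|\ell(\ba,\bmu)-\ell(\ba,\bmu')|\leq\max_{i\in I_{\ba}}a_i|\mu_i-\mu_i'|\leq B\sum_{i\in I_{\ba}}|\mu_i-\mu_i'|$, and take $d=n$ and $s=\min\{n,B\}$. The paper states $s=\min\{n,B\}$ as an equality, which holds because the $B$ tasks can be spread over $\min\{n,B\}$ workers; your $s\leq\min\{n,B\}$, together with your check that $s\geq2$, is equally sufficient since $\log$ is monotone.
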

\begin{proof}
We use $r(\ba,\bmu)=\ell(\ba,\bmu)$ as the objective of the GenCTS oracle
and measure performance by proxy-loss regret.
The feedback model agrees with ours: assigning $a_i$ tasks to worker $i$
provides $a_i$ independent observations with mean $\mu_i$.
For any $\ba\in\mathcal A$, we have
\begin{align*}
|\ell(\ba,\bmu)-\ell(\ba,\bmu')|
&\leq\max_{i\in I_{\ba}}a_i|\mu_i-\mu_i'|
\leq B\sum_{i\in I_{\ba}}|\mu_i-\mu_i'|.
\end{align*}
Thus, $\kappa_r=B$ is admissible.
Moreover, $d=n$ and $s=\min\{n,B\}$.
We then apply Theorem~\ref{thm:gencts_proxy_bound} with $r=\ell$.
\end{proof}

\section{Knapsack Problem}\label{sec:knapsack}
In this section we slightly generalize the approach from the proof sketch for ATA. We first state the three
steps in a more general form and then apply them to the knapsack problem from~\citep{nakamura2025multi}. The main difference is in Step~1.

\textbf{Step 1.} Assume that the regret on a fixed trajectory, restricted to the selected rounds, satisfies
\begin{align*}
\bar \regretdyn \leq \mathcal O\left(\sum_{i = 1}^n \sum_{k \in \mathcal S_i} a_{i,k}'\Delta_{i,k}\right).
\end{align*}
Here, $\mathcal S_i \subseteq [K]$ are sets of rounds, and
$0 \leq a_{i,k}' \leq a_{i,k}$. For each $i$ with $\mathcal S_i \neq \varnothing$, the confidence levels satisfy
$0 < \Delta_i \leq \Delta_{i,k} \leq 2\conf(i,k)$ for every $k \in \mathcal S_i$.

\textbf{Step 2.} We fix $i$ with $\mathcal S_i \neq \varnothing$ and define $\delta_1 < \cdots < \delta_J$ as the
increasing sequence of distinct values in $\{\Delta_{i,k}\}_{k \in \mathcal S_i}$. Additionally, we set
$\delta_0 = 0$. By Lemma~\ref{lemma:weighted_gap_summation}, we have
\begin{align}\label{eq:knapsack_levels}
\sum_{k \in \mathcal S_i} a_{i,k}'\Delta_{i,k} = \sum_{j = 1}^J (\delta_j - \delta_{j-1})
   \sum_{k \in \mathcal S_i:\,\Delta_{i,k} \geq \delta_j} a_{i,k}'.
\end{align}

\textbf{Step 3.} Since $\delta_j \leq \Delta_{i,k} \leq 2\conf(i,k)$ and $a_{i,k}' \leq a_{i,k}$, we bound
$\sum_{k \in \mathcal S_i:\,\Delta_{i,k} \geq \delta_j} a_{i,k}'$ by counting samples up to the last round with
$\Delta_{i,k} \geq \delta_j$, as in Lemma~\ref{lemma:stable_samples}. For confidence intervals constructed for sub-Gaussian 
variables, this number of samples is $\mathcal O\left(\nicefrac{\log K}{\delta_j^2}\right)$, omitting constant factors.
Additionally, we hide the sub-Gaussian parameter in $\mathcal O\left(\cdot\right)$. Then, 
\begin{align*}
\sum_{k \in \mathcal S_i} a_{i,k}'\Delta_{i,k} \leq O\left(\log K \sum_{j = 1}^J
   \frac{\delta_j - \delta_{j-1}}{\delta_j^2}\right) \leq O\left(\frac{\log K}{\delta_1}\right) \leq O\left(\frac{\log K}{\Delta_i}\right).
\end{align*}
Below, we apply this proof idea to the knapsack problem.

\paragraph{Problem setting.}
We consider $n$ types of items. Each item of type $i \in [n]$ has a known weight $w_i > 0$ and an unknown mean
value $\mu_i > 0$. A knapsack of capacity $W$ can contain multiple copies of each item type. The goal is to
maximize the total mean value without exceeding the capacity:
\begin{align*}
\bar \ba \in \argmax_{\ba \in \mathcal A} \ba^T \bmu, \qquad
\mathcal A := \left\{\ba \in \mathbb Z_{\geq 0}^n:\,\ba^T \bw \leq W\right\}.
\end{align*}
We assume $w_i \leq W$ for every $i$.
In each round $k \in [K]$, the algorithm chooses $\ba_k \in \mathcal A$ before observing the rewards. For every
item type $i$, it then observes $a_{i,k}$ independent values $X_{i,k}^{(1)}, \dots, X_{i,k}^{(a_{i,k})}$.
The observations are positive and mutually independent. For each $i$, they have a fixed distribution with mean
$\mu_i$. We assume that $\|X_{i,k}^{(u)} - \mu_i\|_{\psi_2} \leq \sigma$ for a known $\sigma > 0$, where the
sub-Gaussian norm is defined by
\begin{align*}
\|Y\|_{\psi_2} := \inf\left\{c > 0:\,\Exp{\exp(Y^2/c^2)} \leq 2\right\}.
\end{align*}
In this section, the expected regret is defined by
\begin{align*}
\regretdyn := \Exp{\sum_{k = 1}^K \left(\bar \ba^T \bmu - \ba_k^T \bmu\right)}.
\end{align*}

\paragraph{Algorithm.}
We assume that there exists an oracle $O$ that, for given $\bw$, $\bu$, and $W$, returns an allocation maximizing $\ba^T \bu$ over
$\mathcal A$, as in GenCTS \citep{nakamura2025multi}. 
Since the objective is maximized, we use upper confidence bounds. In the first $\min\{n,K\}$
rounds, we fill the knapsack with copies of one item type at a time. For $k > n$, let
\begin{align*}
N_{i,k} := \sum_{q = 1}^{k-1} a_{i,q}, \qquad
\hat \mu_{i,k} := \frac{1}{N_{i,k}}\sum_{q = 1}^{k-1}\sum_{u = 1}^{a_{i,q}} X_{i,q}^{(u)}.
\end{align*}
Using the same argument as in Lemma~\ref{lemma:sufficient-beta} and the sub-Gaussian assumption, 
we define the confidence radius and the upper confidence bounds (UCBs) by
\begin{align}\label{eq:knapsack_ucb}
\conf(i,k) := 2\sigma\sqrt{\frac{\log\left(2 B k^3\right)}{N_{i,k}}}, \quad
u_{i,k} := \hat \mu_{i,k} + \conf(i,k),
\end{align}
where $B := \tfloor{W/\min_{i \in [n]}w_i}$ is the maximum number of items in a feasible allocation.
We then present Algorithm~\ref{alg:knapsack_ucb}, which is similar to 
Algorithm~\ref{alg:sw_ata} (with $W=K$). The main differences are substituting LCBs with UCBs and RAS with oracle $O$.
\begin{algorithm}[h]
\caption{UCB for the Knapsack Problem}\label{alg:knapsack_ucb}
\begin{algorithmic}[1]
\STATE \textbf{Input:} weights $\bw$, capacity $W$, $\sigma > 0$
\FOR{$k = 1,\dots,K$}
    \IF{$k \leq n$}
        \STATE Set $a_{k,k} = \tfloor{W/w_k}$ and $a_{i,k} = 0$ for all $i \neq k$
    \ELSE
        \STATE Compute UCBs $u_{i,k}$ for all $i \in [n]$ using~(\ref{eq:knapsack_ucb})
        \STATE Set $\ba_k = \operatorname{Oracle}(\bw,\bu_k;W)$
    \ENDIF
    \STATE Observe $X_{i,k}^{(u)}$ for all $i \in [n]$ and $u = 1,\dots,a_{i,k}$
\ENDFOR
\end{algorithmic}
\end{algorithm}

\begin{theorem}\label{thm:knapsack_regret}
Under the assumptions above, let
$M := \max_{i \in [n]}\mu_i$, and $\beta_K := 4\log(2BK^3)$. For a fixed optimal allocation $\bar \ba$, define sub-optimality gaps
\begin{align}\label{eq:sdfjksndfkjnsd}
\Delta_i := \min_{\substack{\ba \in \mathcal A:\,\bar \ba^T \bmu > \ba^T \bmu,\\ a_i > \bar a_i}}
   \frac{\bar \ba^T \bmu - \ba^T \bmu}{2\sum_{j = 1}^n (a_j - \bar a_j)_+},
\end{align}
with the convention that the minimum of an empty set is $+\infty$ and $1/(+\infty) = 0$. Then,
Algorithm~\ref{alg:knapsack_ucb} satisfies
\begin{align*}
\regretdyn \leq \mathcal O\left(n B M + \sigma^2 \beta_K \sum_{i = 1}^n \frac{1}{\Delta_i}\right).
\end{align*}
\end{theorem}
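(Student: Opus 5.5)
We follow the three-step template stated just before the theorem; the plan has four parts.

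\textbf{Setup.} We fix a trajectory and define bad events $\mathcal E_k$ as in~(\ref{eq:bad_events}), with $\bar\mu$ replaced by $\mu_i$. Since the noise is sub-Gaussian, the sub-Gaussian analogue of Lemma~\ref{lemma:sufficient-beta}, with radius~(\ref{eq:knapsack_ucb}), gives $\sum_k \Prob{\mathcal E_k}\le 2n$. Each bad round and each of the first $n$ initialization rounds costs at most $\bar\ba^T\bmu\le BM$. These rounds therefore contribute $\mathcal O(nBM)$ in expectation.

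\textbf{Good rounds $k>n$ (Step 1).} On $\neg\mathcal E_k$ we have $\mu_i\le u_{i,k}\le \mu_i+2\conf(i,k)$ for all $i$. Oracle optimality gives $\ba_k^T\bu_k\ge\bar\ba^T\bu_k$. Writing $\ba_k-\bar\ba=(\ba_k-\bar\ba)_+-(\bar\ba-\ba_k)_+$, we get
\begin{align*}
\bar\ba^T\bmu-\ba_k^T\bmu \le \sum_i (a_{i,k}-\bar a_i)_+(u_{i,k}-\mu_i) - \sum_i(\bar a_i-a_{i,k})_+(u_{i,k}-\mu_i) \le \sum_i a'_{i,k}\,2\conf(i,k),
\end{align*}
where $a'_{i,k}:=(a_{i,k}-\bar a_i)_+\le a_{i,k}$.

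Let $r_k$ denote the round-$k$ regret and $D_k:=\sum_j a'_{j,k}$. We define $\mathcal S_i:=\{k: a'_{i,k}>0,\ 2\conf(i,k)\ge r_k/(2D_k)\}$ and $\Delta_{i,k}:=r_k/(2D_k)$. Pigeonholing the inequality above shows that the workers $i\in\mathcal S_i$ carry at least half of $r_k=2D_k\Delta_{i,k}$: the workers outside $\mathcal S_i$ contribute less than $\sum_i a'_{i,k}r_k/(2D_k)=r_k/2$. Hence
\begin{align*}
r_k\le 2\sum_{i:\,k\in\mathcal S_i} a'_{i,k}\cdot 2\conf(i,k),
\end{align*}
which we need to bound by $\mathcal O\left(\sum_i\sum_{k\in\mathcal S_i} a'_{i,k}\Delta_{i,k}\right)$.

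\textbf{Main obstacle.} This last step is where the plan is weakest. Step 1 requires weights of the form $\Delta_{i,k}$, while pigeonholing naturally yields $\conf$. I would instead bound $r_k\le 4\sum_{i: k\in\mathcal S_i}a'_{i,k}\Delta_{i,k}\cdot(\text{ratio})$ directly. To do so, I would redefine $\mathcal S_i$ with threshold $\Delta_{i,k}$, so that the weights $a'_{i,k}\Delta_{i,k}$ sum to at least $r_k/4$ over $\mathcal S_i$ by the same half-mass argument. Concretely, $\sum_i a'_{i,k}\Delta_{i,k}=r_k/2$ and the workers with $2\conf<\Delta_{i,k}$ contribute less than $r_k/2$ through the conf inequality, so the remaining workers carry positive mass. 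The factor of 2 in the definition of $\Delta_i$ in~(\ref{eq:sdfjksndfkjnsd}) is designed exactly for this. By the definition of $\Delta_i$ (the allocation $\ba_k$ has $a_{i,k}>\bar a_i$ and positive regret), we get $\Delta_{i,k}\ge\Delta_i$.

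\textbf{Steps 2--3.} For fixed $i$, we apply Lemma~\ref{lemma:weighted_gap_summation} to the values $\Delta_{i,k}$, $k\in\mathcal S_i$. Following Lemma~\ref{lemma:stable_samples} (with $W=K$, $\varepsilon=0$), we take the last round $k'$ with $\Delta_{i,k'}\ge\delta_j$; the inequality $\delta_j\le2\conf(i,k')$ gives $N_{i,k'}\le 16\sigma^2\log(2BK^3)/\delta_j^2$. Therefore $\sum_{k\in\mathcal S_i:\Delta_{i,k}\ge\delta_j}a'_{i,k}\le N_{i,k'}+B$. The $N$-part sums, as in the proof of Theorem~\ref{thm:master_bound}, to $\mathcal O(\sigma^2\beta_K/\delta_1)\le\mathcal O(\sigma^2\beta_K/\Delta_i)$. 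The extra $+B$ contributes at most $B\delta_J\le BM$, since $\Delta_{i,k}\le r_k/2\le BM$ per unit. Summing over $i$ gives $\mathcal O(nBM)$ for these terms. Taking expectations completes the bound.
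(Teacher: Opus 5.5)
The gap is in your Step 1, specifically in the repair you propose under ``Main obstacle''. Your level variable $\Delta_{i,k}:=r_k/(2D_k)$ does not depend on $i$, so
\[
\sum_{i:\,k\in\mathcal S_i}a'_{i,k}\Delta_{i,k}=\frac{r_k}{2}\cdot\frac{\sum_{i:\,k\in\mathcal S_i}a'_{i,k}}{D_k}.
\]
This is $r_k/2$ times the fraction of the excess units held by the workers that pass the threshold. The half-mass argument only shows that these workers carry at least $r_k/2$ of the \emph{confidence-weighted} mass $\sum_i a'_{i,k}\,2\conf(i,k)$. It says nothing about their share of the excess units. The workers below the threshold have $2\conf(i,k)<\Delta_{i,k}$, so the confidence inequality gives no upper bound on their $\Delta$-weighted mass; the inference runs in the wrong direction. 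Concretely, suppose one worker with $a'_{i,k}=1$ has $2\conf(i,k)\ge r_k$, and the other $D_k-1$ excess units belong to workers $j$ with $2\conf(j,k)<r_k/(2D_k)$. Then only that worker is in $\mathcal S_i$, and the right-hand side above equals $r_k/(2D_k)$. So $r_k\le\mathcal O\bigl(\sum_{i:\,k\in\mathcal S_i}a'_{i,k}\Delta_{i,k}\bigr)$ fails by a factor up to $2D_k$, which can be of order $B$. Steps 2--3 then bound a quantity that does not control the regret.

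The obstacle is self-inflicted. The template only needs $\Delta_i\le\Delta_{i,k}\le2\conf(i,k)$ on $\mathcal S_i$, so your first pigeonhole already suffices if you take the level variable $\Delta_{i,k}:=2\conf(i,k)$. On your $\mathcal S_i$ you have $2\conf(i,k)\ge r_k/(2D_k)\ge\Delta_i$, and $r_k\le2\sum_{i:\,k\in\mathcal S_i}a'_{i,k}\,2\conf(i,k)$ is exactly Step 1. The paper instead sets $\Delta_{i,k}:=u_{i,k}-\mu_i$, so that $r_k\le\sum_i a'_{i,k}\Delta_{i,k}$ holds directly, and thresholds at the gap itself. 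Terms with $\Delta_{i,k}<\Delta_i$ then contribute at most $\sum_i a'_{i,k}\Delta_i\le r_k/2$ by (\ref{eq:sdfjksndfkjnsd}); this is where its factor $2$ is used.

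There is also a second error in your Steps 2--3. The additive $+B$ contributes $B\delta_J$, and with your level variable $\delta_J\le\max_k r_k/2\le BM/2$. That gives only $\mathcal O(B^2M)$ per worker, not $BM$. With $2\conf(i,k)$ or $u_{i,k}-\mu_i$ as levels, $\delta_J$ is not controlled by $M$ at all. The paper removes this term entirely: the initialization rounds give $N_{i,k'}\ge\lfloor W/w_i\rfloor\ge a_{i,k'}$ for $k'>n$, hence $\sum a'_{i,k}\le N_{i,k'}+a_{i,k'}\le2N_{i,k'}$.
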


\begin{proof}
We apply the three steps above. As in the proof of Theorem~\ref{thm:master_bound}, we
bound the expected regret as 
\begin{align}\label{eq:ksdflknsdf}
\regretdyn \leq 3n \bar \ba^T \bmu
   + \Exp{\sum_{k \in \mathcal S}\left(\bar \ba^T \bmu - \ba_k^T \bmu\right)},
\end{align}
where $\mathcal S := \{k \in [K]:\,k > n,\ \neg\mathcal E_k,\ \ba_k^T \bmu < \bar \ba^T \bmu\}$.
We now fix a trajectory and bound the sum over $\mathcal S$.

For $k \in \mathcal S$, we have $0 \leq u_{i,k} - \mu_i \leq 2\conf(i,k)$. The oracle guarantees
$\ba_k^T \bu_k \geq \bar \ba^T \bu_k$, so
\begin{align*}
\bar \ba^T \bmu - \ba_k^T \bmu &\leq \left(\ba_k - \bar \ba\right)^T\left(\bu_k - \bmu\right)
\leq \sum_{i = 1}^n \left(a_{i,k} - \bar a_i\right)_+\left(u_{i,k} - \mu_i\right).
\end{align*}
Set $a_{i,k}' := (a_{i,k} - \bar a_i)_+$ and $\Delta_{i,k} := u_{i,k} - \mu_i$. By the definition of the gaps,
\begin{align*}
\sum_{i:\,a_{i,k}' > 0} a_{i,k}'\Delta_i \leq \frac{\bar \ba^T \bmu - \ba_k^T \bmu}{2}.
\end{align*}
Define $\mathcal S_i := \{k \in \mathcal S:\,a_{i,k}' > 0,\ \Delta_{i,k} \geq \Delta_i\}$.
The terms with $\Delta_{i,k} < \Delta_i$ contribute at most half the regret at each round. Therefore,
\begin{align}\label{eq:knapsack_charge}
\sum_{k \in \mathcal S}\left(\bar \ba^T \bmu - \ba_k^T \bmu\right) \leq 2\sum_{i = 1}^n
   \sum_{k \in \mathcal S_i} a_{i,k}'\Delta_{i,k}.
\end{align}
For every $k \in \mathcal S_i$, we have $0 < \Delta_i \leq \Delta_{i,k} \leq 2\conf(i,k)$ and
$0 < a_{i,k}' \leq a_{i,k}$. This completes Step~1.
Fix $i$ with $\mathcal S_i \neq \varnothing$ and the levels
$\delta_j$ from Step~2. Let $k'$ be the last round in $\{k \in \mathcal S_i:\,\Delta_{i,k} \geq \delta_j\}$. Since
$\delta_j \leq \Delta_{i,k'} \leq 2\conf(i,k')$, we obtain $N_{i,k'} \leq \nicefrac{4\sigma^2\beta_K}{\delta_j^2}$.
Due to the first $n$ rounds, we have $N_{i,k'} \geq \tfloor{W/w_i} \geq a_{i,k'}$. As in
Lemma~\ref{lemma:stable_samples}, we count the samples up to this last round and obtain
\begin{align}\label{eq:knapsack_tail}
\sum_{k \in \mathcal S_i:\,\Delta_{i,k} \geq \delta_j} a_{i,k}' &\leq N_{i,k'} + a_{i,k'}
   \leq 2N_{i,k'} \leq \frac{8\sigma^2\beta_K}{\delta_j^2}.
\end{align}
Applying Steps~2 and~3, we deduce
\begin{align*}
\sum_{k \in \mathcal S_i} a_{i,k}'\Delta_{i,k} \leq \frac{16\sigma^2\beta_K}{\Delta_i},
\end{align*}
Summing over $i$, applying~(\ref{eq:knapsack_charge}), and using $\bar \ba^T \bmu \leq BM$ in~(\ref{eq:ksdflknsdf}) proves the claim.
\end{proof}

\paragraph{Comparison with \citet{nakamura2025multi}.}
We compare the bounds on the common class $0 < X_{i,k}^{(u)} \leq 1$, for which we can take $\sigma = 1$ and
$M \leq 1$. Assume that $\bar \ba$ is unique, and let $s := \max_{\ba \in \mathcal A}|\supp(\ba)| \geq 2$.
Theorem~3.1 of \citet{nakamura2025multi}, with the Lipschitz constant $\kappa_r = B$ from their Section~3,
gives the leading term
\begin{align*}
\regretdyn^{\mathrm{GenCTS}} \leq C_0
   + \mathcal O\left(B^2\log s\sum_{i = 1}^n\frac{\log K}{\Delta_i^{\mathrm{MP}}}\right),
\end{align*}
where $C_0$ is independent of $K$ and
\begin{align*}
\Delta_i^{\mathrm{MP}} := \min_{\ba \in \mathcal A\setminus\{\bar \ba\}:\,a_i > 0}
   \left\{\bar \ba^T\bmu - \ba^T\bmu\right\}.
\end{align*}
Our gaps use only allocations with $a_i > \bar a_i$ and have a denominator at most $2B$. Therefore,
$\Delta_i \geq \Delta_i^{\mathrm{MP}}/(2B)$, and Theorem~\ref{thm:knapsack_regret} implies
\begin{align*}
\regretdyn \leq \mathcal O\left(nB + B\beta_K\sum_{i = 1}^n\frac{1}{\Delta_i^{\mathrm{MP}}}\right).
\end{align*}
Thus, when both bounds are written using the same gaps, the coefficient of the leading logarithmic term has
dependence $B$ instead of $B^2\log s$. We compare against GenCTS, which uses the same exact knapsack oracle as our algorithm.
Another algorithm from~\citep{nakamura2025multi} (GenLBINFV)  
solves a regularized problem over $\operatorname{conv}(\mathcal A)$ and uses a convex decomposition to sample an integer allocation.
The authors note that solving this regularized problem exactly is difficult for knapsack
\citep[Section~4]{nakamura2025multi}. Thus, we compare methods with the same optimization step at each iteration.

\section{Connection between Proxy-loss and the True Time}
\PROXCONNECTION*
\begin{proof}
The case $B = 1$ is trivial, since the proxy loss and the expected completion time coincide and 
$C_K = C_K^* + \regretdyn$. 

Assume $B \geq 2$.
We fix $k \in [K]$. Let $\ba$ be a fixed allocation. Define $Y_{i, k}^{(u)} = X_{i, k}^{(u)} - \mu_{i, k}$.
Then, for any $i \in \supp(\ba)$
and all $\left|\lambda\right| \leq \nicefrac{1}{\alpha_{i,k}}$, Assumption~\ref{ass:drift-exp}
and independence give
\begin{align}\label{eq:ksndfnsdfnl}
\Exp{\exp\left(\lambda \sum_{u = 1}^{a_i} Y_{i, k}^{(u)}\right)}
= \prod_{u = 1}^{a_i} \Exp{\exp\left(\lambda Y_{i, k}^{(u)}\right)}
\leq \exp\left(a_i \alpha_{i,k}^2 \lambda^2\right)
= \exp\left((\sqrt{a_i} \alpha_{i,k})^2 \lambda^2\right).
\end{align}
We denote
$\tilde \alpha = \max_{i \in [n]} \sqrt{a_i} \alpha_{i,k}$.
Since $a_i \geq 1$ for $i \in \supp(\ba)$, we have $\tilde\alpha \geq \alpha_{i,k}$ for every such $i$.
Therefore, inequality~(\ref{eq:ksndfnsdfnl}) implies that
\begin{align*}
\Exp{\exp\left(\lambda \sum_{u = 1}^{a_i} Y_{i, k}^{(u)}\right)} \leq \exp\left(\tilde \alpha^2 \lambda^2\right)
\quad \text{for all} \quad
i \in \supp(\ba) \quad \text{and} \quad
\left|\lambda\right| \leq \frac{1}{\tilde \alpha}.
\end{align*}
Thus, the conditions of Lemma~\ref{lemma:mgf_max_bound} are satisfied, and we get
\begin{align}\label{eq:klsdnflksdnfl}
\Exp{\max_{i \in \supp(\ba)} \sum_{u = 1}^{a_i} X_{i, k}^{(u)}}
\leq \max_{i \in [n]} \left\{a_i \mu_{i, k}\right\}
   + \Exp{\max_{i \in \supp(\ba)} \sum_{u = 1}^{a_i} Y_{i, k}^{(u)}}
&\leq \max_{i \in [n]} \left\{a_i \mu_{i, k}\right\} \\
&\quad + \left(1 + \ln \min\left\{n, B\right\}\right) \tilde \alpha. \nonumber
\end{align}
Here, we use Remark~\ref{remark:tighter_max_bound} to obtain the tighter constant
and the inequality $\left|\supp(\ba)\right| \leq \min\left\{n, B\right\}$.
Then, we denote by $\ExpSub{k - 1}{\cdot}$ the conditional expectation given the history prior to the $k$th round
and use~(\ref{eq:klsdnflksdnfl}) to bound the true expected time.
\begin{align}\label{eq:ksladsfssdf}
\sum_{k = 1}^K \Exp{\max_{i \in \supp(\ba_k)} \sum_{u = 1}^{a_{i, k}} X_{i, k}^{(u)}}
&= \Exp{\sum_{k = 1}^K \ExpSub{k - 1}{\max_{i \in \supp(\ba_k)} \sum_{u = 1}^{a_{i, k}} X_{i, k}^{(u)}}} \nonumber \\
&\leq \Exp{\sum_{k = 1}^K \ell_k(\ba_k)}
   + \left(1 + \ln \min\left\{n, B\right\}\right)\sum_{k = 1}^K \Exp{\max_{i \in [n]} \left\{\alpha_{i, k} \sqrt{a_{i, k}}\right\}}.
\end{align}
Consider the last term separately. Note that the following two bounds hold simultaneously:
\begin{align*}
\alpha_{i, k} \sqrt{a_{i, k}} \leq \frac{\alpha_{i, k}}{\sqrt{\mu_{i, k}}} \cdot \sqrt{\ell_k(\ba_k)}
\quad\text{and}\quad
\alpha_{i, k} \sqrt{a_{i, k}} \leq \frac{\alpha_{i, k}}{\mu_{i, k}} \cdot \ell_k(\ba_k).
\end{align*}
Then, we fix $x > 0$ and define
\begin{align*}
\varphi(x) = \max_{i \in [n], k \in [K]}
   \left\{\alpha_{i, k} \cdot \min\left\{\frac{1}{x \sqrt{\mu_{i, k}}},
      \frac{1}{\mu_{i, k}}\right\}\right\}.
\end{align*}
Additionally, we define the sets of fast and slow worker-round pairs, respectively, by
\begin{align*}
\mathcal F = \left\{(i, k):\, i \in [n],\, k \in [K],\,
   \frac{\alpha_{i, k}}{\mu_{i, k}}
      > \frac{\alpha_{i, k}}{\sqrt{\mu_{i, k}} x}\right\}
\quad \text{and} \quad
\mathcal S = \left\{(i, k):\, i \in [n],\, k \in [K]\right\}\setminus\mathcal F.
\end{align*}
In this notation,
\begin{align}\label{eq:lsidhfjsdh}
\varphi(x) =
\max\left\{\max_{(i, k) \in \mathcal F} \left\{\alpha_{i, k} \cdot \frac{1}{x \sqrt{\mu_{i, k}}}\right\},
      \max_{(i, k) \in \mathcal S} \left\{\alpha_{i, k} \cdot \frac{1}{\mu_{i, k}}\right\}\right\}.
\end{align}
Here, maxima over empty sets are taken to be zero.
Let $i_k$ be the smallest index in $\supp(\ba_k)$ such that
$\alpha_{i_k, k} \sqrt{a_{i_k, k}} = \max_{i \in [n]} \left\{\alpha_{i, k} \sqrt{a_{i, k}}\right\}$.
We split the rounds into two groups:
\begin{align*}
\mathcal K_{\mathcal F} = \left\{k \in [K]:\, (i_k, k) \in \mathcal F\right\}
\quad \text{and} \quad
\mathcal K_{\mathcal S} = [K]\setminus\mathcal K_{\mathcal F},
\end{align*}
where we refer to the first set as fast rounds and the second set as slow rounds.
Then, we bound the sum over rounds of the first type.
\begin{align*}
\sum_{k \in \mathcal K_{\mathcal F}} \alpha_{i_k, k} \sqrt{a_{i_k, k}}
\leq \sum_{k \in \mathcal K_{\mathcal F}} \frac{\alpha_{i_k, k}}{\sqrt{\mu_{i_k, k}}} \cdot \sqrt{\ell_k(\ba_k)}
&\leq \max_{(i, k) \in \mathcal F} \left\{\frac{\alpha_{i, k}}{\sqrt{\mu_{i, k}}}\right\}
   \cdot \sum_{k \in \mathcal K_{\mathcal F}} \sqrt{\ell_k(\ba_k)}.
\end{align*}
By Cauchy--Schwarz and Jensen's inequalities, using $\left|\mathcal K_{\mathcal F}\right| \leq K$, we obtain
\begin{align*}
\Exp{\sum_{k \in \mathcal K_{\mathcal F}} \sqrt{\ell_k(\ba_k)}}
\leq \Exp{\sqrt{\left|\mathcal K_{\mathcal F}\right| \sum_{k \in \mathcal K_{\mathcal F}} \ell_k(\ba_k)}}
\leq \sqrt{K \Exp{\sum_{k \in \mathcal K_{\mathcal F}} \ell_k(\ba_k)}}.
\end{align*}
Note that~(\ref{eq:lsidhfjsdh}) implies
$\max_{(i, k) \in \mathcal F} \left\{\frac{\alpha_{i, k}}{\sqrt{\mu_{i, k}}}\right\} \leq \varphi(x) x$.
Therefore,
\begin{align}\label{eq:kldsnfkjlsdnjfsd}
\Exp{\sum_{k \in \mathcal K_{\mathcal F}} \alpha_{i_k, k} \sqrt{a_{i_k, k}}}
\leq \varphi(x) x \cdot \sqrt{K \Exp{\sum_{k \in \mathcal K_{\mathcal F}} \ell_k(\ba_k)}}
\leq \varphi(x) \cdot \left(\frac{x^2 K}{4} + \Exp{\sum_{k \in \mathcal K_{\mathcal F}} \ell_k(\ba_k)}\right).
\end{align}
Here, we use Young's inequality $ab \leq \frac{a^2}{2 \varepsilon} + \frac{\varepsilon b^2}{2}$ with
$\varepsilon = 2$.
On the slow rounds, we get
\begin{align}\label{eq:klsdfsdcgdjfsd}
\Exp{\sum_{k \in \mathcal K_{\mathcal S}} \alpha_{i_k, k} \sqrt{a_{i_k, k}}}
\leq \Exp{\sum_{k \in \mathcal K_{\mathcal S}} \frac{\alpha_{i_k, k}}{\mu_{i_k, k}} \cdot \ell_k(\ba_k)}
&\leq \max_{(i, k) \in \mathcal S} \left\{\frac{\alpha_{i, k}}{\mu_{i, k}}\right\}
   \cdot \Exp{\sum_{k \in \mathcal K_{\mathcal S}} \ell_k(\ba_k)} \nonumber \\
&\leq \varphi(x) \cdot \Exp{\sum_{k \in \mathcal K_{\mathcal S}} \ell_k(\ba_k)}.
\end{align}
By the definition of $\regretdyn$ and Jensen's inequality,
\begin{align}\label{eq:lsdfkjsndfkj}
\sum_{k = 1}^K \Exp{\ell_k(\ba_k)}
= \sum_{k = 1}^K \ell_k(\bar{\ba}_k) + \regretdyn
\leq \sum_{k = 1}^K \ell_k(\ba_k^*) + \regretdyn
\leq C_K^* + \regretdyn.
\end{align}
Combining~(\ref{eq:kldsnfkjlsdnjfsd}),~(\ref{eq:klsdfsdcgdjfsd}), and~(\ref{eq:lsdfkjsndfkj}),
and setting $x = \sqrt{\nicefrac{\left(C_K^* + \regretdyn\right)}{K}}$, so that $\varphi(x) = \varphi$, we get
\begin{align*}
\sum_{k = 1}^K \Exp{\max_{i \in [n]} \left\{\alpha_{i, k} \sqrt{a_{i, k}}\right\}}
&\leq \varphi \cdot \left(\frac{x^2 K}{4} + \Exp{\sum_{k \in \mathcal K_{\mathcal F}} \ell_k(\ba_k)}\right)
   + \varphi \cdot \Exp{\sum_{k \in \mathcal K_{\mathcal S}} \ell_k(\ba_k)} \\
&= \varphi \left(\frac{x^2 K}{4} + \Exp{\sum_{k = 1}^K \ell_k(\ba_k)}\right)
\leq \frac{5}{4} \varphi \left(C_K^* + \regretdyn\right).
\end{align*}
Substituting this bound and~(\ref{eq:lsdfkjsndfkj}) into~(\ref{eq:ksladsfssdf}) gives
\begin{align*}
\sum_{k = 1}^K \Exp{\max_{i \in \supp(\ba_k)} \sum_{u = 1}^{a_{i, k}} X_{i, k}^{(u)}}
&\leq \left(1 + \varphi \left(1 + \ln \min\left\{n, B\right\}\right) \cdot \frac{5}{4}\right)
   \left(C_K^* + \regretdyn\right) \\
&\leq \left(1 + 4 \varphi \ln \min\left\{n, B\right\}\right)
   \left(C_K^* + \regretdyn\right),
\end{align*}
where the last inequality uses $\nicefrac{5}{4}(1 + \ln \min\left\{n, B\right\}) \leq 4 \ln \min\left\{n, B\right\}$ 
   for $n \geq 2$.
\end{proof}

\section{Sub-exponential properties}
\subsection{Confidence Intervals}\label{sec:confidence_interval}
\begin{lemma}\label{lemma:different_sigma}
Let $Y_1,\dots,Y_N$ be random variables adapted to a filtration
$(\mathcal F_j)_{j=0}^N$. Suppose that, for some $\alpha_j>0$,
\begin{align*}
\E\left[Y_j\mid\mathcal F_{j-1}\right]=0,
\qquad
\E\left[\exp(\lambda Y_j)\mid\mathcal F_{j-1}\right]
\leq
\exp\left(\lambda^2\alpha_j^2\right)
\end{align*}
for every $j\in[N]$ and every $|\lambda|\leq1/\alpha_j$.
Let $A_j\in\{0,1\}$ be $\mathcal F_{j-1}$-measurable indicators, and define
\begin{align*}
Z=\sum_{j=1}^N A_jY_j,
\qquad
M=\sum_{j=1}^N A_j,
\qquad
\alpha=\max_{j\in[N]}\alpha_j.
\end{align*}
Then, for every $m\in[N]$ and every $\varepsilon>0$,
\begin{align*}
\Prob{\left|\frac{Z}{m}\right|\geq\varepsilon,\ M=m}
\leq
2\exp\left(
-\frac{m}{4}
\min\left\{
\frac{\varepsilon^2}{\alpha^2},
\frac{2\varepsilon}{\alpha}
\right\}
\right).
\end{align*}
\end{lemma}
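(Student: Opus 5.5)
The plan is a Chernoff bound built on a supermartingale argument, so that the random, predictable selection indicators $A_j$ can be handled. Fix $\lambda$ with $|\lambda|\leq 1/\alpha$ and set
\begin{align*}
S_t(\lambda) := \exp\Bigl(\lambda\sum_{j=1}^t A_jY_j - \lambda^2\alpha^2\sum_{j=1}^t A_j\Bigr).
\end{align*}
Since $A_j$ is $\mathcal F_{j-1}$-measurable and $A_j\in\{0,1\}$, we have $\E[\exp(\lambda A_jY_j)\mid\mathcal F_{j-1}] = 1-A_j + A_j\E[\exp(\lambda Y_j)\mid\mathcal F_{j-1}]$. The hypothesis applies because $|\lambda|\leq 1/\alpha\leq 1/\alpha_j$, and it gives $\E[\exp(\lambda Y_j)\mid\mathcal F_{j-1}]\leq \exp(\lambda^2\alpha_j^2)\leq\exp(\lambda^2\alpha^2)$. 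Hence $\E[\exp(\lambda A_jY_j)\mid\mathcal F_{j-1}] \leq \exp(\lambda^2\alpha^2 A_j)$. By the tower property, $S_t(\lambda)$ is a nonnegative supermartingale with $\E[S_N(\lambda)]\leq 1$.

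Next, on the event $\{M=m\}$ the compensator is deterministic and equals $\lambda^2\alpha^2 m$. For $\lambda\in(0,1/\alpha]$ this gives
\begin{align*}
\Prob{Z\geq m\varepsilon,\ M=m} \leq \Prob{S_N(\lambda)\geq \exp(\lambda m\varepsilon-\lambda^2\alpha^2 m)} \leq \exp\bigl(-m(\lambda\varepsilon-\lambda^2\alpha^2)\bigr)
\end{align*}
by Markov's inequality. The main point is that $\lambda$ is chosen as a function of the fixed $m$ and $\varepsilon$ only, not of the random $M$. This is why the statement fixes $m$, and it avoids any union bound or peeling over the random count.

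The remaining step is optimizing $\lambda$. If $\varepsilon\leq 2\alpha$, take $\lambda=\varepsilon/(2\alpha^2)\leq 1/\alpha$; the exponent becomes $m\varepsilon^2/(4\alpha^2)$. If $\varepsilon>2\alpha$, take $\lambda=1/\alpha$; the exponent is $m(\varepsilon/\alpha-1)\geq m\varepsilon/(2\alpha)$, since $\varepsilon/\alpha> 2$. In both cases the exponent is at least $\frac m4\min\{\varepsilon^2/\alpha^2, 2\varepsilon/\alpha\}$.
\begin{itemize}
\item In the first case $\varepsilon^2/\alpha^2\leq 2\varepsilon/\alpha$, so the minimum equals $\varepsilon^2/\alpha^2$ and the exponent matches exactly.
\item In the second case the minimum is $2\varepsilon/\alpha$, and $\frac m4\cdot 2\varepsilon/\alpha = m\varepsilon/(2\alpha)$.
\end{itemize}
Applying the same argument to $-Y_j$, which satisfies the same hypotheses because $\lambda$ ranges over a symmetric interval, handles $Z\leq -m\varepsilon$. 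Summing the two tails yields the factor $2$.

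I expect the only delicate point to be justifying the conditioning on $\{M=m\}$ inside the supermartingale bound, that is, making sure no optional-stopping issue arises. This is fine because we evaluate $S$ at the deterministic time $N$ and merely restrict to an event on which the compensator is known.
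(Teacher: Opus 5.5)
Your proposal is correct and matches the paper's proof step for step. Both build the exponential supermartingale with compensator $\lambda^2\alpha^2 M$ from the predictable indicators, apply Markov's inequality on $\{Z\geq m\varepsilon,\ M=m\}$, use the same two choices $\lambda=\varepsilon/(2\alpha^2)$ and $\lambda=1/\alpha$, and handle the lower tail by symmetry. The only cosmetic difference is that the paper first keeps $\sum_j \alpha_j^2 A_j$ in the exponent and then bounds it by $\alpha^2 M$, whereas you put $\alpha^2$ into the compensator directly.
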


\begin{proof}
For every $|\lambda|\leq1/\alpha$, since $A_j$ is
$\mathcal F_{j-1}$-measurable, we have
\begin{align*}
\E\left[\exp(\lambda A_jY_j)\mid\mathcal F_{j-1}\right]
&=
\cI\{A_j=0\}
+
\cI\{A_j=1\}
\E\left[\exp(\lambda Y_j)\mid\mathcal F_{j-1}\right] \\
&\leq
\cI\{A_j=0\}
+
\cI\{A_j=1\}\exp\left(\lambda^2\alpha_j^2\right)
=
\exp\left(\lambda^2\alpha_j^2A_j\right).
\end{align*}
Therefore,
\begin{align*}
\E\left[
\exp\left(
\lambda A_jY_j-\lambda^2\alpha_j^2A_j
\right)
\Bigm|
\mathcal F_{j-1}
\right]
\leq1.
\end{align*}
By repeatedly applying the tower property, we obtain
\begin{align*}
\E\left[
\exp\left(
\lambda Z-\lambda^2\sum_{j=1}^N\alpha_j^2A_j
\right)
\right]
\leq1.
\end{align*}
Since $\sum_{j=1}^N\alpha_j^2A_j\leq\alpha^2M$, it follows that
\begin{align*}
\E\left[
\exp\left(
\lambda Z-\lambda^2\alpha^2M
\right)
\right]
\leq1.
\end{align*}

For $\lambda\in(0,1/\alpha]$, on the event
$\{Z\geq m\varepsilon,\ M=m\}$, we have
\begin{align*}
\lambda Z-\lambda^2\alpha^2M
\geq
\lambda m\varepsilon-\lambda^2\alpha^2m.
\end{align*}
Hence,
\begin{align*}
\Prob{Z\geq m\varepsilon,\ M=m}
\leq
\exp\left(
\lambda^2\alpha^2m-\lambda m\varepsilon
\right).
\end{align*}
If $\varepsilon\leq2\alpha$, setting
$\lambda=\varepsilon/(2\alpha^2)$, we obtain
\begin{align*}
\Prob{Z\geq m\varepsilon,\ M=m}
\leq
\exp\left(
-\frac{m\varepsilon^2}{4\alpha^2}
\right).
\end{align*}
If $\varepsilon>2\alpha$, setting $\lambda=1/\alpha$, we obtain
\begin{align*}
\Prob{Z\geq m\varepsilon,\ M=m}
&\leq
\exp\left(
m-\frac{m\varepsilon}{\alpha}
\right) \\
&\leq
\exp\left(
-\frac{m\varepsilon}{2\alpha}
\right).
\end{align*}
Consequently,
\begin{align*}
\Prob{Z\geq m\varepsilon,\ M=m}
\leq
\exp\left(
-\frac{m}{4}
\min\left\{
\frac{\varepsilon^2}{\alpha^2},
\frac{2\varepsilon}{\alpha}
\right\}
\right).
\end{align*}
Applying the same argument to $-Z$ and taking a union bound over the
upper and lower tails proves the result.
\end{proof}
\begin{lemma}[Confidence intervals]\label{lemma:sufficient-beta}
Under Assumption~\ref{ass:drift-exp}, we have
\begin{align*}
\Prob{\mathcal E_k}\leq\frac{n}{k^2},
\qquad
\sum_{k=1}^K\Prob{\mathcal E_k}\leq2n,
\end{align*}
where the events are defined in~(\ref{eq:bad_events}).
\end{lemma}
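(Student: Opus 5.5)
The plan is to bound $\Prob{\mathcal E_{i,k}}$ for a fixed worker $i$ and round $k$ by $1/k^2$, and then take a union bound over $i\in[n]$. This gives $\Prob{\mathcal E_k}\le n/k^2$, and summing over $k$ yields $\sum_k \Prob{\mathcal E_k}\le n\sum_k k^{-2}\le n\pi^2/6\le 2n$.

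\textbf{Step 1: martingale setup.} Fix $i$ and $k$. Enumerate the potential observations of worker $i$ in the rounds $q\in\cW_k$ as a single sequence indexed by $(q,u)$ with $q\in\cW_k$ and $u\in[B]$, ordered lexicographically; this gives $N\le BW\le Bk$ slots. Set $Y_{(q,u)}=X_{i,q}^{(u)}-\mu_{i,q}$ and $A_{(q,u)}=\Ind{u\le a_{i,q}}$. Take the filtration generated by the history up to round $q$ together with the first $u$ observations of round $q$. Since $a_{i,q}$ is decided before round $q$, each $A_{(q,u)}$ is predictable. By independence, $Y_{(q,u)}$ has zero conditional mean. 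The conditional MGF bound $\Exp{\exp(\lambda Y)}\le\exp(\lambda^2\alpha_{i,q}^2)$ for $|\lambda|\le 1/\alpha_{i,q}$ follows from the sub-exponential property with $\|Y\|_{\psi_1}=\alpha_{i,q}$; this is the same standard fact already used in (\ref{eq:ksndfnsdfnl}). With these definitions, $Z=\sum A Y=N_{i,k}^{(W)}\bigl(\hat\mu_{i,k}^{(W)}-\bar\mu_{i,k}^{(W)}\bigr)$ and $M=N_{i,k}^{(W)}$. The maximal parameter is at most $\alpha_{\max}\le\alpha$.

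\textbf{Step 2: apply Lemma~\ref{lemma:different_sigma} for each fixed count $m$.} For each $m\in[N]$, set $\varepsilon_m=2\alpha\max\{\sqrt{L/m},L/m\}$ with $L=\log(2Bk^3)$. This is exactly $\conf_W(i,k)$ on the event $\{M=m\}$. The exponent satisfies $\frac m4\min\{\varepsilon_m^2/\alpha^2,\,2\varepsilon_m/\alpha\}\ge L$. Indeed, if $L\le m$, then $\varepsilon_m=2\alpha\sqrt{L/m}$, so $\varepsilon_m^2/\alpha^2=4L/m$ and $2\varepsilon_m/\alpha=4\sqrt{L/m}\ge 4L/m$. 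If $L>m$, then $\varepsilon_m=2\alpha L/m$, so $\varepsilon_m^2/\alpha^2=4L^2/m^2\ge 4L/m$ and $2\varepsilon_m/\alpha=4L/m$. In both cases the probability is at most $2e^{-L}=1/(Bk^3)$. Summing over the at most $Bk$ possible values of $m$ gives $\Prob{\mathcal E_{i,k}}\le Bk/(Bk^3)=1/k^2$.

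\textbf{Main obstacle.} The delicate point is the filtration and predictability in Step 1. Allocations depend on past data, and the window structure must be handled within one adapted sequence. Conditioning on $\{M=m\}$ is done inside Lemma~\ref{lemma:different_sigma} rather than by fixing $m$ in advance, which is exactly why that lemma is stated with the joint event. The remaining care is to bound the range of $m$ by $Bk$ so that the $\log(2Bk^3)$ term absorbs the union bound.
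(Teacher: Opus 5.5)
Your proposal is correct and follows the paper's proof essentially step for step: you use predictable indicators $\Ind{u\le a_{i,q}}$ over the $B|\cW_k|$ slots of the window, apply Lemma~\ref{lemma:different_sigma} for each fixed count $m$ with $\varepsilon_m=\conf_W(i,k)$, and take union bounds over $m$ and over workers, and your explicit case check of the exponent is a detail the paper only asserts. One small slip: the inequality $BW\le Bk$ fails when $W>k$, but the bound you actually need, $N=B|\cW_k|\le B(k-1)$, holds because $|\cW_k|\le k-1$, and this is the bound the paper uses.
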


\begin{proof}
Fix a worker $i$ and a round $k$. If $\cW_k=\varnothing$, then
$\mathcal E_{i,k}=\varnothing$, and the result is immediate.
Suppose that $\cW_k\neq\varnothing$.

For $q\in\cW_k$ and $u\in[B]$, define
$Y_q^{(u)}=X_{i,q}^{(u)}-\mu_{i,q}$ and
$A_q^{(u)}=\cI\{u\leq a_{i,q}\}$.
Since the allocation $\ba_q$ is chosen before the observations in round $q$,
the indicator $A_q^{(u)}$ is $\mathcal F_{q,u-1}$-measurable. Moreover,
\begin{align*}
N_{i,k}^{(W)}
=
\sum_{q\in\cW_k}\sum_{u=1}^B A_q^{(u)}
\leq BW.
\end{align*}

By Assumption~\ref{ass:drift-exp}, the preceding MGF lemma, and the
independence of $Y_q^{(u)}$ and $\mathcal F_{q,u-1}$, we have
\begin{align*}
\E\left[
\exp\left(\lambda Y_q^{(u)}\right)
\Bigm|
\mathcal F_{q,u-1}
\right]
\leq
\exp\left(\lambda^2\alpha^2\right)
\end{align*}
for every $|\lambda|\leq1/\alpha$.

On the event $N_{i,k}^{(W)}=m$, we have
\begin{align*}
\hat\mu_{i,k}^{(W)}-\bar\mu_{i,k}^{(W)}
=
\frac{1}{m}
\sum_{q\in\cW_k}\sum_{u=1}^B A_q^{(u)}Y_q^{(u)}.
\end{align*}
For every $m\geq1$, define
\begin{align*}
\varepsilon_m
= 2\alpha\max\left\{\sqrt{\frac{\log(2B k^3)}{m}}, \frac{\log(2B k^3)}{m}
\right\}.
\end{align*}
Therefore, using a union bound over all possible values of
$N_{i,k}^{(W)}$, we obtain
\begin{align*}
\Prob{\mathcal E_{i,k}}
&\leq \sum_{m=1}^{B|\cW_k|} \Prob{\left|\frac{1}{m}\sum_{q\in\cW_k}\sum_{u=1}^B A_q^{(u)}Y_q^{(u)}\right|
> \varepsilon_m, \quad N_{i,k}^{(W)}=m}.
\end{align*}
Then, $\min\left\{\frac{\varepsilon_m^2}{\alpha^2}, \frac{2\varepsilon_m}{\alpha}\right\}
\geq \frac{4\log(2B k^3)}{m}$.
Enumerating the pairs $(q,u)$ in chronological order and applying Lemma~\ref{lemma:different_sigma}, we obtain
\begin{align*}
&\Prob{\left|\frac{1}{m} \sum_{q\in\cW_k}\sum_{u=1}^B A_q^{(u)}Y_q^{(u)}\right|
   > \varepsilon_m, \quad N_{i,k}^{(W)}=m}
\leq 2\exp\left(-\frac{m}{4} \min\left\{\frac{\varepsilon_m^2}{\alpha^2},
\frac{2\varepsilon_m}{\alpha}\right\}\right) 
\leq 2\exp\left(-\log(2Bk^3)\right).
\end{align*}
Since $|\cW_k| \leq k - 1$, we get
\begin{align*}
\Prob{\mathcal E_{i,k}}
&\leq
2B|\cW_k|\exp\left(-\log(2Bk^3)\right)
\leq 2B|\cW_k|\exp\left(-\log(2Bk^3)\right)
\leq \frac{1}{k^2}.
\end{align*}
Taking a union bound over all workers, we obtain
\begin{align*}
\Prob{\mathcal E_k}
= \Prob{\bigcup_{i=1}^n\mathcal E_{i,k}}
\leq \sum_{i=1}^n\Prob{\mathcal E_{i,k}} \leq
\frac{n}{k^2}
\quad \text{and} \quad
\sum_{k=1}^K\Prob{\mathcal E_k}
\leq n\sum_{k=1}^{\infty}\frac{1}{k^2}
\leq 2n.
\end{align*}
\end{proof}

\begin{corollary}\label{cor:confidence_interval}
On the good event $\neg\mathcal E_k$, for every worker $i$ such that
$N_{i,k}^{(W)}\geq1$, we have
\begin{align*}
\bar\mu_{i,k}^{(W)}
\in
\left[
s_{i,k},\:
\hat\mu_{i,k}^{(W)}+\conf_W(i,k)
\right].
\end{align*}
\end{corollary}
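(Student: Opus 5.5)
The claim follows almost directly from the definition of the bad event $\mathcal E_{i,k}$ in~(\ref{eq:bad_events}) together with the definition of the LCB $s_{i,k}$ in~(\ref{eq:kAGPwqp}). The plan is to fix a round $k$ on which $\neg\mathcal E_k$ holds and a worker $i$ with $N_{i,k}^{(W)}\geq 1$. Since $\mathcal E_k=\bigcup_{j}\mathcal E_{j,k}$, the good event gives $\neg\mathcal E_{i,k}$. By the definition of $\mathcal E_{i,k}$, and because the first condition $N_{i,k}^{(W)}\geq1$ holds by assumption, the second condition must fail. This means
\begin{align*}
\left|\hat\mu_{i,k}^{(W)}-\bar\mu_{i,k}^{(W)}\right|\leq \conf_W(i,k),
\end{align*}
equivalently $\hat\mu_{i,k}^{(W)}-\conf_W(i,k)\leq \bar\mu_{i,k}^{(W)}\leq \hat\mu_{i,k}^{(W)}+\conf_W(i,k)$.

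The upper endpoint of the claimed interval is then immediate. For the lower endpoint, I would recall that $s_{i,k}=\max\{0,\hat\mu_{i,k}^{(W)}-\conf_W(i,k)\}$, since $N_{i,k}^{(W)}\geq1$ rules out the convention $s_{i,k}=0$. It therefore suffices to bound both arguments of the maximum by $\bar\mu_{i,k}^{(W)}$. The second argument is handled by the display above. For the first, I would note that $\bar\mu_{i,k}^{(W)}=\frac{1}{N_{i,k}^{(W)}}\sum_{q\in\cW_k}a_{i,q}\mu_{i,q}$ is a weighted average of the means $\mu_{i,q}>0$ from Assumption~\ref{ass:drift-exp}. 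The weights are nonnegative and sum to one, so $\bar\mu_{i,k}^{(W)}>0$. Hence $s_{i,k}\leq\bar\mu_{i,k}^{(W)}$.

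No step is a real obstacle. The only point needing care is the positive-part truncation in $s_{i,k}$, which is exactly where positivity of the means is used; this is also the form in which the corollary is invoked in Lemma~\ref{lemma:confidence_radius_bound}. The probabilistic content, namely that $\neg\mathcal E_k$ holds with high probability, is already contained in Lemma~\ref{lemma:sufficient-beta} and is not needed for this deterministic statement.
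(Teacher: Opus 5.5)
Your proposal is correct and follows the paper's proof essentially verbatim. Like the paper, you read off the two-sided bound from $\neg\mathcal E_{i,k}$, and you handle the positive part in $s_{i,k}$ by noting that $\bar\mu_{i,k}^{(W)}>0$ since it is a weighted average of the positive means $\mu_{i,q}$.
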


\begin{proof}
On the event $\neg\mathcal E_k$, we have
\begin{align*}
\hat\mu_{i,k}^{(W)}-\conf_W(i,k)
\leq
\bar\mu_{i,k}^{(W)}
\leq
\hat\mu_{i,k}^{(W)}+\conf_W(i,k).
\end{align*}
Moreover, $\bar\mu_{i,k}^{(W)}>0$ because $\mu_{i,q}>0$ for all $q$. Therefore,
\begin{align*}
s_{i,k}
=
\left(\hat\mu_{i,k}^{(W)}-\conf_W(i,k)\right)_+
\leq
\bar\mu_{i,k}^{(W)}.
\end{align*}
This proves the claim.
\end{proof}

\subsection{General properties}
The lemmas in this section are well known, but we include them to make the paper self-contained.
\begin{lemma}\label{lemma:orlicz_norm_mgf}
Assume that $X$ is a centered random variable and that
there exists an $\alpha > 0$ such that $\|X\|_{\psi_1} \leq \alpha$. 
Then, 
\begin{align*}
\Exp{\exp\left(\lambda X\right)} \leq \exp\left(\alpha^2 \lambda^2\right)
\quad \text{for all} \quad \left|\lambda\right| \leq \frac{1}{\alpha}.
\end{align*}
\end{lemma}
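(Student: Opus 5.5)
We prove the claim by expanding the exponential in a Taylor series and controlling the moments of $X$ through the Orlicz-norm condition. Since only the ratio $X/\alpha$ matters, we first reduce to $\alpha = 1$. Put $Z = X/\alpha$, so that $\|Z\|_{\psi_1} \leq 1$. Because the infimum in Definition~\ref{def:subexp} is attained in the limit and $t \mapsto \Exp{\exp(|Z|/t)}$ is monotone, this gives $\Exp{\exp(|Z|)} \leq 2$; monotone convergence justifies passing to the limit. We then need to show $\Exp{\exp(s Z)} \leq \exp(s^2)$ for $|s| \leq 1$, where $s = \lambda\alpha$.

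\textbf{Step 1: moment bounds.} From $\Exp{\exp(|Z|)} = \sum_{p \geq 0} \Exp{|Z|^p}/p! \leq 2$ and the $p=0$ term being $1$, we obtain
\begin{align*}
\sum_{p \geq 1} \frac{\Exp{|Z|^p}}{p!} \leq 1 .
\end{align*}
In particular, $\Exp{|Z|^p} \leq p!$ for every $p \geq 1$.

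\textbf{Step 2: series expansion.} For $|s| \leq 1$ we expand
\begin{align*}
\Exp{e^{sZ}} = 1 + s\,\Exp{Z} + \sum_{p \geq 2} \frac{s^p\,\Exp{Z^p}}{p!} .
\end{align*}
Fubini applies here because $\Exp{e^{|sZ|}} \leq 2$. The linear term vanishes since $X$ is centered. Using $|s|^p \leq s^2$ for $p \geq 2$, the remainder is at most
\begin{align*}
s^2 \sum_{p \geq 2} \frac{\Exp{|Z|^p}}{p!} \leq s^2 \left(1 - \Exp{|Z|}\right) \leq s^2 .
\end{align*}
Hence $\Exp{e^{sZ}} \leq 1 + s^2 \leq e^{s^2}$. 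Substituting back $s = \lambda\alpha$ gives $\Exp{e^{\lambda X}} \leq \exp(\alpha^2\lambda^2)$ for all $|\lambda| \leq 1/\alpha$, which is the claim.

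\textbf{Main obstacle.} The delicate point is obtaining the constant $1$ in front of $\alpha^2\lambda^2$ together with the full range $|\lambda| \leq 1/\alpha$. Generic textbook statements only give unspecified absolute constants. The step that makes it work is the summed bound $\sum_{p\geq 2} \Exp{|Z|^p}/p! \leq 1$ from Step 1, used directly rather than the termwise bound $\Exp{|Z|^p} \leq p!$. The termwise bound would produce a geometric series $\sum_{p \geq 2} |s|^p$, which diverges at $|s| = 1$. We also need to be careful that attainment of the infimum is justified via monotone convergence, which requires $\|Z\|_{\psi_1} > 0$. If $\|Z\|_{\psi_1} = 0$, then $Z = 0$ almost surely and the claim is trivial.
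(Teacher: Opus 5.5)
Your proof is correct and is essentially the paper's argument. Both expand $\exp(\lambda X)$ in its Taylor series, drop the linear term by centering, use $(\alpha|\lambda|)^p \le (\alpha|\lambda|)^2$ for $p \ge 2$, and bound the remaining sum by $\mathbb{E}[\exp(|X|/\alpha)] - 1 \le 1$; your extra care (that $\mathbb{E}[\exp(|X|/\alpha)] \le 2$ holds at $t=\alpha$ via monotonicity and monotone convergence, and that the sum and expectation may be interchanged) fills in details the paper leaves implicit.
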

\begin{proof}
Using the Taylor expansion and $\Exp{X} = 0$, we get
\begin{align*}
\Exp{\exp\left(\lambda X\right)}
\leq \Exp{1 + \lambda X + \sum \limits_{p = 2}^{+\infty} \frac{\lambda^p X^p}{p!}} 
\leq \Exp{1 + \sum \limits_{p = 2}^{+\infty} \frac{\left(\alpha \left|\lambda\right|\right)^p \left(\frac{\left|X\right|}{\alpha}\right)^p}{p!}}.
\end{align*}
Consider $\left|\lambda\right| \leq \frac{1}{\alpha}$. Since $\left(\alpha \left|\lambda\right|\right)^p 
   \leq \left(\alpha \left|\lambda\right|\right)^2$
for $p \geq 2$, we deduce
\begin{align*}
\Exp{\exp\left(\lambda X\right)} 
&\leq 1 + \left(\alpha \left|\lambda\right|\right)^2 \Exp{\sum \limits_{p = 2}^{+\infty} 
   \frac{\left(\frac{\left|X\right|}{\alpha}\right)^p}{p!}}
\leq  1 + \left(\alpha \left|\lambda\right|\right)^2 
   \Exp{\exp\left(\frac{\left|X\right|}{\alpha}\right) - 1} \\
&\leq  1 + \alpha^2 \lambda^2 
\leq \exp\left(\alpha^2 \lambda^2\right).
\end{align*}
\end{proof}

\begin{lemma}\label{lemma:mgf_max_bound}
Let $X_1, \dots, X_n$ be a sequence of centered random variables such that for some $\alpha > 0$
\begin{align*}
\Exp{\exp\left(\lambda X_i\right)} \leq \exp\left(\alpha^2 \lambda^2\right)
\quad \text{for all} \quad
i \in [n] \quad \text{and} \quad
\left|\lambda\right| \leq \frac{1}{\alpha}.
\end{align*}
Then, $\Exp{\max_{i \in [n]} X_i} \leq 4 \alpha \ln n$.
\end{lemma}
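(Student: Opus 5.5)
The plan is the standard log-sum-exp (soft-max) argument, with the temperature chosen inside the admissible range $|\lambda|\le 1/\alpha$. Fix $\lambda\in(0,1/\alpha]$. By Jensen's inequality applied to the concave logarithm,
\begin{align*}
\Exp{\max_{i\in[n]} X_i}
= \frac{1}{\lambda}\Exp{\log \exp\Big(\lambda\max_{i} X_i\Big)}
\leq \frac{1}{\lambda}\log \Exp{\sum_{i=1}^n \exp(\lambda X_i)}
\leq \frac{1}{\lambda}\log\Big(n\exp(\alpha^2\lambda^2)\Big)
= \frac{\ln n}{\lambda} + \alpha^2\lambda .
\end{align*}
Here I bound the maximum of the exponentials by their sum and then use the assumed MGF bound termwise. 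Independence is not needed, which matters because the lemma is applied to sums over distinct workers in the proof of Theorem~\ref{thm:oRtZJgVnZogTrbAAt}.

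Next I optimize over $\lambda$. The unconstrained minimizer is $\lambda^*=\sqrt{\ln n}/\alpha$. It is admissible only when $\ln n\le 1$, and in that case it gives the bound $2\alpha\sqrt{\ln n}$. For $n\ge 3$, where $\ln n\ge 1$, I take the boundary value $\lambda=1/\alpha$ and obtain $\alpha(\ln n+1)\le 2\alpha\ln n$. This is also the tighter form $(1+\ln n)\alpha$ that Remark~\ref{remark:tighter_max_bound} refers to, and which is used in~(\ref{eq:klsdnflksdnfl}) with $n$ replaced by $|\supp(\ba)|\le\min\{n,B\}$.

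The only delicate step is small $n$, where $\ln n<1$ and the target $4\alpha\ln n$ can fall below what the argument above gives.
\begin{itemize}
\item For $n=1$, the claim is $\Exp{X_1}\le 0$, which holds with equality since $X_1$ is centered.
\item For $n=2$, the constrained optimum gives $2\alpha\sqrt{\ln 2}\approx 1.67\alpha$, while $4\alpha\ln 2\approx 2.77\alpha$. So the bound holds.
\end{itemize}
The general principle is that for $\ln n\le 1$ we have $2\sqrt{\ln n}\le 4\ln n$ exactly when $\ln n\ge 1/4$, which covers every integer $n\ge 2$. I therefore expect the proof to reduce to these two short case checks plus the displayed Jensen computation.
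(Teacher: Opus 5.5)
Your proposal is correct and matches the paper's proof essentially step for step. The paper uses the same log-sum-exp and Jensen bound $\frac{\ln n}{\lambda}+\alpha^2\lambda$ and the same choice $\lambda=\min\{1/\alpha,\sqrt{\ln n}/\alpha\}$. It also treats the small cases the same way: $n=1$ directly, and $n=2$ via $\sqrt{\ln 2}\le 2\ln 2$.
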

\begin{proof}
For $n = 1$, the bound is trivial. Assume $n \geq 2$. Then,
\begin{align*}
\Exp{\max_{i \in [n]} X_i}
= \Exp{\frac{1}{\lambda} \ln \exp\left(\lambda \max_{i \in [n]} X_i\right)}
\leq \Exp{\frac{1}{\lambda} \ln\left(\sum_{i = 1}^n \exp\left(\lambda X_i\right)\right)}.
\end{align*}
By Jensen's inequality and the assumption, for all $0 < \lambda \leq \nicefrac{1}{\alpha}$,
we have
\begin{align*}
\Exp{\max_{i \in [n]} X_i}
\leq \frac{1}{\lambda} \ln\left(\Exp{\sum_{i = 1}^n \exp\left(\lambda X_i\right)}\right)
\leq \frac{1}{\lambda} \ln\left(n \exp\left(\alpha^2 \lambda^2\right)\right)
= \frac{\ln n}{\lambda} + \alpha^2 \lambda.
\end{align*}
The function on the right-hand side is convex and decreasing on
$\left(0, \nicefrac{\sqrt{\ln n}}{\alpha}\right]$.
Choosing $\lambda = \min\left\{\nicefrac{1}{\alpha}, \nicefrac{\sqrt{\ln n}}{\alpha}\right\}$, we get
\begin{align}\label{eq:iskldjflkdss}
\Exp{\max_{i \in [n]} X_i} \leq
\begin{cases}
2 \alpha \sqrt{\ln n}, & \ln n \leq 1, \\
\alpha \ln n + \alpha \leq 2 \alpha \ln n, & \ln n > 1.
\end{cases}
\end{align}
Note that for $n = 2$, we have $\sqrt{\ln n} \leq 2 \ln n$. This finishes the proof.
\end{proof}
\begin{remark}\label{remark:tighter_max_bound}
Actually, Lemma~\ref{lemma:mgf_max_bound} can be slightly improved. Using the bound $2 \sqrt{\ln 2} \leq \ln 2 + 1$
in (\ref{eq:iskldjflkdss}), we obtain $\Exp{\max_{i \in [n]} X_i} \leq \alpha \left(\ln n + 1\right)$.
\end{remark}

\begin{lemma}\label{lemma:expectation_via_orlicz}
Let $X$ be a non-negative, sub-exponential random variable. 
Denote $\alpha = \left\|X\right\|_{\psi_1}$ and $\Exp{X} = \mu$. Then
$\mu \leq \alpha\ln 2$.
\end{lemma}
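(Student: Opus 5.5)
The plan is to compare $\mathbb{E}[\exp(X/\alpha)]$ with $\exp(\mu/\alpha)$ through Jensen's inequality and then use the defining property of the $\psi_1$-norm. First I dispose of the degenerate case. If $\alpha = 0$, then $X = 0$ almost surely, so $\mu = 0$ and the claim holds; from now on assume $\alpha > 0$.

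The function $x \mapsto \exp(x/\alpha)$ is convex. Since $X \geq 0$, we have $|X| = X$. Jensen's inequality then gives
\begin{align*}
\exp\left(\frac{\mu}{\alpha}\right) \leq \Exp{\exp\left(\frac{X}{\alpha}\right)} = \Exp{\exp\left(\frac{|X|}{\alpha}\right)}.
\end{align*}

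Next I show that the right-hand side is at most $2$, i.e., that the infimum in Definition~\ref{def:subexp} is attained. For every $t > \alpha$, the definition yields a $t' \in (\alpha, t]$ with $\mathbb{E}[\exp(|X|/t')] \leq 2$. Since $\mathbb{E}[\exp(|X|/t)]$ is non-increasing in $t$, this gives $\mathbb{E}[\exp(|X|/t)] \leq 2$ for all $t > \alpha$. Now let $t \downarrow \alpha$. The integrand $\exp(|X|/t)$ increases monotonically to $\exp(|X|/\alpha)$, so the monotone convergence theorem gives $\mathbb{E}[\exp(|X|/\alpha)] \leq 2$.

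This limiting step is the only place that needs care, and monotone convergence handles it. Alternatively, I could avoid it entirely by applying Jensen with any $t > \alpha$ to get $\mu \leq t \ln 2$ and then letting $t \to \alpha$.

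Combining the two displays gives $\exp(\mu/\alpha) \leq 2$. Taking logarithms yields $\mu \leq \alpha \ln 2$, which is the claim of Lemma~\ref{lemma:expectation_via_orlicz}.
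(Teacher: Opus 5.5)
Your proof is correct and follows the same route as the paper: apply Jensen's inequality to $\exp(X/\alpha)$, use $|X| = X$, and then use $\mathbb{E}[\exp(|X|/\alpha)] \le 2$. Your monotone-convergence argument that the infimum in the $\psi_1$-norm is attained, and your treatment of the case $\alpha = 0$, fill in details that the paper's proof takes for granted.
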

\begin{proof}
Using Jensen's inequality and the non-negativity of $X$, we get
\begin{align*}
2 \geq \Exp{\exp\left(\frac{\left|X\right|}{\alpha}\right)}
\geq \exp\left(\frac{\Exp{\left|X\right|}}{\alpha}\right)
= \exp\left(\frac{\Exp{X}}{\alpha}\right)
= \exp\left(\frac{\mu}{\alpha}\right)
\end{align*}
Therefore, $\ln 2 \geq \nicefrac{\mu}{\alpha}$, which implies 
$\mu \leq \alpha \ln 2$.
\end{proof}
\section{RAS}\label{sec:ras}
In Algorithm~\ref{alg:sw_ata}, whenever $s_{i,k} > 0$ for all $i \in [n]$, we need to solve
$\min_{\ba \in \mathcal A} \max_{i\in[n]} a_i s_{i,k}$.
\citet{maranjyan2025ata} propose the Recursive Allocation Selection (RAS) algorithm for this purpose
and prove its correctness. We employ Algorithm~\ref{alg:ras} in SW-ATA.
\begin{algorithm}[t]
\caption{Recursive Allocation Selection (RAS)}
\label{alg:ras}
\begin{algorithmic}[1]
\STATE \textbf{Input:} LCBs $s_1,\ldots,s_n>0$, number of tasks $B$
\STATE Assume without loss of generality that $s_1 \leq s_2 \leq \cdots \leq s_n$
\IF{$B = 1$}
    \RETURN $(1,0,\ldots,0)$
\ENDIF
\STATE $\ba \leftarrow \operatorname{RAS}(s_1,\ldots,s_n;B-1)$
\IF{$a_n = 0$}
    \STATE $r \leftarrow \min\{i \in [n]: a_i=0\}$
\ELSE
    \STATE $r \leftarrow n$
\ENDIF
\STATE $\mathcal M \leftarrow
\argmin_{j \in [r]} \max_{i \in [n]} (a_i + \Ind{i=j})s_i$
\STATE Choose $j \in \mathcal M$ minimizing
$\left|\argmax_{i \in [r]}(a_i+\Ind{i=j})s_i\right|$
\RETURN $(a_1,\ldots,a_j+1,\ldots,a_n)$
\end{algorithmic}
\end{algorithm}

\section{Lower bounds}

\label{sec:lower_bounds}

We now turn to information-theoretic limits and derive a lower bound on the
regret for the problem at hand. The minimax regret is
batch-dependent: sparse ($B<n$) and large ($B\ge n$) batches exhibit
different dependence on $B$ and $n$ (\cref{thm:lower_bound}).

For a drifting instance $\nu$ of the model of \cref{ass:drift-exp}, with
its deterministic mean path $(\bmu_k)_{k=1}^K$, we write
\begin{equation}
\regretdyn_K(\pi,\nu)
:=
\ExpSub{\nu}{\sum_{k=1}^K \bigl(\ell_k(\ba_k) - L_k^*\bigr)}
\label{eq:proxy_loss_regret_policy}
\end{equation}
for the expected proxy-loss regret of a policy $\pi$. This is the
policy-and-instance form of the regret $\regretdyn$ defined in
\Cref{sec:setup}, with $L_k^*$ the optimal proxy loss of
round $k$. On a stationary instance, where $\mu_{i,k}=\mu_i$ for every
round, it is the expected proxy-loss regret against the fixed optimal
allocation. \Cref{subsec:lower_statement} states the bound and discusses
its relation to the upper bounds, and \Cref{app:lower_bound_proofs}
constructs the hard environments for the two batch regimes and proves the
bound.

\subsection{The environment class and the lower bound}
\label{subsec:lower_statement}

The lower bound is stated for the following class of environments.
\begin{definition}\label{def:lower_class}
For fixed $n$, $B$ and $K$, let $\mathfrak E_{n,B}^{[0,1]}(K,V_K)$ denote the
class of environments whose observations are positive almost surely and take
values in $[0,1]$, and whose mean paths satisfy the variation budget of
\cref{ass:variation_budget}.
\end{definition}
The hard instances constructed in the proof have two-point observations
on $\{\nicefrac12, 1\}$ or deterministic positive delays; in particular,
they satisfy \cref{ass:drift-exp}, so the lower bound below
applies to the same model as the upper bounds. The class is deliberately
restricted: it shows that the hardness already arises on simple bounded
instances. For
the class of \cref{def:lower_class} we obtain the following minimax bound
on the proxy-loss regret \eqref{eq:proxy_loss_regret_policy}.

\begin{theorem}[Batch-dependent minimax lower bound]
\label{thm:lower_bound}
Let $n\ge2$ and $B\ge1$. Then there is a universal constant $c>0$ such
that the following two bounds hold.

\noindent\emph{(i) Sparse batches.} If $B<n$, $K\ge n/B$, and
$V_K\lesssim BK/n$, then
\begin{equation}
\inf_{\pi}
\sup_{\nu\in\mathfrak E_{n,B}^{[0,1]}(K,V_K)}
\regretdyn_K(\pi,\nu)
\ge
c\left(
\sqrt{\frac{nK}{B}}
\ \vee\
\left(\frac nB\right)^{1/3}K^{2/3}V_K^{1/3}
\right).
\label{eq:lower_bound_sparse_batch}
\end{equation}

\noindent\emph{(ii) Large batches.} If $B\ge n$, $K\ge B/n$, and
$V_K\lesssim Kn^2/B^2$, then
\begin{equation}
\inf_{\pi}
\sup_{\nu\in\mathfrak E_{n,B}^{[0,1]}(K,V_K)}
\regretdyn_K(\pi,\nu)
\ge
c\left(
\sqrt{\frac{BK}{n}}
\ \vee\
\left(\frac Bn\right)^{2/3}K^{2/3}V_K^{1/3}
\right).
\label{eq:lower_bound_large_batch}
\end{equation}
\end{theorem}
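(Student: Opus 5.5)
We would prove each regime by reducing to a known bandit lower bound: first a stationary two-point (Bernoulli-type) construction for the $\sqrt{\cdot}$ term, then a piecewise-stationary concatenation of such blocks for the $K^{2/3}V_K^{1/3}$ term. The overall strategy is the standard Le Cam / Bretagnolle--Huber argument over a family of $n$ (or $n$-indexed) perturbed instances. Divergences are controlled by the number of observations each worker receives, which is $\sum_k a_{i,k}$ and sums to $BK$.

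\emph{Sparse batches ($B<n$).} Take a base instance where all workers have deterministic-like mean $\mu_0$ (two-point law on $\{\nicefrac12,1\}$ with mean $\nicefrac34$). Group workers so that the optimal allocation uses $B$ distinct workers, one task each, hence $L^*=\mu_0$ on the base. In instance $\nu_j$ the workers in a designated block $j$ of size $B$ have mean $\mu_0-\epsilon$, and there are $n/B$ such blocks. The proxy loss of any allocation that places a task on a non-fast worker (or two tasks on one worker) exceeds $L^*$ by at least $\epsilon$. The regret is therefore at least $\epsilon\,\mathbb E_j[\#\text{rounds not playing block } j]$.

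The total number of samples from block $j$ under the base instance, averaged over $j$, is at most $BK/(n/B)\cdot(1/B)$ rounds-equivalent. This yields KL divergence $\lesssim \epsilon^2 K B/n$ per block in round units; here one round with $B$ tasks on block $j$ gives $B$ samples, but they all belong to one identification. Optimizing $\epsilon\asymp\sqrt{n/(BK)}$ gives $\sqrt{nK/B}$, i.e.\ effectively an $(n/B)$-armed bandit with $K$ pulls. For the variation term, split $[K]$ into $K/H$ epochs of length $H$ and resample $j$ independently per epoch. Each epoch contributes $\sqrt{(n/B)H}$, and the variation used is $\epsilon\cdot K/H$ with $\epsilon=\sqrt{n/(BH)}$. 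Setting $\epsilon K/H=V_K$ gives $H\asymp (n/B)^{1/3}(K/V_K)^{2/3}$ and total regret $(K/H)\sqrt{nH/B}=(n/B)^{1/3}K^{2/3}V_K^{1/3}$. The condition $V_K\lesssim BK/n$ ensures $H\geq n/B$, so the stationary bound is valid per epoch.

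\emph{Large batches ($B\ge n$).} Now $L^*\approx B\mu/n$ on a near-uniform instance, and the hardness is in splitting $B$ tasks. Perturbing one worker's mean by $\epsilon$ changes the optimal count $a_i$ by one only when $\epsilon\gtrsim \mu n/B$. For finer perturbations, the regret per mistaken round is roughly $(B/n)\epsilon$. Worker $i$ receives about $B/n$ samples per round, so over $K$ rounds the KL is $\asymp \epsilon^2 KB/n$. With $n$ candidate perturbed workers (the ``slow'' one to be under-allocated), the Fano/Le Cam computation gives regret $\gtrsim (B/n)\epsilon K$ with $\epsilon\asymp\sqrt{n/(BK)}\cdot$(a correction so that distinguishing is hard). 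A cleaner way to present it is to reduce to deterministic delays with integer rounding: set $\mu_i$ so that exactly one worker sits at a rounding threshold. The direction of the perturbation then determines whether it should get $\lfloor B/n\rfloor$ or $\lceil B/n\rceil$ tasks, with the loss gap $\asymp (B/n)\epsilon$ per round. Balancing gives $\sqrt{BK/n}$. The variation version repeats the epoch trick: each epoch costs $\sqrt{BH/n}$ at $\epsilon\asymp\sqrt{n/(BH)}$, and the gap in loss scales with $B/n$ so that effective variation is $\epsilon K/H$. This yields $H\asymp (n/B)^{1/3}(K/V_K)^{2/3}$ up to the needed scaling, with the claimed $(B/n)^{2/3}K^{2/3}V_K^{1/3}$. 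The condition $V_K\lesssim Kn^2/B^2$ ensures epochs are long enough.

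The main obstacle is the large-batch construction. Because feedback is only informative through observations from the perturbed worker, we must ensure that any allocation avoiding the regret penalty still yields only $O(B/n)$ samples per round from that worker. We must also ensure that the gap scales exactly as $(B/n)\epsilon$ despite the integer max structure. We would handle this by choosing means with an explicit rounding slack and checking $\ell(\ba)-L^*\ge (B/n)\epsilon\,\mathbf 1\{a_i\ne \bar a_i\}$ directly. Then, as a final check, we would apply Bretagnolle--Huber with the divergence decomposition $\mathrm{KL}=\sum_i \mathbb E[N_{i,K}]\,\mathrm{kl}(\mu_i,\mu_i')$.
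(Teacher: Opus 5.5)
\textbf{Large batches and restarts.} Your large-batch plan is essentially the paper's: one worker sits at a rounding threshold with a random two-point law $\Lambda(\tfrac14\pm\delta)$, all other workers are deterministic at $\mu_i=L/(b_i+1)$, you run a two-point test, and you restart independently per epoch. (Only the other workers may be deterministic. The perturbed one must stay random, or one sample reveals the instance.) Your epoch arithmetic is also correct in both regimes. The genuine gap is the sparse-batch stationary construction.

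\textbf{Why the block family fails.} You perturb all $B$ workers of block $j$. By the divergence decomposition, every observation from every worker of that block then costs $\mathrm{kl}\asymp\epsilon^2$. There is no ``rounds-equivalent'' discount. Averaging over the $n/B$ blocks, $\frac{B}{n}\sum_j\mathbb E_0[N_{\text{block }j}]\le \frac{B}{n}\cdot BK$, so the average KL is $\asymp\epsilon^2B^2K/n$, not $\epsilon^2BK/n$. This gives only $\epsilon\asymp\sqrt{n/K}/B$ and regret $\asymp\sqrt{nK}/B$, a factor $\sqrt B$ short.

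This is not slack in your analysis; the family really is that easy. A learner that puts one task on each of $B$ distinct workers pays only $\epsilon$ in a non-optimal round, since the loss is a max ($\mu_0$ versus $\mu_0-\epsilon$). Yet each of its $B$ samples is informative about its block. So about $(n/B)\epsilon^{-2}$ samples, i.e.\ $n/(B^2\epsilon^2)$ rounds, identify the fast block, and its regret is $\widetilde{\mathcal O}(\sqrt{nK}/B)$. Separately, for $n/2<B<n$ your construction has only one block.

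\textbf{The missing idea.} Only one worker should carry information, and the other $B-1$ tasks must be absorbed without cost. The paper takes $d=\min\{B-1,\lfloor n/2\rfloor\}$ deterministic dummy workers with delays $\frac{3}{4(c_j+1)}$, where $c_j\le 2$ and $\sum_j c_j=B-1$. The remaining $m=n-d\ge n/2$ workers are candidates: exactly one has law $\Lambda(\tfrac12-\delta)$ and the rest have $\Lambda(\tfrac12)$. The unique optimum gives $c_j$ tasks to each dummy and one task to the fast candidate, and every other allocation has regret at least $\delta/2$. Only samples from candidate $\theta$ contribute KL, and $\sum_\theta N_{\theta,K+1}\le BK$ pathwise. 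Averaging over $\theta$ gives $\frac1m\sum_\theta\mathrm{KL}(P_0\Vert P_\theta)\le 4\delta^2BK/m$, so $\delta\asymp\sqrt{m/(BK)}$ and regret $K\delta\asymp\sqrt{nK/B}$. With this family, your restart computation goes through unchanged and yields $(n/B)^{1/3}K^{2/3}V_K^{1/3}$.

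\textbf{The over-sampling obstacle in the large-batch case.} You correctly flag that the learner may over-allocate to worker $1$ to gain information. An indicator-type gap $\ell(\ba)-L^*\ge (B/n)\epsilon\,\mathbf 1\{a_1\neq\bar a_1\}$ does not control this. What you need is regret that grows linearly in the excess: $(a_{1,k}-b_1)_+\le 6\bigl(\ell_k(\ba_k)-L^*\bigr)$. Summing gives $\mathbb E_-[N_{1,K+1}]\le b_1K+6R_-$. Then split into two cases: either $R_-\ge hK/8$ and you are already done, or $\mathrm{KL}(P_-\Vert P_+)=O(\delta^2 b_1K)\le\tfrac12$ and Pinsker finishes the argument.
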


\paragraph{Proof idea.}
The proof first builds hard stationary instances for the two batch regimes
and, for each regime, shows that the instances a policy must tell apart
have nearly identical distributions of the history at the available number of
observations. For $B<n$, all but one
of the $B$ tasks of a round are absorbed by fast dummy workers, and the
remaining task probes a pool of $m\ge n/2$ candidate workers, exactly
one of which is slightly faster than the others. Identifying the fast
candidate costs $\Omega(\sqrt{nK/B})$. For $B\ge n$, the optimal allocation
is balanced, and perturbing the weight of a single worker's delay law by a
small amount $\delta$ forces regret $\Omega(\sqrt{BK/n})$. The bounds under a variation
budget follow the restart construction of \citet{cheung2019learning}: the
horizon is divided into epochs, a fresh instance of the family is drawn
independently of the past at the beginning of every epoch, and the
near-identical history distributions force every policy to pay regret of
the stationary order over the epoch. The
mean change at each epoch boundary consumes part of the variation budget
$V_K$, and the epoch length is chosen to balance the per-epoch regret
against the number of epochs admissible under the budget.
All constructed instances have independent observations from two-point
distributions on $\{\nicefrac12,1\}$ or deterministic positive values in
$(0,1]$, which are positive almost surely and $\mathcal
O(1)$-sub-exponential.
The full proof is in
\Cref{app:lower_bound_proofs}.

The two conditions of each regime are read directly: the condition on $K$
asks for sufficiently many iterations relative to the batch imbalance, and
the condition on $V_K$ asks that the variation is not too large. Both
come from the construction and are not claimed to be necessary.
The two terms of each bound answer two different costs: the stationary term
is the price of identifying the fast workers under a fixed batch
constraint, and the variation term is the price of tracking them under the
budget. The tuned \algname{SW-ATA} attains the optimal dependence on the horizon
$K$ and on the variation budget $V_K$: the factor $K^{2/3}V_K^{1/3}$ is
common to the lower bound and, for large enough $V_K$, to the guarantee of
\cref{thm:general_ns_gap_independent}. The mismatch is confined to the
batch parameters: the guarantee of \cref{thm:general_ns_gap_independent}
carries the maximal powers of $B$ and $n$ appearing in the lower bound,
namely $B^{2/3}$ of \eqref{eq:lower_bound_large_batch} and $n^{1/3}$ of
\eqref{eq:lower_bound_sparse_batch}, and therefore covers both regimes at
once. Designing an algorithm that matches the lower bound in both
parameters simultaneously is an open question.

\subsection{Hard-instance constructions}
\label{app:lower_bound_proofs}

In all instances constructed below, the observations are positive almost
surely, bounded in
$[0,1]$, and they are independent across workers, tasks, and rounds.
Hence every constructed
instance whose mean path satisfies the variation constraint belongs to
$\mathfrak E_{n,B}^{[0,1]}(K,V_K)$.
Throughout this subsection, $\Lambda(p)$ denotes the two-point
distribution on $\{\nicefrac12,1\}$ with $\Prob{X=1}=p$, so that
$\Exp{X}=(1+p)/2$. Since the KL divergence between two such laws depends
only on the weights,
$\mathrm{KL}(\Lambda(p)\Vert\Lambda(q))
=\mathrm{KL}(\operatorname{Ber}(p)\Vert\operatorname{Ber}(q))$.

An environment $\nu$ is a family of delay distributions
$(\nu_{i,k})_{i\in[n],\,k\in[K]}$, one per worker per round. Together with
a policy $\pi$, it generates a distribution
$P_{\nu,\pi}$ over the history of the run, and the subscript of an
expectation names the law it is taken under. For brevity, the policy is
dropped from the notation: for the sparse family we write $P_\theta$ for
$P_{\nu_\theta,\pi}$, and $\ExpSub{\theta}{\cdot}$ for the expectation
under $P_\theta$, while $P_0$ is the law under the null environment
$\nu_0$ of the sparse family, with expectation
$\ExpSub{0}{\cdot}$. For the large-batch pair, $P_-$ and $P_+$ are the laws
under $\nu^-$ and $\nu^+$, with expectations $\ExpSub{-}{\cdot}$ and
$\ExpSub{+}{\cdot}$. The divergence decomposition for these laws is proved
in \cref{lem:kl_decomposition}.

To prove the lower bound, we construct for each of the two cases a family
of hard environments and bound the regret of every policy averaged over
the family. Since the average over a finite family never exceeds the
maximum, at least one environment of the family attains the same order,
which gives a lower bound on the worst case.

\subsubsection{Divergence decomposition}
\label{app:kl_decomposition}

We now make precise the probability space on which the laws
$P_{\nu,\pi}$ live. A policy $\pi=(\pi_k)_{k=1}^K$ is a sequence of
measurable rules, where $\pi_k$ maps the observed history of the first
$k-1$ rounds and a randomization seed to an allocation
$\ba_k\in\mathcal A$. The canonical space is
\[
\Omega
=
[0,1]\times\prod_{k=1}^K\Bigl(\mathcal A\times[0,1]^{nB}\Bigr),
\]
equipped with the product $\sigma$-algebra, where $\mathcal A$ carries the
counting $\sigma$-algebra and $[0,1]$ the Borel one. The coordinate
$U\in[0,1]$ is the randomization seed of the policy, with the uniform
law in every environment. The interconnection of $\pi$ with an
environment $\nu$ generates the history recursively: at round $k$ the
allocation is $\ba_k=\pi_k(\mathcal T_{k-1},U)$, where $\mathcal T_{k-1}$
is the observed history of the first $k-1$ rounds, the revealed delays
$X_{i,k}^{(1)},\dots,X_{i,k}^{(a_{i,k})}$ of worker $i$ are drawn
independently from $\nu_{i,k}$, and the unrevealed entries
$X_{i,k}^{(u)}$ with $u>a_{i,k}$ are drawn from the point mass at $0$,
identically in every environment. The law of the resulting element of
$\Omega$ is denoted $P_{\nu,\pi}$. The observed
history $\tau_K=(\ba_k,\{X_{i,k}^{(u)}\}_{i\in[n],\,u\le
a_{i,k}})_{k=1}^K$ is a measurable projection of $\Omega$.

\begin{lemma}[Divergence decomposition]
\label{lem:kl_decomposition}
Let $\nu=(\nu_{i,k})$ and $\nu'=(\nu'_{i,k})$ be two environments, and
assume that $\nu_{i,k}$ is absolutely
continuous with respect to $\nu'_{i,k}$ for every $i\in[n]$ and
$k\in[K]$. Fix a policy $\pi$, and let $P_{\nu,\pi}$ and $P_{\nu',\pi}$
be the probability measures on the canonical space induced by the
$K$-round interconnection of $\pi$ with $\nu$ and with $\nu'$. Then
\begin{equation}
\mathrm{KL}(P_{\nu,\pi}\,\|\,P_{\nu',\pi})
=
\sum_{k=1}^K\sum_{i=1}^n
\ExpSub{\nu}{a_{i,k}}\,\mathrm{KL}(\nu_{i,k}\,\|\,\nu'_{i,k}).
\label{eq:kl_decomposition}
\end{equation}
\end{lemma}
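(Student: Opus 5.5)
We prove \eqref{eq:kl_decomposition} by the chain rule for KL divergence on the canonical space, following the standard bandit divergence decomposition of \citet{lattimore2019bandit}, adapted to integer allocations with several observations per worker. First, we write the density of $P_{\nu,\pi}$ with respect to a common dominating measure. Fix for each $(i,k)$ a $\sigma$-finite measure $\lambda_{i,k}$ dominating both $\nu_{i,k}$ and $\nu'_{i,k}$ (e.g.\ $\nu_{i,k}+\nu'_{i,k}$), and let $p_{i,k}$, $p'_{i,k}$ denote the corresponding densities. Since the seed $U$ is uniform in both environments, and the policy kernels $\pi_k(\cdot\mid\mathcal T_{k-1},U)$ are the same in both, the joint density of $(U,\ba_1,X_1,\dots,\ba_K,X_K)$ factorizes as
\begin{align*}
\prod_{k=1}^K \pi_k(\ba_k\mid \mathcal T_{k-1},U)\prod_{i=1}^n\prod_{u=1}^{a_{i,k}} p_{i,k}(X_{i,k}^{(u)}).
\end{align*}
The unrevealed coordinates with $u>a_{i,k}$ are point masses at $0$ in both environments and contribute a common factor. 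The analogous expression holds with $p'_{i,k}$ for $\nu'$.

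Second, we take the log-likelihood ratio. Absolute continuity $\nu_{i,k}\ll\nu'_{i,k}$ guarantees that it is well defined $P_{\nu,\pi}$-a.s. All policy factors cancel, leaving
\begin{align*}
\log\frac{dP_{\nu,\pi}}{dP_{\nu',\pi}}=\sum_{k=1}^K\sum_{i=1}^n\sum_{u=1}^{B}\Ind{u\le a_{i,k}}\log\frac{p_{i,k}(X_{i,k}^{(u)})}{p'_{i,k}(X_{i,k}^{(u)})}.
\end{align*}
Taking $\ExpSub{\nu}{\cdot}$, we condition each summand on $\mathcal G_{k}:=\sigma(U,\mathcal T_{k-1},\ba_k)$. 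The indicator $\Ind{u\le a_{i,k}}$ is $\mathcal G_k$-measurable. Given $\mathcal G_k$, the variable $X_{i,k}^{(u)}$ with $u\le a_{i,k}$ has law $\nu_{i,k}$, so the conditional expectation of the log ratio is exactly $\mathrm{KL}(\nu_{i,k}\|\nu'_{i,k})$. Summing over $u$ then gives $a_{i,k}\,\mathrm{KL}(\nu_{i,k}\|\nu'_{i,k})$, and the tower property yields \eqref{eq:kl_decomposition}.

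The main obstacle is the case $\mathrm{KL}(\nu_{i,k}\|\nu'_{i,k})=+\infty$ or a log ratio that is not integrable, where the interchange of sum and expectation must be justified. We handle this by splitting each log ratio into its positive and negative parts. The negative parts are integrable, since $\ExpSub{\nu}{(\log(p/p'))_-}\le \ExpSub{\nu'}{p/p'\cdot\ldots}\le 1/e$ by $x\log x\ge -1/e$. We can therefore apply monotone convergence (Tonelli) to the finite sum of nonnegative conditional terms, with both sides equal to $+\infty$ simultaneously. A minor point to verify is that the number of terms is at most $nBK$, hence finite, so no further convergence issues arise.
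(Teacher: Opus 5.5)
Your proof is correct and follows essentially the same route as the paper, which applies the KL chain rule round by round, observes that the allocation (a deterministic function of $(U,\mathcal T_{k-1})$ shared by both environments) contributes nothing, and sums the per-draw divergences of the $a_{i,k}$ revealed observations before taking the tower property; your factorized log-likelihood ratio with cancelling policy factors, conditioned on $\sigma(U,\mathcal T_{k-1},\ba_k)$, is the same computation written with explicit densities. Your handling of the negative parts via $x\log x\ge -1/e$, which covers the case $\mathrm{KL}(\nu_{i,k}\|\nu'_{i,k})=+\infty$, adds rigor the paper leaves implicit. One small fix: the per-coordinate dominating measure should be $\lambda_{i,k}+\delta_0$, so that the point-mass unrevealed entries are dominated as well.
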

\begin{proof}
This is the standard divergence decomposition for bandit histories (see,
e.g., \citet[Chapter~15]{lattimore2019bandit}); we prove it in our notation.
By the chain rule for the KL divergence, applied over the rounds of the
canonical space,
\[
\mathrm{KL}(P_{\nu,\pi}\,\|\,P_{\nu',\pi})
=
\sum_{k=1}^K
\ExpSub{\nu}{
\mathrm{KL}\bigl(
P_{\nu,\pi}(\cdot\mid U,\mathcal T_{k-1})
\,\big\|\,
P_{\nu',\pi}(\cdot\mid U,\mathcal T_{k-1})
\bigr)
},
\]
where the $k$-th term is the conditional divergence of round $k$ given the
seed and the past history.

Conditional on $(U,\mathcal T_{k-1})$, the allocation $\ba_k$ is the same
deterministic function in both environments, so it contributes zero to
the conditional divergence. Given the allocation, the delays of round $k$
are independent across workers and across draws: worker $i$ contributes
$a_{i,k}$ revealed draws from $\nu_{i,k}$ under $\nu$ and from
$\nu'_{i,k}$ under $\nu'$, and the unrevealed entries have the same dummy
law in both environments and contribute zero. The divergence of a product
of independent draws is the sum of the per-draw divergences. Hence
\[
\mathrm{KL}\bigl(
P_{\nu,\pi}(\cdot\mid U,\mathcal T_{k-1})
\,\big\|\,
P_{\nu',\pi}(\cdot\mid U,\mathcal T_{k-1})
\bigr)
=
\sum_{i=1}^n
a_{i,k}\,\mathrm{KL}(\nu_{i,k}\,\|\,\nu'_{i,k}).
\]
The expectation
over the past history then replaces $a_{i,k}$ by
$\ExpSub{\nu}{a_{i,k}}$ by the tower property of expectation, since the
divergences
$\mathrm{KL}(\nu_{i,k}\,\|\,\nu'_{i,k})$ are deterministic, and summing
over the rounds gives \eqref{eq:kl_decomposition}.
\end{proof}
In particular, when the delay laws of the two environments differ
only on a subset $S$ of the workers, the inner sum in
\eqref{eq:kl_decomposition} reduces to $S$.

\subsubsection{Sparse batches ($B<n$)}
Let $B<n$, and put
\[
d:=\min\{B-1,\lfloor n/2\rfloor\},
\qquad
m:=n-d.
\]
When $d>0$, define the vector $\bm c=(c_1,\dots,c_d)$ by
\[
q:=\left\lfloor\frac{B-1}{d}\right\rfloor,
\qquad
r:=(B-1)-dq,
\qquad
c_j:=
\begin{cases}
q+1, & j\le r,\\
q, & j>r.
\end{cases}
\]
Since $d\le B-1$, the entries of $\bm c$ are positive integers; they differ
by at most one and sum to $B-1$. We distinguish two classes of workers. We
call workers $1,\dots,d$ the \emph{dummy workers} and define their delays
from the vector $\bm c$: dummy worker $j$ has the deterministic
delay $\mu_j=\frac{3}{4(c_j+1)}$. We call the remaining $m$ workers
the \emph{candidates}. When $B=1$ the dummy set is empty and all $n=m$
workers are candidates.

We now define the family of environments, indexed by $\theta\in[m]$. In
the environment $\nu_\theta$, candidate $\theta$ has
$\Lambda(1/2-\delta)$ delays,
and every other candidate has $\Lambda(1/2)$ delays, where for a sufficiently small universal $c_0>0$ we take
\begin{equation}
\delta
:=
c_0\min\left\{1,\sqrt{\frac{m}{BK}}\right\}.
\label{eq:lower_sparse_stationary_delta}
\end{equation}
We also write $\nu_0$ for the environment of the same form in which every
candidate has $\Lambda(1/2)$ delays.

The next lemma shows that the environment $\nu_\theta$ has a unique
optimal allocation, and that deviating from it incurs instantaneous regret
at least $\delta/2$.

\begin{lemma}[Hard family for sparse batches]
\label{lem:lower_sparse_construction}
In environment $\nu_\theta$, the unique optimal allocation assigns
$c_j$ tasks to dummy worker $j$ and one task to candidate $\theta$. Its
loss is $3/4-\delta/2$, and every nonoptimal action has instantaneous regret
at least $\delta/2$.
\end{lemma}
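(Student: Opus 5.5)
The proof is a direct case analysis on how many of the $B$ tasks an allocation sends to the candidates. First I record the per-worker losses. For a dummy worker $j$ with $a_j$ tasks the loss is $a_j\mu_j = \frac{3a_j}{4(c_j+1)}$. This is strictly below $3/4$ for $a_j\le c_j$ and at least $3/4$ for $a_j\ge c_j+1$. Candidate $\theta$ has mean $\frac{1+(1/2-\delta)}{2}=\frac34-\frac\delta2$, and every other candidate has mean $\frac34$.

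Next I check that the proposed allocation is feasible and has loss exactly $3/4-\delta/2$. It assigns $c_j$ tasks to dummy $j$ and one task to candidate $\theta$, and since $\sum_j c_j = B-1$ it uses exactly $B$ tasks. Its loss is the maximum of $\frac34-\frac\delta2$ and $\max_j \frac{3c_j}{4(c_j+1)}$, so I must show every dummy term is at most $\frac34-\frac\delta2$. This amounts to $\frac\delta2\le\frac{3}{4(c_j+1)}$, i.e.\ a bound on $c_j$, and it is the step I expect to need the most care.

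To bound $c_j$ I split on which term attains $d=\min\{B-1,\lfloor n/2\rfloor\}$.
\begin{itemize}
\item If $d=B-1$, then $c_j=1$ for every $j$, so the dummy terms equal $3/8$.
\item If $d=\lfloor n/2\rfloor<B-1$, then $B\ge \lfloor n/2\rfloor+2$, while $B<n$ gives $B-1\le n-2\le 2\lfloor n/2\rfloor$. Hence $q\le 2$ and $c_j\le 3$, so each dummy term is at most $9/16$.
\end{itemize}
In both cases $\delta\le c_0$, and a small universal $c_0$ (e.g.\ $c_0\le 1/4$) keeps every dummy term strictly below $\frac34-\frac\delta2$. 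Since the optimal loss is at most $3/4-\delta/2$, this reduces both uniqueness and the gap bound to showing that every other allocation has loss at least $3/4$.

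Finally I take an arbitrary allocation $\ba$ and show that, unless it is the proposed one, $\ell(\ba)\ge \frac34$.
\begin{itemize}
\item If some dummy $j$ has $a_j\ge c_j+1$, then $\ell(\ba)\ge \frac34$ by the first step.
\item Otherwise every dummy has $a_j\le c_j$, so the dummies take at most $B-1$ tasks and at least one task goes to the candidates.
  \begin{itemize}
  \item If some candidate gets at least two tasks, its load is at least $2(\frac34-\frac\delta2)\ge\frac34$ for $\delta\le \frac34$.
  \item If two distinct candidates each get a task, one of them is not $\theta$ and contributes $\frac34$.
  \item If exactly one candidate gets exactly one task but it is not $\theta$, the loss is again at least $\frac34$.
  \end{itemize}
\end{itemize}
The only remaining configuration is a single task to candidate $\theta$. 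Then the dummies carry exactly $B-1$ tasks with $a_j\le c_j$ and $\sum_j c_j=B-1$, which forces $a_j=c_j$ for all $j$. That is precisely the proposed allocation.

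So every nonoptimal allocation has loss at least $\frac34$. This gives uniqueness of the optimum and instantaneous regret at least $\frac34-\left(\frac34-\frac\delta2\right)=\frac\delta2$. When $B=1$ the dummy set is empty and the argument reduces to the last case, with one task to a single candidate.
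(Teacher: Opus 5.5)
Your proof is correct and follows essentially the same route as the paper: bound $c_j$ so that the dummy loads sit below $3/4-\delta/2$, then show that every allocation other than ``$c_j$ to each dummy, one task to candidate $\theta$'' incurs loss at least $3/4$, using the same three-way split. The one difference is that you use the non-strict inequality $B-1\le 2\lfloor n/2\rfloor$ and obtain $c_j\le 3$; the paper notes that the inequality is strict, which gives $c_j\le 2$, but your looser bound still suffices with $c_0\le 1/4$.
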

\begin{proof}
The construction has $c_j\le2$ for every dummy worker. We distinguish two
cases.

\textbf{Case $B-1\le\lfloor n/2\rfloor$.} Then $d=B-1$; for $d>0$ this
gives $q=1$ and $r=0$, so every $c_j=1$, and for $B=1$ there are no dummy
workers at all.

\textbf{Case $B-1>\lfloor n/2\rfloor$.} Then $d=\lfloor n/2\rfloor$, and the
regime condition $B\le n-1$ gives
$B-1\le n-2<2\lfloor n/2\rfloor=2d$, hence
$c_j\le\lceil(B-1)/d\rceil\le2$.

In both cases $c_j\le2$, and since $\delta\le c_0$ by
\eqref{eq:lower_sparse_stationary_delta}, choosing $c_0\le1/6$ gives
$\delta\le1/6\le 3/(2(c_j+1))$ for every dummy worker.

In $\nu_\theta$, the allocation described in the statement assigns $c_j$
tasks to dummy worker $j$ and one task to candidate $\theta$. The load of
candidate $\theta$ is $3/4-\delta/2$, the load of dummy worker $j$ is
\begin{equation*}
c_j\mu_j
=
\frac{3c_j}{4(c_j+1)}
=
\frac{3}{4}-\frac{3}{4(c_j+1)}
\le
\frac{3}{4}-\frac{\delta}{2},
\end{equation*}
since $\delta\le 3/(2(c_j+1))$ by the choice of $c_0\le1/6$ above, and
every other candidate besides $\theta$ carries no task. The
loss of the allocation is therefore $3/4-\delta/2$.

Any other allocation has loss at least $3/4$. Indeed, the total number of
tasks is fixed at $B=\sum_jc_j+1$, and an allocation that assigns no task
to any candidate other than $\theta$, at most one task to candidate
$\theta$, and at most $c_j$ tasks to every dummy worker $j$ places at most
$B$ tasks, with equality only for the allocation above. Every other
allocation therefore falls into one of three cases: it assigns a task to a
candidate other than $\theta$, it assigns at least two tasks to candidate
$\theta$, or it assigns more than $c_j$ tasks to some dummy worker $j$.
In the first case, that candidate's load is at least $3/4$. In the second
case, the load of candidate $\theta$ is at least $2(3/4-\delta/2)\ge3/4$,
since $\delta\le1/6$. In the third case, the load of dummy worker $j$ is
at least $(c_j+1)\mu_j=3/4$. The optimum is therefore unique, with loss
$3/4-\delta/2$, and every nonoptimal action has instantaneous regret at
least $\delta/2$.
\end{proof}

Recall that
$N_{i,k}$ denotes the number of observations collected from worker $i$ up
to round $k$, as in the stationary analysis, so that $N_{i,K+1}$ is the
total over the horizon. The next lemma shows that the laws of the history
under $\nu_\theta$ and $\nu_0$ are close at the available number of
observations.

\begin{lemma}[KL bound for the sparse family]
\label{lem:lower_sparse_kl}
Retain the sparse family defined above.
Then
\begin{equation}
\mathrm{KL}(P_0\Vert P_\theta)
\le
4\delta^2\ExpSub{0}{N_{\theta,K+1}}
\label{eq:lower_sparse_kl}
\end{equation}
\end{lemma}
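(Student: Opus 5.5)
The plan is to apply the divergence decomposition of Lemma~\ref{lem:kl_decomposition} with $\nu=\nu_0$ and $\nu'=\nu_\theta$, and then bound a single Bernoulli divergence.

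\textbf{Step 1 (reduce to one worker).} The two environments differ only in the law of candidate $\theta$. The dummy workers have identical deterministic delays, and every other candidate has $\Lambda(1/2)$ delays in both environments. Absolute continuity holds because $\Lambda(1/2)$ and $\Lambda(1/2-\delta)$ share the support $\{\nicefrac12,1\}$. Indeed, $\delta\le c_0<1/2$, so both weights lie strictly inside $(0,1)$. The remark after Lemma~\ref{lem:kl_decomposition} then restricts the sum to $S=\{\theta\}$. Since the laws are stationary in $k$, this gives
\begin{equation*}
\mathrm{KL}(P_0\Vert P_\theta)
=\sum_{k=1}^K \ExpSub{0}{a_{\theta,k}}\,
\mathrm{KL}\bigl(\operatorname{Ber}(\tfrac12)\,\Vert\,\operatorname{Ber}(\tfrac12-\delta)\bigr)
=\ExpSub{0}{N_{\theta,K+1}}\,
\mathrm{KL}\bigl(\operatorname{Ber}(\tfrac12)\,\Vert\,\operatorname{Ber}(\tfrac12-\delta)\bigr).
\end{equation*}
The second equality uses $N_{\theta,K+1}=\sum_{k=1}^K a_{\theta,k}$ and the identification $\mathrm{KL}(\Lambda(p)\Vert\Lambda(q))=\mathrm{KL}(\operatorname{Ber}(p)\Vert\operatorname{Ber}(q))$ stated at the start of the subsection.

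\textbf{Step 2 (scalar bound).} Bound the Bernoulli divergence by the $\chi^2$-divergence, $\mathrm{KL}(p\Vert q)\le (p-q)^2/(q(1-q))$. With $p=\tfrac12$ and $q=\tfrac12-\delta$, this equals $\delta^2/(\tfrac14-\delta^2)$. Since $\delta\le c_0\le 1/6$, we have $\tfrac14-\delta^2\ge \tfrac14-\tfrac1{36}=\tfrac29$. The divergence is therefore at most $\tfrac92\delta^2$.

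Achieving the stated constant $4$ needs a slightly sharper estimate. The first option is to expand directly: $\mathrm{KL}=-\tfrac12\log(1-4\delta^2)-\tfrac12\log\!\left(\tfrac{1+2\delta}{1-2\delta}\right)+\dots$. More cleanly, $\mathrm{KL}(\tfrac12\Vert\tfrac12-\delta)=-\tfrac12\log(1-4\delta^2)$ exactly, since the symmetric log terms cancel. Then $-\tfrac12\log(1-x)\le \tfrac{x}{2(1-x)}$ with $x=4\delta^2\le 4/36$ gives at most $\tfrac{2\delta^2}{1-1/9}\le 4\delta^2$. Combining this with Step 1 yields \eqref{eq:lower_sparse_kl}.

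\textbf{Main obstacle.} The only delicate point is the constant $4$ in the scalar inequality, which is handled by the exact closed form above together with $\delta\le 1/6$. The measure-theoretic bookkeeping is already contained in Lemma~\ref{lem:kl_decomposition}.
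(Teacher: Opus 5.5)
Your proposal is correct and follows essentially the same route as the paper. The divergence decomposition isolates candidate $\theta$, and the exact identity $\mathrm{KL}(\operatorname{Ber}(\tfrac12)\Vert\operatorname{Ber}(\tfrac12-\delta))=-\tfrac12\log(1-4\delta^2)$ then gives the constant $4$; the paper only needs $\delta\le 1/4$ for this, where you use $\delta\le 1/6$. You can drop the $\chi^2$ detour and the ``first option'' expansion, which contains a spurious extra term.
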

\begin{proof}
By the divergence decomposition of \cref{lem:kl_decomposition}, only the
workers whose delay laws differ under the two environments
contribute to $\mathrm{KL}(P_0\Vert P_\theta)$. The dummy workers are
deterministic and identical under $P_0$
and $P_\theta$, so their divergences vanish. Among the candidates, only
candidate $\theta$ has different distributions under the two laws, so a
single term survives:
\[
\mathrm{KL}(P_0\Vert P_\theta)
=
\ExpSub{0}{N_{\theta,K+1}}\,
\mathrm{KL}(\Lambda(1/2)\Vert\Lambda(1/2-\delta)),
\]
The divergence of the two-point laws equals the Bernoulli divergence of
their weights,
$\mathrm{KL}(\operatorname{Ber}(1/2)\Vert\operatorname{Ber}(1/2-\delta))
=-\tfrac12\log(1-4\delta^2)\le 4\delta^2$ for $\delta\le1/4$, and
inserting this estimate gives \eqref{eq:lower_sparse_kl}.
\end{proof}

\begin{theorem}[Stationary lower bound for sparse batches]
\label{thm:lower_sparse_stationary}
Retain the sparse family defined above.
Then, for $\delta$ of \eqref{eq:lower_sparse_stationary_delta} with
$c_0\le1/6$,
\begin{equation}
\inf_\pi\sup_{\nu\in\mathfrak E_{n,B}^{[0,1]}(K,0)}
\regretdyn_K(\pi,\nu)
\ge
c\min\left\{K,\sqrt{\frac{nK}{B}}\right\},
\qquad B<n.
\label{eq:lower_sparse_stationary}
\end{equation}
\end{theorem}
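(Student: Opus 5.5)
We follow the standard Fano/averaging argument over the family $\{\nu_\theta\}_{\theta\in[m]}$, with $\nu_0$ as the reference law. Fix a policy $\pi$. By \cref{lem:lower_sparse_construction}, every round in which the allocation is not the optimal one for $\nu_\theta$ costs at least $\delta/2$. In particular, every round in which candidate $\theta$ does not receive exactly one task costs at least $\delta/2$. Write $T_\theta:=\sum_{k=1}^K\Ind{a_{\theta,k}=1,\ a_{j,k}=0 \text{ for all candidates } j\neq\theta}$. Then
$$\regretdyn_K(\pi,\nu_\theta)\ \ge\ \tfrac{\delta}{2}\bigl(K-\ExpSub{\theta}{T_\theta}\bigr).$$
Since $T_\theta\in[0,K]$, Pinsker's inequality together with \cref{lem:lower_sparse_kl} gives
$$\ExpSub{\theta}{T_\theta}\le \ExpSub{0}{T_\theta}+K\sqrt{\tfrac12\mathrm{KL}(P_0\Vert P_\theta)}\le \ExpSub{0}{T_\theta}+K\delta\sqrt{2\,\ExpSub{0}{N_{\theta,K+1}}}.$$

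Next we average over $\theta\in[m]$. The events defining $T_\theta$ are disjoint across $\theta$, because at most one candidate can be the unique candidate holding a single task. Hence $\sum_\theta T_\theta\le K$. The candidates together receive at most $B$ tasks per round, so $\sum_\theta \ExpSub{0}{N_{\theta,K+1}}\le BK$. Cauchy--Schwarz then gives $\frac1m\sum_\theta\sqrt{\ExpSub{0}{N_{\theta,K+1}}}\le\sqrt{BK/m}$. Combining these,
$$\frac1m\sum_{\theta}\regretdyn_K(\pi,\nu_\theta)\ \ge\ \frac{\delta K}{2}\Bigl(1-\frac1m-\delta\sqrt{\frac{2BK}{m}}\Bigr).$$
We have $m\ge n/2\ge 1$ and $n\ge2$. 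When $B<n$ and $d\ge1$, $m\ge n-\lfloor n/2\rfloor\ge 2$ unless $n=2,B=1$. In that case $m=2$ anyway, since there are no dummies. So $1/m\le 1/2$.

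The choice \eqref{eq:lower_sparse_stationary_delta} gives $\delta\sqrt{2BK/m}\le \sqrt2 c_0\le 1/4$ for small $c_0$. The average regret is therefore at least $\delta K/8$. Substituting $\delta=c_0\min\{1,\sqrt{m/(BK)}\}$ and $m\ge n/2$ yields $c\min\{K,\sqrt{nK/B}\}$. Because the maximum over the family dominates the average, the bound holds in the sup. Every $\nu_\theta$ is stationary with $[0,1]$-valued, positive observations, so it lies in $\mathfrak E^{[0,1]}_{n,B}(K,0)$.

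The main obstacle is the disjointness and counting step. We must charge regret to rounds where $\theta$ is not isolated, and the charging must be compatible with the budget $\sum_\theta N_\theta\le BK$. The bound $BK$ is crude, since a near-optimal policy places only about $K$ observations on candidates. It still suffices for the stated $\sqrt{nK/B}$ rate, so I would not try to sharpen it. A secondary point is that Pinsker is applied to a $[0,K]$-valued variable, which requires the scaling by $K$ used above.
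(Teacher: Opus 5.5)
Your proposal is correct and follows essentially the same route as the paper: Pinsker against the null environment $\nu_0$, pathwise disjointness across $\theta$ to bound the averaged null-expectation by $K/m$, Jensen/Cauchy--Schwarz with $\sum_\theta N_{\theta,K+1}\le BK$, and $m\ge2$, $\sqrt2\,c_0\le 1/4$ to reach average regret at least $K\delta/8$. The only difference is cosmetic. You count rounds where $\theta$ is the unique candidate holding exactly one task ($T_\theta\ge C_\theta$), whereas the paper counts rounds where the exact optimal allocation of $\nu_\theta$ is played ($C_\theta$); both statistics are disjoint across $\theta$ and give the same bound.
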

\begin{proof}
Write $C_\theta$ for the number of rounds in which the learner selects the
optimal allocation of the environment $\nu_\theta$, and fix
$\theta\in[m]$. Since $0\le C_\theta\le K$, the difference of the means
is at most $K$ times the total variation distance, and Pinsker's inequality
gives
\[
\ExpSub{\theta}{C_\theta}
\le
\ExpSub{0}{C_\theta}
+K\,\mathrm{TV}(P_0,P_\theta)
\le
\ExpSub{0}{C_\theta}
+K\sqrt{\frac12\mathrm{KL}(P_0\Vert P_\theta)}.
\]
Averaging this inequality over $\theta\in[m]$, we get
\[
\frac1m\sum_{\theta=1}^m\ExpSub{\theta}{C_\theta}
\le
\frac1m\sum_{\theta=1}^m\ExpSub{0}{C_\theta}
+\frac{K}{m}\sum_{\theta=1}^m\sqrt{\frac12\mathrm{KL}(P_0\Vert P_\theta)}.
\]
The
first term is at most $K/m$: the optimal allocations associated with
distinct $\theta$ are distinct, so at most one of them is selected in each
round, which gives $\sum_{\theta=1}^mC_\theta\le K$ pathwise, and hence
$\sum_{\theta=1}^m\ExpSub{0}{C_\theta}\le K$. For the second term, inserting
\eqref{eq:lower_sparse_kl} and then applying Jensen's inequality to the
concave function $x\mapsto\sqrt x$ gives
\[
\frac1m\sum_{\theta=1}^m
\sqrt{\mathrm{KL}(P_0\Vert P_\theta)}
\le
2\delta\,
\frac1m\sum_{\theta=1}^m
\sqrt{\ExpSub{0}{N_{\theta,K+1}}}
\le
2\delta\,
\sqrt{\frac1m\sum_{\theta=1}^m\ExpSub{0}{N_{\theta,K+1}}}
\le
2\delta\,\sqrt{\frac{BK}{m}},
\]
where the last step uses $\sum_{\theta=1}^mN_{\theta,K+1}\le BK$, which
holds pathwise because every observation is collected from a single
candidate and the learner assigns $B$ tasks per round. Combining the two
terms yields
\begin{equation}
\frac1m\sum_{\theta=1}^m\ExpSub{\theta}{C_\theta}
\le
\frac Km
+\sqrt2\,K\delta\sqrt{\frac{BK}{m}}
\le
\frac K2+\frac K4.
\label{eq:lower_sparse_correct_actions}
\end{equation}
Here the first inequality is from the averaging above, and the second
inequality holds term by term: $K/m\le K/2$ because $m\ge2$, and
$\sqrt2\,\delta\sqrt{BK/m}\le\sqrt2\,c_0\le1/4$ by
\eqref{eq:lower_sparse_stationary_delta} with $c_0\le1/6$.
Every round in which the learner does not select the optimal allocation of
$\nu_\theta$ contributes instantaneous regret at least $\delta/2$ by
\cref{lem:lower_sparse_construction}, so averaging uniformly over
$\theta\in[m]$ and inserting \eqref{eq:lower_sparse_correct_actions} yields
an average regret of at least
\[
\frac{\delta}{2m}
\sum_{\theta=1}^m
\left(K-\ExpSub{\theta}{C_\theta}\right)
\ge \frac{K\delta}{8}.
\]
Every environment of the family $\{\nu_\theta\}$ is stationary and hence
belongs to $\mathfrak E_{n,B}^{[0,1]}(K,0)$, as noted at the beginning of
this subsection. The average regret over the family is
at least $K\delta/8$ for every policy, so at least one environment of the
family attains it, which gives
\[
\inf_\pi\sup_{\nu\in\mathfrak E_{n,B}^{[0,1]}(K,0)}
\regretdyn_K(\pi,\nu)
\ge
\frac{K\delta}{8}.
\]
Since $d\le\lfloor n/2\rfloor$, the number of candidates satisfies
$m=n-d\ge n/2$. Substituting this bound and the choice of $\delta$ from
\eqref{eq:lower_sparse_stationary_delta} yields
\[
K\delta
=
c_0K\min\left\{1,\sqrt{\frac{m}{BK}}\right\}
=
c_0\min\left\{K,\sqrt{\frac{mK}{B}}\right\}
\ge
c_0\min\left\{K,\sqrt{\frac{nK}{2B}}\right\},
\]
and absorbing the constant factors into the universal constant gives
\eqref{eq:lower_sparse_stationary}.
\end{proof}

\subsubsection{Large batches ($B\ge n$)}

The hard instances of the large-batch regime are two environments that differ
only in the mean of worker $1$. Let $n\ge2$ and $B\ge n$, and define the balanced
allocation $\bm{b}=(b_1,\dots,b_n)\in\mathcal A$ by
\[
q:=\left\lfloor\frac Bn\right\rfloor,
\qquad
r:=B-nq,
\qquad
b_1:=q,
\qquad
b_i:=
\begin{cases}
q+1, & 2\le i\le r+1,\\
q, & i>r+1.
\end{cases}
\]
For a sufficiently small universal $c_0$,
set
\begin{equation}
\delta
:=
c_0\min\left\{\frac1{b_1},\frac1{\sqrt{b_1K}}\right\}
\label{eq:lower_dense_stationary_delta}
\end{equation}
and put
\[
p:=\frac14,
\qquad
L:=\frac{5b_1}{8},
\qquad
h:=\frac{b_1\delta}{2}.
\]
Here $p$ is the unperturbed weight parameter of the two-point delay law of
worker $1$, whose unperturbed mean is $(1+p)/2=5/8$, while $L=b_1(1+p)/2$
is its load under the allocation $\bm{b}$ at that mean, and $h$ is the gap
by which the two environments below shift the loss of $\bm{b}$.
We now define the pair of environments. Under
$\nu^-$, worker $1$ has $\Lambda(p-\delta)$ delays (of mean
$5/8-\delta/2$), and
under $\nu^+$, it has $\Lambda(p+\delta)$ delays (of mean
$5/8+\delta/2$). Every
worker $i\ge2$ has deterministic delay
\begin{equation}
\mu_i=\frac{L}{b_i+1}
\label{eq:lower_dense_dummy_means}
\end{equation}
under both environments. Write $T_{\bm{b}}$ for the number of rounds in
which the allocation $\bm{b}$ is selected, $R_-$ and $R_+$ for the expected
regrets of the policy under $\nu^-$ and $\nu^+$, and recall that
$N_{1,K+1}$ denotes the total number of observations collected from worker
$1$ over the horizon. The next lemma shows that the two environments have
different optimal allocations, and records the resulting regret bounds.

\begin{lemma}[Hard pair for large batches]
\label{lem:lower_dense_construction}
Under $\nu^-$, the unique optimal allocation is
$\bm{b}$, with loss $L-h$, and every other allocation has gap at least $h$.
Under $\nu^+$, the allocation
$\bm{b}-\mathbf e_1+\mathbf e_2$ has loss at most $L$, whereas $\bm{b}$ has
loss $L+h$, so selecting $\bm{b}$ under $\nu^+$ incurs regret at least $h$.
Consequently,
\begin{equation}
R_-\ge h\left(K-\ExpSub{-}{T_{\bm{b}}}\right),
\qquad
R_+\ge h\ExpSub{+}{T_{\bm{b}}},
\label{eq:lower_dense_testing_regrets}
\end{equation}
and
\begin{equation}
\ExpSub{-}{N_{1,K+1}}\le b_1K+6R_-.
\label{eq:lower_dense_self_bound}
\end{equation}
\end{lemma}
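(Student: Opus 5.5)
The plan is to verify the claims by direct computation of the loads, using only the explicit means and the smallness of $\delta$. First I would record two elementary facts.
\begin{itemize}
\item[(a)] Since $B\ge n$, we have $q\ge1$, so $b_1=q\ge1$ and $b_i\le q+1$.
\item[(b)] By \eqref{eq:lower_dense_stationary_delta}, $h=b_1\delta/2\le c_0/2$ and $\delta\le c_0$. Taking $c_0$ small (say $c_0\le 1/6$) makes $h$ and $\delta$ tiny compared with the absolute constants below.
\end{itemize}
For $i\ge2$, the deterministic worker under $\bm b$ has load $b_iL/(b_i+1)=L-L/(b_i+1)$. Moreover,
\[
\frac{L}{b_i+1}\ge \frac{5q}{8(q+2)}\ge \frac{5}{24}>h,
\]
so this load is at most $L-h$. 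With one extra task the load of worker $i$ is exactly $L$.

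\emph{Under $\nu^-$.} Worker $1$ under $\bm b$ has load $b_1(5/8-\delta/2)=L-h$, so $\ell(\bm b)=L-h$. For uniqueness and the gap, I use the counting argument of \cref{lem:lower_sparse_construction}. Since $\sum_i a_i=B=\sum_i b_i$, any $\ba\neq\bm b$ has $a_i\ge b_i+1$ for some $i$.
\begin{itemize}
\item If $i\ge2$, the load of worker $i$ is at least $L$, so the gap is at least $h$.
\item If $i=1$ and $a_1=b_1+j$ with $j\ge1$, the load of worker $1$ is at least $L-h+j(5/8-\delta/2)$. Since $5/8-\delta/2\ge h$, the gap is again at least $h$.
\end{itemize}

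\emph{Under $\nu^+$.} Now $\bm b$ has load $b_1(5/8+\delta/2)=L+h$ on worker $1$. For $\bm b-\mathbf e_1+\mathbf e_2$, which is feasible by (a):
\begin{itemize}
\item worker $1$ carries $(b_1-1)(5/8+\delta/2)=L+h-5/8-\delta/2\le L$;
\item worker $2$ carries exactly $L$;
\item all other workers carry less than $L$.
\end{itemize}
Hence $L_k^*\le L$ and selecting $\bm b$ costs at least $h$.

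The bounds \eqref{eq:lower_dense_testing_regrets} then follow by summing over rounds and taking expectations. Under $\nu^-$, every round not playing $\bm b$ costs at least $h$. Under $\nu^+$, every round playing $\bm b$ costs at least $h$.

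For the self-bounding inequality \eqref{eq:lower_dense_self_bound}, I would prove the pathwise per-round bound
\[
a_{1,k}\le b_1+6\left(\ell_k(\ba_k)-L_k^*\right)\quad\text{under }\nu^-.
\]
If $a_{1,k}\le b_1$ it is trivial. If $a_{1,k}=b_1+j$ with $j\ge1$, the computation above gives
\[
\ell_k(\ba_k)-L_k^*\ge j\left(\tfrac58-\tfrac\delta2\right)\ge \frac{j}{6}.
\]
Summing over $k$ gives $N_{1,K+1}\le b_1K+6\bar\regretdyn$, and taking $\ExpSub{-}{\cdot}$ gives the claim.

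The main obstacle is bookkeeping rather than ideas. One must check that every inequality involving $h$ or $\delta$ ($h\le 5/24$, $h\le 5/8$, $5/8-\delta/2\ge\max\{h,1/6\}$) holds uniformly in $q$ for one fixed universal $c_0$. I would settle this first by fixing $c_0\le1/6$ and checking each inequality against the worst case $q=1$.
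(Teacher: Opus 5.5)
Your proposal is correct and follows the paper's proof essentially step for step. It uses the same load computations under $\nu^-$ and $\nu^+$, the same observation that any $\ba\neq\bm{b}$ must exceed $\bm{b}$ in some coordinate, and the same pathwise bound $(a_{1,k}-b_1)_+\le 6(\ell_k(\ba_k)-L_k^*)$, summed over rounds and averaged under $\nu^-$. The only cosmetic difference is how the smallness conditions are certified: you use the absolute bound $h\le c_0/2$ with $c_0\le 1/6$, while the paper uses the equivalent relative condition $\delta\le 5/(4(b_i+1))$ with $c_0\le 1/16$.
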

\begin{proof}
Since $\delta\le c_0/b_1$ by \eqref{eq:lower_dense_stationary_delta} and
$b_i\le b_1+1$ for every $i$, choosing $c_0\le1/16$ gives
$\delta\le 1/(16b_1)\le 5/(4(b_i+1))$ for every $i$.

Under $\nu^-$, the load of worker $1$ under $\bm{b}$ is
$b_1(5/8-\delta/2)=L-h$, and the load of worker $i\ge2$ is
\begin{equation*}
b_i\mu_i
=
\frac{5b_1}{8}\,\frac{b_i}{b_i+1}
=
b_1\Bigl(\frac58-\frac{5}{8(b_i+1)}\Bigr)
\le
b_1\Bigl(\frac58-\frac\delta2\Bigr)
=
L-h
\end{equation*}
by the condition $\delta\le 5/(4(b_i+1))$
established above, so $\bm{b}$ has loss $L-h$. If $\ba\ne\bm{b}$, then some
coordinate exceeds its corresponding coordinate in $\bm{b}$. Exceeding
$b_i$ on worker $i\ge2$ produces load $(b_i+1)\mu_i=L$,
while exceeding $b_1$ on worker $1$ produces load
$(b_1+1)(5/8-\delta/2)\ge L$, since $\delta\le 5/(4(b_1+1))$. Hence
every other allocation has loss at least $L$ and gap at least $h$, and the
optimum is unique.

Under $\nu^+$, the load of worker $1$ under $\bm{b}$ is
$b_1(5/8+\delta/2)=L+h$, so $\bm{b}$ has loss $L+h$. Under
$\bm{b}-\mathbf e_1+\mathbf e_2$, worker $1$ carries $b_1-1$ tasks with
load $(b_1-1)(5/8+\delta/2)\le 5b_1/8=L$, since $(b_1-1)\delta\le5/4$
follows from $\delta\le 1/(16b_1)$, and worker
$2$ carries $b_2+1$ tasks with load $(b_2+1)\mu_2=L$, so this allocation
has loss at most $L$.

\eqref{eq:lower_dense_testing_regrets} follows from
the gaps established above. Under $\nu^-$, every round in which $\bm{b}$ is
not selected incurs regret at least $h$, which gives the first inequality.
Under $\nu^+$, every round in which $\bm{b}$ is selected incurs regret at
least $h$, which gives the second.
For \eqref{eq:lower_dense_self_bound}, note that the learner may allocate more than $b_1$
tasks to worker $1$, so $N_{1,K+1}$ cannot be bounded by $b_1K$ directly, and is
bounded through the regret instead. Under $\nu^-$, whenever $a_{1,k}>b_1$,
the load of worker $1$ is
$a_{1,k}(5/8-\delta/2)=(L-h)+(a_{1,k}-b_1)(5/8-\delta/2)$, so the round regret is
at least
\begin{equation*}
(a_{1,k}-b_1)\Bigl(\frac58-\frac\delta2\Bigr)\ \ge\ \frac{a_{1,k}-b_1}{6},
\end{equation*}
since $\delta\le c_0\le1/16$ gives $5/8-\delta/2\ge1/6$. Each round
therefore satisfies
$(a_{1,k}-b_1)_+\le6\bigl(\ell_k(\ba_k)-(L-h)\bigr)$, and summing over the iterations and separating the first $b_1$
tasks gives
\begin{equation*}
N_{1,K+1}
\ =\
\sum_{k=1}^Ka_{1,k}
\ \le\
b_1K+\sum_{k=1}^K(a_{1,k}-b_1)_+
\ \le\
b_1K+6\sum_{k=1}^K\bigl(\ell_k(\ba_k)-(L-h)\bigr),
\end{equation*}
and taking expectations under $\nu^-$ gives \eqref{eq:lower_dense_self_bound}.
\end{proof}

The next lemma shows that the laws of the history under $\nu^-$ and $\nu^+$
are close when worker $1$ is not observed too often.

\begin{lemma}[KL bound for the large-batch pair]
\label{lem:lower_dense_kl}
Retain the large-batch pair defined above.
Then
\begin{equation}
\mathrm{KL}(P_-\Vert P_+)
\le
32\delta^2\left(b_1K+6R_-\right).
\label{eq:lower_dense_kl}
\end{equation}
\end{lemma}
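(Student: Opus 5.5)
The bound follows from the divergence decomposition of \cref{lem:kl_decomposition}, a local estimate of the KL divergence between the two two-point laws, and the self-bounding inequality \eqref{eq:lower_dense_self_bound}.

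First, $\nu^-$ and $\nu^+$ differ only in the delay law of worker $1$. The other workers have the same deterministic delays under both environments, so their per-draw divergences vanish. \Cref{lem:kl_decomposition} (with absolute continuity clear, since both laws live on $\{\nicefrac12,1\}$ with positive weights) then gives
\[
\mathrm{KL}(P_-\Vert P_+)=\ExpSub{-}{N_{1,K+1}}\,\mathrm{KL}\bigl(\Lambda(p-\delta)\Vert\Lambda(p+\delta)\bigr).
\]

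Second, as remarked at the start of the subsection, this two-point divergence equals $\mathrm{KL}(\operatorname{Ber}(p-\delta)\Vert\operatorname{Ber}(p+\delta))$. I bound it by the $\chi^2$ divergence:
\[
\mathrm{KL}\le\frac{(2\delta)^2}{(p+\delta)(1-p-\delta)}.
\]
With $p=\nicefrac14$ and $\delta\le c_0\le\nicefrac1{16}$ (as fixed in the proof of \cref{lem:lower_dense_construction}), the denominator is at least $\nicefrac14\cdot\nicefrac{11}{16}>\nicefrac18$. The divergence is therefore at most $32\delta^2$; only a constant check is needed here.

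Third, substituting \eqref{eq:lower_dense_self_bound}, $\ExpSub{-}{N_{1,K+1}}\le b_1K+6R_-$, yields \eqref{eq:lower_dense_kl}.

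The only genuine subtlety is that the learner may over-allocate worker $1$, so $N_{1,K+1}$ is not bounded by $b_1K$ a priori. This has already been handled by the self-bounding inequality of \cref{lem:lower_dense_construction}, so the remaining work is the elementary constant in the Bernoulli KL estimate.
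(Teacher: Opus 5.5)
Your proposal is correct and follows the paper's proof step for step. The divergence decomposition reduces the KL to worker $1$, the Bernoulli KL is bounded by the $\chi^2$ divergence $\frac{(2\delta)^2}{(p+\delta)(1-p-\delta)}$, and the self-bounding inequality \eqref{eq:lower_dense_self_bound} is inserted at the end. The only difference is the constant check: you use $\delta\le c_0\le 1/16$ to get a denominator above $1/8$, while the paper uses $\delta\le 1/8$ to get $5/32$, and both give the factor $32\delta^2$.
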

\begin{proof}
By the divergence decomposition of \cref{lem:kl_decomposition}, only the
workers whose delay laws differ under the two environments
contribute to $\mathrm{KL}(P_-\Vert P_+)$. Every worker $i\ge2$ of
\eqref{eq:lower_dense_dummy_means} is
deterministic with the same mean under both environments, so these workers
contribute nothing, and only worker $1$ remains:
\[
\mathrm{KL}(P_-\Vert P_+)
=
\ExpSub{-}{N_{1,K+1}}\,
\mathrm{KL}(\Lambda(p-\delta)\Vert\Lambda(p+\delta)).
\]
The divergence of the two-point laws equals the Bernoulli divergence of
their weights, which is at most the $\chi^2$ divergence,
\[
\mathrm{KL}(\operatorname{Ber}(p-\delta)\Vert\operatorname{Ber}(p+\delta))
\le
\frac{(2\delta)^2}{(p+\delta)(1-p-\delta)}
\le
32\delta^2,
\]
where the last step uses $p=1/4$ and $\delta\le1/8$, which give
$(p+\delta)(1-p-\delta)\ge\frac14\cdot\frac58=\frac5{32}$. The condition
$\delta\le1/8$ holds for $c_0\le1/8$ by
\eqref{eq:lower_dense_stationary_delta}. Inserting
$\ExpSub{-}{N_{1,K+1}}\le b_1K+6R_-$ of \eqref{eq:lower_dense_self_bound} gives
\eqref{eq:lower_dense_kl}.
\end{proof}

We now show that the two environments are hard to distinguish at the
available number of observations, which yields the stationary lower bound.

\begin{theorem}[Stationary lower bound for large batches]
\label{thm:lower_dense_stationary}
Let $n\ge2$, and retain the large-batch pair defined above.
Then, for $\delta$ of \eqref{eq:lower_dense_stationary_delta} with
$c_0\le1/16$,
\begin{equation}
\inf_\pi\sup_{\nu\in\mathfrak E_{n,B}^{[0,1]}(K,0)}
\regretdyn_K(\pi,\nu)
\ge
c\min\left\{K,\sqrt{\frac{BK}{n}}\right\},
\qquad B\ge n.
\label{eq:lower_dense_stationary}
\end{equation}
\end{theorem}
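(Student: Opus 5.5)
The plan is a two-point testing argument built on the hard pair of \cref{lem:lower_dense_construction}, combined with the self-bounding KL estimate of \cref{lem:lower_dense_kl}. Fix a policy $\pi$ and write $R:=\max\{R_-,R_+\}$; the goal is to show $R\gtrsim Kh$. Both environments are stationary, hence in $\mathfrak E_{n,B}^{[0,1]}(K,0)$, so a lower bound on $R$ is a lower bound on the minimax regret.

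First, by \eqref{eq:lower_dense_testing_regrets},
\[
R_-+R_+\ge h\bigl(K-\ExpSub{-}{T_{\bm b}}+\ExpSub{+}{T_{\bm b}}\bigr).
\]
Since $0\le T_{\bm b}\le K$, Pinsker's inequality gives $|\ExpSub{-}{T_{\bm b}}-\ExpSub{+}{T_{\bm b}}|\le K\sqrt{\mathrm{KL}(P_-\Vert P_+)/2}$. Hence $2R\ge hK\bigl(1-\sqrt{\mathrm{KL}(P_-\Vert P_+)/2}\bigr)$.

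Next, I case-split on $R_-$. If $R_-\ge b_1K$, then $R\ge b_1K\ge Kh$, since $h=b_1\delta/2\le c_0/2$ and $b_1\ge1$ (because $B\ge n$). This case is done. Otherwise $R_-<b_1K$, and \cref{lem:lower_dense_kl} gives $\mathrm{KL}(P_-\Vert P_+)\le 32\cdot7\,\delta^2b_1K$. Using $\delta\le c_0/\sqrt{b_1K}$ from \eqref{eq:lower_dense_stationary_delta}, this is at most $224c_0^2$. Choosing $c_0$ small (also $c_0\le1/16$) makes $\sqrt{\mathrm{KL}/2}\le1/2$, so $R\ge hK/4$.

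It then remains to evaluate $Kh$:
\[
Kh=\frac{c_0K}{2}\min\Bigl\{1,\sqrt{\tfrac{b_1}{K}}\Bigr\}=\frac{c_0}{2}\min\{K,\sqrt{b_1K}\}.
\]
Since $b_1=\lfloor B/n\rfloor\ge B/(2n)$ for $B\ge n$, this is at least $\frac{c_0}{2\sqrt2}\min\{K,\sqrt{BK/n}\}$. This proves \eqref{eq:lower_dense_stationary}.

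The main obstacle is that $N_{1,K+1}$ under $\nu^-$ is not a priori bounded by $b_1K$. This is exactly why the regret-dependent KL bound is needed, and why the case split on $R_-$ is required to close the self-referential argument. The step to double-check is that the first case indeed dominates $Kh$ for all admissible $c_0$.
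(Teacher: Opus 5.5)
Your proof is correct and follows the paper's argument: you combine \eqref{eq:lower_dense_testing_regrets} with Pinsker's inequality applied to $T_{\bm b}$, and you case-split on $R_-$ to close the self-bounding KL estimate of \cref{lem:lower_dense_kl}. The only difference is that you split at $R_-\ge b_1K$ rather than the paper's $R_-\ge hK/8$, which changes only constants. In fact your bound $224c_0^2\le 7/8$ already keeps $\sqrt{\mathrm{KL}/2}\le\sqrt{7/16}<1$ for every $c_0\le 1/16$, so you do not need to shrink $c_0$ further.
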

\begin{proof}
If $R_-\ge hK/8$, then
$(R_-+R_+)/2\ge R_-/2\ge hK/16$, and the average regret under the two
environments already has the desired order. If instead $R_-<hK/8$, the
KL estimate \eqref{eq:lower_dense_kl} gives
\[
\mathrm{KL}(P_-\Vert P_+)
\le
32\delta^2\left(b_1K+\frac{3hK}{4}\right)
\le
32c_0^2+24c_0^3
\le
\frac12,
\]
where the first step inserts $R_-<hK/8$ into \eqref{eq:lower_dense_kl},
the second step uses $\delta^2b_1K\le c_0^2$ and
$\delta^2hK=b_1\delta^3K/2\le c_0^3$, which follow from $h=b_1\delta/2$ and
the choice \eqref{eq:lower_dense_stationary_delta} of $\delta$, and the
third step holds by choosing $c_0\le1/16$. Since $0\le T_{\bm{b}}\le K$,
Pinsker's inequality gives
\[
\ExpSub{-}{T_{\bm{b}}}-\ExpSub{+}{T_{\bm{b}}}
\le
K\,\mathrm{TV}(P_-,P_+)
\le
K\sqrt{\mathrm{KL}(P_-\Vert P_+)/2}
\le
\frac K2.
\]
Averaging the two displays
\eqref{eq:lower_dense_testing_regrets} and inserting this bound gives
\[
\frac{R_-+R_+}{2}
\ge
\frac{h}{2}
\left(K-\ExpSub{-}{T_{\bm{b}}}+\ExpSub{+}{T_{\bm{b}}}\right)
\ge
\frac{hK}{4}.
\]
Both environments $\nu^-$ and $\nu^+$ are stationary and hence belong to
$\mathfrak E_{n,B}^{[0,1]}(K,0)$, as noted at the beginning of this
subsection. The average regret over the two
environments is at least $hK/16$ for every policy, so at least one of the
two environments attains it, which gives
\[
\inf_\pi\sup_{\nu\in\mathfrak E_{n,B}^{[0,1]}(K,0)}
\regretdyn_K(\pi,\nu)
\ge
\frac{hK}{16}.
\]
Since $b_1=\lfloor B/n\rfloor=\Theta(B/n)$ and $h=b_1\delta/2$ with $\delta$
from \eqref{eq:lower_dense_stationary_delta},
\[
hK
=
\frac{c_0}{2}K\min\left\{1,\sqrt{\frac{b_1}{K}}\right\}
=
\frac{c_0}{2}\min\left\{K,\sqrt{b_1K}\right\},
\]
and substituting $b_1=\Theta(B/n)$ gives \eqref{eq:lower_dense_stationary}.
\end{proof}

\subsubsection{Drifting environment}

It remains to assemble the proof of \cref{thm:lower_bound}. The stationary
bounds are supplied by the stationary theorem of each regime. Each of the two
restart paragraphs in the proof replays one of the stationary
configurations on independent epochs: the epoch-wise randomization is
drawn first, then the total variation it creates is bounded, and finally
the number of epochs is optimized against the budget $V_K$.

\begin{proof}[Proof of \cref{thm:lower_bound}]
The stationary lower bounds of
\cref{thm:lower_sparse_stationary,thm:lower_dense_stationary} read
\begin{equation*}
\inf_\pi\sup_{\nu\in\mathfrak E_{n,B}^{[0,1]}(K,0)}
\regretdyn_K(\pi,\nu)
\ge
c\begin{cases}
\min\bigl\{K,\sqrt{nK/B}\bigr\}, & B<n,\\[0.3em]
\min\bigl\{K,\sqrt{BK/n}\bigr\}, & B\ge n.
\end{cases}
\end{equation*}
It remains
to restart the stationary constructions on independent epochs under the
variation budget. The rest of the proof runs under the hypotheses of the
theorem: the conditions on $K$ and $V_K$ of the corresponding regime.

\paragraph{Independent restarts when $B<n$.}
We construct the non-stationary environment by restarting the family of
\cref{lem:lower_sparse_construction} on independent epochs.
Partition the horizon into $M$ epochs of equal length
$\ell:=\lfloor K/M\rfloor$, leaving a final incomplete block of the
remaining rounds. The proxy-loss regret is nonnegative in every round, and the final block
covers less than half of the horizon, so ignoring it costs only a
constant factor. At the
beginning of every epoch, draw the \emph{fast candidate} of the epoch ---
the index of the candidate whose delay law is $\Lambda(1/2-\delta_M)$
(whose mean is $3/4-\delta_M/2$) in that epoch ---
independently and uniformly from the $m$ candidates of
\cref{lem:lower_sparse_construction}, keep all
means fixed inside the epoch, and use the common gap
\begin{equation}
\delta_M
:=
c_0\min\left\{1,\sqrt{\frac{mM}{BK}}\right\}.
\label{eq:lower_sparse_epoch_delta}
\end{equation}
These draws define a distribution over deterministic mean paths:
conditioned on any realization, the whole mean path is fixed before the
policy is run. The fast candidate of an epoch is drawn independently of the history
before the epoch, so the past carries no information about which
environment of the family runs during the epoch. The gap $\delta_M$ of \eqref{eq:lower_sparse_epoch_delta} does not
exceed the gap that \cref{lem:lower_sparse_construction} prescribes for an
epoch of length $\ell$, since $\ell\le K/M$ gives
$\sqrt{m/(B\ell)}\ge\sqrt{mM/(BK)}$. The stationary lower bound of \cref{thm:lower_sparse_stationary}
applies conditionally in every epoch with the same constant $c_0$ and gives an
expected regret of at least $\ell\delta_M/8$ per epoch, averaged
over the choice of its fast candidate.
Summing over the epochs, the expected regret, averaged over the draws of
the fast candidate, is at least
\begin{equation}
\Exp{\regretdyn_K(\pi,\nu)}
\ \ge\
\frac18\,M\Bigl\lfloor\frac KM\Bigr\rfloor\,\delta_M
\ \ge\
c\,K\delta_M
\ \ge\
c\min\left\{K,\sqrt{\frac{mKM}{B}}\right\},
\label{eq:lower_sparse_epoch_regret}
\end{equation}
where the second step uses $M\lfloor K/M\rfloor\ge K-M\ge K/2$ for
$M\le K/2$ and absorbs the factor $1/8$, and the last step inserts
\eqref{eq:lower_sparse_epoch_delta} and multiplies both its branches by $K$.
At an epoch boundary, changing the fast candidate changes the mean vector
by at most $\delta_M$ in infinity norm. Hence every realized mean path has
total variation at most
\begin{equation}
M\delta_M.
\label{eq:lower_sparse_epoch_variation}
\end{equation}
Whenever this quantity is at most $V_K$, every realization is therefore a
deterministic path of the instance class. The regret of
\eqref{eq:lower_sparse_epoch_regret} is averaged over this distribution,
so at least one admissible deterministic path attains it.
It remains to optimize the epoch count $M$ subject to
\eqref{eq:lower_sparse_epoch_variation} being at most $V_K$. Put
$A=m/B$, so that $A\ge1/2$, and abbreviate
\[
M_0:=\left(\frac{V_K^2K}{A}\right)^{1/3},
\]
the epoch count at which the variation
\eqref{eq:lower_sparse_epoch_variation} balances the budget. The choice is
$M=\min\bigl\{\Theta\bigl(\max\{M_0,1\}\bigr),\,\lfloor K/2\rfloor\bigr\}$:
the cap keeps $\ell=\lfloor K/M\rfloor\ge2$ and the final block below half
of the horizon, and the implicit constant is small
enough that $M\le M_0$ when $M_0\gtrsim1$ and that it absorbs the factor
$c_0$ in the variation check below. When the cap binds, $M_0\gtrsim K$,
which under the hypothesis $V_K\lesssim K/A$ forces $A=\Theta(1)$ and
$V_K=\Theta(K)$; the target is then already of order $K$, and the capped
choice attains it, since \eqref{eq:lower_sparse_epoch_regret} gives
$\Theta(K\delta_M)=\Theta\bigl(K\sqrt{A}\bigr)=\Theta(K)$ there while the
variation consumed is $\Theta(c_0K)\le V_K$.

Consider first the case $M_0\lesssim1$, equivalently
$V_K\lesssim\sqrt{A/K}$. The choice gives $M=\Theta(1)$, and the
stationary bound \eqref{eq:lower_sparse_stationary}, valid for every
budget, dominates the non-stationary target. The case $A>K$ falls here as well:
the hypothesis $V_K\lesssim BK/n$ gives $M_0\lesssim K/A<1$ there, since
$A=\Theta(n/B)$.

Suppose henceforth that $M_0\gtrsim1$. Since $A=\Theta(n/B)$, the
hypothesis $V_K\lesssim BK/n$ reads $V_K\lesssim K/A$, which gives
$M_0\lesssim K/A$, hence $AM\lesssim K$. The gap of
\eqref{eq:lower_sparse_epoch_delta} therefore sits in its second branch,
$\delta_M=c_0\sqrt{AM/K}$, and the minimum in
\eqref{eq:lower_sparse_epoch_regret} attains its second entry.

For admissibility, inserting the definition of $M_0$ gives
\[
M\delta_M
=
c_0\sqrt{\frac AK}\,M^{3/2}
\le
c_0\sqrt{\frac AK}\,M_0^{3/2}
=
c_0V_K
\le
V_K,
\]
where the second step uses $M\le M_0$, and the last holds because the
implicit constant of the choice absorbs $c_0$. Hence every realized mean
path is admissible. If the choice returns $M=1$, which happens only when
$M_0=\Theta(1)$, admissibility follows directly, since
$M\delta_M=\delta_1=c_0\min\{1,\sqrt{A/K}\}$ and $M_0\gtrsim1$ gives
$V_K\gtrsim\sqrt{A/K}$.

For the regret evaluation, inserting the definition of $M_0$ into the
second entry of \eqref{eq:lower_sparse_epoch_regret} gives
\[
\sqrt{AKM}
=
\Theta\left(\sqrt{AKM_0}\right)
=
\Theta\bigl(A^{1/3}K^{2/3}V_K^{1/3}\bigr).
\]

Combining the regimes, and replacing $A^{1/3}$ by
$\Theta\bigl((n/B)^{1/3}\bigr)$ via $A=m/B$ and $m\ge n/2$, gives
\begin{equation}
\inf_\pi\sup_{\nu\in\mathfrak E_{n,B}^{[0,1]}(K,V_K)}
\regretdyn_K(\pi,\nu)
\ge
c\left(\frac nB\right)^{1/3}K^{2/3}V_K^{1/3},
\qquad B<n.
\label{eq:lower_sparse_nonstationary}
\end{equation}
The cap at $K$ is absent: under the hypothesis
$V_K\lesssim BK/n$ of the regime, this term does not exceed $K$.

\paragraph{Independent restarts when $B\ge n$.}
We do the same for the pair of \cref{lem:lower_dense_construction}.
Use $M$ epochs of equal length $\ell:=\lfloor K/M\rfloor$, with the
final incomplete block ignored as above, at the same absorbed constant
factor. At the beginning of every epoch, one of the two environments of
\cref{lem:lower_dense_construction} is chosen uniformly at random and
independently of the past: the delay law of worker $1$ is
$\Lambda(p-\delta_M)$ under one of them and $\Lambda(p+\delta_M)$ under
the other (of means $5/8\mp\delta_M/2$), with the common gap
\begin{equation}
\delta_M
:=
c_0\min\left\{
\frac1{b_1},
\sqrt{\frac{M}{b_1K}}
\right\}.
\label{eq:lower_dense_epoch_delta}
\end{equation}
As above, these choices specify a distribution over deterministic mean
paths.
Applying the stationary lower bound of \cref{thm:lower_dense_stationary}
conditionally in each epoch
(as in the sparse case, $\delta_M$ does not exceed the per-epoch choice
with horizon $\ell$) gives, for the expected regret averaged over the
epoch draws and with the same rounding step as above,
\begin{equation}
\Exp{\regretdyn_K(\pi,\nu)}
\ \ge\
c\,M\Bigl\lfloor\frac KM\Bigr\rfloor\,b_1\delta_M
\ \ge\
c\,Kb_1\delta_M
\ \ge\
c\min\left\{K,\sqrt{b_1KM}\right\},
\label{eq:lower_dense_epoch_regret}
\end{equation}
the last step inserting \eqref{eq:lower_dense_epoch_delta} and multiplying
both its branches by $b_1K$.
Only the mean of worker $1$ changes, by at most $2\delta_M$ at each
boundary, so the total variation is at most
\begin{equation}
2M\delta_M.
\label{eq:lower_dense_epoch_variation}
\end{equation}
Thus, whenever the total variation is at most $V_K$, every realization is
an admissible deterministic path, and at least one realization attains
the average bound.
It remains to choose the epoch count $M$. Abbreviate
\[
M_0:=\left(V_K^2b_1K\right)^{1/3},
\]
the epoch count at which the variation
\eqref{eq:lower_dense_epoch_variation} balances the budget. The choice is
$M=\min\bigl\{\Theta\bigl(\max\{M_0,1\}\bigr),\,\lfloor K/2\rfloor\bigr\}$:
the cap keeps $\ell=\lfloor K/M\rfloor\ge2$ and the final block below half
of the horizon, and the implicit constant is small
enough that $M\le M_0$ when $M_0\gtrsim1$ and that it absorbs the factor
$c_0$ in the variation check below. When the cap binds, $M_0\gtrsim K$,
which under the hypothesis $V_K\lesssim Kn^2/B^2$ forces $b_1=\Theta(1)$
and $V_K=\Theta(K)$; the target is then already of order $K$, and the
capped choice attains it, since \eqref{eq:lower_dense_epoch_regret} gives
$\Theta(Kb_1\delta_M)=\Theta(K)$ there while the variation consumed is
$\Theta(c_0K)\le V_K$.

Consider first the case $M_0\lesssim1$, equivalently
$V_K\lesssim1/\sqrt{b_1K}$. The choice gives $M=\Theta(1)$, and the
stationary bound \eqref{eq:lower_dense_stationary}, valid for every
budget, dominates. The case $b_1\ge K$ falls here as well: the hypothesis
$V_K\lesssim Kn^2/B^2$ gives $M_0\lesssim K/b_1\le1$ there, since
$b_1=\Theta(B/n)$.

Suppose henceforth that $M_0\gtrsim1$. Since $b_1=\Theta(B/n)$, the
hypothesis $V_K\lesssim Kn^2/B^2$ reads $V_K\lesssim K/b_1^2$, which gives
$M_0\lesssim K/b_1$, hence $b_1M\lesssim K$. The gap of
\eqref{eq:lower_dense_epoch_delta} therefore sits in its second branch,
$\delta_M=c_0\sqrt{M/(b_1K)}$, and the minimum in
\eqref{eq:lower_dense_epoch_regret} attains its second entry.

For admissibility, inserting the definition of $M_0$ gives
\[
2M\delta_M
=
2c_0\,\frac{M^{3/2}}{\sqrt{b_1K}}
\le
2c_0\,\frac{M_0^{3/2}}{\sqrt{b_1K}}
=
2c_0V_K
\le
V_K,
\]
where the second step uses $M\le M_0$, and the last holds because the
implicit constant of the choice absorbs $c_0$. Hence every realized mean
path is admissible. If the choice returns $M=1$, which happens only when
$M_0=\Theta(1)$, admissibility follows directly, since
$2M\delta_M=2\delta_1=2c_0\min\{1/b_1,1/\sqrt{b_1K}\}$ and $M_0\gtrsim1$
gives $V_K\gtrsim1/\sqrt{b_1K}$.

For the regret evaluation, inserting the definition of $M_0$ into the
second entry of \eqref{eq:lower_dense_epoch_regret} gives
\[
\sqrt{b_1KM}
=
\Theta\left(\sqrt{b_1KM_0}\right)
=
\Theta\bigl(b_1^{2/3}K^{2/3}V_K^{1/3}\bigr).
\]

Since $b_1=\Theta(B/n)$, combining the regimes gives
\begin{equation}
\inf_\pi\sup_{\nu\in\mathfrak E_{n,B}^{[0,1]}(K,V_K)}
\regretdyn_K(\pi,\nu)
\ge
c\left(\frac Bn\right)^{2/3}K^{2/3}V_K^{1/3},
\qquad B\ge n.
\label{eq:lower_dense_nonstationary}
\end{equation}
As above, there is no cap at $K$: under the hypothesis
$V_K\lesssim Kn^2/B^2$ of the regime, this term does not exceed $K$.

Finally, the stationary and nonstationary lower bounds of the two regimes
combine, up to universal constants, into the two displays of
\cref{thm:lower_bound}: in each regime the condition on $K$ bounds the
stationary terms of \eqref{eq:lower_sparse_stationary} and
\eqref{eq:lower_dense_stationary} by $K$, and the condition on $V_K$
removes the cap at $K$ from the variation term, as noted after
\eqref{eq:lower_sparse_nonstationary} and
\eqref{eq:lower_dense_nonstationary}. This completes the proof.
\end{proof}

\section{Experimental details}
\label{app:paper_extra}

This appendix collects the details of the experiment of
\Cref{sec:experiments}: the evaluation metrics
(\Cref{app:paper_experiments:setup}), the parameter selection
(\Cref{app:paper_param_selection}), and the full results at the remaining
pool sizes (\Cref{app:paper_extra:drift}).

\subsection{Evaluation metrics}
\label{app:paper_experiments:setup}

The run is evaluated on the quantities of \Cref{sec:setup}, most of which
already appear in \Cref{sec:experiments}: the suboptimality
$f(x_k) - f^*$ of the optimization run, the partial proxy-loss regret
$\bar\regretdyn_k := \sum_{q=1}^k \bigl(\ell_q(\ba_q) - L_q^*\bigr)$ up to
iteration $k$, where $L_q^*$ is the loss of the exact \algname{RAS}
allocation of the round's true means, the runtime
$C_k := \sum_{q=1}^k C_q(\ba_q)$, and the total worker time
$\sum_{i=1}^n T_{i,k}$, where
$T_{i,k}$ is the total time worked by worker $i$ up to iteration $k$. For
the simplex methods, $T_{i,k}$ accumulates the realized delays of the
assigned tasks,
\begin{equation}
T_{i,k}
=
\sum_{q=1}^k\sum_{u=1}^{a_{i,q}} X_{i,q}^{(u)},
\label{eq:paper_worker_time}
\end{equation}
and for \algname{Rennala} every worker stays busy for the whole round, so
$T_{i,k} = C_k$. The figures report these quantities directly or averaged
per iteration, and \cref{tab:app_metrics_map} summarizes where each of
them appears. Since \algname{Rennala} is not a simplex allocation, it
has no proxy-loss regret and is absent from the regret panels.

\begin{table}[h]
\centering
\small
\caption{The evaluation metrics, their definitions, and the figures and
tables where each of them is reported.}
\label{tab:app_metrics_map}
\begin{tabular}{lll}
\toprule
Metric & Definition & Reported in \\
\midrule
Suboptimality & $f(x_k) - f^*$ & \cref{fig:paper_sgd_drift_n20_mini}, \cref{fig:paper_sgd_drift_time_app} \\
Runtime & $C_k = \sum_{q=1}^k C_q(\ba_q)$ & \cref{fig:paper_sgd_drift_time_app} (top), \cref{tab:paper_to_target} \\
Total worker time & $\sum_{i=1}^n T_{i,k}$, \eqref{eq:paper_worker_time} & \cref{fig:paper_sgd_drift_time_app} (bottom), \cref{tab:paper_to_target} \\
Avg. iteration time & $C_k / k$ & \cref{fig:paper_sgd_drift_iters_app} (top) \\
Avg. worker time & $\frac{1}{k}\sum_{i=1}^n T_{i,k}$ & \cref{fig:paper_sgd_drift_iters_app} (middle) \\
Avg. proxy-loss regret & $\bar\regretdyn_k / k$ & \cref{fig:paper_sgd_drift_iters_app} (bottom) \\
\bottomrule
\end{tabular}
\end{table}

\subsection{Parameter selection}
\label{app:paper_param_selection}

The confidence schedule uses $\alpha = 1/2$ and $\beta_K = 4\log(2BK^3)$
throughout. The sub-exponential constant $\alpha$ enters both the
confidence radii and the choice of the window below. For the
shifted-exponential family $\mu/2 + \mathrm{Exp}(\mu/2)$ of
\Cref{sec:experiments} the centering removes the deterministic shift, so
the Orlicz constant of \cref{ass:drift-exp} is the norm of the centered
exponential part: solving
$\Exp{\exp\bigl(|E - \lambda|/t\bigr)} = 2$ for
$E \sim \mathrm{Exp}(\lambda)$ gives
$\alpha_{i,k} = \|E - \mu_{i,k}/2\|_{\psi_1} \approx 1.53\,\mu_{i,k}/2
\approx 0.77\,\mu_{i,k}$, and the prescribed value is
$\alpha \approx 0.77\max_{i,k}\mu_{i,k} \approx 8.4$. This value is
conservative in practice, and the
methods are run with the smaller constant $\alpha = 1/2$: a sweep over the
grid $\{2^{-3},\dots,2^{3}\}$ on the instances of \Cref{sec:experiments}
places the empirical optimum in $[1/4, 1]$, growing mildly with the pool
size, and the final regret at $\alpha = 1/2$ is within $12\%$ of the best
value over the grid on every instance.

The window of \algname{SW-ATA} follows the prescription of
\cref{thm:general_ns_gap_independent}:
\begin{equation}
W =
\begin{cases}
\left\lceil \max\Bigl\{1,\; \Bigl(\dfrac{K^2 n \alpha^2 \beta_K}
{B V_K^2}\Bigr)^{1/3}\Bigr\} \right\rceil,
& V_K \ge \alpha\sqrt{\dfrac{n \beta_K}{B K}},\\[2mm]
K, & \text{otherwise (including } V_K = 0\text{)},
\end{cases}
\label{eq:paper_window_app}
\end{equation}
and the confidence radius of \algname{ATA} of \citet{maranjyan2025ata} is
used as is.

The window \eqref{eq:paper_window_app} requires the variation
budget $V_K$ as input, which is unknown in practice. We therefore evaluate
it at the reference budget $V_K = \sqrt K$, and select the
multiplicative constant of the window once from the grid
$\{1/2, 1, 2, 4\}$ on the drifting instance of \Cref{sec:experiments}. The
final regret is flat for the multipliers in $[1, 2]$ at every pool size
and grows beyond the multiplier $4$, and the multiplier $1$ is within
$2\%$ of the empirical optimum at $n = 20$ and $n = 40$ and within $19\%$
at $n = 10$. The window is therefore used with the multiplier $1$,
exactly as prescribed. The reported runs thus
use \eqref{eq:paper_window_app} with $V_K = \sqrt K$, which gives
$W = 47$, $59$, and $74$ at $n = 10$, $20$, and $40$.

\subsection{Drifting delays in the optimization loop}
\label{app:paper_extra:drift}

\paragraph{Overview.}
While \Cref{sec:experiments} focused on the suboptimality of the runs at
$n = 20$, this section reports the same experiment in full: all three
pool sizes $n = 10, 20, 40$ and all the metrics of
\cref{tab:app_metrics_map}. To give a complete picture of the behavior of
the methods under drift, we examine them at both the global and the
per-iteration scale: \Cref{fig:paper_sgd_drift_time_app} shows the
suboptimality against the runtime and the total worker time,
\cref{tab:paper_to_target} reports when each method reaches a target
suboptimality, and \Cref{fig:paper_sgd_drift_iters_app} resolves the
same runs per iteration.

\paragraph{Suboptimality against time.}
\Cref{fig:paper_sgd_drift_time_app} plots the suboptimality gap
$f(x_k) - f^*$ of the objective against the runtime (top row) and against
the total worker time (bottom row). The curves of the
simplex methods share one shape and differ essentially by a horizontal
shift. In runtime, \algname{Rennala} has the smallest value at every pool
size, the oracle \algname{OFTA} is next, and \algname{SW-ATA} has the
smallest runtime among the learning methods: it essentially coincides
with the oracle at $n = 10$, and its runtime is within a factor of about
one and a half of the oracle's at $n = 40$. \algname{ATA} and the
non-adaptive \algname{UTA} are the slowest methods and nearly overlap:
under a persistent drift the full-history estimates of \algname{ATA} lag
the drift, so its allocation approaches the uniform one. The gap between
the oracle and this pair widens as the pool grows, since the oracle
concentrates the fixed batch on the fast part of the pool, while the
allocations of \algname{ATA} and \algname{UTA} do not follow the drift
fast enough.

On worker time (bottom row) the ordering partly reverses:
\algname{Rennala}, the fastest method
in runtime, has a high worker time that grows with the pool, while
\algname{SW-ATA} has the smallest worker time among the learning methods:
it closely tracks the oracle and even improves on it at $n = 10$ and
$n = 20$.

\paragraph{Target suboptimality.}
For a concrete reference point we consider the target suboptimality
$10^{-5}$: \cref{tab:paper_to_target} reports
the runtime and the total worker time at which every method first reaches
it and confirms the trends of \Cref{fig:paper_sgd_drift_time_app}: \algname{Rennala} reaches the
target first at every pool size, \algname{SW-ATA} essentially matches the
oracle, and \algname{UTA} is the slowest by both measures.

\paragraph{Per-iteration dynamics.}
\Cref{fig:paper_sgd_drift_iters_app} resolves the same runs along the
iteration axis. The average iteration time $C_k/k$ (top row) splits the
methods into three tiers: \algname{UTA} on top; \algname{ATA} approaches
it as the drift accumulates in its full-history estimates, until the two
nearly coincide at $n = 40$; and the drift-aware methods at the bottom,
where \algname{SW-ATA} is the fastest learning method at every pool
size: it matches the oracle at $n = 10$ and stays within a factor of
about one and a half of it at $n = 40$. The average worker
time per iteration $\frac1k\sum_{i=1}^n T_{i,k}$ (middle row) orders the
methods differently: the uniform baseline has the highest value,
\algname{Rennala} spends more than the oracle --- about three times more
at $n = 40$ --- and \algname{SW-ATA} stays at or below the oracle's
level. The oracle's allocation minimizes the round's proxy loss, not
the worker time spent on it, so its worker time is not a lower bound: at
$n = 10$ \algname{SW-ATA} has slightly less than the oracle itself. The
average regret $\bar\regretdyn_k/k$ (bottom row) shows the ranking inside
the optimization loop: \algname{SW-ATA} is the best learner at every pool
size, and \algname{ATA} stays at the level of the non-adaptive
baseline. At $n = 40$, where the batch is smaller than the pool, the
uniform allocation itself becomes less wasteful, and the regret of every
method contracts towards the oracle.

\begin{figure}[t]
\centering
\includegraphics[width=0.78\linewidth]{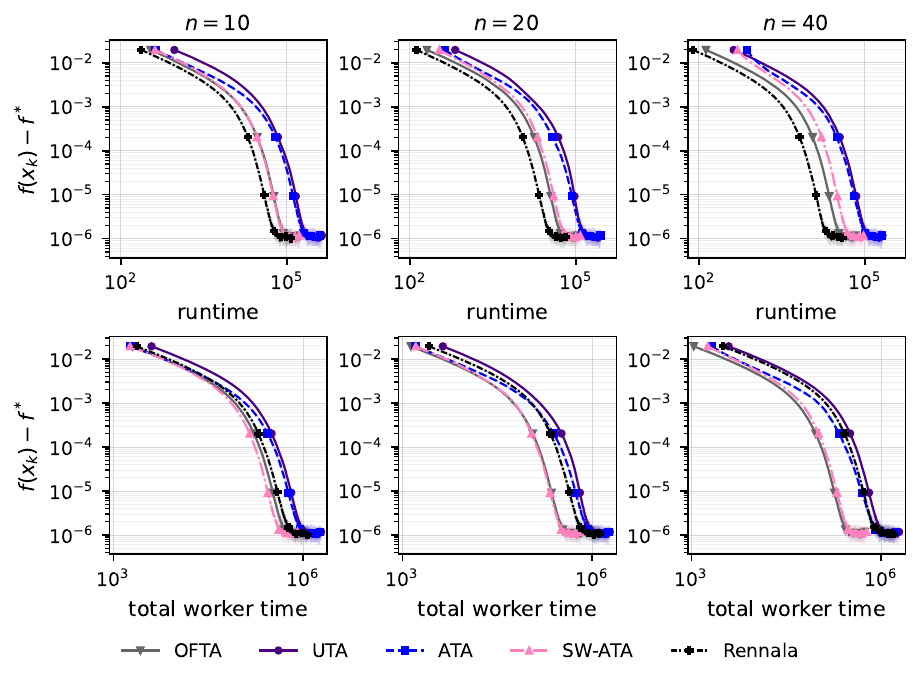}
\caption{Each column corresponds to one pool size, $n = 10, 20, 40$ from
left to right. The top row shows the suboptimality $f(x_k) - f^*$ against
the runtime $C_k$, and the bottom row shows the same against the
\textit{total worker time} $\sum_{i=1}^n T_{i,k}$ of
\eqref{eq:paper_worker_time}. The curves of the simplex methods differ essentially by a
horizontal shift. In runtime, \algname{Rennala} has the
smallest value and \algname{SW-ATA} the smallest value among the learning
methods, essentially coinciding with the oracle at $n = 10$;
\algname{ATA} overlaps with the non-adaptive \algname{UTA} on the right. On
worker time the ordering partly reverses: the worker time of
\algname{Rennala} grows with the pool, and \algname{SW-ATA} is the most
economical learning method, below the oracle at $n = 10$ and $n = 20$.}
\label{fig:paper_sgd_drift_time_app}
\end{figure}

\begin{table*}[t]
\centering
\caption{The runtime and the total worker time at which every method first
reaches the target suboptimality $10^{-5}$ on the drifting instance, for
the three pool sizes. Median over $15$ trajectories; every trajectory
reaches the target in every cell. The best value in every column is in
bold.}
\label{tab:paper_to_target}
\begin{tabular}{lrrrrrr}
\toprule
 & \multicolumn{2}{c}{$n=10$} & \multicolumn{2}{c}{$n=20$} & \multicolumn{2}{c}{$n=40$} \\
\cmidrule(lr){2-3}\cmidrule(lr){4-5}\cmidrule(lr){6-7}
method & runtime & worker time & runtime & worker time & runtime & worker time \\
\midrule
\algname{OFTA} & $5.67\times10^{4}$ & $3.27\times10^{5}$ & $3.44\times10^{4}$ & $2.24\times10^{5}$ & $2.21\times10^{4}$ & $\mathbf{1.78\times10^{5}}$ \\
\algname{UTA} & $1.43\times10^{5}$ & $6.35\times10^{5}$ & $9.33\times10^{4}$ & $6.30\times10^{5}$ & $6.84\times10^{4}$ & $6.29\times10^{5}$ \\
\algname{ATA} & $1.28\times10^{5}$ & $5.73\times10^{5}$ & $8.33\times10^{4}$ & $5.40\times10^{5}$ & $6.34\times10^{4}$ & $4.84\times10^{5}$ \\
\algname{SW-ATA} & $5.58\times10^{4}$ & $\mathbf{2.74\times10^{5}}$ & $3.91\times10^{4}$ & $\mathbf{2.13\times10^{5}}$ & $3.13\times10^{4}$ & $1.97\times10^{5}$ \\
\algname{Rennala} & $\mathbf{3.87\times10^{4}}$ & $3.87\times10^{5}$ & $\mathbf{2.15\times10^{4}}$ & $4.29\times10^{5}$ & $\mathbf{1.30\times10^{4}}$ & $5.18\times10^{5}$ \\
\bottomrule
\end{tabular}
\end{table*}

\begin{figure}[t]
\centering
\includegraphics[width=0.8\linewidth]{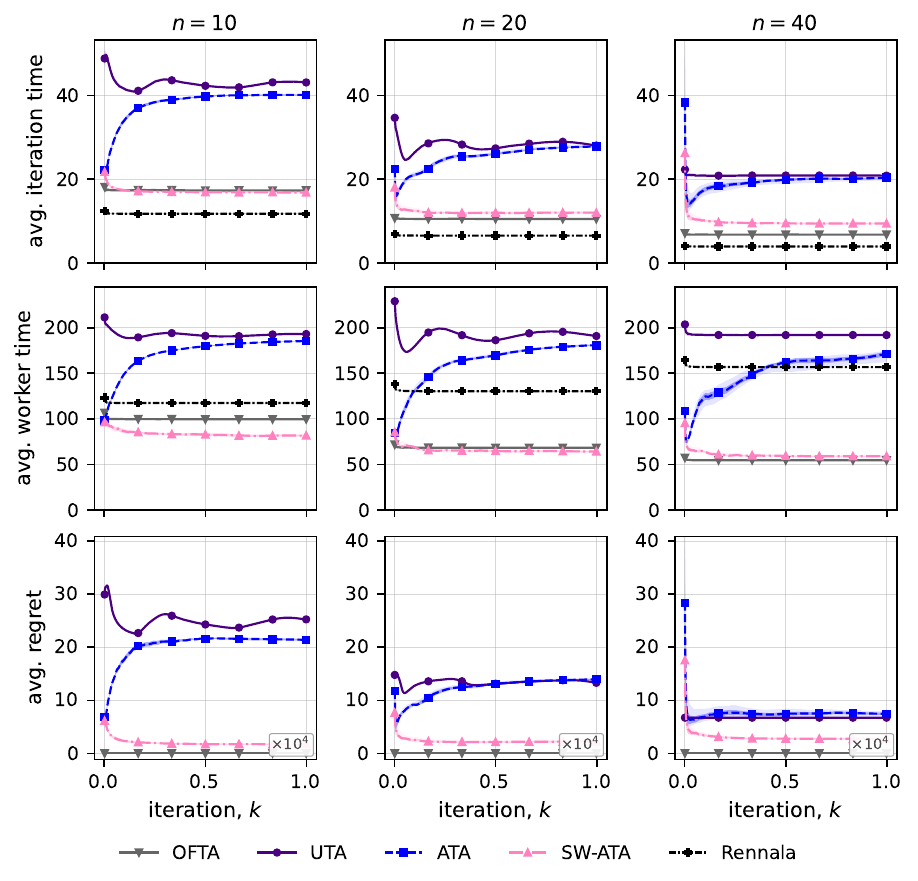}
\caption{Each column corresponds to one pool size, $n = 10, 20, 40$ from
left to right. The top row shows the average iteration time $C_k/k$
against the iteration $k$, the middle row shows the average worker time
per iteration $\frac1k\sum_{i=1}^n T_{i,k}$ of
\eqref{eq:paper_worker_time}, and the bottom row shows the average regret
$\bar\regretdyn_k/k$; the iteration axis is in units of $10^4$
iterations. \algname{Rennala} has no proxy-loss regret and is absent from the
bottom row.}
\label{fig:paper_sgd_drift_iters_app}
\end{figure}

\subsection{Protocol and reproducibility}
\label{app:paper_experiments:repro}

Every figure of this appendix is produced by the script of its experiment
from the stored per-trajectory series, so the appendix and the main text
always show the same run, and the variation budget and the growth exponent
of each regime are read from the recorded metadata, not quoted from the
construction. Reported values are aggregated over the $15$ trajectories as
the median with a $95\%$ confidence band. The code that reproduces every
figure and table of the experimental part is provided in the supplementary
material. All experiments are CPU-only: the full experiment (three pool
sizes, $15$ trajectories, $K = 10^4$ iterations) runs in about one minute
on a 112-core Intel Xeon Gold 6348 machine, and the tuning sweeps take a
few minutes each.

\end{document}

%% file: math_commands.tex
\usepackage{amsmath,amsfonts,bm}

\def\eqref#1{equation~\ref{#1}}

\def\1{\bm{1}}

\DeclareMathAlphabet{\mathsfit}{\encodingdefault}{\sfdefault}{m}{sl}
\SetMathAlphabet{\mathsfit}{bold}{\encodingdefault}{\sfdefault}{bx}{n}

\newcommand{\E}{\mathbb{E}}

\newcommand{\R}{\mathbb{R}}

\DeclareMathOperator*{\argmax}{arg\,max}
\DeclareMathOperator*{\argmin}{arg\,min}

%% file: sample_paper.bib
@article{vershynin2019high,
  title={High-dimensional probability},
  author={Vershynin, Roman},
  journal={Cambridge Series in Statistical and Probabilistic Mathematics},
  volume={47},
  year={2019}
}

@inproceedings{mitliagkas2016asynchrony,
  title={Asynchrony begets momentum, with an application to deep learning},
  author={Mitliagkas, Ioannis and Zhang, Ce and Hadjis, Stefan and R{\'e}, Christopher},
  booktitle={2016 54th Annual Allerton Conference on Communication, Control, and Computing (Allerton)},
  pages={997--1004},
  year={2016},
  organization={IEEE}
}

@inproceedings{kveton2015tight,
  title={Tight regret bounds for stochastic combinatorial semi-bandits},
  author={Kveton, Branislav and Wen, Zheng and Ashkan, Azin and Szepesvari, Csaba},
  booktitle={Artificial Intelligence and Statistics},
  pages={535--543},
  year={2015},
  organization={PMLR}
}

@inproceedings{takemura2021near,
  title={Near-optimal regret bounds for contextual combinatorial semi-bandits with linear payoff functions},
  author={Takemura, Kei and Ito, Shinji and Hatano, Daisuke and Sumita, Hanna and Fukunaga, Takuro and Kakimura, Naonori and Kawarabayashi, Ken-ichi},
  booktitle={Proceedings of the AAAI Conference on Artificial Intelligence},
  volume={35},
  number={11},
  pages={9791--9798},
  year={2021}
}

@article{maranjyan2025ringmaster,
  title={Ringmaster ASGD: The first Asynchronous SGD with optimal time complexity},
  author={Maranjyan, Artavazd and Tyurin, Alexander and Richt{\'a}rik, Peter},
  journal={arXiv preprint arXiv:2501.16168},
  year={2025}
}

@inproceedings{peng2018optimus,
  title={Optimus: an efficient dynamic resource scheduler for deep learning clusters},
  author={Peng, Yanghua and Bao, Yixin and Chen, Yangrui and Wu, Chuan and Guo, Chuanxiong},
  booktitle={Proceedings of the Thirteenth EuroSys Conference},
  pages={1--14},
  year={2018}
}

@article{dean2013tail,
  title={The tail at scale},
  author={Dean, Jeffrey and Barroso, Luiz Andr{\'e}},
  journal={Communications of the ACM},
  volume={56},
  number={2},
  pages={74--80},
  year={2013},
  publisher={ACM New York, NY, USA}
}

@inproceedings{rajbhandari2020zero,
  title={Zero: Memory optimizations toward training trillion parameter models},
  author={Rajbhandari, Samyam and Rasley, Jeff and Ruwase, Olatunji and He, Yuxiong},
  booktitle={SC20: international conference for high performance computing, networking, storage and analysis},
  pages={1--16},
  year={2020},
  organization={IEEE}
}

@article{shoeybi2019megatron,
  title={Megatron-lm: Training multi-billion parameter language models using model parallelism},
  author={Shoeybi, Mohammad and Patwary, Mostofa and Puri, Raul and LeGresley, Patrick and Casper, Jared and Catanzaro, Bryan},
  journal={arXiv preprint arXiv:1909.08053},
  year={2019}
}

@article{sergeev2018horovod,
  title={Horovod: fast and easy distributed deep learning in TensorFlow},
  author={Sergeev, Alexander and Del Balso, Mike},
  journal={arXiv preprint arXiv:1802.05799},
  year={2018}
}

@inproceedings{islamov2024asgrad,
  title={AsGrad: A sharp unified analysis of asynchronous-SGD algorithms},
  author={Islamov, Rustem and Safaryan, Mher and Alistarh, Dan},
  booktitle={International Conference on Artificial Intelligence and Statistics},
  pages={649--657},
  year={2024},
  organization={PMLR}
}

@inproceedings{dutta2018slow,
  title={Slow and stale gradients can win the race: Error-runtime trade-offs in distributed SGD},
  author={Dutta, Sanghamitra and Joshi, Gauri and Ghosh, Soumyadip and Dube, Parijat and Nagpurkar, Priya},
  booktitle={International conference on artificial intelligence and statistics},
  pages={803--812},
  year={2018},
  organization={PMLR}
}

@inproceedings{chen2021combinatorial,
  title={Combinatorial semi-bandit in the non-stationary environment},
  author={Chen, Wei and Wang, Liwei and Zhao, Haoyu and Zheng, Kai},
  booktitle={Uncertainty in Artificial Intelligence},
  pages={865--875},
  year={2021},
  organization={PMLR}
}

@inproceedings{garivier2011upper,
  title={On upper-confidence bound policies for switching bandit problems},
  author={Garivier, Aur{\'e}lien and Moulines, Eric},
  booktitle={International conference on algorithmic learning theory},
  pages={174--188},
  year={2011},
  organization={Springer}
}

@article{nakamura2025multi,
  title={Multi-Play Combinatorial Semi-Bandit Problem},
  author={Nakamura, Shintaro and Kuroki, Yuko and Chen, Wei},
  journal={arXiv preprint arXiv:2509.09933},
  year={2025}
}

@inproceedings{chen2013combinatorial,
  title={Combinatorial multi-armed bandit: General framework and applications},
  author={Chen, Wei and Wang, Yajun and Yuan, Yang},
  booktitle={International conference on machine learning},
  pages={151--159},
  year={2013},
  organization={PMLR}
}

@article{slivkins2019introduction,
  title={Introduction to multi-armed bandits},
  author={Slivkins, Aleksandrs},
  journal={Foundations and Trends{\textregistered} in Machine Learning},
  volume={12},
  number={1-2},
  pages={1--286},
  year={2019},
  publisher={Emerald Publishing Limited}
}

@article{lattimore2019bandit,
  title={Bandit algorithms},
  author={Lattimore, Tor and Szepesv{\'a}ri, Csaba},
  journal={Cambridge University},
  volume={355},
  year={2019}
}

@article{besbes2014stochastic,
  title={Stochastic multi-armed-bandit problem with non-stationary rewards},
  author={Besbes, Omar and Gur, Yonatan and Zeevi, Assaf},
  journal={Advances in neural information processing systems},
  volume={27},
  year={2014}
}

@article{tyurin2023optimal,
  title={Optimal time complexities of parallel stochastic optimization methods under a fixed computation model},
  author={Tyurin, Alexander and Richt{\'a}rik, Peter},
  journal={Advances in Neural Information Processing Systems},
  volume={36},
  pages={16515--16577},
  year={2023}
}

@article{maranjyan2025ata,
  title={ATA: Adaptive task allocation for efficient resource management in distributed machine learning},
  author={Maranjyan, Artavazd and Saad, El Mehdi and Richt{\'a}rik, Peter and Orabona, Francesco},
  journal={arXiv preprint arXiv:2502.00775},
  year={2025}
}

@article{chen2016combinatorial,
  title={Combinatorial multi-armed bandit with general reward functions},
  author={Chen, Wei and Hu, Wei and Li, Fu and Li, Jian and Liu, Yu and Lu, Pinyan},
  journal={Advances in Neural Information Processing Systems},
  volume={29},
  year={2016}
}

@article{auer2002finite,
  title={Finite-time analysis of the multiarmed bandit problem},
  author={Auer, Peter and Cesa-Bianchi, Nicolo and Fischer, Paul},
  journal={Machine learning},
  volume={47},
  number={2},
  pages={235--256},
  year={2002},
  publisher={Springer}
}

@inproceedings{cheung2019learning,
  title={Learning to optimize under non-stationarity},
  author={Cheung, Wang Chi and Simchi-Levi, David and Zhu, Ruihao},
  booktitle={The 22nd International Conference on Artificial Intelligence and Statistics},
  pages={1079--1087},
  year={2019},
  organization={PMLR}
}
